\definecolor{imayou}{RGB}{154, 154, 235}
\definecolor{usuai}{RGB}{9, 150, 126}
\definecolor{persred}{RGB}{154,63,63}
\definecolor{sand}{RGB}{201, 177, 60}
    \newcommand{\ti}{\tilde}
    \newcommand{\e}{\varepsilon}
    \renewcommand{\d}{\delta}
    \newcommand{\Zb}{\mathbb{Z}}
    \newcommand{\supp}{\text{supp }}
    \newcommand{\nm}[1]{\left|\left| #1 \right|\right|}
    \newcommand{\lp}[2]{\nm{#1}_{L^{#2}}}
    \newcommand{\ltwo}[1]{\lp{#1}{2}}
    \newcommand{\ra}{\rightarrow}
    \newcommand{\p}{\partial}
    \newcommand{\Cs}{C^{\infty}}
    \newcommand{\Cc}{C^{\infty}_c}
    \newcommand{\<}{\left<}
    \renewcommand{\>}{\right>}
    \renewcommand{\Im}{\text{Im}}
    \newcommand{\Rb}{\mathbb{R}}
    \newcommand{\Trf}{T^*\Rb^4}
    \newcommand{\Trfo}{T^*\Rb^4\setminus 0}
    \newcommand{\AF}[1]{\nm{#1}_{AF}}
    \newcommand{\Nb}{\mathbb{N}}
    \newcommand{\Ltxaj}[1]{\nm{#1}_{L^2_t L^2_x(\Rb_+ \times A_j)}}
    \newcommand{\LtxTAj}[1]{\nm{#1}_{L^2_t L^2_x([0,T] \times A_j)}}
    \newcommand{\LEsltx}[1]{\nm{#1}_{LE^*+L^1_t L^2_x}}
    \newcommand{\LET}[1]{\nm{#1}_{LE[0,T]}}
    \newcommand{\LEoT}[1]{\nm{#1}_{LE^1[0,T]}}
    \newcommand{\LEST}[1]{\nm{#1}_{LE^*[0,T]}}
    \newcommand{\LEsltxT}[1]{\nm{#1}_{LE^*+L^1_t L^2_x[0,T]}}
        \newcommand{\characteristicsetofP}{\text{Char}({P})}
        \newcommand{\characteristicsetoftildeP}{\text{Char}({\tilde P})}
        \newcommand{\characteristicsetofPplusminus}{\text{Char}({P}^\pm)}
        \newcommand{\characteristicsetofPplus}{\text{Char}({P}^+)}
        \newcommand{\characteristicsetofPminus}{\text{Char}({P}^-)}
        \newcommand{\forwardtrappedset}{\Gamma_{tr}}
        \newcommand{\backwardtrappedset}{\Lambda_{tr}}
        \newcommand{\forwardnontrappedset}{\Gamma_\infty}
        \newcommand{\backwardnontrappedset}{\Lambda_\infty}
        \newcommand{\trappedset}{\Omega^{p}_{\textit{tr}}}
        \newcommand{\nontrappedset}{\Omega^{p}_{\infty}}
    \newcommand{\ppm}{p^{\pm}}
        \newcommand{\plusminusforwardtrappedset}{\Gamma_\textit{tr}^\pm}
        \newcommand{\plusminusbackwardtrappedset}{\Lambda_\textit{tr}^\pm}
        \newcommand{\plusminusforwardnontrappedset}{\Gamma_\infty^\pm}
        \newcommand{\plusminusbackwardnontrappedset}{\Lambda_\infty^\pm}
        \newcommand{\plusminustrappedset}{\Omega_{tr}^\pm}
        \newcommand{\plusminusnontrappedset}{\Omega_{\infty}^\pm}
            \newcommand{\plusforwardtrappedset}{\Gamma_\textit{tr}^+}
            \newcommand{\plusforwardnontrappedset}{\Gamma_\infty^+}
            \newcommand{\plustrappedset}{\Omega_{tr}^+}
            \newcommand{\plusnontrappedset}{\Omega_{\infty}^+}
            \newcommand{\minustrappedset}{\Omega_{tr}^-}
            \newcommand{\minusnontrappedset}{\Omega_{\infty}^-}
        \newcommand{\homogplusminusforwardtrappedset}{\mathring{\Gamma}_\textit{tr}^\pm}
        \newcommand{\homogplusminusbackwardtrappedset}{\mathring{\Lambda}_\textit{tr}^\pm}
        \newcommand{\homogplusminustrappedset}{\mathring{\Omega}_{tr}^\pm}
            \newcommand{\homogplustrappedset}{\mathring{\Omega}_{tr}^+}
            \newcommand{\homogminustrappedset}{\mathring{\Omega}_{tr}^-}
    \newcommand{\Hpm}{H_{p^{\pm}}}
    \newcommand{\bpm}{b^{\pm}}
    \newcommand{\bpmx}{\bpm(x,\xi)}
    \newcommand{\xpms}{x^{\pm}_s}
    \newcommand{\xipms}{\xi^{\pm}_s}
    \newcommand{\tpms}{t^{\pm}_s}
    \newcommand{\taupms}{\tau^{\pm}_s}
    \newcommand{\bpmxs}{\bpm(\xpms,\xipms)}
    \newcommand{\gpms}{\varphi_s^{\pm}}
    \newcommand{\gpmsd}{\mathring{\varphi}_s^{\pm}}
    \newcommand{\xpmso}{\xpms(\omega)}
    \newcommand{\tpmso}{\tpms(\omega)}
    \newcommand{\xipmso}{\xipms(\omega)}
    \newcommand{\taupmso}{\taupms(\omega)}
    \newcommand{\zpms}{z^{\pm}_s}
    \newcommand{\zetapms}{\zeta^{\pm}_s}
    \newcommand{\zpmso}{\zpms(\omega)}
    \newcommand{\zetapmso}{\zetapms(\omega)}
    \newcommand{\Phipm}{\Phi^{\pm}}
\newcommand{\Cm}{\overline{C}}
\newcommand{\Piz}{\Pi_z}
\newcommand{\Pizeta}{\Pi_{\zeta}}
\newcommand{\pmftrappedcompactset}{\Gamma_R^{\pm}}
\newcommand{\pmbtrappedcompactset}{\Lambda_R^{\pm}}
\newcommand{\pmsemitrappedset}{\mathcal{T}_{\leq R}^{\pm}}
\newcommand{\homogpmsemitrappedset}{\mathring{\mathcal{T}}_{\leq R}^{\pm}}
\newcommand{\pmhomogftrappedcompactset}{\mathring{\Gamma}_R^{\pm}}
\newcommand{\pmhomogbtrappedcompactset}{\mathring{\Lambda}_R^{\pm}}
\newcommand{\semitrappedescapeset}{V_R^{\pm}}
\newcommand{\homogsemitrappedescapeset}{\mathring{V}_R^{\pm}}
\newcommand{\qpm}{q^{\pm}}
\newcommand{\homogpmcompactsemi}{\mathring{\mathcal{T}}^{\pm}_{\leq R, T_2}}
\newcommand{\ac}{\mathcal{A}}
\newcommand{\acpm}{\ac^{\pm}}
\newcommand{\rpm}{r^{\pm}}
\newcommand{\qpmo}{q^{\pm}_1}
\newcommand{\acpmo}{\mathcal{A}_1^{\pm}}
\newcommand{\rpmo}{r^{\pm}_1}
    \newcommand{\qo}{q_{\omega}^{\pm}}
    \newcommand{\aco}{\ac_{\omega}^{\pm}}
    \newcommand{\alphaopm}{\alpha_{\omega}^{\pm}}
    \newcommand{\ro}{r_{\omega}^{\pm}}
    \newcommand{\rhopm}{\rho_{\omega}}
    \newcommand{\opmj}{\omega_j^{\pm}}
    \newcommand{\opmjk}{\omega_{j,k}}
\newcommand{\gopmr}[1]{\varphi_{#1}^{\pm}(\omega)}
\newcommand{\gopms}{\gopmr{s}}
\newcommand{\Topm}{T_{\omega}}
\newcommand{\gopm}{\varphi^{\pm}(\omega)}
\newcommand{\Psiopm}{\Psi_{\omega}}
\newcommand{\phiopm}{\phi_{\omega}}
\newcommand{\chiopm}{\chi_{\omega}}
\newcommand{\Ci}{C^{\infty}}
\newcommand{\qpmin}{\qpm_{in}}
\newcommand{\qpmint}{\widetilde{q}^{\pm}_{in}}
\newcommand{\qpmout}{\qpm_{out}}
\newcommand{\psipm}{\psi^{\pm}}
\newcommand{\phipm}{\varphi^{\pm}}
\newcommand{\Cpm}{C^{\pm}}
\newcommand{\cpm}{c^{\pm}}
\newcommand{\Ypm}{Y^{\pm}}
\newcommand{\qpmlg}{q^{\pm}_{>1}}
\newcommand{\qmplg}{q^{\mp}_{>1}}
\newcommand{\qmlg}{q^{-}_{>1}}
\newcommand{\qplg}{q^{+}_{>1}}
\newcommand{\qpmt}{\qpm_2}
\newcommand{\bmp}{b^{\mp}}
\newcommand{\chigl}{\chi_{>1}}
\newcommand{\chilr}{\chi_{<R}}
\newcommand{\chigr}{\chi_{>R}}
\newcommand{\chir}{\chi_R}
\newcommand{\chilro}{\chi_{<R_0}}
\newcommand{\chigro}{\chi_{>R_0}}
\newcommand{\vgl}{v_{>\lambda}}
\newcommand{\vll}{v_{<\lambda}}
\newcommand{\vggl}{v_{>>\lambda}}
\newcommand{\LE}[1]{\nm{#1}_{LE}}
\newcommand{\LEs}[1]{\nm{#1}_{LE^*}}
\newcommand{\LEo}[1]{\nm{#1}_{LE^1}}
\newcommand{\Ltx}[1]{\nm{#1}_{L^2_t L^2_x}}
\newcommand{\LtxT}[1]{\nm{#1}_{L^2_t L^2_x[0,T]}}
\newcommand{\Ltauxi}[1]{\nm{#1}_{L^2_{\tau}L^2_{\xi}}}
\newcommand{\chiggl}{\chi_{|\xi|+|\tau|>\frac{\lambda}{2}}}
\newcommand{\chill}{\chi_{|\xi|+|\tau|<\frac{\lambda}{2}}}
\newcommand{\chitl}{\chi_{|\tau|>1}}
\newcommand{\vglgl}{v_{>\lambda,>1}}
\newcommand{\vlgl}{v_{>\lambda,<1}}
\newcommand{\vgll}{v_{< \lambda, >\sigma \lambda}}
\newcommand{\vlsl}{v_{<\lambda,<\sigma \lambda}}
\newcommand{\Op}{\text{Op}}
\newcommand{\chihigh}{\chi_{\tau \geq \tau_1}}
\newcommand{\Ophw}{\text{Op}_h^w}
\newcommand{\chilow}{\chi_{\tau \leq \tau_0}}
\newcommand{\chimed}{\chi_{<>\tau}}
\theoremstyle{plain}
\newtheorem{definition}{Definition}[section]
\newtheorem{theorem}[definition]{Theorem}
\newtheorem{lemma}[definition]{Lemma}
\newtheorem{remark}[definition]{Remark}
\newtheorem{proposition}[definition]{Proposition}
\numberwithin{equation}{section}
\title[Local energy decay with time-dependent damping]{Integrated Local Energy Decay for Waves with Time-Dependent Damping}
\author{Perry Kleinhenz and Michael McNulty}
\date{}
\begin{document}

\begin{abstract}
    We prove integrated local energy decay for solutions of the damped wave equation with time-dependent damping satisfying an appropriate generalization of the geometric control condition on asymptotically flat, stationary space-times. We first obtain a high frequency estimate, which we prove via a positive commutator estimate using an escape function explicitly constructed in terms of the damping around individual space-time trajectories. We  combine the high frequency estimate with low and medium frequency results for the undamped problem, then we handle the damping term as a perturbation to obtain local energy decay.
\end{abstract}
\maketitle
\section{Introduction}
    %%%Comment from P: The idea of what's currently in this introduction is that it contains the absolute minimum amount of text required to state our main result and then states our main result. The goal is to keep the statement of our main result on page 2. We can then state any additional context, intermediate results, proof outline etc. on later pages.

    Let $(\Rb^4,g)$ be a Lorentzian manifold with coordinates $(t,x) \in \Rb \times \Rb^3$, where $g$ has signature $(- + + +)$. Let $\Box_g = D_{\alpha}g^{\alpha \beta}D_{\beta}$ for $\alpha, \beta \in \{0,1,2,3\}$. We consider the Cauchy problem for damped wave operators 
    \begin{equation}
        \begin{cases}
            Pu=(\Box_g+iaD_t)u=f \\
            u[0]=(u(0), \p_t u(0)) \in \dot{H}^1(\Rb^3) \times L^2(\Rb^3),
        \end{cases}
    \end{equation}
    where $a \in C^{\infty}(\Rb \times \Rb^3)$ is non-negative and uniformly continuous. Our goal is to show that the energy of solutions, measured with spatial weights, decays quickly enough to be integrable in time. This is \textit{integrated local energy decay}, which we define precisely below. 

    We will focus on the case where the operator $P$ is asymptotically flat. That is, for large values of $|x|$, $g$ is close to the Minkowski metric.
    \begin{definition}\label{d:asymptoticFlat1}
        Let $\p=(\p_t, \nabla)$ be the space-time gradient, and $\<x\>=(1+|x|^2)^{1/2}$ be the Japanese angle bracket of $x$. 
    
        For $j \geq 0$, let $A_j=\{x \in \Rb^3: 2^{j-1} \leq \<x\> \leq 2^{j+1}\}$ and define the family of norms
        \begin{equation}
            \|h\|_k = \sum_{|\alpha| \leq k} \nm{ \<x\>^{|\alpha|} \p^{\alpha} h}_{\ell^1_jL^{\infty}(\Rb \times A_j)},
        \end{equation}
        where $\ell^1_j$ denotes the $\ell^1$ norm over the $j$ index.

        We define the $AF$ norm as 
        \begin{equation}
            \AF{(h,a)} = \|h\|_2 + \|\<x\> a\|_1.
        \end{equation}
        Now, letting $m$ denote the Minkowski metric, we say $P$ is asymptotically flat if 
        \begin{equation}
            \AF{(g-m,a)}<\infty,
        \end{equation} 
        and for multi-indices $\alpha$ there exists $C_{\alpha}>0$, such that 
        \begin{align}
            &\nm{\<x\>^{|\alpha|} \p^{\alpha} g}_{\ell^1_j L^{\infty}(\Rb \times A_j)} \leq C_{\alpha}, & |\alpha| \geq 3,\\
            &\nm{\<x\>^{|\alpha|+1} \p^{\alpha} a}_{\ell^1_j L^{\infty}(\Rb \times A_j)} \leq C_{\alpha},  & |\alpha| \geq 2.
        \end{align}
    \end{definition}
    We also require that the metric $g$ is independent of time.
    \begin{definition}
        We say that $P$ is stationary if the metric $g$ is independent of $t$.
    \end{definition}
    Now, to make our notion of local energy precise, we define local energy norms.
    \begin{definition} We write $L^p_t L^q_x=L^p(\Rb_+, L^q(\Rb^3))$, and define
    \begin{align}
        &\LE{u} = \sup_{j \geq 0} \Ltxaj{\<x\>^{-1/2} u},\\
        &\LEo{u}=\LE{\p u} + \LE{\<x\>^{-1} u},\\
        &\LEs{f}=\sum_{j=0}^{\infty} \Ltxaj{\<x\>^{1/2} f},\\
        &\LEsltx{f}=\inf_{f =f_1+f_2} \left( \LEs{f_1}+\nm{f_2}_{L^1_tL^2_x}\right).
    \end{align}
         We will use the notation $\LET{u}$, $\LEoT{u} $, $ \LEST{u}$, $ \LEsltxT{u}$ to represent these norms with time interval $[0,T]$ instead of $\Rb_+$. We write $LE^*_c$ to refer to the elements of $LE^*$ with compact support. 
    \end{definition}
    %We also must define the space of spatially compact functions on bounded time intervals. 
    % \begin{definition}
    %     Fix $T>0$, and define 
    %     \begin{equation}
    %         \Wt=\{u \in C^2([0,T]\times \Rb^3): \text{ for some } R>0, u(t,x)=0, \text{ for all } |x|>R, \text{ and } t\in [0,T]\}.
    %     \end{equation}
    %     \textcolor{red}{P: I think the case reduction means we do not need to restrict ourselves to solutions in this $\Wt$. We can just take $u[0] \in \dot{H}^1 \times L^2$. }
    % \end{definition}
    Our final preliminary is to state a rough version of our time-dependent geometric control condition. We provide an exact statement of this in Definition \ref{Defn:TGCC}.
    \begin{definition}
        We say that the time-dependent geometric control condition holds if there exists $T_0, \Cm>0$ such that for all trapped space-time trajectories $(t_s,x_s)$ with $|t_s'|=c$ and $T \geq T_0/c$, we have 
        \begin{equation}
            \frac{1}{2T}\int_{-T}^T a(t_s, x_s) ds \geq \Cm.
        \end{equation}
    \end{definition}
    We are now ready to state our main result.
    \begin{theorem}\label{thm:iled}
        Let $P$ be a stationary asymptotically flat damped wave operator with non-negative time-dependent damping satisfying the time-dependent geometric control condition, and suppose $\p_t$ is uniformly time-like and constant time-slices are uniformly space-like. %\footnote{\textcolor{red}{This assumption is needed to have $g^{00}=-1$, which we do not otherwise mention or assume in the introduction.}}. 
        Then local energy decay holds. That is, there exists $C>0$ such that for all $T>0$ and all $u$ with $u[0] \in \dot{H}^1 \times L^2$, we have
        \begin{equation}
            \LEoT{u}+\nm{\p u}_{L^{\infty}_tL^2_x[0,T]} \leq C \left( \ltwo{\p u(0)} + \nm{Pu}_{LE^*+L^1_t L^2_x[0,T]}\right).
        \end{equation}
    \end{theorem}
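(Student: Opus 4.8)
\medskip
\noindent\textbf{Proof strategy.} The plan is to split the estimate according to the time-frequency variable $\tau$ into low, medium, and high frequencies, treat each regime separately, and reassemble. Since the metric $g$ is stationary, $\Box_g$ commutes with every Fourier multiplier $\chi(D_t)$ in time, so the only term of $P=\Box_g+iaD_t$ that obstructs an exact time-frequency localization is the damping, through the commutator $[\chi(D_t),a]D_t$. However, asymptotic flatness forces $\<x\>^{1+|\alpha|}\p^\alpha a$ to be bounded in $\ell^1_jL^{\infty}$ for all $\alpha$, so $[\chi(D_t),a]$ effectively carries a factor of $\p_t a$ and hence maps $LE^1[0,T]\to LE^*[0,T]$ with norm controlled by $\AF{(g-m,a)}$; such commutator terms are carried along as errors. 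Accordingly, fix a partition of unity $1=\chilow(D_t)+\chimed(D_t)+\chihigh(D_t)$ and reduce Theorem~\ref{thm:iled} to: (i) a high-frequency estimate for $\chihigh(D_t)u$; (ii) low- and medium-frequency estimates for $\chilow(D_t)u$ and $\chimed(D_t)u$; and (iii) absorption of the cutoff commutators and the genuinely damped perturbative terms.

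\medskip
\noindent For (i) I would invoke the high-frequency estimate established earlier in the paper: a positive commutator estimate with the escape function explicitly constructed from the damping around individual trapped space-time trajectories. This is precisely where the time-dependent geometric control condition (Definition~\ref{Defn:TGCC}) enters — the lower bound $\frac1{2T}\int_{-T}^{T}a(t_s,x_s)\,ds\ge\Cm$ along trapped trajectories is what turns the relevant commutator positive near the trapped set once the escape function is integrated along the flow. The outcome is a bound on $\LEoT{\chihigh(D_t)u}+\nm{\p\,\chihigh(D_t)u}_{L^{\infty}_tL^2_x[0,T]}$ by $\ltwo{\p u(0)}+\LEsltxT{Pu}$, modulo contributions supported at lower frequencies (which are passed back to (ii)) and lower-order terms.

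\medskip
\noindent For (ii) I would apply the known local energy decay estimates for the \emph{undamped} stationary asymptotically flat wave operator $\Box_g$ — which hold unconditionally in the low and medium frequency ranges, with no trapping hypothesis — to the identity $\Box_g(\chi_\bullet(D_t)u)=\chi_\bullet(D_t)(Pu)-iaD_t\chi_\bullet(D_t)u+[\chi_\bullet(D_t),a]D_tu$ for $\chi_\bullet\in\{\chilow,\chimed\}$. On the supports of $\chilow$ and $\chimed$ the symbol of $D_t$ is bounded, so the genuinely damped term obeys $\LEST{aD_t\chi_\bullet(D_t)u}\lesssim\|\<x\> a\|_1\,\LEoT{u}$ by the $\<x\>^{-1}$-type spatial decay of $a$ coming from $\|\<x\> a\|_1<\infty$, and the commutator error is handled the same way. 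Summing the dyadic medium pieces, step (ii) reproduces the desired right-hand side plus a contribution of the form $C\,\LEoT{u}$; in step (iii) this last term is absorbed, either by arranging the relevant constants to be small (localizing the damping in space-time, tuning the frequency thresholds, and using that $\AF{(g-m,a)}<\infty$ yields small tails) or, when that is not available directly, by a continuity-in-$T$ bootstrap, since every $[0,T]$ norm tends to $0$ as $T\to0^{+}$.

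\medskip
\noindent The uniform energy bound then follows from the standard energy identity for $Pu=f$: pairing the equation with $\p_t\bar u$ and integrating over $[0,t]\times\Rb^3$, the damping contributes $\iint a|\p_tu|^2\ge0$ with a favorable sign because $a\ge0$, so $\ltwo{\p u(t)}^2\lesssim\ltwo{\p u(0)}^2+\bigl|\iint f\,\p_t\bar u\bigr|$, and the forcing integral splits as $\lesssim\nm{f}_{L^1_tL^2_x}\nm{\p u}_{L^{\infty}_tL^2_x[0,T]}+\LEST{f}\,\LEoT{u}$, the first term absorbed and the second controlled by the $LE^1$ bound just proved. The main obstacle is step (ii)--(iii): the time dependence of $a$ means it genuinely does not commute with the time-frequency localization, and at medium frequencies the coefficient of the perturbative term $aD_tu$ is bounded but \emph{not} small, so it cannot simply be absorbed. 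Matching the high-frequency escape-function estimate cleanly with the undamped medium-frequency estimate on the overlap of their supports, and organizing every error so that the aggregate is truly absorbable rather than merely bounded by $\LEoT{u}$, is where the real difficulty lies; the construction and positivity of the escape function near trapped trajectories, though technically the heart of the paper, is taken here as an established earlier result.
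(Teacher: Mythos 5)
Your overall architecture (time-frequency decomposition, high-frequency estimate via the escape function, undamped low/medium estimates for $\Box_g$, commutator errors) matches the paper, but you have misidentified the mechanism by which the damping is absorbed at low and medium frequencies, and your proposed fix does not work.

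You bound the damped perturbation by $\LEST{aD_t\chi_\bullet(D_t)u}\lesssim\|\<x\> a\|_1\,\LEoT{u}$, which leaves a term $C\,\LEoT{u}$ on the right-hand side with $C\approx\|\<x\>a\|_1$. This constant is finite but not small, and you correctly flag this as the ``real difficulty.'' Neither of your proposed remedies resolves it: you cannot ``tune the frequency thresholds'' or ``localize the damping'' to make $\|\<x\>a\|_1$ small, since $a$ is a fixed nonnegative function (Theorem~\ref{thm:iled} holds for any asymptotically flat $a$, not just small ones); and a continuity-in-$T$ bootstrap fails because the constant $C$ in the theorem must be uniform in $T$, so smallness of the $[0,T]$ norms as $T\to0^+$ does not propagate. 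The paper's resolution is Lemma~\ref{l:aptuControl}: the sign $a\ge0$ makes the damping term enter the energy identity \eqref{eq:energyDeriv} with a favorable sign, yielding the \emph{superlinear} bound
\[
  \LEs{a\p_t u}\le C\,\Ltx{a^{1/2}\p_t u}\le C\bigl(\ltwo{\p u(0)}+\LEs{Pu}^{1/2}\LEo{u}^{1/2}\bigr),
\]
so that after Young's inequality the $\LEo{u}$ piece appears with an arbitrarily small coefficient and can genuinely be absorbed. This square-root gain, not mere boundedness, is what makes the perturbative treatment close.

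Your plan also reintroduces the commutators $[\chi_\bullet(D_t),a]$ at low and medium frequencies, which is precisely what the paper is engineered to avoid. Since $\Box_g$ is stationary, the paper commutes $\chi_\bullet^w$ only with $\Box_g$ (where the commutator vanishes exactly) and then estimates $\Box_g u = Pu - a\p_t u$ directly via Lemma~\ref{l:aptuControl}; as the remark in Section~\ref{s:localEnergyDecay} notes, estimating $[a,\chilow^w]$ directly would require an almost-stationary hypothesis on $a$. A commutator of $a$ with a cutoff appears only for the high-frequency piece (Lemma~\ref{l:freqcombinecommutator}), where the semiclassical scaling produces the explicit factor $\tau_1^{-1}$, small once $\tau_1$ is chosen large. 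Finally, the paper first performs the case reduction of Proposition~\ref{prop:iledCaseReduction} to $u[0]=u[T]=0$, $Pu\in LE^*_c$; your proposal skips this, but applying the Carleman estimate of Proposition~\ref{p:MST20med} and extending to global-in-time norms requires it.
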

    If $u$ solves $Pu=0$, this result roughly says that the local energy of $u$, measured via 
            \begin{equation}
                \sup_{j \geq 0} \nm{\<x\>^{-1/2} \p u(t, \cdot)}_{L^2_x(A_j)} + \sup_{j \geq 0} \nm{\<x\>^{-3/2} u(t,\cdot)}_{L^2_x(A_j)}, 
            \end{equation}
            decays quickly enough in $t$ to be $L^2_t$ integrable, with a uniform upper bound given by the initial energy $\ltwo{\p u(0)}$. It is in this sense that the result provides integrated local energy decay. 
    \begin{remark}\normalfont
        \begin{enumerate}
            \item This result generalizes that of \cite[Theorem 1.7]{Kofroth23} by allowing the damping $a$ to depend on time. We also assume only that the damping $a$ is asymptotically flat, rather than compactly supported, although the stationary asymptotically flat case is addressed in \cite{KofrothMagnetic}. Our result also generalizes the integrated version of \cite[Theorem 1.1]{BoucletRoyer2014} from a Euclidean background and stationary damping, to a Lorentzian background and time-dependent damping. 
            \item If there are no trapped space-time trajectories, then the time-dependent geometric control condition is always satisfied. In this case we only require that the damping $a$ is nonnegative and asymptotically flat; it is otherwise free. In this case our result is a partial generalization of \cite[Theorems 2.12(a), 2.15, and 2.16]{MST20} because we do not require our $P$ to be $\e$-slowly varying or $\e$-almost symmetric. That is, we can write
            \begin{align}
            &P=(D_{\alpha} +A_{\alpha} )g^{\alpha \beta} (D_{\beta} + A_{\beta}) +V(t,x),\quad  \text{ where}\\
                 &2g^{\alpha \beta} A_{\alpha} := i a(t,x) \delta^{\beta}_{\;0},
                 \qquad V(t,x) := -\left( g^{\alpha \beta}(D_{\alpha} A_{\beta}) + (D_{\alpha} g^{\alpha \beta}) A_{\beta}+A_{\alpha} g^{\alpha \beta} A_{\beta} \right).
            \end{align}
            Our result gives local energy decay for this $P$ which is non-trapping, asymptotically flat and has stationary $g$. Note the $A_{\alpha}(t,x)$ and $V(t,x)$ terms need not be $\e$-slowly varying or $\e$-almost symmetric, c.f. \cite[Definitions 1.2, 1.3]{MST20}. Because of this \cite{MST20} does not provide integrated local energy decay for such a $P$. Note however the high frequency result \cite[Theorem 2.11]{MST20} applies to this $P$, and so it is our argument in Section \ref{s:localEnergyDecay} that provides this partial generalization.     
            \item Conversely, if $a=0$, then there cannot be any trapping. Then we are in a special case of
            \cite[Theorem 2.12(b)]{MST20}. Using their notation we have $A=V=0$ and so local energy decay occurs \cite[Section 8.3]{MST20}.

            \item We have written the wave operator in divergence form, $\Box_g = D_{\alpha} g^{\alpha \beta} D_{\beta}$, as opposed to Laplace-Beltrami form
            \begin{equation}
                \Box_g^{LB} = |g|^{-1/2} D_{\alpha} |g|^{1/2} g^{\alpha \beta} D_{\beta}, \quad |g| = |\det g^{\alpha \beta}|.
            \end{equation}
            We can transition between these forms by conjugating the wave operator by $|g|^{1/4}$ at the cost of lower order potential terms, see \cite[Section 2.2]{Tataru13} or \cite[Proposition 2.2]{Morgan24}.
        \end{enumerate}
    \end{remark}
    Theorem \ref{thm:iled} follows from what we call a \textit{high-frequency integrated local energy decay} result. 
\begin{theorem}\label{thm:highfreq}
    Let $P$ be a stationary asymptotically flat damped wave operator with non-negative time-dependent damping satisfying the time-dependent geometric control condition, and suppose $\p_t$ is uniformly time-like and constant time-slices are uniformly space-like, then there exists $C>0$ such that for all $T>0$, and $u$ with $u[0] \in \dot{H}^1 \times L^2$
    \begin{equation}
        \LEoT{u} + \nm{\p u}_{L^{\infty}_t L^2_x[0,T]} \leq C \left( \ltwo{\p u(0)}+\LET{\<x\>^{-2} u}  + \LEsltxT{Pu}\right).
    \end{equation}
\end{theorem}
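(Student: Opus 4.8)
The plan is to prove the estimate by a positive commutator (Morawetz-type) argument using a pseudodifferential multiplier $Q=\Op(q)$ whose symbol $q$ is an escape function adapted simultaneously to the asymptotically flat geometry, to the non-trapping dynamics away from the trapped set, and to the damping along trapped space-time trajectories, supplemented by the energy estimate coming from the sign of $a$. Throughout one works microlocally near $\characteristicsetofP$, factoring $P$ (modulo lower order and smoothing terms) into a product of first-order half-wave operators with symbols $\ppm$ and Hamilton vector fields $\Hpm$, so that the two components $\characteristicsetofPplusminus$ of the characteristic set, and the corresponding trapped sets $\plusminustrappedset$, can be handled separately and then recombined.

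First I would record the \emph{energy estimate}. Since $a\ge 0$, the energy identity for $P$ gives $\nm{\p u(s)}_{L^2_x}^2\lesssim \nm{\p u(0)}_{L^2_x}^2+\big|\int_0^s\<Pu,\p_t u\>\,dt'\big|$ for $0\le s\le T$; splitting $Pu=f_1+f_2$ with $f_1\in LE^*$ and $f_2\in L^1_tL^2_x$ and applying Cauchy--Schwarz yields $\nm{\p u}_{L^\infty_tL^2_x[0,T]}\lesssim \ltwo{\p u(0)}+\LEsltxT{Pu}+\LEsltxT{Pu}^{1/2}\LEoT{u}^{1/2}$, the last term to be absorbed at the end. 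So it suffices to control $\LEoT{u}$. Next comes the \emph{construction of the escape function}. Away from $\trappedset$ one uses the standard choices: a radial symbol growing like $\<x\>$ in the asymptotically flat region, which is where a controlled loss appears and is traded on the right for $\LET{\<x\>^{-2}u}$, and a standard non-trapping escape function on the compact non-trapped region; both are chosen so that $\Hpm q$ is positive and elliptic at $\LEoT{}$-scale there, and the radial part naturally produces the $\<x\>^{-1}u$ contribution in $LE^1$. On a neighborhood of the compact part of $\plusminustrappedset$, where no monotone escape function exists, I would build $q$ from the damping: along each trapped trajectory $(t_s,x_s,\tau_s,\xi_s)$, after rescaling so that $|t_s'|=c$, set $q$ equal to a primitive of $a$ minus its long-time average over a window of length $\sim T_0/c$, roughly $q(\rho)\sim\int_0^{s}\big(a(t_{s'},x_{s'})-\Cm\big)\,ds'$, so that $\Hpm q+a$ is bounded below by a positive constant on the trapped set by virtue of the time-dependent geometric control condition. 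One then glues these flow-box constructions over a finite cover of the compact part of $\plusminustrappedset$ by a partition of unity, arranging that all gluing errors are supported off the trapped set and hence dominated by the non-trapping part of $q$.

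Then I would \emph{assemble the commutator identity}. With $Q=\Op(q)$ plus a symmetrizing zeroth-order correction $q_0$, integrating by parts on $[0,T]\times\Rb^3$ gives
\[
2\,\Im\<Pu,\,Qu+q_0u\>_{L^2_{t,x}[0,T]}=\<\Op(\Hp q)\,u,\,u\>+2\,\Re\<aD_tu,\,Qu+q_0u\>+\mathcal{B}(T)-\mathcal{B}(0)+\text{l.o.t.},
\]
where $\mathcal{B}(s)$ is a boundary term at time $s$. Using the positivity of $\Hpm q+a$ on the trapped set together with a sharp G\aa rding inequality, and the ellipticity of $\Hpm q$ off the trapped set, the first two terms on the right are bounded below by $c\,\LEoT{u}^2-C\,\LET{\<x\>^{-2}u}^2-\e\,\LEoT{u}^2$, the $\e$-term absorbed; here the damping term $2\Re\<aD_tu,Qu\>$ supplies coercivity precisely where $q$ is built from $a$, and is controlled elsewhere because $a\ge 0$ and $\<x\>a$ is bounded. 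The boundary terms obey $|\mathcal{B}(s)|\lesssim\nm{\p u(s)}_{L^2_x}^2$, which is controlled by the energy estimate, while the left-hand side is $\lesssim\LEsltxT{Pu}\big(\LEoT{u}+\nm{\p u}_{L^\infty_tL^2_x[0,T]}\big)$ by boundedness of $Q$ on the relevant spaces. Combining with the energy estimate and absorbing the $\e$- and square-root terms gives the claimed inequality.

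The step I expect to be the main obstacle is the construction and gluing of the escape function near the trapped set in the \emph{time-dependent} setting. Unlike the stationary case one cannot work on a fixed time slice, the trapped set is non-compact in $t$, and trajectories of different speeds $c$ sample the damping over different time windows, so the primitive-of-$a$ construction must be carried out uniformly in $c$ (this is exactly what the scaling $T\ge T_0/c$ in the geometric control condition is designed to permit) and patched to the non-trapping part of $q$ without generating lower-order terms that the damping term cannot absorb. Ensuring that the resulting symbol belongs to a usable pseudodifferential class and that its positivity survives quantization, so that the error terms are genuinely negligible against $\LEoT{u}$, is the technical heart of the argument.
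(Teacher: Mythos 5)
Your proposal identifies the paper's core idea correctly: prove the theorem by a positive commutator estimate with a space-time escape function $q$ built from a primitive of the damping along trapped bicharacteristics, with the TGCC supplying the uniformity in $t$ needed to make the flow-box construction global. That part of the argument, and your identification of the gluing near the (non-compact in $t$) trapped set as the technical heart, matches the paper. However, the surrounding scaffolding in your sketch has several genuine gaps.

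First, your treatment of boundary terms is not quite well-posed. You integrate $\Im\<Pu,Qu\>$ over $[0,T]\times\Rb^3$ and claim boundary terms $\mathcal{B}(s)$ at $s=0,T$ controlled by $\nm{\p u(s)}_{L^2_x}$. But $Q$ is a full space-time pseudodifferential operator in $(t,x,\tau,\xi)$; it is nonlocal in $t$, so integrating by parts over a finite time slab does not produce clean local-in-time boundary contributions. The paper circumvents this entirely by first reducing (Propositions \ref{prop:iledCaseReduction}, \ref{prop:CaseReduction}) to $v$ with $v[0]=v[T]=0$ and $Pv\in LE^*_c$, so that $v$ can be extended by zero and all inner products taken over $\Rb_{t}\times\Rb^3$ with no boundary terms at all. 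That reduction (and the further reduction to $v$ supported in $\{|x|\le 2R_0\}$) is not a cosmetic convenience: it is also what lets you convert the $L^2_{t,x}$ error terms produced by Sharp G\r{a}rding and by commutator remainders into $LE$ norms, which you will need to close the estimate.

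Second, you attribute the $\LET{\<x\>^{-2}u}$ loss to the asymptotically flat region via a radial multiplier growing like $\<x\>$. In the paper the exterior escape function $\qpmout$ is \emph{bounded} (built from the slowly-varying $f$ of Proposition \ref{p:slowvaryf}), and the $\<x\>^{-2}u$ term actually enters through the compact region: after reduction to $v$ compactly supported in $\{|x|\le 2R_0\}$, the $L^2_{t,x}$ remainders from Sharp G\r{a}rding, from the commutator calculus, and from the low-frequency regime $\{|\xi|<1\}\cup\{|\tau|<1\}$ (where the escape-function lower bound fails) are equivalent to $\LE{\<x\>^{-2}v}$, and unwinding the case reduction turns this into the stated $\LET{\<x\>^{-2}u}$. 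Relatedly, your sketch does not mention the low spatial-frequency split $v=\vll+\vgl$: the escape function estimate of Proposition \ref{p:EscapeFunction} is supported in $\{|\xi|\ge1,|\tau|\ge1\}$, so a separate elliptic/Plancherel argument (Lemmas \ref{l:lowfreq}, \ref{l:highlow}) is needed to control $\vll$ and $\vlgl$, and you cannot close the positive-commutator estimate without it.

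Finally, a smaller point: your "symmetrizing zeroth-order correction $q_0$" is underspecified. The paper chooses $m\in S^0(\Trf)$ by minimizing the discriminant of the quadratic-in-$\tau$ expression $H_pq+2\kappa\tau aq+pm$, which is what forces positivity on the \emph{elliptic} set of $p$. Simply symmetrizing $Q$ does not produce that positivity, and without it the Sharp G\r{a}rding step would fail off $\characteristicsetofP$. None of these gaps invalidate the overall strategy, which is the paper's, but each one is a substantive missing step rather than a routine detail.
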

We call this a high-frequency result because when we apply it to $\chi_{\tau \geq \tau_1}^w u$, by applying Plancherel's theorem, we have 
    \begin{align}
        \LE{\<x\>^{-2} \chihigh^w u} &\leq C \nm{\<x\>^{-2} \chihigh \hat{u}(\tau,x)}_{LE_{\tau,x}} \\
        &\leq \frac{C}{\tau_1} \nm{\<x\>^{-2} \tau \chihigh \hat{u}(\tau,x)}_{LE_{\tau,x}} \leq \frac{C}{\tau_1} \LEo{\chihigh^w u}.
    \end{align}
Thus taking $\tau_1$ large enough, we can absorb this term back into the left hand side of the estimate, and be left with a right hand side resembling that of Theorem \ref{thm:iled}. We expand on this in the Outline \ref{outline:ILED} and Section \ref{sec:combineILED}.

\subsection{Additional Preliminaries}

We further define some constants related to the asymptotic flatness of $g$.

\begin{definition}\label{d:asymflat2}
    \begin{enumerate}
        \item Fix a $\textbf{c}\ll1$, and let  $R_0>0$ be such that 
        \begin{equation}
            \nm{(g-m,a)}_{AF \geq R_0} \leq \textbf{c},
        \end{equation}
        where the subscript denotes the restriction of the norm to $\{|x| \geq R_0\}$. Note that for any $\textbf{c}>0$, such an $R_0$ is guaranteed to exist, by the asymptotic flatness of $g$.

        \item Consider a sequence $c_j$ such that for some $C>0$,
        \begin{equation}
            \nm{(g-m,a)}_{AF(A_j)} \leq C c_j, \quad \text{ and } \quad  \sum_j c_j \leq C \textbf{c},
        \end{equation}
        where
        \begin{equation}
            \nm{(h,a)}_{AF(A_j)} = \sum_{|\alpha| \leq 2} \nm{\<x\>^{|\alpha|} \p^{\alpha} h}_{L^{\infty}(A_j)} + \sum_{|\beta| \leq 1} \nm{\<x\>^{1+|\beta|} \p^{\beta}a}_{L^{\infty}(A_j)}.
        \end{equation}
        We may further assume, without loss of generality, that the sequence is slowly varying, that is there exists $\d\ll1$ such that 
        \begin{equation}
            \frac{c_j}{c_k} \leq 2^{\d|k-j|}.
        \end{equation}
        In particular, there exists $C \geq 1$ such that 
        \begin{equation}
            \frac{1}{C} 2^{-\d j} \leq c_j \leq C 2^{-\d j}.
        \end{equation}
    \end{enumerate}
\end{definition}

\subsection{Cutoff Notation}\label{s:cutoffdef}
    Throughout the paper we use the following notation for cutoffs. Let $\chi \in \Cc(\Rb)$ be non-increasing and have $\chi(x) \equiv 1$ for $|x| \leq 1$ and $\chi(x) \equiv 0$ for $|x| \geq 2$. Then for any $R>0$ define
    \begin{align}
        &\chilr(x) = \chi\left(\frac{|x|}{R}\right), \qquad \chigr(x)=1-\chilr(x)\\
        &\chir \in \Cc(\Rb),\quad  0\leq \chir\leq1, \quad \supp\chir \subset \{R/2 \leq |x| \leq R\}.
    \end{align}
    Note we will often use this notation with $\lambda$ or other constants in place of $R$.

    \subsection{Literature Review}
    Local energy decay estimates on Minkowski space-times go back to \cite{Mor66, Mor68, Mor75, MRS77} and similar estimates have been obtained for small perturbations of Minkowski space \cite{KSS02, KPV95, SmithSogge2000, Sterbenz2005, Strauss75, Alinhac2006, MS2006, MS2007, MT2009}. Local energy decay has also been proven for
    asymptotically flat space-times with no trapping \cite{BH2009, MST20}. When there is trapping, local energy decay cannot occur \cite{Ralston69, Sbierski15}. However when trapping is allowed, local energy decay with a loss can still be recovered, \cite{ NZ2009, WZ2011, BCMP17}. Local energy decay with a loss has also been proven on black-hole backgrounds, see \cite{LT20} and the references therein. 

    Energy decay for the damped wave equation via a geometric control condition goes back to \cite{RauchTaylor1974}. See also \cite{Lebeau1996} and \cite{BurqJoly2016}. Using a stationary damping on the trapped set to obtain local energy decay estimates goes back to \cite{AlouiKhenissi2002} on an exterior domain, and \cite{BoucletRoyer2014} when $\Rb^4$ is stationary and asymptotically Euclidean. This was brought to the space-time setting in \cite{Kofroth23} and further generalized to include potential terms in \cite{KofrothMagnetic}, although both still require a stationary damping.

    The geometric control condition was used to establish energy decay for a damping $W(t,x)$ with periodic $t$ dependence in \cite{LRLTT17}. This was generalized to fully time-dependent damping in \cite{Kleinhenz2022a}, \cite{kleinhenz2025sharp}.

    Local energy decay estimates can be used to prove pointwise energy decay results (for example see \cite{MTT12} \cite{Tataru13}, \cite{MTT17}, \cite{Looi22b}, \cite{Looi22c}, \cite{Looi22a}, \cite{Looi23}, \cite{LooiTohaneanu2025}, \cite{MW21}, \cite{Morgan24}) and Strichartz estimates (for example see \cite{MMTT10}, \cite{MT12}, \cite{Tohaneanu12}).    

\subsection{Outline of the Proof}
    In this section, we outline the proof of Theorem \ref{thm:iled} and highlight the key novelties. 
    
\subsubsection{Outline of ILED Proof}\label{outline:ILED}
    By Proposition \ref{prop:iledCaseReduction}, Theorem \ref{thm:iled} holds if there exists $C>0$ such that for all $u$ with zero Cauchy data at $t=0$ and $t=T$ and $Pu\in LE^*_c$, we have
        \begin{equation}\label{eqn:iledReduced-outline}
            \LEoT{u} \leq C \LEST{Pu}.
        \end{equation}
    In Section \ref{s:localEnergyDecay}, we prove \eqref{eqn:iledReduced-outline} by decomposing $u$ into low, medium, and high time-frequencies
        \begin{equation}\label{eq:timefreqdecomp}
            u = \chilow^w u + (1-\chilow^w-\chihigh^w) u + \chihigh^w u,
        \end{equation}
    for $\tau_0,\tau_1>0$, then controlling the $LE^1$ norm of each of the three terms separately.

    To estimate the low time-frequency term, we treat $a \p_t u$ as a perturbation and apply a zero non-resonance condition satisfied by $\Box_g$ \eqref{eq:zernoresonance}, to obtain
        $$
            \LEo{\chilow^w u} \leq C \left(\LEs{Pu} + \LEs{Pu}^{1/2} \LEo{u}^{1/2}\right).
        $$
    
    For the high time-frequency term, we utilize our high frequency estimate Theorem \ref{thm:highfreq}. The proof of the high frequency estimate relies on our novel escape function construction in Section \ref{s:escapeFunction}. Our construction incorporates the time-dependent geometric control condition to overcome new difficulties introduced by the time-dependence of the damping. We outline this in more detail in Sections \ref{outline:highfrequencyestimateproof} and \ref{outline:escapefunction}. Combining Theorem \ref{thm:highfreq} with a semiclassical estimate of $[a,\chihigh^w]$, we obtain
        $$
            \LEo{\chihigh^w u} \leq C \nm{Pu}_{LE^*}+\frac12\LEo u.
        $$
    
    For the medium time-frequency term, we use a Carleman estimate \eqref{eq:carleman} from \cite{MST20} to obtain
        $$
            \LEo{(1-\chilow^w-\chihigh^w) u} \leq C \left( \LEs{Pu} + \LEs{Pu}^{1/2} \LEo{u}^{1/2}\right).
        $$
    The proof of \eqref{eqn:iledReduced-outline} follows from these three estimates, along with applications of the triangle inequality and Young's inequality.

\subsubsection{Outline of the High Frequency Estimate Proof}\label{outline:highfrequencyestimateproof}
    By Proposition \ref{prop:CaseReduction}, the high frequency estimate holds if there exists $C>0$ such that for all $T>0$ and $u$ with spatial support contained in $\{|x|\leq 2R_0\}$, zero Cauchy data at $t=0$ and $t=T$, and $Pu\in LE^*_c$, we have
        \begin{equation}\label{eqn:reducedhighfreqest-outline}
            \nm{u}_{LE^1[0,T]} \leq C \left( \nm{u}_{L^2_t L^2_x[0,T]} + \nm{Pu}_{LE^*[0,T]} \right).
        \end{equation}
    In Section \ref{s:propagation}, we prove \eqref{eqn:reducedhighfreqest-outline} by performing a space-frequency decomposition into low and high frequencies
        \begin{equation}
            u=\chi_{|\xi|< \lambda}^w u+\chi_{|\xi|>\lambda}^w u, 
        \end{equation}
    for $\lambda \geq 1$ and control the $LE^1$ norm of each of the two terms separately. %See Section \ref{s:propagation} for the precise definitions of the frequency cutoffs. 
    
    To estimate the low space-frequency term, we %combine the compact support of $u$, asymptotic flatness of $g$ and $a$, and 
    use a further time-frequency decomposition to obtain
        \begin{equation}
            \LEo{\chi_{|\xi|< \lambda}^w u} \leq C \left( \sigma \lambda \Ltx{u}+ \frac{1}{ \sigma \lambda} \LEs{Pu} + \frac{1}{\sigma} \LEo{u} \right), \label{eqn:highlow-outline}
        \end{equation}
    for an additional parameter $\sigma \geq 1$. 
    
    For the high space-frequency term, we use a positive commutator argument. Our implementation of this argument relies crucially on our escape function construction. Indpendent of our construction, we first note that for any two symbols $q\in S^1(T^*\mathbb R^4)$ and $m\in S^0(T^*\mathbb R^4)$, we can consider the operator $Q:=q^w-\frac i2m^w\in\Psi^1(\mathbb R^4)$ and compute $\Im\<Pu,Qu\>$ two different ways to obtain
        \begin{align*}
            2 \Im&\<Pu,Qu\> + \frac{i \kappa}{2}\<[aD_t, m^w]u,u\>-\kappa \<(D_t a) q^w u,u\> + \frac{i \kappa}{2} \<(D_t a) m^w u,u\> 
            \\
            &= \<i[\Box_g, q^w]u,u\> + \kappa \<(q^w aD_t + aD_t q^w)u,u\> + \frac{1}{2}\<(\Box_g m^w + m^w \Box_g)u,u\>.
        \end{align*}
    For the first line, we obtain the bound
        \begin{align*}
            |2 \Im&\<Pu,Qu\> + \frac{i \kappa}{2}\<[aD_t, m^w]u,u\>-\kappa \<(D_t a) q^w u,u\> + \frac{i \kappa}{2} \<(D_t a) m^w u,u\>|
            \\
            &\leq C \LEs{Pu} \LEo{u} + C(\lambda) \Ltx{u}^2+C \lambda^{-\frac{1}{2}} \LEo{u}^2.
        \end{align*}
    From our escape function construction in Section \ref{s:escapeFunction}, we will have $q$ and $m$ such that
        \begin{align}
        \begin{split}
            \<i[\Box_g, q^w]u,u\> + \kappa \<(q^w aD_t + aD_t q^w)u,u\> + \frac{1}{2}\<(\Box_g m^w + m^w \Box_g)u,u\> \\
            \geq C \nm{\chi_{|\xi|>\lambda}^w u}_{LE^1}^2 - C(\lambda) \Ltx{u}^2-C(\lambda^{-1}+\rho^{-2})\LEo{u}^2. \label{eqn:gardingapplication-outline}
        \end{split}
        \end{align}
        We outline our novel construction of $q$ and $m$ that achieves this inequality in Section \ref{outline:escapefunction}.
    
    The proof of the high frequency estimate \eqref{eqn:reducedhighfreqest-outline} concludes by applying \eqref{eqn:highlow-outline} and \eqref{eqn:gardingapplication-outline} and taking $\lambda,\sigma$ sufficiently large to close the estimate.

\subsubsection{Outline of the Escape Function Construction}\label{outline:escapefunction}
    Crucial to the proof of the high frequency estimate is \eqref{eqn:gardingapplication-outline}. Our construction of symbols $q\in S^1(T^*\mathbb R^4)$ and $m\in S^0(T^*\mathbb R^4)$ --- namely our escape function construction from Section \ref{s:escapeFunction} --- such that \eqref{eqn:gardingapplication-outline} holds is the primary contribution of the present work. In this section, we provide a brief overview of the desired properties of these symbols and describe the key steps in their construction. A more detailed outline is provided in Section \ref{sec:escapeoutline} before we carry out the construction. 
    
    To find $q\in S^1(T^*\mathbb R^4)$ and $m\in S^0(T^*\mathbb R^4)$ which satisfy \eqref{eqn:gardingapplication-outline}, we first note that for any such symbols, we have
        \begin{align}
        \begin{split}
            \< i[\Box_g, q^w]u,u\>& + \kappa \<(q^w aD_t + aD_t q^w)u,u\> + \frac{1}{2} \<(\Box_g m^w + m^w \Box_g)u,u\> \\
            =&\<(H_p q+ 2\kappa \tau a q+mp)^w u,u\>
        \end{split}
        \end{align}
    modulo an error term. To obtain \eqref{eqn:gardingapplication-outline}, we bound $H_p q+ 2\kappa \tau a q+mp$ from below and then apply the Sharp G\r{a}rding inequality (see Proposition \ref{prop:Garding}). Specifically, we construct symbols $q\in S^1(T^*\mathbb R^4)$ and $m\in S^0(T^*\mathbb R^4)$ such that for some $C>0$, and all $\omega =(t,x,\tau,\xi) \in T^*\Rb^4$
        \begin{equation}
            H_p q(\omega)+2\kappa \tau a(\omega) q(\omega)+mp(\omega) \geq C \mathbbm{1}_{|\xi| \geq 1}\mathbbm{1}_{|\tau| \geq 1} \<x\>^{-4} (|\xi|^2 + \tau^2). \label{ol:escape function estimate}
        \end{equation}
    One of the key difficulties we overcome is obtaining a uniform $C$ for all $t$, despite the time-dependence of the damping $a$. 
    
    We begin by constructing $q$ on the characteristic set of $P$. Here, there are two regions to consider: semi-trapped null bicharacteristics and non-trapped null bicharacteristics. 
    
    For the semi-trapped null bicharacteristics, the trapping is an obstruction to integrated local energy decay, and one of our key innovations is how we address this region.
    %and there are two sub-regions to consider: where the damping is sufficiently large to reinforce local energy decay despite the trapping and where it is not. 
     %The key innovation in our escape function construction is how it addresses the portion of the semi-trapped region where the damping fails to be strong enough to reinforce local energy decay. 
     Here we explicitly construct escape functions in terms of the damping in local coordinates around individual null bicharacteristics. Using the time-dependent geometric control condition we are able to ensure that several key properties hold uniformly in $t$ for these different escape functions. This uniformity in $t$ allows us to cover the semi-trapped set with a locally finite number of these escape functions. 
     
     We combine these local escape functions to obtain a single escape function $q$ that satisfies global in time symbol estimates, and 
        $$
            H_pq+2\kappa\tau aq\geq C \mathbbm{1}_{V},
        $$
    where $V$ is an open set containing the semi-trapped region, and which is invariant in $t$. This is essential in obtaining a constant $C$ uniform in $t$ in \eqref{ol:escape function estimate}.

    For the non-trapped null bicharacteristics, because $V$ is invariant in $t$, we are able to separate this step from the damping and its time-dependence. Thus our construction is an adaptation of \cite{Kofroth23,BoucletRoyer2014,MST20,MMT08}.
    
    We then combine all of these escape functions into a single escape function $q$ defined on the characteristic set of $P$ satisfying
        $$
            H_pq+2\kappa\tau aq\geq C.
        $$
    After that, we construct $m$ on the elliptic set of $P$ in order to extend this estimate (in the sense of \eqref{ol:escape function estimate}) to all of $T^*\mathbb R^4$. At that point, we are able to apply the Sharp G\r{a}rding inequality as previously mentioned and complete the proof of \eqref{eqn:iledReduced-outline}.

\subsubsection{Structure of the Paper}
    The remainder of the paper proceeds as follows. In Section \ref{sec:Hamiltonian Dynamics}, we summarize the Hamiltonian dynamics associated to $P$ and its half-wave factorization. This will allow us to precisely state the time-dependent geometric control condition and its consequences which we will utilize in the escape function construction. In Section \ref{s:escapeFunction}, we carry out our escape function construction. In Section \ref{s:caseReduce}, we reduce the proofs of our main results, Theorem \ref{thm:iled} and Theorem \ref{thm:highfreq}, to the proofs of \eqref{eqn:iledReduced-outline} and \eqref{eqn:reducedhighfreqest-outline} respectively. In Section \ref{s:propagation}, we prove \eqref{eqn:reducedhighfreqest-outline} and, as a consequence, Theorem \ref{thm:highfreq}. Finally, in Section \ref{s:localEnergyDecay}, we prove \eqref{eqn:iledReduced-outline} and, as a consequence, Theorem \ref{thm:iled}.

    \subsection{Acknowledgements}
    The authors would like to thank Andras Vasy, Mihai Tohaneanu, Jared Wunsch, and Willie Wong for helpful conversations. The authors would also like to thank Jason Metcalfe for helpful correspondence. 

    The second author thanks the NSF for partial support under grant DMS-2530465.
    
\section{Hamiltonian Dynamics}\label{sec:Hamiltonian Dynamics}
    In this section, we summarize the relevant Hamiltonian dynamics associated to the operator $P$ and the half-wave factorization of its principal symbol. We use these definitions and basic results to precisely state the time-dependent geometric control condition and two of its consequences, which we will use in the escape function construction. Our general approach follows that of \cite[Section 2.2]{Kofroth23}, although we must work on $\Trf$ rather than $T^*\Rb^3$ to handle the time-dependence of the damping. Also, our geometric control condition and related proofs are necessarily different, and we handle $g^{00}$ differently. See also \cite[Section 8]{BoucletRoyer2014}.

    \subsection{Hamiltonian Flow for the Principal Symbol}    
    The principal symbol of $P$ is
        \begin{equation}
            p(t,x,\tau,\xi)=g^{00}(x)\tau^2+2\tau g^{0j}(x)\xi_j+g^{ij}(x)\xi_i\xi_j,
        \end{equation}
    viewed as a function on $\Trfo$ with $0$ denoting the zero section. This symbol generates the Hamiltonian flow map, $\varphi:\mathbb R\times \Trf\to \Trf$, denoted by 
        \begin{equation}
            \varphi_s(\omega)=\left(t_s(\omega),x_s(\omega),\tau_s(\omega),\xi_s(\omega)\right),
        \end{equation} 
    and defined as the exponential of the Hamilton vector field
    \begin{equation}
        H_p = \p_{\zeta} p  \p_z - \p_z p \p_{\zeta},
    \end{equation}
    where $z=(t,x), \zeta=(\tau,\xi)$. This can be defined as a system of differential equations
    \begin{align}
        \begin{cases}
            \frac{d}{ds}{t}_s=\partial_\tau p\left(\varphi_s(\omega)\right), & \frac{d}{ds}{\tau}_s=-\partial_tp\left(\varphi_s(\omega)\right),
            \\
            \frac{d}{ds}{x}_s=\nabla_\xi p\left(\varphi_s(\omega)\right), & \frac{d}{ds}{\xi}_s=-  \nabla_xp\left(\varphi_s(\omega)\right).
        \end{cases}
    \end{align} 
    The existence and uniqueness of a smooth, globally-defined flow with smooth dependence on the initial data follows from $g$ being smooth and asymptotically flat in addition to $\partial_t$ being a uniformly timelike vector field.
    
    Associated to the flow $\varphi$, we define the \textit{forward} and \textit{backward trapped sets}
        \begin{align}
            &\forwardtrappedset=\left\{\omega\in \Trfo:\sup_{s\geq0}|x_s(\omega)|<\infty\right\}\cap\characteristicsetofP,
            \\
            &\backwardtrappedset=\left\{\omega\in \Trfo:\sup_{s \geq0}|x_{-s}(\omega)|<\infty\right\}\cap\characteristicsetofP
        \end{align}
        respectively. We also define the \textit{forward} and \textit{backward non-trapped sets}
        \begin{align}
            &\forwardnontrappedset=\left\{\omega\in \Trfo:\lim_{s\to\infty}|x_s(\omega)|=\infty\right\}\cap\characteristicsetofP,
            \\
            &\backwardnontrappedset=\left\{\omega\in \Trfo:\lim_{s\to\infty}|x_{-s}(\omega)|=\infty\right\}\cap\characteristicsetofP.
        \end{align}
    The \textit{trapped} and \textit{non-trapped} sets are defined to be
        \begin{equation}
            \trappedset=\forwardtrappedset\cap\backwardtrappedset \qquad \text{ and } \qquad \nontrappedset=\forwardnontrappedset\cap\backwardnontrappedset.
        \end{equation}
    To refer to the coordinates of $\omega$, we write $\omega=(\omega_t, \omega_x, \omega_{\tau}, \omega_{\xi})$. 
    We can now precisely state the time-dependent geometric control condition associated to the damping function $a$.
    \begin{definition}[Time-Dependent Geometric Control Condition (TGCC)] \label{Defn:TGCC}
        We say that time-dependent geometric control holds if there exist $\Cm,T_0>0$ such that for every $\omega \in\trappedset$ and $T\geq \frac{T_0}{\omega_{\tau}}$ 
            \begin{equation}
                \frac{1}{2T}\int^T_{-T}a\left(t_s(\omega), x_s(\omega)\right)ds\geq \Cm
            \end{equation}
    \end{definition}
    \begin{remark}
        \begin{enumerate}
            \item When the damping does not depend on time, the TGCC holds if and only if the $x$-projection of every trapped trajectory eventually enters the damped set $\{a>0\}$, which is \cite[Definition 2.2]{Kofroth23}. For a proof of this statement see Lemma \ref{l:gccequiv}.
            \item Formulating the geometric control condition in this way for stationary damping  goes back to \cite{Lebeau1996}. See also \cite{BurqJoly2016}. This was first applied to time-dependent damping in \cite{Kleinhenz2022a,kleinhenz2025sharp}. See also \cite{LRLTT17}. 
            \item When $a \equiv 0$, this is a qualitative non-trapping assumption. Because $g$ is stationary we are then back in the setting of \cite{MST20}, see their Definition 1.4 and subsequent remark.
            \item As we will show in Lemma \ref{l:escapeFlow}, $\trappedset \subset \{|x| \leq R_0\}$. Because of this, the TGCC can be satisfied by damping which are non-trivial only in a compact spatial region, for example $a(t,x)=(1+\<t\>^{-1}) \chi_{<2R_0}(|x|)$ satisfies the TGCC.
        \end{enumerate}

    \end{remark}
    \begin{remark}
        Note we do not make the somewhat standard simplifying assumption that $g^{00}=-1$, as it cannot be done without loss of generality in our setup. Recall that we assumed $\p_t$ is uniformly time-like and constant time slices are uniformly space-like. As a result, there exists $C>0$ such that $g^{00} \leq -C$. The standard argument, for example see \cite[Section 3]{MT12}, is to divide by $g^{00}$. That is let $(-g^{00})^{-1} P :=P_1$, and $g_1^{\alpha \beta} = (-g^{00})^{-1} g^{\alpha\beta}, a_1 = (-g^{00})^{-1} a$. Then $g_1^{00}=-1$ and we have 
    \begin{align}
        P_1 &= D_{\alpha} (-g^{00})^{-1} g^{\alpha \beta} D_{\beta} + [(-g^{00})^{-1}, D_{\alpha}]g^{\alpha \beta} D_{\beta} + (-g^{00})^{-1} a \p_t\\
        &= D_{\alpha} g_1^{\alpha \beta} D_{\beta} - (D_{\alpha} (g^{00})^{-1}) g^{\alpha \beta} D_{\beta} + a_1 \p_t.
    \end{align}
    Notice that we are left with a lower order error term which cannot be written as part of the divergence form. Because we require our operator to be exactly of the form $\Box_g+a\p_t$ elsewhere, we are not able to absorb these lower order terms as in \cite[Section 4]{MST20} and \cite[Section 3]{MT12}. However, we still take advantage of $g^{00} \geq -C$ and effectively simplify to $g^{00}=-1$ via our half-wave decomposition in Section \ref{s:half-wave}.
    \end{remark}

    \subsection{Behavior of the Flow Under Rescaling}
        In the construction of the escape function, it is useful to replace the damping $a$, by a multiple $\kappa a$. To accomplish this, we take advantage of a scaling property of $P$

        Given a solution $u$ of $Pu=f$ and $\kappa \geq 1$, define 
            \begin{align}
                \tilde v(t,x)&=\kappa^{-2}u(\kappa t,\kappa x), \\
                \tilde g^{\alpha\beta}(x)&=g^{\alpha\beta}(\kappa x),\\
                \tilde a(t,x)&=a(\kappa t,\kappa x),\\
                \tilde f(t,x)&=f(\kappa t,\kappa x),\\
                \ti{p}(t,x,\xi,\tau)&=p( \kappa t, \kappa x, \tau,  \xi),
            \end{align}
        and
            \begin{equation}
                \tilde P=D_\alpha\tilde g^{\alpha\beta}D_\beta+i\kappa\tilde aD_t.
            \end{equation}
        A direct calculation verifies that $\tilde v$ solves $\tilde P\tilde v=\tilde f$. We have the following proposition concerning the behavior of the time-dependent geometric control condition under rescaling.

        \begin{proposition}
            Suppose Definition \ref{Defn:TGCC} holds and recall $T_0, \Cm$ from there. For the same $T_0, \Cm>0$ and any $\kappa \geq 1$, Definition \ref{Defn:TGCC} holds with respect to the Hamiltonian flow generated by the principal symbol of $\tilde P$ with the damping $a$ replaced by $ \tilde a$.
        \end{proposition}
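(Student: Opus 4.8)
The plan is to reduce the statement to an exact rescaling identity for Hamiltonian flows, followed by a change of variables in the integral appearing in Definition \ref{Defn:TGCC}. The key preliminary observation is that, since $g$ is stationary, the principal symbol $p$ is independent of $t$, and the damping term $i\kappa\tilde a D_t$ is of lower order; hence the principal symbol of $\tilde P$ is $\tilde p(x,\tau,\xi)=p(\kappa x,\tau,\xi)$, still independent of $t$. Differentiating, $\partial_\tau\tilde p(x,\tau,\xi)=(\partial_\tau p)(\kappa x,\tau,\xi)$, $\nabla_\xi\tilde p(x,\tau,\xi)=(\nabla_\xi p)(\kappa x,\tau,\xi)$, $\nabla_x\tilde p(x,\tau,\xi)=\kappa\,(\nabla_x p)(\kappa x,\tau,\xi)$, and $\partial_t\tilde p\equiv0$.

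First I would establish the scaling relation between the flows. Writing $\varphi_s(\omega)=(t_s,x_s,\tau_s,\xi_s)$ for the flow of $H_p$ and $\tilde\varphi$ for the flow of $H_{\tilde p}$, I claim that for $\tilde\omega=(\tilde\omega_t,\tilde\omega_x,\tilde\omega_\tau,\tilde\omega_\xi)\in\Trf$ and $\omega:=(\kappa\tilde\omega_t,\kappa\tilde\omega_x,\tilde\omega_\tau,\tilde\omega_\xi)$, one has
\begin{equation}
    \tilde\varphi_s(\tilde\omega)=\big(\kappa^{-1}t_{\kappa s}(\omega),\,\kappa^{-1}x_{\kappa s}(\omega),\,\tau_{\kappa s}(\omega),\,\xi_{\kappa s}(\omega)\big).
\end{equation}
This is checked by differentiating the right-hand side in $s$, substituting into the Hamilton system for $\tilde p$ via the derivative formulas above, and verifying the initial condition at $s=0$; in fact the right-hand side then furnishes the (smooth, globally defined) flow $\tilde\varphi$ directly in terms of the already-available $\varphi$, so no separate existence argument is needed. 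Two consequences are immediate: the $\tau$-coordinate is preserved, so $\tilde\omega_\tau=\omega_\tau$; and $\tilde p(\tilde\omega)=p(\omega)$, while the reparametrization $s\mapsto\kappa s$ (with $\kappa>0$) preserves the forward and backward structure of the flow. Hence $\tilde\omega\in\characteristicsetoftildeP$ if and only if $\omega\in\characteristicsetofP$, and $\tilde\omega$ is trapped with respect to $H_{\tilde p}$ if and only if $\omega\in\trappedset$.

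Next comes the change of variables. Given $\tilde\omega$ trapped with respect to $H_{\tilde p}$ and $T\geq T_0/\tilde\omega_\tau$, set $\omega$ as above and $T':=\kappa T$. From the definition of $\tilde a$ together with the scaling identity, $\tilde a(\tilde t_s(\tilde\omega),\tilde x_s(\tilde\omega))=a(\kappa\tilde t_s(\tilde\omega),\kappa\tilde x_s(\tilde\omega))=a(t_{\kappa s}(\omega),x_{\kappa s}(\omega))$, so substituting $r=\kappa s$ gives
\begin{align*}
    \frac1{2T}\int_{-T}^T\tilde a\big(\tilde t_s(\tilde\omega),\tilde x_s(\tilde\omega)\big)\,ds
    &=\frac1{2\kappa T}\int_{-\kappa T}^{\kappa T}a\big(t_r(\omega),x_r(\omega)\big)\,dr
    \\
    &=\frac1{2T'}\int_{-T'}^{T'}a\big(t_r(\omega),x_r(\omega)\big)\,dr.
\end{align*}
Since $\kappa\geq1$ and $\tilde\omega_\tau=\omega_\tau$, we have $T'=\kappa T\geq\kappa T_0/\omega_\tau\geq T_0/\omega_\tau$, so the TGCC for $P$ (Definition \ref{Defn:TGCC}) applies to $\omega\in\trappedset$ at time $T'$ and bounds the right-hand side below by $\Cm$. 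This is precisely the claimed estimate for $\tilde P$, with the same constants $T_0$ and $\Cm$.

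There is no deep obstacle here; the care is in getting the scaling exponents right in the flow identity --- base variables scaled by $\kappa^{-1}$, fibre variables $(\tau,\xi)$ unchanged, time reparametrized by $\kappa s$ --- and in tracking how the time threshold transforms, which is where the hypothesis $\kappa\geq1$ is used, to ensure $\kappa T\geq T_0/\omega_\tau$. Stationarity of $g$ is what makes everything clean: $p$, and hence the flow, is independent of $t$, so the rescaling of the time variable enters only through $\tilde a$.
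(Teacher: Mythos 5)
Your proof is correct, and it takes a genuinely different route from the paper's. The paper sets $\omega=\kappa\tilde\omega$ (scaling \emph{all four} coordinates by $\kappa$) and relies on the degree-$2$ homogeneity of $p$ in $(\tau,\xi)$ to establish the flow identity $\varphi_s(\kappa\tilde\omega)=\kappa\tilde\varphi_s(\tilde\omega)$ with the \emph{same} flow parameter $s$; the integral in the TGCC is then compared at the same $T$ on both sides, and $\kappa\geq1$ enters because $\omega_\tau=\kappa\tilde\omega_\tau$, so $T\geq T_0/\tilde\omega_\tau=\kappa T_0/\omega_\tau\geq T_0/\omega_\tau$. You instead scale only the base variables, $\omega=(\kappa\tilde\omega_t,\kappa\tilde\omega_x,\tilde\omega_\tau,\tilde\omega_\xi)$, keeping the fibre fixed; this sidesteps the homogeneity of $p$ entirely (stationarity alone suffices in the flow computation), but the flow parameter must now be rescaled via $r=\kappa s$, which replaces $T$ by $T'=\kappa T$ in the integral, and $\kappa\geq1$ is then needed to ensure $T'\geq T_0/\omega_\tau$. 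The two uses of $\kappa\geq1$ are the same in spirit. Your version has the minor advantage of not invoking the homogeneity of $p$, which makes the identification $\tilde p(\tilde\omega)=p(\omega)$ immediate, whereas the paper checks $\tilde\omega\in\text{Char}(\tilde P)\Rightarrow\omega\in\text{Char}(P)$ via $\kappa^2\tilde p(\tilde\omega)=p(\omega)$; the paper's version has the minor advantage of parallelism with Proposition \ref{Propn:RescalingPropn}, where the fibre-rescaling pattern recurs. Both close cleanly.
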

        \begin{proof}
            A direct calculation shows that a scaled version of the Hamiltonian flow generated by $\tilde p$, which we will call $\tilde{\varphi}_s$, with initial data $(t_0,x_0,\tau_0,\xi_0)$ solves the same system of ordinary differential equations as the Hamiltonian flow generated by $p$ with initial data $(\kappa t_0,\kappa x_0,\kappa \tau_0,\kappa \xi_0)$. In particular, we have that 
                \begin{equation}
                 \varphi_s(\kappa t_0,\kappa x_0,\kappa \tau_0,\kappa \xi_0)=\kappa\tilde \varphi_s(t_0,x_0,\tau_0,\xi_0).\label{eq:dampingflowrescale}
                \end{equation}
            Let $\tilde \omega=( \ti{t},\ti{x}, \ti{\tau}, \ti{\xi}) \in\Omega^{\tilde p}_{tr}$. Since $\tilde\omega\in\characteristicsetoftildeP$, if we compute directly and let $\omega=\kappa \ti{\omega}$ we have
            \begin{align}
                0&=\kappa^2 \ti{p}(\ti{\omega})=\kappa^2 p(\kappa \ti{t}, \kappa \ti{x}, \ti{\tau}, \ti{\xi}) \\
                %&=-(\ti{\tau}^2 - 2 \ti{\tau} g^{0j}(\kappa x) \ti{\xi}_j - g^{ij}(\kappa x) \ti{\xi}_i \ti{\xi}_j) \\
                &= \kappa^2( g^{00}(\kappa \ti{x}) \ti{\tau}^2 + 2 \ti{\tau} g^{0j}(\kappa \ti x) \ti{\xi}_j + g^{ij}(\kappa \ti x) \ti{\xi}_i \ti{\xi}_j) \\
                &=p(\kappa \ti{t}, \kappa \ti{x}, \kappa \ti{\tau}, \kappa \ti{\xi}) = p (\omega).
            \end{align}
            That is $\omega \in \characteristicsetofP$. Furthermore,
                \begin{equation}
                \sup_{s\in\mathbb R}|x_s(\omega)|=\kappa\sup_{s\in\mathbb R}|\tilde x_s(\tilde\omega)|<\infty,
                \end{equation}
            and so $\omega\in\trappedset$. Now by \eqref{eq:dampingflowrescale} and Definition \ref{Defn:TGCC}, for any $T\geq \frac{T_0}{\ti{\tau}} = \kappa \frac{T_0}{\omega_{\tau}} \geq \frac{T_0}{\omega_{\tau}}$,
                \begin{align*}
                    \frac{1}{2T}\int_{-T}^T  \tilde a\left(\tilde t_s(\ti \omega), \tilde x_s(\ti{\omega})\right)ds= \frac{1}{2T}\int_{-T}^T  a\left(t_s( \omega), x_s(\omega)\right)ds\geq \Cm.
                \end{align*}
                Since this holds for any $\ti{\omega} \in \Omega^{\ti{p}}_{tr}$, this is exactly Definition \ref{Defn:TGCC} holding for the Hamiltonian flow generated by $\ti{p}$ with $a$ replaced by $\ti{a}$.
        \end{proof}
        Using this proposition, without loss of generality we replace $a$ by $\kappa a$ for some large $\kappa \geq 1$. We fix the value of $\kappa$ during our escape function construction, specifically in the proof of Lemma \ref{l:combineEscapeFunc}.
    \subsection{The Half-Wave Decomposition}\label{s:half-wave}
        When working with null-bicharacteristics, it will be convenient to avoid the cross-terms involving both $\tau$ and $\xi$ in the principal symbol $p$. This can be done by factoring the principal symbol as follows
            \begin{equation}
                p(t,x,\tau,\xi)=g^{00}(x)\left(\tau-b^+(x,\xi)\right)\left(\tau-b^-(x,\xi)\right), \label{Eqn:HalfWaveFactorization}
            \end{equation}
        where
            \begin{equation}\label{eq:bdef}
                b^\pm(x,\xi)=\left(\frac{g^{0j}(x)}{-g^{00}(x)} \xi_j\pm\sqrt{\left(\frac{g^{0j}(x)}{-g^{00}(x)}  \xi_j\right)^2+\frac{g^{ij}(x)}{-g^{00}(x)}\xi_i\xi_j} \right).
            \end{equation}
        In particular, note that $b^\pm(x,\xi)$ are both homogeneous of degree one in the variable $\xi$. 
        
        Furthermore, $\bpm$ are signed and satisfy symbol estimates.
        \begin{lemma}
            For any $(x,\xi) \in T^* \Rb^3 \backslash 0$, we have 
            \begin{equation}
                b^+(x,\xi) >  0 > b^-(x,\xi).
            \end{equation}
            Additionally $\bpm \in S^1(\Trfo)$, where $S^1$ is defined in Definition \ref{def:Symbol}.
        \end{lemma}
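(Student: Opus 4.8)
The plan is to prove the two claims separately, starting from the explicit formula \eqref{eq:bdef}. Write $\beta^j(x) = g^{0j}(x)/(-g^{00}(x))$ and $A^{ij}(x) = g^{ij}(x)/(-g^{00}(x))$, so that $b^\pm(x,\xi) = \beta^j\xi_j \pm \sqrt{(\beta^j\xi_j)^2 + A^{ij}\xi_i\xi_j}$. For the sign claim, the key point is that the quadratic form $A^{ij}\xi_i\xi_j$ is positive definite for $\xi \neq 0$: this follows from the hypothesis that constant time-slices are uniformly space-like (so $g^{ij}$ is positive definite on the cotangent space of the slice, possibly after accounting for the $g^{0j}$ terms — more precisely, positive definiteness of $A^{ij} + \beta^i\beta^j$ is what the space-like/time-like hypotheses give, and in fact the cleanest route is to observe that the discriminant $(\beta^j\xi_j)^2 + A^{ij}\xi_i\xi_j$ equals $g^{00}$ times something, or to note directly that $p(t,x,\tau,\xi)=0$ must have two real roots in $\tau$ of opposite sign because $\partial_t$ timelike means $p(t,x,0,\xi) = g^{ij}\xi_i\xi_j$ has a definite sign and $g^{00} < 0$). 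Once we know the radicand is strictly positive for $\xi\neq 0$, its square root strictly exceeds $|\beta^j\xi_j|$, so $b^+ > 0 > b^-$ is immediate. I would phrase this via the product of roots: from \eqref{Eqn:HalfWaveFactorization}, $b^+ b^- = g^{ij}\xi_i\xi_j / g^{00} < 0$ since $g^{ij}\xi_i\xi_j > 0$ (constant time-slices uniformly space-like) and $g^{00} \le -C < 0$ ($\partial_t$ uniformly timelike); a product of two real numbers being negative forces them to have strictly opposite signs, and the $+$ branch is the larger one.

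For the symbol estimates, the homogeneity of degree one in $\xi$ is visible from the formula since $\beta^j\xi_j$ is degree one and the radicand is degree two. The content is that $b^\pm \in S^1(\Trfo)$, i.e. $|\partial_x^\alpha \partial_\xi^\gamma b^\pm(x,\xi)| \lesssim \langle\xi\rangle^{1-|\gamma|}$ with the appropriate $x$-decay of derivatives dictated by the asymptotic flatness norms in Definitions \ref{d:asymptoticFlat1} and \ref{d:asymflat2}. The strategy is: (i) $\beta^j, A^{ij}$ are smooth, bounded, with $\langle x\rangle^{|\alpha|}\partial_x^\alpha$-control inherited from $\AF{(g-m,a)}<\infty$ and the higher-order bounds, together with the lower bound $g^{00}\le -C$ which keeps the denominator away from zero so that quotients and their $x$-derivatives are controlled; (ii) the radicand $r(x,\xi) := (\beta^j\xi_j)^2 + A^{ij}\xi_i\xi_j$ is a smooth symbol in $S^2$, and by the sign analysis above it is bounded below by $c|\xi|^2$ uniformly in $x$ (uniform space-likeness), so $\sqrt{r} \in S^1$ away from $\xi = 0$ by the chain rule — each $\xi$-derivative gains a power of $|\xi|^{-1}$ because of the ellipticity $r \gtrsim |\xi|^2$, and each $x$-derivative is controlled by $\langle x\rangle$-weighted bounds on $\beta, A$; (iii) summing the two pieces gives $b^\pm \in S^1$.

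The main obstacle — really the only subtlety — is step (ii): carefully tracking that differentiating $\sqrt{r}$ repeatedly in $\xi$ does not lose powers of $|\xi|$, which is exactly where the uniform lower bound $r(x,\xi) \ge c|\xi|^2$ (equivalently, uniform space-likeness of time slices / uniform time-likeness of $\partial_t$, giving a uniform gap between $b^+$ and $b^-$ away from $\xi=0$) is essential; without it the square root would only be Lipschitz near the light cone. A clean way to organize this is to write $\sqrt{r} = |\xi|\sqrt{r/|\xi|^2}$ and note $r/|\xi|^2$ takes values in a compact subinterval of $(0,\infty)$ with all derivatives bounded, so $\sqrt{\cdot}$ composed with it stays in the good symbol class; the $x$-dependence then rides along through the $\AF$ bounds. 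I would also record that the definition of $S^1(\Trfo)$ referenced (Definition \ref{def:Symbol}) incorporates the $\langle x\rangle$-weights matching the asymptotic flatness, so that the decay of $x$-derivatives of $g-m$ transfers directly to decay of $x$-derivatives of $b^\pm - b^\pm_{\mathrm{Mink}}$.
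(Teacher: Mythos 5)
Your argument is correct and follows the paper's route: the sign of $b^\pm$ comes down to strict positivity of the radicand $A^{ij}\xi_i\xi_j + (\beta^j\xi_j)^2$, and the symbol bounds come from asymptotic flatness of $g$ together with the uniform lower bound $-g^{00}\geq C$. The paper deduces the signs by noting the radicand strictly exceeds $(\beta^j\xi_j)^2$ (since $g^{ij}$, hence $A^{ij}$, is uniformly elliptic) so that $\sqrt{\cdot} > |\beta^j\xi_j|$; your product-of-roots phrasing $b^+b^- = g^{ij}\xi_i\xi_j/g^{00}<0$ is an equivalent way to package the same input and is if anything cleaner. You also spell out the only real subtlety of the $S^1$ claim — that the uniform ellipticity $r(x,\xi)\gtrsim|\xi|^2$ is what lets the square root stay in the symbol class — which the paper dispatches in a single sentence, so that part is welcome added detail.

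One small exposition slip worth fixing: you attribute $g^{ij}\xi_i\xi_j>0$ to the slices being space-like and $g^{00}<0$ to $\partial_t$ being time-like, but the standard attributions run the other way. The conormal to $\{t=\text{const}\}$ is $dt$, so space-like slices is precisely $g^{00}=g^{-1}(dt,dt)<0$; whereas any covector of the form $(0,\xi)$ has dual vector $v^\alpha=g^{\alpha j}\xi_j$ satisfying $g(v,\partial_t)=0$, so $\partial_t$ time-like forces $v$ space-like and hence $g^{ij}\xi_i\xi_j=g(v,v)>0$. Since both hypotheses are in force, your conclusion is unaffected, and in fact your remark that the space-like hypothesis really controls $A^{ij}+\beta^i\beta^j$ (the Schur-complement form) rather than $A^{ij}$ alone is a correct observation — it just doesn't change the bottom line because $A^{ij}$ is also positive definite via the time-like hypothesis.
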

        \begin{proof}
        Recall that since $\p_t$ is uniformly time-like and constant time-slices are uniformly space-like, $g^{00} \leq -C$. 
            By ellipticity of $g^{ij}$ we have 
            \begin{equation}
                \sqrt{\left(\frac{g^{0j}}{-g^{00}}  \xi_j\right)^2+\frac{g^{ij}}{-g^{00}}\xi_i\xi_j} > \left|\frac{g^{0j}}{-g^{00}} \xi_j\right|.
            \end{equation}
            Thus
            \begin{equation}
                b^+ > \frac{g^{0j}}{-g^{00}} \xi_j + \left|\frac{g^{0j}}{-g^{00}} \xi_j\right| \geq 0, \qquad b^- < \frac{g^{0j}}{-g^{00}} \xi_j -\left|\frac{g^{0j}}{-g^{00}} \xi_j \right|\leq 0.
            \end{equation}
            Asymptotic flatness of $g$, and $-g^{00} \geq C$ show that $\bpm$ satisfies the symbol estimates in Definition \ref{def:Symbol}.
        \end{proof}

        We define the \textit{half-wave symbols} as
            \begin{equation}\label{eq:halfWave}
                p^{\pm}(t,x,\tau,\xi)=\tau-b^{\pm}(x,\xi).
            \end{equation}
        According to \eqref{Eqn:HalfWaveFactorization}, since $g^{00} \neq 0$, $p=0$ if and only if either $p^+=0$ or $p^-=0$. In particular, the signs of $b^{\pm}$ guarantee that there does not exist $\omega\in \Trfo$ such that $p^+(\omega)=p^-(\omega)=0$.

        We associate to $p^{\pm}$ their Hamiltonian flow maps $\varphi^\pm:\Rb \times \Trf\to \Trf$ denoted by 
            \begin{equation}
                \varphi_s^\pm(\omega)=\left(t_s^\pm(\omega),x_s^\pm(\omega),\tau_s^\pm(\omega),\xi_s^\pm(\omega)\right),
            \end{equation}
        and defined as the exponential of the Hamilton vector field 
        \begin{equation}
            \Hpm = \p_{\zeta} p^{\pm} \p_z - \p_z p^{\pm} \p_{\zeta},
        \end{equation}
        or defined via the system of differential equations 
        \begin{equation}
            \label{eq:flowdef}
            \begin{cases}
                \frac{d}{ds}{t}_s^\pm=\partial_\tau p^\pm\left(\varphi_s^\pm(\omega)\right), & \frac{d}{ds}{\tau}_s^\pm=-\partial_tp^\pm\left(\varphi_s^\pm(\omega)\right),
                \\
                \frac{d}{ds}{x}_s^\pm=\nabla_\xi p^\pm\left(\varphi_s^\pm(\omega)\right), & \frac{d}{ds}{\xi}_s^\pm=-  \nabla_xp^\pm\left(\varphi_s^\pm(\omega)\right).
            \end{cases}
        \end{equation}
        Moreover, we define the forward and backward trapped sets associated to the half-wave flows $\varphi^\pm$ as
            \begin{align}
                &\plusminusforwardtrappedset=\left\{\omega\in \Trfo:\sup_{s\geq0}|x_s^\pm(\omega)|<\infty\right\}\cap\characteristicsetofPplusminus
                \\
                &\plusminusbackwardtrappedset=\left\{\omega\in \Trfo:\sup_{s \geq0}|x_{-s}^\pm(\omega)|<\infty\right\}\cap\characteristicsetofPplusminus,
            \end{align}
       where $P^\pm$ are the Weyl quantizations of $p^\pm$, see Definition \ref{def:Pseudo}. We similarly define the forward and backward non-trapped sets associated to $\varphi^{\pm}$ as 
            \begin{align}
                &\plusminusforwardnontrappedset=\left\{\omega\in \Trfo:\lim_{s\to\infty}|x_s^\pm(\omega)|=\infty\right\}\cap\characteristicsetofPplusminus
                \\
                &\plusminusbackwardnontrappedset=\left\{\omega\in \Trfo:\lim_{s\to\infty}|x_{-s}^\pm(\omega)|=\infty\right\}\cap\characteristicsetofPplusminus.
            \end{align}
        The corresponding trapped and non-trapped sets are
            \begin{equation}
                \plusminustrappedset=\plusminusforwardtrappedset\cap\plusminusbackwardtrappedset \qquad \text{ and } \qquad \plusminusnontrappedset=\plusminusforwardnontrappedset\cap\plusminusbackwardnontrappedset.
            \end{equation}

        The decomposition into $p^+$ and $p^-$ is convenient because there are no cross terms involving both $\tau$ and $\xi$ in the Hamilton flow, and null bicharacteristics of $p$ correspond to null bicharacteristics of $p^{\pm}$ as described in the following proposition.
        \begin{lemma}\label{l:CorrespondenceOfFlows}
            Every null bicharacteristic for the flow generated by $p$ is a reparameterization of a null bicharacteristic for the flow generated by either $p^+$ or $p^-$. The converse is also true.
        \end{lemma}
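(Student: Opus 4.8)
The plan is to work with the explicit factorization \eqref{Eqn:HalfWaveFactorization}, $p = g^{00}(x)(\tau - b^+(x,\xi))(\tau - b^-(x,\xi))$, and show that the two first-order Hamilton flows $\varphi^\pm$, when restricted to their respective characteristic sets $\{p^\pm = 0\}$, trace out exactly the same point sets as $\varphi$ restricted to $\{p = 0\}$, up to a change of parameter. The key algebraic fact is that for a product of smooth functions, the Hamilton vector field satisfies a Leibniz-type identity: on the common zero set things simplify. Concretely, writing $p = g^{00} p^+ p^-$, for any $\omega$ with $p(\omega) = 0$ we have either $p^+(\omega) = 0$ or $p^-(\omega) = 0$ (and, by the sign lemma, not both), so at such an $\omega$,
\begin{equation}
    H_p = g^{00} p^\mp \, H_{p^\pm} \quad \text{on } \{p^\pm = 0\},
\end{equation}
since the terms where a derivative falls on $g^{00}$ or on the vanishing factor $p^\pm$ drop out (the former is multiplied by $p^+ p^- = 0$, the latter by $p^\pm = 0$ after expanding $H_{p^\pm p^\mp}$). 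Here $g^{00} p^\mp$ is a nonvanishing smooth scalar in a neighborhood (within the characteristic set) of $\omega$, because $g^{00} \le -C < 0$ and $p^+ - p^- = b^- - b^+ < 0$ is bounded away from zero on $\{|\xi| = 1\}$ hence (by homogeneity) nonzero on $\Trfo$.

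\textbf{Steps.} First I would fix a null bicharacteristic $\gamma(s) = \varphi_s(\omega_0)$ of $p$ with $\omega_0 \in \characteristicsetofP$; by the factorization, $\omega_0$ lies in exactly one of $\characteristicsetofPplus$ or $\characteristicsetofPminus$, say $\characteristicsetofPplus$. Since the characteristic set $\{p = 0\}$ is invariant under $\varphi$ and locally (away from $\{p^+ = p^- = 0\} = \emptyset$ here) coincides with $\{p^+ = 0\}$, the whole curve $\gamma$ stays in $\{p^+ = 0\}$. On this curve the ODE $\dot\gamma = H_p(\gamma)$ becomes $\dot\gamma = c(\gamma)\, H_{p^+}(\gamma)$ with $c = g^{00} p^-$ smooth and nonvanishing along $\gamma$. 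Then I would introduce the reparametrization $\sigma(s)$ defined by $\sigma'(s) = c(\gamma(s))$, $\sigma(0) = 0$; since $c$ is bounded above and below in absolute value on the curve (it is $\le -C \cdot (b^+ - b^-)$, and $b^+ - b^- > 0$), $\sigma$ is a smooth strictly monotone bijection of the maximal interval, and $\tilde\gamma(\sigma) := \gamma(s(\sigma))$ satisfies $\frac{d}{d\sigma}\tilde\gamma = H_{p^+}(\tilde\gamma)$ with $\tilde\gamma(0) = \omega_0$. By uniqueness of solutions of the $\varphi^+$ ODE system \eqref{eq:flowdef}, $\tilde\gamma(\sigma) = \varphi^+_\sigma(\omega_0)$, so $\gamma$ is a reparametrization of the $p^+$-null bicharacteristic through $\omega_0$. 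The converse runs identically, reversing the roles: start from $\varphi^\pm_\sigma(\omega_0)$ with $\omega_0 \in \characteristicsetofPplusminus \subset \characteristicsetofP$, note $H_{p^\pm} = c^{-1} H_p$ along it, and reparametrize.

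\textbf{Main obstacle.} The only real subtlety — and the step I'd be most careful with — is the \emph{maximal interval of definition}. I need that the reparametrization $\sigma(s)$ is defined on the \emph{entire} maximal existence interval of $\gamma$ and is surjective onto the maximal interval of the $\varphi^\pm$ flow; otherwise one only gets that they agree as germs, not as full bicharacteristics. This follows because $c(\gamma(s)) = g^{00}(x_s)\bigl(b^+(x_s,\xi_s) - b^-(x_s,\xi_s)\bigr)$ stays bounded away from $0$ and $\infty$ along any bicharacteristic: $g^{00} \in [-C_2, -C_1]$ by the timelike/spacelike hypotheses, and $b^+ - b^-$ is comparable to $|\xi|$ by ellipticity of $g^{ij}$ together with asymptotic flatness, while $|\xi_s|$ is itself comparable to $|\xi_0|$ along the flow (the half-wave flows preserve $p^\pm = 0$ and $|\xi|$ cannot blow up or vanish in finite flow-time on a fixed energy shell, since $b^\pm$ are homogeneous degree one with bounded coefficients). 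Hence $\int_0^s c\,ds'$ diverges exactly as $s \to \pm(\text{endpoint})$, matching the reparametrized flow's maximal interval. I would also record as a byproduct that this identification is exactly what lets one transfer the trapped/non-trapped set decompositions $\Omega^p_{tr} = \Omega^+_{tr} \cup \Omega^-_{tr}$ etc., which is how the lemma gets used later in the escape function construction.
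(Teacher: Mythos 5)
Your proof is correct and follows essentially the same approach as the paper: factor $p = g^{00}p^+p^-$, show the null bicharacteristic stays in one of $\{p^+=0\}$ or $\{p^-=0\}$, reduce $H_p$ to $g^{00}p^\mp H_{p^\pm}$ there, and reparametrize by $s'(\sigma) = 1/(g^{00}p^\mp)$ and invoke ODE uniqueness. The one small variation is that you establish invariance of $\{p^+=0\}$ along the flow by a connectedness argument (the two factors' zero sets partition $\{p=0\}$ into disjoint relatively clopen pieces), whereas the paper argues via the constancy of $\tau$ along the flow together with the sign dichotomy $b^+>0>b^-$; and your care about the reparametrization reaching the full maximal interval is sound, though in this setting both flows are already globally defined on $\mathbb{R}$ as noted at the start of Section 2.1.
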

        Before proving this we mention that as a consequence, we have the following relationship between the trapped and non-trapped sets of $p$ and $p^\pm$
            \begin{equation}
                \trappedset=\plustrappedset\cup\minustrappedset \qquad \text{ and } \qquad \nontrappedset=\plusnontrappedset\cup\minusnontrappedset.
            \end{equation}
        \begin{proof}
            For the forward direction, let $\omega=(t_0,x_0,\tau_0,\xi_0)\in\characteristicsetofP$. Then either $p^+(\omega)=0$ or $p^-(\omega)=0$ but not both simultaneously. Without loss of generality, take $p^+(\omega)=0$.

            Consider the bicharacteristic flows $\varphi_{(\cdot)}(\omega):\mathbb R\to \Trf$ and $\varphi_{(\cdot)}^+(\omega):\mathbb R\to \Trf$ generated by $p$ and $p^+$ respectively, and starting at $\omega$.
            % Denote them in coordinates by 
            %     \begin{equation}
            %         \varphi_s(\omega)=\left(t_s(\omega),x_s(\omega),\tau_s(\omega),\xi_s(\omega)\right),
            %     \end{equation}
            % and 
            %     \begin{equation}
            %         \varphi_s^+(\omega)=\left(t_s^+(\omega),x_s^+(\omega),\tau_s^+(\omega),\xi_s^+(\omega)\right).
            %     \end{equation}
            By definition, these flow maps solve the system of differential equations
                \begin{equation}\label{eq:PflowODE}
                    \begin{cases}
                        \frac{d}{ds}t_s(\omega)=\partial_\tau p\left(\varphi_s(\omega)\right), & \frac{d}{ds}\tau_s(\omega)=-\partial_tp\left(\varphi_s(\omega)\right),
                        \\
                        \frac{d}{ds} x_s(\omega)=\nabla_\xi p\left(\varphi_s(\omega)\right), & \frac{d}{ds}\xi_s(\omega)=-  \nabla_xp\left(\varphi_s(\omega)\right),
                    \end{cases}
                \end{equation}
            and
                \begin{equation}\label{eq:PplusflowODE}
                    \begin{cases}
                        \frac{d}{ds'}t_{s'}^+(\omega)=\partial_\tau p^+\left(\varphi_{s'}^+(\omega)\right), & \frac{d}{ds'}\tau_{s'}^+(\omega)=-\partial_tp^+\left(\varphi_{s'}^+(\omega)\right),
                        \\
                        \frac{d}{ds'} x_{s'}^+(\omega)=\nabla_\xi p^+\left(\varphi_{s'}^+(\omega)\right), & \frac{d}{ds'}\xi_{s'}^+(\omega)=-  \nabla_xp^+\left(\varphi_{s'}^+(\omega)\right).
                    \end{cases}
                \end{equation}
            Using that $p=g^{00} p^+p^-$, and that null bicharacteristics of $P$ are contained in $\characteristicsetofP$, we have 
            \begin{align}
                \frac{d}{ds} \varphi_s(\omega) = H_p = H_{g^{00}p^+ p^-} = p^+ p^- H_{g^{00}} + g^{00} H_{p^+ p^-} = 0 + g^{00} H_{p^+ p^-},
            \end{align}
            since $p=0$ exactly when $p^+ p^-=0$ because $g^{00} \neq 0$.
            
            Using this, along with $p^\pm=\tau-b^\pm$, we can rewrite \eqref{eq:PflowODE} as
                \begin{equation}\label{eq:PflowODE2}
                    \begin{cases}
                        \frac{d}{ds}t_s(\omega)=g^{00}\left(\varphi_s(\omega)\right) (p^+\left(\varphi_s(\omega)\right)+p^-\left(\varphi_s(\omega)\right)),
                        \\
                        \frac{d}{ds} x_s(\omega)=g^{00}\left(\varphi_s(\omega)\right)(p^+\left(\varphi_s(\omega)\right)\nabla_\xi p^-\left(\varphi_s(\omega)\right)-p^+\left(\varphi_s(\omega)\right)\nabla_\xi p^+\left(\varphi_s(\omega)\right)), 
                        \\
                        \frac{d}{ds}\tau_s(\omega)=0,
                        \\
                        \frac{d}{ds}\xi_s(\omega)=-g^{00}\left(\varphi_s(\omega)\right)(p^+\left(\varphi_s(\omega)\right)\nabla_x p^-\left(\varphi_s(\omega)\right)+p^-\left(\varphi_s(\omega)\right)\nabla_xp^+\left(\varphi_s(\omega)\right)).
                    \end{cases}
                \end{equation}
            and we can rewrite \eqref{eq:PplusflowODE} as
                \begin{equation}
                    \begin{cases}
                        \frac{d}{ds'}t_{s'}^+(\omega)=1, & \frac{d}{ds'}\tau_{s'}^+(\omega)=0,
                        \\
                        \frac{d}{ds'} x_{s'}^+(\omega)=\nabla_\xi p^+\left(\varphi_{s'}^+(\omega)\right), & \frac{d}{ds'}\xi_{s'}^+(\omega)=-  \nabla_xp^+\left(\varphi_{s'}^+(\omega)\right),
                    \end{cases} \label{eq:pluscharflow}
                \end{equation}
            with $\omega$ still as the initial condition.

            Now, noting that $p^+(\omega)=0$, we claim that for all $s\in\mathbb R$, $p^+\left(\varphi_s(\omega)\right)=0$. To see this, suppose by way of contradiction that there exists $s_0\in\mathbb R$ such that $p^+\left(\varphi_{s_0}(\omega)\right)\neq0$. Since $p\left(\varphi_{s_0}(\omega)\right)=0$, it follows that $p^-\left(\varphi_{s_0}(\omega)\right)=0$. Using the explicit form of $p^-$, this implies that $\tau_{s_0}(\omega)=b^-(\varphi_{s_0}(\omega))<0$. However, \eqref{eq:PflowODE2} implies that for all $s\in\mathbb R$, $\tau_s(\omega)=\tau_0$. Since $p^+(\omega)=0$, we must also have that $\tau_0=b^+(\omega)>0$. Thus, we have a contradiction. As a consequence, \eqref{eq:PflowODE2} simplifies to
                \begin{equation} 
                    \begin{cases}
                        \frac{d}{ds}t_s(\omega)=g^{00}\left(\varphi_s(\omega)\right) p^-\left(\varphi_s(\omega)\right),
                        \\
                        \frac{d}{ds} x_s(\omega)=g^{00}\left(\varphi_s(\omega)\right) p^-\left(\varphi_s(\omega)\right)\nabla_\xi p^+\left(\varphi_s(\omega)\right), 
                        \\
                        \frac{d}{ds}\tau_s(\omega)=0,
                        \\
                        \frac{d}{ds}\xi_s(\omega)=-g^{00}\left(\varphi_s(\omega)\right) p^-\left(\varphi_s(\omega)\right)\nabla_xp^+\left(\varphi_s(\omega)\right).
                    \end{cases} \label{eq:charflow}
                \end{equation}
                
            We now reparameterize \eqref{eq:charflow}. Define $s(r)$ via
                \begin{equation}
                    \frac{d}{dr}s(r)=\frac{1}{g^{00}(\varphi_{s(r)}(\omega)) p^-\left(\varphi_{s(r)}(\omega)\right)},\quad s(0)=0.
                \end{equation}
            By the chain rule, we have
                \begin{equation}
                    \begin{cases}
                        \frac{d}{dr}t_{s(r)}(\omega)=1,& \frac{d}{dr}\tau_{s(r)}(\omega)=0,
                        \\
                        \frac{d}{dr} x_{s(r)}(\omega)=\nabla_\xi p^+\left(\varphi_{s(r)}(\omega)\right), &\frac{d}{dr}\xi_{s(r)}(\omega)=-\nabla_xp^+\left(\varphi_{s(r)}(\omega)\right)
                    \end{cases}
                \end{equation}
            with $\omega$ as the initial condition. Note that this is the same set of equations that $\varphi^+_s$ satisfies in \eqref{eq:pluscharflow} and $\varphi_0(\omega)=\varphi_0^+(\omega)=\omega$. Therefore by uniqueness of solutions to systems of ordinary differential equations, for all $r\in\mathbb R$, 
            \begin{equation}
                \varphi_{s(r)}(\omega)=\varphi^+_r(\omega).
            \end{equation}

            The converse proceeds similarly. Let $\omega=(t_0,x_0,\tau_0,\xi_0)\in\characteristicsetofPplusminus$. Then $\omega\in\characteristicsetofP$. We now reparameterize \eqref{eq:PplusflowODE}. Define $s'(r')$ via
                \begin{equation}
                    \frac{d}{dr'}s'(r')=g^{00}(\varphi_{s'(r')}^{\pm}(\omega))p^{\mp}\left(\varphi_{s'(r')}^\pm(\omega)\right),\quad s'(0)=0.
                \end{equation}
            %Note by $p^\mp$ we mean that we take $-$ when $\omega \in \text{Char}(P^+)$, and $+$ when $\omega \in \text{Char}(P^-)$. 
            By the chain rule, we get
                \begin{equation}
                    \begin{cases}
                        \frac{d}{dr'}t_{s'(r')}^\pm(\omega)=g^{00}\left(\varphi_{s'(r')}^\pm(\omega)\right) p^\mp\left(\varphi_{s'(r')}^\pm(\omega)\right),
                        \\
                        \frac{d}{dr'} x_{s'(r')}^\pm(\omega)= g^{00}\left(\varphi_{s'(r')}^\pm(\omega)\right) p^\mp\left(\varphi_{s'(r')}^\pm(\omega)\right)\nabla_\xi p^\pm\left(\varphi_{s'(r')}^\pm(\omega)\right),
                        \\
                        \frac{d}{dr'}\tau_{s'(r')}^\pm(\omega)=0,
                        \\
                        \frac{d}{dr'}\xi_{s'(r')}^\pm(\omega)=- g^{00}\left(\varphi_{s'(r')}^\pm(\omega)\right) p^\mp\left(\varphi_{s'(r')}^\pm(\omega)\right)\nabla_xp^\pm\left(\varphi_{s'(r')}^\pm(\omega)\right),
                    \end{cases}
                \end{equation}
             with $\omega$ as the initial condition. Note this is the same set of equations that $\varphi$ satisfies in \eqref{eq:charflow}, and $\varphi_0(\omega)=\varphi_0^{\pm}(\omega)=\omega$. Therefore by uniqueness of solutions to systems of ordinary differential equations, for all $r'\in\mathbb R$, 
                $$
                    \varphi^\pm_{s'(r')}(\omega)=\varphi_{r'}(\omega).
                $$
        \end{proof}
        In the construction of the escape function, it will be useful to first construct functions for space-time frequencies at a fixed scale. 
        We then extend the initial construction to all frequency scales (away from 0) via homogeneity. To facilitate this argument, we record the behavior of the half-wave flow under such a rescaling. 
        \begin{proposition}\label{Propn:RescalingPropn}
            For any $(t,x,\tau,\xi)\in\Trf$ and $\lambda>0$, the Hamiltonian flows generated by $p^\pm$ satisfy the following scaling relations
                \begin{equation}
                    \begin{cases}
                        t_s^\pm(t,x,\tau,\xi)=t_s^\pm(t, x, \lambda \tau, \lambda \xi)
                        \\
                        x_s^\pm(t,x,\tau,\xi)=x_s^\pm(t, x, \lambda \tau, \lambda \xi)
                         \\
                        \lambda\tau_s^\pm(t,x,\tau,\xi)=\tau_s^\pm(t, x, \lambda \tau, \lambda \xi)
                        \\
                        \lambda\xi_s^\pm(t,x,\tau,\xi)=\xi_s^\pm(t, x, \lambda \tau, \lambda \xi)
                    \end{cases}
                \end{equation}
        \end{proposition}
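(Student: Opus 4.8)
The plan is to fix initial data $\omega=(t,x,\tau,\xi)\in\Trf$ and $\lambda>0$, to verify that the rescaled curve
$$
s\longmapsto \bigl(t_s^\pm(\omega),\,x_s^\pm(\omega),\,\lambda\tau_s^\pm(\omega),\,\lambda\xi_s^\pm(\omega)\bigr)
$$
solves the Hamilton system \eqref{eq:flowdef} for $p^\pm$ with initial data $(t,x,\lambda\tau,\lambda\xi)$, and then to conclude by uniqueness of solutions to ODEs, exactly in the style of the proof of Lemma \ref{l:CorrespondenceOfFlows}. The only structural input is that $b^\pm(x,\xi)$ is homogeneous of degree one in $\xi$ (noted just after \eqref{eq:bdef}), so that $p^\pm(t,x,\tau,\xi)=\tau-b^\pm(x,\xi)$ is homogeneous of degree one jointly in $(\tau,\xi)$, i.e.\ $p^\pm(t,x,\lambda\tau,\lambda\xi)=\lambda p^\pm(t,x,\tau,\xi)$ for $\lambda>0$.

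Differentiating this homogeneity relation, the partial derivatives appearing in $H_{p^\pm}$ transform as follows: $\partial_\tau p^\pm\equiv1$ and $\partial_t p^\pm\equiv0$ are scale invariant; $\nabla_\xi p^\pm(t,x,\tau,\xi)=-\nabla_\xi b^\pm(x,\xi)$ is homogeneous of degree zero in $\xi$, hence $\nabla_\xi p^\pm(t,x,\lambda\tau,\lambda\xi)=\nabla_\xi p^\pm(t,x,\tau,\xi)$; and $\nabla_x p^\pm(t,x,\tau,\xi)=-\nabla_x b^\pm(x,\xi)$ is homogeneous of degree one in $\xi$, hence $\nabla_x p^\pm(t,x,\lambda\tau,\lambda\xi)=\lambda\nabla_x p^\pm(t,x,\tau,\xi)$. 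Writing $(t_s,x_s,\tau_s,\xi_s)=\varphi_s^\pm(\omega)$ and substituting the rescaled curve into \eqref{eq:flowdef}: the $t$- and $\tau$-equations reduce to $\tfrac{d}{ds}t_s=1$ and $\tfrac{d}{ds}(\lambda\tau_s)=0$, which hold since $\tfrac{d}{ds}t_s=\partial_\tau p^\pm=1$ and $\tfrac{d}{ds}\tau_s=-\partial_t p^\pm=0$; the $x$-equation requires $\tfrac{d}{ds}x_s=\nabla_\xi p^\pm(t_s,x_s,\lambda\tau_s,\lambda\xi_s)$, which by degree-zero homogeneity equals $\nabla_\xi p^\pm(t_s,x_s,\tau_s,\xi_s)=\tfrac{d}{ds}x_s$; and the $\xi$-equation requires $\tfrac{d}{ds}(\lambda\xi_s)=-\nabla_x p^\pm(t_s,x_s,\lambda\tau_s,\lambda\xi_s)$, which by degree-one homogeneity equals $-\lambda\nabla_x p^\pm(t_s,x_s,\tau_s,\xi_s)=\lambda\tfrac{d}{ds}\xi_s$. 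Since the rescaled curve has value $(t,x,\lambda\tau,\lambda\xi)$ at $s=0$, uniqueness of the solution to \eqref{eq:flowdef} gives $\varphi_s^\pm(t,x,\lambda\tau,\lambda\xi)=\bigl(t_s^\pm(\omega),x_s^\pm(\omega),\lambda\tau_s^\pm(\omega),\lambda\xi_s^\pm(\omega)\bigr)$, which is precisely the four asserted identities.

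This is essentially a bookkeeping exercise and I do not anticipate a genuine obstacle; the only points demanding care are tracking the homogeneity degrees of $\nabla_\xi b^\pm$ (degree $0$) versus $\nabla_x b^\pm$ (degree $1$) in $\xi$ --- this mismatch is exactly what produces the asymmetry in the statement, where $t_s^\pm$ and $x_s^\pm$ are scale invariant while $\tau_s^\pm$ and $\xi_s^\pm$ pick up a factor of $\lambda$ --- and the degenerate case $(\tau,\xi)=0$, where $b^\pm$ fails to be smooth but the flow is the stationary one on both sides, so the relation holds trivially (away from the zero section everything is smooth on $\Trfo$ by $b^\pm\in S^1(\Trfo)$). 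One could instead give the abstract version --- for any Hamiltonian homogeneous of degree one in the fiber variables, the fiber dilation conjugates the flow to itself --- but the direct ODE-uniqueness verification above is self-contained and parallels the technique already used for Lemma \ref{l:CorrespondenceOfFlows}.
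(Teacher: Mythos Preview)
Your proof is correct and follows essentially the same approach as the paper's: both arguments verify that the rescaled curve $\bigl(t_s^\pm,x_s^\pm,\lambda\tau_s^\pm,\lambda\xi_s^\pm\bigr)$ satisfies the Hamilton system \eqref{eq:flowdef} with initial data $(t,x,\lambda\tau,\lambda\xi)$ using the homogeneity of $b^\pm$ in $\xi$, and then conclude by ODE uniqueness. The paper's write-up introduces auxiliary notation $t_{s,\lambda}^\pm$, etc., and displays the two systems side by side, but the substance is identical to yours.
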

        \begin{proof}
            For notational convenience, define the following functions
                \begin{equation}
                    \begin{cases}
                        
                        t_{s,\lambda}^\pm(t,x,\tau,\xi)=t_s^\pm(t, x, \lambda \tau, \lambda \xi)
                        \\
                        x_{s,\lambda}^\pm(t,x,\tau,\xi)=x_s^\pm(t, x, \lambda \tau, \lambda \xi)                    
                        \\
                        \tau_{s,\lambda}^\pm(t,x,\tau,\xi)=\tau_s^\pm(t, x, \lambda \tau, \lambda \xi)
                        \\
                        \xi_{s,\lambda}^\pm(t,x,\tau,\xi)=\xi_s^\pm(t, x, \lambda \tau, \lambda \xi).
                    \end{cases}
                \end{equation}
            Recall that $b^\pm$, and thus $p^{\pm}$, is homogeneous in $\xi$, while $\nabla_{\xi} p^{\pm}$, and $\nabla_x p^{\pm}$ do not depend on $\tau$. Therefore we have
                \begin{equation}
                    \begin{cases}
                        \frac{d}{ds}t_s^\pm=\partial_\tau p^{\pm}\left(t_s^\pm,x_s^\pm, \tau_s^\pm, \xi_s^\pm\right)=1
                        \\
                        \frac{d}{ds} x_s^{\pm} = \nabla_{\xi} p^{\pm}(t_s^\pm,x_s^\pm, \tau_s^\pm, \xi_s^\pm) = \nabla_{\xi} p^{\pm}(t_s^{\pm}, x_s^{\pm}, \lambda \tau_s^{\pm}, \lambda \xi_s^{\pm})\\
                        \frac{d}{ds}\left(\lambda\tau_s^\pm\right)=-\lambda\partial_t p^{\pm}\left(t_s^\pm,x_s^\pm, \tau_s^\pm, \xi_s^\pm\right)=0
                        \\
                        \frac{d}{ds} (\lambda \xi_s^{\pm}) = -\lambda \nabla_x p^{\pm}(t_s^{\pm}, x_s^{\pm},\tau_s^{\pm}, \xi_s^{\pm}) =-\nabla_x p^{\pm}(t_s^{\pm}, x_s^{\pm},\lambda \tau_s^{\pm}, \lambda \xi_s^{\pm})\\
                        \left(t_s^\pm,x_s^{\pm}, \lambda\tau_s^\pm, \lambda \xi_s^{\pm} \right)|_{s=0}=\left(t,x,\lambda\tau, \lambda \xi \right).
                    \end{cases}
                \end{equation}
            Similarly,
                \begin{equation}
                    \begin{cases}
                        \frac{d}{ds}t_{s,\lambda}^\pm=\partial_\tau p^{\pm}(t_{s,\lambda}^\pm,x_{s,\lambda}^\pm,\tau_{s,\lambda}^\pm,\xi_{s,\lambda}^\pm)=1
                        \\
                        \frac{d}{ds} x_{s,\lambda}^{\pm} = \nabla_{\xi} p^{\pm}(t_{s,\lambda}^{\pm}, x_{s,\lambda}^{\pm}, \tau_{s,\lambda}^{\pm}, \xi_{s,\lambda}^{\pm})\\
                        \frac{d}{ds}\tau_{s,\lambda}^\pm=-\partial_t p^{\pm}(t_{s,\lambda}^\pm,x_{s,\lambda}^\pm,\tau_{s,\lambda}^\pm,\xi_{s,\lambda}^\pm)=0\\
                        \frac{d}{ds} \xi_{s,\lambda}^{\pm} = -\nabla_x p^{\pm}(t_{s,\lambda}^{\pm}, x_{s,\lambda}^{\pm}, \tau_{s,\lambda}^{\pm}, \xi_{s,\lambda}^{\pm})
                        \\
                        (t_{s,\lambda}^\pm, x_{s,\lambda}^{\pm}, \tau_{s,\lambda}^\pm, \xi_{s,\lambda}^{\pm} )|_{s=0}=\left(t,x,\lambda\tau, \lambda \xi\right).
                    \end{cases}
                \end{equation}
            These systems are the same, so by uniqueness of solutions to systems of ordinary differential equations, 
            \begin{equation}
                (t_s^{\pm}, x_s^{\pm}, \lambda \tau_s^{\pm}, \lambda \xi_s^{\pm}) = (t^{\pm}_{s,\lambda}, x^{\pm}_{s,\lambda}, \tau_{s,\lambda}^{\pm}, \xi_{s,\lambda}^{\pm}).
            \end{equation}
        \end{proof}

        We will make extensive use of a particular rescaling adapted to the Hamiltonian flows generated by $p^\pm$. Define $\Phi^{\pm}:\Trf\to\Trf$  by
                \begin{equation}
                    \Phi^\pm(t,x,\tau,\xi)=\left(t,x,\frac{\tau}{\bpmx},\frac{\xi}{b^\pm(x,\xi)}\right).
                \end{equation}

            We note that because $p^{\pm}=0=\tau-\bpm$ along its null bicharacteristics, and $\tau$ is a constant along these null bicharacteristics, $\bpm$ is constant along them as well. Furthermore there exist $0<c<C$ such that 
                \begin{equation}\label{eq:xibhomog}
                    c\leq \left|\frac{\xi}{b^\pm(x,\xi)}\right| \leq C.
                \end{equation}
            %Question from Michael: have we defined the symbol \approx?
            The proof of this follows from the definition of $\bpm$ and asymptotic flatness of $g$. For details see \cite[Proposition 2.8]{Kofroth23}. With this map, we define the following rescaled, forward/backward trapped sets associated to the half-wave flows $\varphi^\pm$ as
                \begin{align}
                    &\homogplusminusforwardtrappedset=\plusminusforwardtrappedset  \cap\Phi^{\pm}(\Trfo),
                    \\
                    &\homogplusminusbackwardtrappedset=\plusminusbackwardtrappedset\cap\Phi^{\pm}(\Trfo).
                \end{align}
            Observe that $\homogplusminusforwardtrappedset,\homogplusminusbackwardtrappedset$ are invariant under the corresponding Hamiltonian flows generated by $p^\pm$. This follows from $b^\pm$ being constant along these flows combined with Proposition \ref{Propn:RescalingPropn}. Note also, there exists $0<c<C$ such that for any $\omega \in \homogplusminusforwardtrappedset \cup \homogplusminusbackwardtrappedset$ we have 
            \begin{equation}
                 c \leq |\omega_{\xi}| \leq C, \qquad \text{ and } \qquad \omega_{\tau}=1.
            \end{equation}
            The first follows immediately from \eqref{eq:xibhomog}. To see the second, note that since $\omega \in \characteristicsetofPplusminus$,
    $\omega_{\tau}= \bpm(\omega_x, \omega_{\xi})$ and for some $(\omega_x, \omega_t, \tau_0, \xi_0) \in \homogplusminusforwardtrappedset \cup \homogplusminusbackwardtrappedset$ we have $(\omega_{\tau}, \omega_{\xi}) = \left( \frac{\tau_0}{\bpm(\omega_x, \xi_0)}, \frac{\xi_0}{\bpm(\omega_x, \xi_0)}\right)$. By the $1$-homogeneity of $\bpm$ we have 
    \begin{equation}
        \omega_{\tau}= \bpm(\omega_x, \omega_{\xi}) = \bpm \left( \omega_x, \frac{\xi_0}{\bpm(\omega_x,\xi_0)} \right) = \frac{\bpm(\omega_x, \xi_0)}{\bpm(\omega_x,\xi_0)}=1.
    \end{equation}

    \subsection{Decomposition of the Characteristic Set}
        In this section, we show how to decompose $\characteristicsetofPplusminus$ using the trapped and non-trapped sets. 
        First, we show that in the asymptotically flat region, null bicharacteristics escape to infinity like straight lines. A consequence of this is that fully trapped trajectories never enter the asymptotically flat region. 
        %First, we prove a lemma which shows that if the Hamiltonian flow's position is sufficiently far out into the asymptotically flat region of space at a particular time, then it will essentially behave like a straight line for all time afterward. \textcolor{ForestGreen}{\textbf{Comment from Michael:} This could probably be phrased better, but it suffices for the first read.}

        \begin{lemma}\label{l:escapeFlow}
            Let $R> R_0$. 
            \begin{enumerate}
                \item If for some $\omega \in \characteristicsetofPplus$ and $s'>0$,
                \begin{equation}
                    \left|x_{\pm s'}^{+}(\omega)\right|\geq\max\{R,|x_0(\omega)|\},
                \end{equation}
            then 
                \begin{equation}
                    \left|x_{\pm s}^{+}(\omega)\right|\geq\max\{R,|x_0(\omega)|\},
                \end{equation}
            for all $s\geq s'$ and
                \begin{equation}
                    \lim_{s\to\infty}\left|x_{\pm s}^{+}(\omega)\right|=\infty.
                \end{equation}
            Analogous statements hold with $x_s^-$, resp. $x_s$, replacing $x_s^{+}$, and $\characteristicsetofPminus$, resp. $\characteristicsetofP$, replacing $\characteristicsetofPplus$.

            \item Furthermore, for any $\omega \in \plusminustrappedset$
            \begin{equation}
                |x_s^{\pm}(\omega)| \leq R_0,
            \end{equation}
            for all $s \in \Rb$. An analogous statement holds with $x_s$ replacing $x_s^{\pm}$ and $\trappedset$ replacing $\plusminustrappedset$.
        
            \end{enumerate}
        \end{lemma}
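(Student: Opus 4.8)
The plan is to reduce the whole statement to a single one‑variable convexity estimate along the half‑wave flows. By Lemma~\ref{l:CorrespondenceOfFlows}, every null bicharacteristic of $p$ is a reparametrization of one of $p^+$ or $p^-$ by a change of variable whose derivative is bounded and nonvanishing; since the hypotheses and conclusions of the lemma are symmetric under $s\mapsto-s$, it is enough to prove the assertions for both the $+s$ and $-s$ flows of $p^\pm$. Fix $\omega\in\characteristicsetofPplus$ (the $p^-$ case is identical) and write $\varphi_s^+(\omega)=(t_s,x_s,\tau_s,\xi_s)$ and $f(s)=|x_s|^2$, a smooth function on $\Rb$ because the flow is globally defined. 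Since $g$ is stationary, $p^+=\tau-b^+(x,\xi)$ is independent of $t$, so $\tau_s\equiv\tau_0=b^+(x_0,\xi_0)>0$ along the flow; combined with the uniform two-sided bound $c|\xi|\le b^+(x,\xi)\le C|\xi|$ (ellipticity of $g^{ij}$ plus asymptotic flatness), this gives $|\xi_s|\sim|\tau_0|$, so the flow never meets the singular locus $\xi=0$ of $b^+$ and $\nabla_\xi b^+(x_s,\xi_s)$, $\nabla_\xi^2 b^+(x_s,\xi_s)$ stay bounded.

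The core computation: using the Hamilton equations \eqref{eq:flowdef}, i.e.\ $\dot x_s=\nabla_\xi p^+=-\nabla_\xi b^+$ and $\dot\xi_s=-\nabla_x p^+=\nabla_x b^+$, I would compute
\[
f'(s)=-2\,x_s\cdot\nabla_\xi b^+(x_s,\xi_s),\qquad f''(s)=2\,|\nabla_\xi b^+|^2+2\,x_s\cdot(\nabla_x\nabla_\xi b^+)\nabla_\xi b^+-2\,x_s\cdot(\nabla_\xi^2 b^+)\nabla_x b^+,
\]
all derivatives of $b^+$ evaluated at $(x_s,\xi_s)$. Now $\nabla_\xi b^+$ is $0$-homogeneous in $\xi$ and equals $\xi/|\xi|$ in the Minkowski case, so for $|x_s|\ge R_0$ asymptotic flatness gives $|\nabla_\xi b^+|^2\ge 1-C\textbf{c}$; moreover there $|\nabla_x b^+|\le C\textbf{c}\<x_s\>^{-1}|\xi_s|$ and $|\nabla_x\nabla_\xi b^+|\le C\textbf{c}\<x_s\>^{-1}$, while $|\nabla_\xi^2 b^+|\le C|\xi_s|^{-1}$, so the last two terms of $f''$ are bounded by $C\textbf{c}\,|x_s|\<x_s\>^{-1}\le C\textbf{c}$. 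Taking $\textbf{c}$ small (as we may, see Definition~\ref{d:asymflat2}), we obtain the key estimate: there is $c_0>0$, independent of $\omega$, with
\[
f''(s)\ge c_0>0\qquad\text{whenever}\qquad |x_s|\ge R_0.
\]
The $p^-$ flow and the time-reversed flows $s\mapsto f(-s)$ satisfy the same estimate.

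Granting this, part (1) is elementary convexity. Assume $f(s')\ge\max\{R^2,f(0)\}$ with $s'>0$ and $R>R_0$; in particular $f(s')\ge R_0^2$. First I rule out $f'(s')\le0$: if $f'(s')\le0$, then on the connected component of $\{f\ge R_0^2\}$ containing $s'$, $f'$ is strictly increasing (by the key estimate), so $f'(s)<0$ for $s<s'$ in that component, whence $f(s)>f(s')\ge R^2>R_0^2$ strictly to the left of $s'$; this prevents a finite left endpoint, so the component contains $(-\infty,s']$ and $f$ is strictly decreasing on $(-\infty,s']$, giving $f(0)>f(s')$, a contradiction. Hence $f'(s')>0$. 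Then on $[s',\infty)$, as long as $f\ge R_0^2$ the key estimate forces $f'\ge f'(s')>0$, so $f$ is strictly increasing and stays $\ge f(s')\ge\max\{R^2,f(0)\}>R_0^2$; a continuity/bootstrap argument then shows this holds for all $s\ge s'$, which is exactly $|x_s^+(\omega)|\ge\max\{R,|x_0(\omega)|\}$. Finally $f'(s)\ge f'(s')+c_0(s-s')\to\infty$, so $f(s)\to\infty$. The $-s'$ statement follows by applying this to $s\mapsto f(-s)$, and the statements for $x_s^-$ and $x_s$ follow from the reduction in the first paragraph.

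For part (2), let $\omega\in\plustrappedset$, so $f$ is bounded on $\Rb$. If $|x_{s_0}^+(\omega)|>R_0$ for some $s_0$, let $I$ be the connected component of $\{f\ge R_0^2\}$ containing $s_0$; on $I$ we have $f''\ge c_0>0$. If $I$ is a half-line or all of $\Rb$, then $f$ is unbounded along $I$, contradicting boundedness; if $I=[a,b]$ is a bounded interval then $f(a)=f(b)=R_0^2$, so strict convexity on $[a,b]$ forces $f<R_0^2$ on $(a,b)$, contradicting $f(s_0)>R_0^2$. Hence $|x_s^+(\omega)|\le R_0$ for all $s$, and likewise for $x_s^-$ and $x_s$, which in particular gives $\trappedset=\plustrappedset\cup\minustrappedset\subset\{|x|\le R_0\}$. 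The main obstacle is the core computation: one must track the homogeneity of $b^\pm$ and the $\<x\>^{-1}$-decay of its $x$-derivatives afforded by asymptotic flatness, and crucially use conservation of $\tau$ along the half-wave flow so that $|\xi_s|$ stays comparable to a constant and the Minkowski main term $2|\nabla_\xi b^\pm|^2\approx2$ dominates the error uniformly in $\omega$; everything after that estimate is one-variable convexity.
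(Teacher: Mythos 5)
Your proof is correct and follows the same overall strategy as the paper: derive a uniform lower bound $\frac{d^2}{ds^2}|x_s^\pm|^2 \geq c_0 > 0$ whenever $|x_s^\pm| > R_0$ (the paper's Lemma~\ref{l:magnitudeDeriv}, which you re-derive in the body of the argument), deduce the statements by one-variable convexity, and pass from the $p^\pm$ flows to the $p$ flow by the reparametrization in Lemma~\ref{l:CorrespondenceOfFlows}. Your convexity bookkeeping differs in minor details---for Part (1) you rule out $f'(s')\leq 0$ directly by contradiction rather than producing an intermediate point $s''$ via the Mean Value Theorem, and for Part (2) you argue directly from strict convexity on components of $\{f\geq R_0^2\}$ rather than shifting the base point and invoking Part (1)---but these are interchangeable elementary variants of the same idea.
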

        \begin{proof}
            1) Since $x_s$ is a reparametrization of $x_s^+$ or $x_s^-$, without loss of generality, we work with $x^+$ and $+s'$. We will show that if the magnitude of the position of the bicharacteristic flow is large enough, then the magnitude's first derivative is non-negative and strictly increasing. 
            
            By Lemma \ref{l:magnitudeDeriv} there exists $c>0$ such that for all $\omega \in \characteristicsetofPplus$, so long as $|x_s^+(\omega)|>R_0$ we have
            \begin{equation}
                \frac{\partial^2}{\partial s^2}\left|x^+_s(\omega)\right|^2\geq c.
            \end{equation}
            We now claim that there exists $s''\in(0,s')$ such that
            \begin{equation}
                \left|x^+_{s''}(\omega)\right|^2>R_0^2
                \quad\text{and}\quad
                \left(\frac{\partial}{\partial s}\left|x^+_s(\omega)\right|^2\right)\Bigg|_{s=s''}\geq0.
            \end{equation}
            We prove this in two cases. First, suppose that for all $s\in[0,s')$, $\left|x^+_{s}(\omega)\right|^2>R_0^2$. Then by the Mean Value Theorem, there exists $s''\in(0,s')$ such that
            \begin{equation}
                \left(\frac{\partial}{\partial s}\left|x^+_s(\omega)\right|^2\right)\Bigg|_{s=s''}=\frac{\left|x^+_{s'}(\omega)\right|^2-\left|x^+_{0}(\omega)\right|^2}{s'}\geq0.
            \end{equation}
            Where the final inequality holds by our assumption on $x_{s'}^{\pm}(\omega)$. 

            Second, suppose there exists $s^*\in[0,s')$ such that $\left|x^+_{s^*}(\omega)\right|^2\leq R_0^2$. Define $\alpha=\sup\{s\in[0,s'):\left|x^+_{s}(\omega)\right|^2\leq R_0^2\}$. 
            Since $|x_{s'}^+(\omega)|^2 \geq R^2 >R_0^2$, and by the continuity of the flow, we have $\alpha<s'$ and $|x_{\alpha}^+(\omega)|^2=R_0^2$. By the Mean Value Theorem, there exists $s''\in(\alpha,s')$ such that
                \begin{equation}
                    \left|x^+_{s''}(\omega)\right|^2>R_0^2\quad\text{and}\quad\left(\frac{\partial}{\partial s}\left|x^+_s(\omega)\right|^2\right)\Bigg|_{s=s''}=\frac{\left|x^+_{s'}(\omega)\right|^2-\left|x^+_{\alpha}(\omega)\right|^2}{s'-\alpha}>0.
                \end{equation}
            Where the final inequality holds again by our assumption on $x_{s'}^+(\omega).$
            
            In either case, we have found $s''\in(0,s')$ such that
                \begin{align}
                     \left|x^+_{s''}(\omega)\right|^2 > R_0^2, \qquad  \left(\frac{\partial}{\partial s}\left|x^+_s(\omega)\right|^2\right)\Bigg|_{s=s''} \geq 0 ,
                     \qquad\text{and} \quad \left(\frac{\partial^2}{\partial s^2}\left|x^+_s(\omega)\right|^2\right)\Bigg|_{s=s''}>0.
                \end{align}
            Therefore 
            \begin{align}
                |x_s^{+}(\omega)|^2 >R_0^2, \quad \frac{\p}{\p s} |x_s^+(\omega)|^2 >0,\quad\text{ and } \quad \frac{\p^2}{\p s^2} |x_s^+(\omega)|^2 >0 \quad \text{ for all } s > s''.
            \end{align}
            This gives the desired conclusion.

        2) Consider $\omega_0 \in \characteristicsetofPplusminus$, such that for some $s' \in \Rb$, $|x_{s'}^{\pm}(\omega_0)| >R_0$. By continuity of the flow in $s$, there exists $R_1>R_0$ and $\e>0$ such that $|x_s^{\pm}(\omega_0)| \geq R_1$ for all $s \in (s'-\e, s'+\e)$. Now consider $s_1, s_2 \in (s'-\e, s'+\e)$ such that $|x_{s_2}^{\pm}(\omega_0)| \geq |x_{s_1}^{\pm}(\omega_0)|$. Let $\omega_1=\varphi_{s_1}^{\pm}(\omega_0)$, so $x_{s_2}^{\pm}(\omega_0)=x_{s_2-s_1}^{\pm}(\omega_1)$ and $|x_{s_2-s_1}^{\pm}(\omega_1)| \geq \max\{R_1, x_0^{\pm}(\omega_1)\}$. Thus by part 1)
        \begin{equation}
            \lim_{s \ra \infty} |x_{sgn(s_2-s_1)s+s_1}^{\pm}(\omega_0)| = \lim_{s \ra \infty} |x_{sgn(s_2-s_1)s}^{\pm}(\omega_1)| = \infty.
        \end{equation}
        Then $\omega_0 \not \in \plusminustrappedset$. Since every null-bicharacteristic of $P$ is a reparametrization of a null-bicharacteristic of $P^+$ or $P^-$, the same conclusion applies to $\omega_0 \in \trappedset$. 
        \end{proof}

        This allows us to partition $\characteristicsetofPplusminus$ and prove some additional basic facts which will be useful in the construction of the non-trapping escape function, Lemma \ref{l:nontrapescape}.
        \begin{proposition}\label{prop:partition}
            The following hold.
            \begin{enumerate}
                \item We can partition $\characteristicsetofPplusminus$ as
                        \begin{equation}
                            \characteristicsetofPplusminus = \plusminusforwardtrappedset \sqcup \plusminusforwardnontrappedset = \plusminusbackwardtrappedset \sqcup \plusminusbackwardnontrappedset = \plusminusforwardtrappedset \cup \plusminusbackwardtrappedset \cup \plusminusnontrappedset.
                        \end{equation}

                \item $\plusminusforwardnontrappedset,\plusminusbackwardnontrappedset,\plusminusnontrappedset$ are open in $\characteristicsetofPplusminus$ and $\plusminusforwardtrappedset,\plusminusbackwardtrappedset,\plusminustrappedset$ are closed.
                \item If $K\subset\plusminusnontrappedset$ is compact, then for every $R> R_0$, there exists $T'\geq0$ such that for every $|s|\geq T'$ and $v\in K$,
                    \begin{equation}
                        |x^\pm_s(v)|\geq R.
                    \end{equation}
                Furthermore, for any $W$, a closed subset of $\mathbb R$, the set
                    \begin{equation}
                        \bigcup_{s\in W}\varphi_s^\pm(K),
                    \end{equation}
                is closed in $\Trf\setminus0$.
            \end{enumerate}
        \end{proposition}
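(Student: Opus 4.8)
The plan is to obtain all three parts from the escape dichotomy established in Lemma \ref{l:escapeFlow}, combined with continuous dependence of the half-wave flows $\varphi^{\pm}$ on their initial data and compactness of $K$; the dynamical content is entirely in Lemma \ref{l:escapeFlow}, and the rest is soft. For the partition (1), disjointness of $\plusminusforwardtrappedset$ and $\plusminusforwardnontrappedset$ is immediate, since $\sup_{s\ge0}|x_s^{\pm}(\omega)|<\infty$ and $\lim_{s\to\infty}|x_s^{\pm}(\omega)|=\infty$ cannot both hold. To see these two sets cover $\characteristicsetofPplusminus$, I would take $\omega\in\characteristicsetofPplusminus\setminus\plusminusforwardtrappedset$, so that $\sup_{s\ge0}|x_s^{\pm}(\omega)|=\infty$; since $s\mapsto x_s^{\pm}(\omega)$ is bounded on compact $s$-intervals there is some $s'>0$ with $|x_{s'}^{\pm}(\omega)|\ge\max\{2R_0,|x_0^{\pm}(\omega)|\}$, and Lemma \ref{l:escapeFlow}(1) with $R=2R_0$ then gives $\lim_{s\to\infty}|x_s^{\pm}(\omega)|=\infty$, i.e. $\omega\in\plusminusforwardnontrappedset$. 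The backward identity follows with $x_{-s}^{\pm}$ in place of $x_s^{\pm}$. The third equality is then formal: if $\omega\in\characteristicsetofPplusminus$ lies in neither $\plusminusforwardtrappedset$ nor $\plusminusbackwardtrappedset$, the first two identities force $\omega\in\plusminusforwardnontrappedset\cap\plusminusbackwardnontrappedset=\plusminusnontrappedset$, and the reverse containment is trivial.

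For (2), by (1) each trapped set is the relative complement in $\characteristicsetofPplusminus$ of the corresponding non-trapped set, so it suffices to prove $\plusminusforwardnontrappedset$ and $\plusminusbackwardnontrappedset$ are open in $\characteristicsetofPplusminus$; then $\plusminusnontrappedset$ is open as an intersection, $\plusminusforwardtrappedset$, $\plusminusbackwardtrappedset$ and $\plusminustrappedset=\plusminusforwardtrappedset\cap\plusminusbackwardtrappedset$ are closed in $\characteristicsetofPplusminus$, and since $\characteristicsetofPplusminus$ is closed in $\Trfo$ they are closed in $\Trfo$ as well. To prove openness of $\plusminusforwardnontrappedset$ at a point $\omega$, I would pick $s'>0$ with $|x_{s'}^{\pm}(\omega)|$ strictly greater than both $2R_0$ and $|x_0^{\pm}(\omega)|$; by continuous dependence of $\varphi_{s'}^{\pm}$ on its argument these strict inequalities, hence $|x_{s'}^{\pm}(\omega')|\ge\max\{2R_0,|x_0^{\pm}(\omega')|\}$, persist for all $\omega'$ in a neighborhood of $\omega$ within $\characteristicsetofPplusminus$, and Lemma \ref{l:escapeFlow}(1) then puts every such $\omega'$ into $\plusminusforwardnontrappedset$. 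The argument for $\plusminusbackwardnontrappedset$ is identical.

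For (3), I would establish the uniform escape claim first. Fix $R>R_0$. For $v\in K\subset\plusminusnontrappedset$ we have $|x_s^{\pm}(v)|\to\infty$ as $s\to\pm\infty$, so there is $T_v>0$ with $|x_{T_v}^{\pm}(v)|$ and $|x_{-T_v}^{\pm}(v)|$ both strictly larger than $\max\{R,|x_0^{\pm}(v)|\}$; by continuous dependence these inequalities hold for all $v'$ in a neighborhood $U_v$ of $v$ in $\characteristicsetofPplusminus$, and applying Lemma \ref{l:escapeFlow}(1) in the forward and backward directions (with the lemma's constant equal to $R$) gives $|x_s^{\pm}(v')|\ge\max\{R,|x_0^{\pm}(v')|\}\ge R$ for all $|s|\ge T_v$ and all $v'\in U_v$. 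Compactness of $K$ produces a finite subcover $U_{v_1},\dots,U_{v_N}$, and $T':=\max_i T_{v_i}$ works. For the closedness of $\bigcup_{s\in W}\varphi_s^{\pm}(K)$ in $\Trfo$, suppose $\varphi_{s_n}^{\pm}(v_n)\to\omega\in\Trfo$ with $s_n\in W$, $v_n\in K$; by compactness of $K$ pass to a subsequence with $v_n\to v\in K$. If $(s_n)$ were unbounded, a further subsequence would satisfy $|s_n|\to\infty$, and the uniform escape claim, which supplies such a threshold for \emph{every} $R>R_0$, would force $|x_{s_n}^{\pm}(v_n)|\to\infty$; but $x_{s_n}^{\pm}(v_n)$ is the spatial component of $\varphi_{s_n}^{\pm}(v_n)$, which converges to the finite spatial component of $\omega$, a contradiction. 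Hence $(s_n)$ is bounded; passing to a subsequence with $s_n\to s$, we have $s\in W$ since $W$ is closed, and joint continuity of the flow gives $\omega=\varphi_s^{\pm}(v)\in\varphi_s^{\pm}(K)\subset\bigcup_{s\in W}\varphi_s^{\pm}(K)$.

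The step I expect to be the crux is ruling out $|s_n|\to\infty$ in the last part of (3): this is precisely where the uniform-in-$K$ escape estimate, and through it the convexity bound of Lemma \ref{l:escapeFlow}, is indispensable, since otherwise a union over an unbounded closed set $W$ could fail to be closed. Everything else is a routine combination of that dichotomy with continuity of the flow and compactness of $K$.
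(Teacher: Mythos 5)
Your proof is correct and follows essentially the same approach as the paper's: the dynamical input is the escape dichotomy from Lemma~\ref{l:escapeFlow}, and the rest is continuity of the flow plus compactness of $K$. The one implementation difference worth noting is in the uniform-escape step of (3), where you extract a finite subcover from neighborhoods $U_v$ on which strict inequalities persist, whereas the paper instead defines the ``last-escape-time'' function $f(\omega)=\inf\{T\ge0:\forall\,|s|\ge T,\ |x_s^{\pm}(\omega)|>R\}$ and maximizes it over $K$, asserting its continuity; your covering argument cleanly sidesteps the need to justify that continuity.
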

        \begin{proof}
            1)  Let $\omega=(t,x,\tau,\xi)\in \characteristicsetofPplusminus$. For any $R>R_0$, either
                \begin{align}
                    &|x_{s'}^+(\omega)|\geq\max\{R,|x|\} \qquad \text{ for some }s'>0 \text{ or } \\
                    &|x_{s}^+(\omega)|<\max\{R,|x|\} \qquad \text{ for all } s>0.
                \end{align}
                    In the former case, Lemma \ref{l:escapeFlow} implies that $\omega\in\plusforwardnontrappedset$. In the latter case, it follows that $\omega\in\plusforwardtrappedset$. By definition, we have $\plusforwardnontrappedset\cap\plusforwardtrappedset=\emptyset$. The remaining three cases are proven analogously. As a consequence,
                        \begin{align*}
                            \plusminusforwardtrappedset\cup\plusminusbackwardtrappedset\cup\plusminusnontrappedset&=\plusminusforwardtrappedset\cup\plusminusbackwardtrappedset\cup(\plusminusforwardnontrappedset\cap\plusminusbackwardnontrappedset)
                            \\
                            &=\plusminusforwardtrappedset\cup\plusminusbackwardtrappedset\cup\left((\plusminusforwardtrappedset)^c\cap(\plusminusbackwardtrappedset)^c\right)
                            \\
                            &=\plusminusforwardtrappedset\cup\plusminusbackwardtrappedset\cup\left(\plusminusforwardtrappedset\cup\plusminusbackwardtrappedset\right)^c
                            \\
                            &=\characteristicsetofPplusminus.
                        \end{align*}
                2) Let $\omega\in\plusminusforwardnontrappedset$. For any $R>R_0$, there exists $s'\geq0$ so that 
                    \begin{equation}
                        |x_{s'}^\pm(\omega)|\geq\max\{2R,2|x_0^{\pm}(\omega)|\}.
                    \end{equation}
                By continuity of the flow, there exists $\delta>0$ such that for all $\zeta\in \characteristicsetofPplusminus$ with $|\omega-\zeta|<\delta$, we have
                    \begin{equation}
                        |x_{s'}^{\pm}(\omega)-x_{s'}^{\pm}(\zeta)|<\min\left\{\frac{|x_0^{\pm}(\omega)|}{4},\frac{R}{2}\right\},
                    \end{equation}
                for all $s\in[0,s']$. Then, we have that
                    \begin{equation}
                        |x_0^{\pm}(\zeta)|<\frac{5}{4}|x_0^{\pm}(\omega)|\text{ and }|x_{s'}^{\pm}(\zeta)|\geq\max\left\{\frac{3R}{2},\frac{7}{4}|x_0^{\pm}(\omega)|\right\} > \max\{R, |x_0^{\pm}(\zeta)|\}.
                    \end{equation}
                By Lemma \ref{l:escapeFlow}, $\lim_{s\to\infty}|x_s^\pm(\zeta)|=\infty$. Therefore $\zeta\in\plusminusforwardnontrappedset$, and  $\plusminusforwardnontrappedset$ is open. A similar argument shows that $\plusminusbackwardnontrappedset$ is open. We have that $\plusminusnontrappedset$ is open because it is the intersection of two open sets. Finally, the trapped sets are closed as the complements of open sets.
                
                3) Consider $K\subset\plusminusnontrappedset$ a compact set and let $R> R_0$. Define $f:K\to[0,\infty)$ as follows: 
                \begin{equation}
                    f(\omega) = \inf\{ T \geq 0: \forall |s|\geq T, |x_s^\pm(\omega)|>R\}.
                \end{equation}
                The existence of such a $T$ is guaranteed for $\omega\in K\subset\plusminusnontrappedset$ by definition and Lemma \ref{l:escapeFlow}. This function is continuous in $\omega$ by continuity of the flow. Thus, by compactness of $K$, $f$ has a finite maximum. Set $T'\geq0$ to be that maximum value.

                Now consider $\zeta\in \Trf\setminus0$ with a sequence $\zeta_i\in\bigcup_{s\in W}\varphi^\pm_s(K)$ such that $\zeta_i\to\zeta$. 
                Thus, there exist sequences $s_i\in W$ and $\omega_i\in K$ such that $\zeta_i=\varphi^\pm_{s_i}(\omega_i)$. 
                By the compactness of $K$, up to a replacement by a subsequence, there exists $\omega \in K$ such that $\omega_i \ra \omega$. 
                We claim that, up to replacement by a subsequence, there exists $s\in W$ such that $s_i \to s$. 
                To see this, pick $R>2|x_0(\zeta)|$. 
                Then there exists $T'\geq0$ such that $|x_s(K)|>R>2|x_0(\zeta)|$ for all $|s|\geq T'$. 
                Since $\varphi^{\pm}_{s_i}(\omega_i) \ra \zeta$, this means $s_i \in[-T',T']$, which is compact and proves our claim. 
                
                We next claim that $\varphi_s(\omega)=\zeta$. To see this, observe that
                    \begin{align*}
                        |\varphi_s(\omega)-\zeta|\leq|\varphi_s(\omega)-\varphi_s(\omega_{i})|+|\varphi_s(\omega_{i})-\varphi_{s_{i}}(\omega_{i})|+|\varphi_{s_{i}}(\omega_{i})-\zeta|.
                    \end{align*}
                The first term can be made arbitrarily small by the continuity of $\varphi$ in its initial data. The second term can be made arbitrarily small by the fundamental theorem of calculus and taking $s_{i}$ arbitrarily close to $s$. The third can be made arbitrarily small by convergence of $\zeta_{i}$ to $\zeta$. Thus, $\zeta\in\bigcup_{s\in W}\varphi^\pm_s(K)$, and so the set is closed. 
        \end{proof}

    \subsection{Consequences of TGCC}
        We conclude this section with two conditions equivalent to the time-dependent geometric control condition and a lemma connecting it with the geometric control condition used in \cite[Definition 2.2]{Kofroth23}. Recall that Definition \ref{Defn:TGCC} is a statement concerning the Hamiltonian flow generated by $p$. The first equivalence tells us that Definition \ref{Defn:TGCC} is equivalent to an analogous statement for the flows generated by $p^\pm$. The second equivalence tells us that Definition \ref{Defn:TGCC} is equivalent to analogous statements for the forward and backward trapped sets of $p^{\pm}$.
        The propositions in this section are similar to \cite[Proposition 2.12]{Kofroth23}, but the proofs are more involved due to the time-dependent nature of our damping and our requirement that the constants in Definition \ref{Defn:TGCC} are uniform in $t$.

        First we show that the time-dependent geometric control condition is equivalent to an analogous statement for the Hamiltonian flow of $p^{\pm}$.

         \begin{proposition}\label{prop:TGCCplusminus}
            Definition \ref{Defn:TGCC} holds if and only if there exist $\Cm_1,T_1>0$ such that for every $\omega\in \mathring{\Omega}^{\pm}_{tr}$ and $T\geq T_1$,
            \begin{equation}
                \frac{1}{2T}\int^T_{-T}a\left(t_s^\pm(\omega), x_s^\pm(\omega)\right)ds\geq \Cm_1.
            \end{equation}
        \end{proposition}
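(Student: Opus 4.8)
The plan is to move the averaged integral condition back and forth between the $p$-flow (which appears in Definition \ref{Defn:TGCC}) and the half-wave flows $\varphi^{\pm}$, using two ingredients already in hand: the reparametrization relating the two flows from the proof of Lemma \ref{l:CorrespondenceOfFlows}, namely $\varphi_{s(r)}(\omega)=\varphi^+_r(\omega)$ with $\tfrac{ds}{dr}=(g^{00}p^-)^{-1}$ along the curve (and its inverse), and the homogeneity rescaling of Proposition \ref{Propn:RescalingPropn}. Since $\trappedset=\plustrappedset\cup\minustrappedset$, it suffices to treat the $+$ sign, the $-$ case being identical. The non-negativity of $a$ will be used repeatedly to shrink an integration window without increasing the integral, and throughout we rely on $g^{00}\le -C<0$ and on the fact that $b^\pm$, being homogeneous of degree one and strictly signed, are bounded above and away from zero in magnitude on $\{|x|\le R_0\}\times S^2$.

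For the forward implication, fix $\omega\in\mathring{\Omega}^{+}_{tr}=\homogplusforwardtrappedset\cap\homogplusbackwardtrappedset$. Then $\omega\in\trappedset$ and $\omega_\tau=1$, so Definition \ref{Defn:TGCC} applies to $\omega$ with the fixed threshold $T_0$. Since $\mathring{\Omega}^{+}_{tr}$ is $\varphi^+$-invariant, lies in $\{|x|\le R_0\}$ by Lemma \ref{l:escapeFlow}, and has $\tau^+_s(\omega)=1$, $c\le|\xi^+_s(\omega)|\le C$ along the flow, the factor $g^{00}(x^+_s)\,p^-(\varphi^+_s(\omega))=g^{00}(x^+_s)(1-b^-(x^+_s,\xi^+_s))$ is bounded above and below in magnitude by positive constants uniform in $\omega$ and $s$. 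Changing variables in the $p$-flow integral of Definition \ref{Defn:TGCC} via $\varphi_{s(r)}(\omega)=\varphi^+_r(\omega)$ converts the $p$-average over $[-T,T]$, $T\ge T_0$, into a $\varphi^+$-average over a comparable symmetric window, with a fixed multiplicative loss from the two-sided bound on the Jacobian (the shrinkage of the window being absorbed using $a\ge0$); taking $T_1$ a fixed multiple of $T_0$ and $\Cm_1$ a fixed multiple of $\Cm$ closes this direction.

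For the converse, first extend the hypothesis from $\mathring{\Omega}^{\pm}_{tr}$ to all of $\plusminustrappedset$: given $\omega\in\plustrappedset$, the point $\Phi^+(\omega)=(\omega_t,\omega_x,1,\omega_\xi/\omega_\tau)$ lies in $\mathring{\Omega}^{+}_{tr}$, and by Proposition \ref{Propn:RescalingPropn} the $t$- and $x$-components of the $\varphi^+$-flow through $\omega$ and through $\Phi^+(\omega)$ coincide, so the two $\varphi^+$-averages of $a$ agree. Then run the change of variables of the forward direction in reverse: for $\omega\in\trappedset$, WLOG $\omega\in\plustrappedset$, Lemma \ref{l:escapeFlow} gives $|x_s(\omega)|\le R_0$ and $\tau_s(\omega)=\omega_\tau$ is constant because $g$ is stationary; hence $|\xi_s(\omega)|$ and $|b^-(\varphi_s(\omega))|$ are each two-sidedly comparable to $\omega_\tau$ by homogeneity, so $|g^{00}p^-|$ along the $p$-flow is comparable to $\omega_\tau$ with uniform constants. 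Under $\varphi^+_{s'(r')}(\omega)=\varphi_{r'}(\omega)$ the Jacobian is comparable to $\omega_\tau^{-1}$ while the integration window is rescaled by a factor comparable to $\omega_\tau$, and these cancel in the average $\tfrac{1}{2T}\int_{-T}^T$ --- this is exactly why the threshold in Definition \ref{Defn:TGCC} must be $T_0/\omega_\tau$ rather than a fixed constant. Using $a\ge0$ to pass to a slightly smaller symmetric window and applying the extended hypothesis once $T\ge T_0/\omega_\tau$, with $T_0$ a suitable fixed multiple of $T_1$, yields Definition \ref{Defn:TGCC}.

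The main obstacle is the bookkeeping of the $\omega_\tau$-dependence: one must verify that every quantity governing the reparametrization --- $|x_s|$, $|\xi_s|$, $g^{00}$, $b^\pm$, and hence $g^{00}p^\mp$ --- is two-sidedly comparable to the correct power of $\omega_\tau$ with constants independent of the trajectory, and that these powers cancel in the time-average so that $\Cm_1,\Cm,T_1,T_0$ come out genuinely uniform in $t$ and in the trajectory. The compactness input from Lemma \ref{l:escapeFlow} (trapped trajectories remain in $\{|x|\le R_0\}$), together with $\tau_s=\omega_\tau$ constant and $a\ge0$, is what makes this go through; the orientation reversal in the reparametrization $s\leftrightarrow r$ is harmless since all the windows are symmetric about $0$, and global solvability of the reparametrization ODE follows from the two-sided bounds on $g^{00}p^\mp$.
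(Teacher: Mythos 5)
Your proposal is essentially correct and follows the same approach as the paper's proof: both directions rest on the reparametrization between the $p$-flow and the half-wave flows $\varphi^\pm$ from Lemma~\ref{l:CorrespondenceOfFlows}, on uniform two-sided bounds for $|g^{00}p^\mp|$ along trapped trajectories (derived from $|x_s|\leq R_0$, constancy of $\tau_s$, and homogeneity of $b^\pm$), on the homogeneous rescaling of Proposition~\ref{Propn:RescalingPropn} to reduce from $\plusminustrappedset$ to $\homogplusminustrappedset$, and on $a\geq 0$ to shrink the transformed window to a symmetric one. You correctly identify that the $\omega_\tau$-dependence of the Jacobian cancels against the $\omega_\tau$-rescaling of the window in the average, which is precisely why the threshold in Definition~\ref{Defn:TGCC} scales as $T_0/\omega_\tau$.

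Two small remarks. First, in your converse paragraph you write the reparametrization as $\varphi^+_{s'(r')}(\omega)=\varphi_{r'}(\omega)$ while asserting the Jacobian is comparable to $\omega_\tau^{-1}$; but with that labeling $\frac{ds'}{dr'}=g^{00}p^-$ is comparable to $\omega_\tau$, not $\omega_\tau^{-1}$. The correct relation with Jacobian $\simeq\omega_\tau^{-1}$ is $\varphi_{s(r)}(\omega)=\varphi^+_r(\omega)$, which is what you actually need when transforming the $p$-flow average $\frac{1}{2T}\int_{-T}^{T}a(\varphi_s(\omega))\,ds$. This is a slip in notation; the cancellation argument you then describe is the right one. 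Second, the paper, after establishing the estimate at the threshold time, runs a separate interval-splitting step to extend to all larger $T$; your sketch implicitly handles all $T\geq T_1$ (resp.\ $T\geq T_0/\omega_\tau$) in one pass, since the ratio (transformed window width)/(original window width) and the Jacobian bound are uniform in $T$. That is a valid and slightly more streamlined bookkeeping than the paper's.
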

            \begin{proof}
            1) We first establish uniform bounds for $g^{00} p^{\pm}$ on $\homogplusminustrappedset$.
            
            Let $\omega=(t_0,x_0,\tau_0,\xi_0)\in \mathring{\Omega}^{\pm}_{tr}$. In particular, we have that $\omega\in\characteristicsetofPplusminus\cap\Phi^{\pm}(\Trfo)$. Thus, $\tau_0=b^\pm(x_0,\xi_0)$ and there exist $\tau_1\in\mathbb R$ and $\xi_1\in\mathbb R^3$ such that 
                \begin{equation}
                        w=(t_0,x_0,\tau_0,\xi_0)=\left(t_0,x_0,\frac{\tau_1}{b^\pm(x_0,\xi_1)},\frac{\xi_1}{b^\pm(x_0,\xi_1)}\right).
                \end{equation}
            By the 1-homogeneity of $\bpmx$ in $\xi$
                \begin{equation}
                    \tau_0=b^\pm(x_0,\xi_0)=b^\pm\left(x_0,\frac{\xi_1}{b^\pm(x_0,\xi_1)}\right)=\frac{b^\pm\left(x_0,\xi_1\right)}{b^\pm(x_0,\xi_1)}=1.
                \end{equation}
            Furthermore, since $\tau_s(\omega)=\tau_0(\omega)$ and $\bpmxs=\bpm(x^{\pm}_0, \xi^{\pm}_0)$ for all $s$, we have 
                    \begin{align*}
                        p^\mp(\varphi_s(\omega))&=\tau_s(\omega)-b^\mp(\varphi_s(\omega))
                        %\\
                        %&=1-b^\mp(x_s(\omega),\xi_s(\omega))
                        \\
                        &=b^\pm(x_s(\omega),\xi_s(\omega))-b^\mp(x_s(\omega),\xi_s(\omega)).
                    \end{align*}
            Now note that, since $\omega\in\mathring{\Omega}^{\pm}_{tr}$, $x_s(\omega)$ and $\xi_s(\omega)$ remain in compact sets for all $s\in\mathbb R$. In particular, by Lemma \ref{l:escapeFlow} and \eqref{eq:xibhomog}, the resulting compact subset of $T^*\mathbb R^3$ is independent of $\omega$. Moreover, the function 
                \begin{equation}
                    (x,\xi)\mapsto b^\pm(x,\xi)-b^\mp(x,\xi),
                \end{equation}
            is continuous and thus attains its minimum and maximum on that compact set. In fact, it is signed ($b^+-b^->0$ and $b^--b^+<0$). Additionally, recall that $-C \leq g^{00} \leq -\frac{1}{C}$ since $g$ is asymptotically flat, $\p_t$ is uniformly time-like, and constant time-slices are uniformly space-like. Thus, there exist $c^* ,C^*>0$ such that for all $s\in\mathbb R$, and $\omega \in \homogplusminustrappedset$
                \begin{align}\label{eq:pminusbound}
                     -C^*&\leq g^{00}(\varphi_s^{\pm}(\omega)) p^-(\varphi_s^{\pm}(\omega))\leq  -c^*\\
                    c^*&\leq g^{00}(\varphi_s^{\pm}(\omega))  p^+(\varphi_s^{\pm}(\omega))\leq C^*.
                \end{align}

            2) We now prove that Definition \ref{Defn:TGCC} implies the integral bound. To do so we first work with $\omega \in \homogplustrappedset$. Recall the reparameterization from Lemma \ref{l:CorrespondenceOfFlows} of a null bicharacteristic of $p^{+}$ into a null bicharacteristic of $p$ given by 
                \begin{equation}
                    \frac{d}{dr'}s'(r')=g^{00}(\varphi_{s'(r')}^+(\omega)) p^-\left(\varphi_{s'(r')}^+(\omega)\right),\quad s'(0)=0.
                \end{equation}
            Note that since $g^{00} p^- < 0$, this is an orientation-reversing reparameterization. In particular, $s'$ is a monotonically decreasing function of $r'$ and its inverse, by an abuse of notation denoted by $r'$, is therefore also monotonically decreasing. Denote by $\overline C_0$ and $T_0$ the constants from Definition \ref{Defn:TGCC}. Pick $T_1 = C^* T_0$. Then since $s'(0)=0$, and 
                \begin{equation} \label{eq:sprimesize}
                    -C^* \leq \frac{d}{dr'} s'(r') \leq -c^*,
                \end{equation}
            integrating from $r'=0$ to $r'= \pm T_0$ we obtain 
                \begin{align}
                    -T_1 = -C^* T_0 \leq &s'(T_0) \leq -c^* T_0\\
                    c^*T_0 \leq &s'(-T_0) \leq C^* T_0 = T_1.
                \end{align}
            Now since $r'$ is monotone decreasing, we have 
                \begin{align}
                    &r'(-T_1) \geq r'(s'(T_0)) = T_0 \label{eq:rprimebound} \\
                   & r'(T_1) \leq r'(s'(-T_0))=-T_0.
                \end{align}
            Now set $ C_1=\frac{c^*}{C^*}\Cm_0$, and consider any $\omega \in \homogplustrappedset$. By substituting $s=s'(r')$, we have
                \begin{align}
                    \frac{1}{2T_1}&\int^{T_1}_{-T_1}a\left(t_{s}^+(\omega), x_{s}^+(\omega)\right)ds
                    \\
                    &=\frac{1}{2T_1}\int^{r'(T_1)}_{r'(-T_1)}a\left(t_{s'(r')}^+(\omega),  x_{s'(r')}^+(\omega)\right)\left(\frac{d}{dr'} s'(r') \right)dr'
                    \\
                    &=\frac{1}{2T_1}\int^{r'(-T_1)}_{r'(T_1)}a\left(t_{s'(r')}^+(\omega),x_{s'(r')}^+(\omega),\right) \left(-\frac{d}{dr'} s'(r') \right)dr'.
                \end{align}
            Now using that $\varphi^+_{s'(r')} = \varphi_{r'}$ and \eqref{eq:sprimesize} we have 
                \begin{align}
                    &\frac{1}{2T_1}\int^{r'(-T_1)}_{r'(T_1)}a\left(t_{s'(r')}^+(\omega), x_{s'(r')}^+(\omega)\right)\left( - \frac{d}{dr'} s'(r') \right)dr'\\
                    &\geq\frac{c^*}{2T_1}\int^{r'(-T_1)}_{r'(T_1)}a\left(t_{r'}(\omega), x_{r'}(\omega)\right)dr'.
                \end{align}
            Then by \eqref{eq:rprimebound} and since $a\geq 0$,
                \begin{align}
                    \frac{c^*}{2T_1}\int^{r'(-T_1)}_{r'(T_1)}a\left(t_{r'}(\omega), x_{r'}(\omega)\right)dr'
                    &\geq\frac{c^*}{2T_1}\int^{T_0}_{-T_0}a\left(t_{r'}(\omega), x_{r'}(\omega)\right)dr'.
                \end{align}
            Now, note that $\omega \in \homogplustrappedset \subset \plustrappedset \subset \trappedset$ and $\omega_{\tau}=1$. Then combining the preceding chain of inequalities and applying  Definition \ref{Defn:TGCC} and the definition of $T_1$ and $C_1$
                \begin{align}
                    \frac{1}{2T_1}\int^{T_1}_{-T_1}a\left(t_{s}^+(\omega), x_{s}^+(\omega)\right)ds &\geq \frac{c^*}{2T_1}\int^{T_0}_{-T_0}a\left(t_{r'}(\omega), x_{r'}(\omega)\right)dr'\\
                    &\geq c^* \frac{T_0}{T_1} \Cm_0 = c^* \frac{1}{C^*} \Cm_0
                    \\
                    &=C_1.
                \end{align}
            We now show that for any $T'\geq T_1$, we obtain the claim for a potentially smaller lower bound. Denote by $K\in\mathbb N$ the largest integer such that $KT_1\leq T'$. Then letting $\omega_j=\varphi_{jT_1}(\omega)$
                \begin{equation}
                    \frac{1}{T'}\int_0^{T'}a\left(t_s^+(\omega),x_s^+(\omega)\right)ds'\geq\frac{1}{T'}T_1\sum_{j=0}^{K-1}\frac{1}{T_1}\int_0^{T_1}a\left(t_s^+(\omega_j), x_s^+(\omega_j)\right)ds'.
                \end{equation}
             By the preceding argument, the right-hand side is bounded from below by $\frac{1}{T'}T_1KC_1$. Since $K$ is maximal, we also have $T'\leq(K+1)T_1$. Thus, the right-hand side is bounded from below by
                \begin{equation}
                    \frac{K}{K+1}C_1\geq\frac{1}{2}C_1=:\overline{C}_1.
                \end{equation}
                An analogous proof shows the same conclusion for $\omega \in \homogminustrappedset$.
            % 3) Now we consider $\varphi^-$. Recall the reparameterization from Lemma \ref{l:CorrespondenceOfFlows} of a null bicharacteristic of $p^-$ into a null bicharacteristic of $p$ given by  \pk{change for $g^{00}$}
            %     \begin{equation}
            %         \frac{d}{dr'}s'(r')=g^{00}\left(\varphi_{s'(r')}^{-} (\omega)\right)p^+\left(\varphi_{s'(r')}^-(\omega)\right),\quad s'(0)=0.
            %     \end{equation}
            % Note that since $g^{00} p^+ >0$ this is an orientation-preserving reparameterization. In particular, $s'$ is a monotonically increasing function of $r'$ and its inverse, by an abuse of notation denoted by $r'$, is therefore also monotonically increasing. Denote by $\overline C_0$ and $T_0$ the constants from TGCC. Pick $T_1 = C^* T_0$. Then since $s'(0)=0$, and 
            %     \begin{equation}
            %         c^* \leq \frac{d}{dr'} s'(r') \leq C^*,
            %     \end{equation}
            % integrating from $r'=0$ to $r'= \pm T_0$ we obtain 
            %     \begin{align}
            %         c^* T_0 \leq &s'(T_0) \leq C^* T_0 = T_1\\
            %         -T_1 = -C^*T_0 \leq &s'(-T_0) \leq -c^* T_0
            %     \end{align}
            % Now since $r'$ is monotone increasing, we have 
            %     \begin{align}
            %         &r'(T_1) \geq r'(s'(T_0)) = T_0 \\
            %        & r'(-T_1) \leq r'(s'(-T_0))=-T_0.
            %     \end{align}
            % Again set $C_1=\frac{c^*}{C^*} C_0$, then a series of computations analogous to those in the $\varphi^+$ case complete the proof. 

            3) Now we show that the integral bound implies Definition \ref{Defn:TGCC}. So consider $\omega \in \trappedset$. Since $\trappedset=\plustrappedset \cup \minustrappedset$, we have $\omega \in \plustrappedset$ or $\minustrappedset$. We will assume  $\omega \in \plustrappedset$. The proof for $\omega \in \minustrappedset$ is analogous.
            
            Since $\omega \in \characteristicsetofPplus$, $\omega_{\tau}=b^+(\omega_x,\omega_{\xi})$, and so  
            \begin{equation}
                \ti{\omega} = \left(\omega_t, \omega_x, 1, \frac{\omega_{\xi}}{\omega_{\tau}}\right) \in \homogplustrappedset.
            \end{equation}
            By Proposition \ref{Propn:RescalingPropn} we have
            \begin{equation}
                (t_s^+(\omega),x_s^+(\omega)) = (t_s^+(\ti{\omega}), x_s^+(\ti \omega)). \label{eq:spatialw}
            \end{equation}
            % Now note that since $\omega \in \characteristicsetofPplus$, $\omega_{\tau}=b^+(\omega_x,\omega_{\xi})=1$. Therefore 
            % \begin{equation}
            %     (\omega_t, \omega_x, \omega_{\tau}, \omega_{\xi}) = \left(\omega_t, \omega_x, \frac{\omega_{\tau}}{b^+(\omega_x,\omega_{\xi})}, \frac{\omega_{\xi}}{b^+(\omega_x, \omega_{\xi})}\right).
            % \end{equation}
            %That is $\omega \in \homogplustrappedset$.
            Now recall the reparametrization from Lemma \ref{l:CorrespondenceOfFlows} of a null bicharacteristic of $p$ into a null bicharacteristic of $p^+$ given by 
            \begin{equation}
                \frac{d}{dr} s(r) = \frac{1}{g^{00}(\varphi_{s(r)}(\omega)) p^-(\varphi_{s(r)} (\omega))}, \quad s(0)=0.
            \end{equation}
            Since $g^{00} p^-<0$ this reparametrization is orientation reversing. In particular, $s$ is a monotonically decreasing function of $r$. Furthermore, the inverse of $s$, by an abuse of notation denoted by $r$, is also monotone decreasing. 
            %Note that when $\omega \in \minustrappedset$ these statements are replaced by orientation preserving and monotone increasing. 

            Now noting that 
            \begin{equation}
                p^-(\varphi_{s}(\omega)) = \omega_{\tau} - b^-(x_s,\xi_s) = b^+(x_s,\xi_s)-b^-(x_s,\xi_s),
            \end{equation}
            and using that $b^+(\omega_x,\omega_{\xi})=\omega_{\tau}$ on $\characteristicsetofPplus$ and $b^{\pm} \simeq |\xi|$ by \eqref{eq:xibhomog}, we have for some $C>0$
            \begin{equation} 
                \frac{1}{C} \omega_{\tau} \leq p^-(\varphi_s(\omega)) \leq C \omega_{\tau}.
            \end{equation}
            Therefore for some $C^*, c^*>0$ we have 
            \begin{equation}\label{eq:sparambounds}
                -\frac{1}{c^* \omega_{\tau}} \leq \frac{d}{dr} s(r) \leq -\frac{1}{C^* \omega_{\tau}}. 
            \end{equation}
            Now set $T_0=\frac{T_1}{c^* \omega_{\tau}}$, then since $s(0)=0$
            integrating from $r=0$ to $r=\pm T_1$ we obtain 
            \begin{align}
                -T_0 = -\frac{T_1}{c^* \omega_{\tau}} \leq &s(T_1) \leq - \frac{T_1}{C^* \omega_{\tau}}\\
                \frac{T_1}{C^*\omega_{\tau}}\leq &s(-T_1)\leq \frac{T_1}{c^* \omega_{\tau}} =T_0.
            \end{align}
            Now since $r$ is monotone decreasing, we have 
            \begin{align}\label{eq:rsparam}
                r(-T_0) \geq r(s(T_1)) = T_1\\
                r(T_0) \leq r(s(-T_1)) = -T_1.
            \end{align}
            Now set $C_0=\frac{c^*}{C^*}\Cm_1$. By substituting $s'=s(r)$ we have 
            \begin{align}
                \frac{1}{2T_0} \int_{-T_0}^{T_0} a(t_{s'}(\omega), x_{s'}(\omega)) ds' = \frac{1}{2T_0} \int_{r(T_0)}^{r(-T_0)} a(t_{s(r)}(\omega), x_{s(r)}(\omega)) \left( -\frac{d}{dr}s(r)\right) dr.
            \end{align}
            Now since $\varphi^+_r=\varphi_{s(r)}$ and by \eqref{eq:sparambounds}, \eqref{eq:rsparam}, and \eqref{eq:spatialw} we have 
            \begin{align}
                \frac{1}{2T_0} \int_{r(T_0)}^{r(-T_0)} a(t_{s(r)}(\omega), x_{s(r)}(\omega)) \left( -\frac{d}{dr}s(r) \right) dr 
                &\geq  \frac{1}{2T_0 C^* \omega_{\tau}} \int_{r(T_0)}^{r(-T_0)} a(t_r^+(\omega), x_r^+(\omega)) dr \\
                &\geq \frac{1}{2T_0 C^* \omega_{\tau}} \int_{-T_1}^{T_1} a(t_r^+(\omega), x_r^+(\omega)) dr\\
                &= \frac{1}{2T_0 C^* \omega_{\tau}} \int_{-T_1}^{T_1} a(t_r^+(\ti w), x_r^+(\ti w)) dr. 
            \end{align}
            Now combining the preceding chain of inequalities and noting that since $\ti w \in \homogplustrappedset$ we can apply the assumed integral inequality to obtain 
            \begin{align}
                \frac{1}{2T_0} \int_{-T_0}^{T_0} a(t_{s'}(\omega), x_{s'}(\omega)) ds &\geq \frac{1}{2T_0 C^* \omega_{\tau}} \int_{-T_1}^{T_1} a(t_r^+(\ti w), x_r^+(\ti w)) dr \\
                &\geq \frac{T_1}{T_0 C^* \omega_{\tau}} \Cm_1 = \frac{c^*}{C^*}\Cm_1 = C_0.
            \end{align}
            As in step 2 we can extend this to any $T' \geq T_0= \frac{T_1}{c^* \omega_{\tau}}$ by replacing $C_0$ by $\Cm_0=\frac{1}{2} C_0$.
            \end{proof}

        Before proving our second equivalence, we record some lemmas which will be used in the proof. For $R \geq R_0$, we define the spatially compact semi-trapped sets
            \begin{equation}
                \pmhomogftrappedcompactset=\homogplusminusforwardtrappedset\cap\{|x|\leq R\},
            \end{equation}
        and
            \begin{equation}
                \pmhomogbtrappedcompactset=\homogplusminusbackwardtrappedset\cap\{|x|\leq R\}.
            \end{equation}
        First, a forward, respectively backward, trapped trajectory with initial position $\leq R$ must remain in a compact spatial region forward, respectively backward, in $s$. This result is analogous to the second part of Lemma \ref{l:escapeFlow}.
        \begin{lemma}\label{l:starlemma}
            Let $R> R_0$. If $\omega\in\pmhomogftrappedcompactset$, resp. $\pmhomogbtrappedcompactset$, then $|x_s^{\pm}(\omega)|\leq R$ for all $s\geq0$, resp. $s\leq0$.
        \end{lemma}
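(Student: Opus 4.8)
The plan is to argue by contradiction and reduce immediately to part (1) of Lemma \ref{l:escapeFlow}, which already encodes the escape-to-infinity dichotomy for null bicharacteristics of $p^\pm$ outside radius $R_0$. So fix $\omega \in \pmhomogftrappedcompactset$. By definition this means $\omega \in \homogplusminusforwardtrappedset \subseteq \plusminusforwardtrappedset$ --- in particular $\omega \in \characteristicsetofPplusminus$ and $\sup_{s\geq 0}|x_s^\pm(\omega)| < \infty$ --- together with $|x_0^\pm(\omega)| = |\omega_x| \leq R$. Suppose, toward a contradiction, that $|x_{s'}^\pm(\omega)| > R$ for some $s' > 0$.

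Because the initial point lies in $\{|x|\leq R\}$ we have $\max\{R,|x_0^\pm(\omega)|\} = R$, so the supposition reads $|x_{s'}^\pm(\omega)| \geq \max\{R,|x_0^\pm(\omega)|\}$ with $s'>0$. Since $R > R_0$, the $+s'$ case of Lemma \ref{l:escapeFlow}(1) applies and gives $\lim_{s\to\infty}|x_s^\pm(\omega)| = \infty$, contradicting $\sup_{s\geq 0}|x_s^\pm(\omega)| < \infty$. Hence no such $s'$ exists and $|x_s^\pm(\omega)|\leq R$ for all $s\geq 0$. For the backward statement I would run the identical argument for $\omega \in \pmhomogbtrappedcompactset \subseteq \plusminusbackwardtrappedset$: if $|x_{-s'}^\pm(\omega)| > R$ for some $s'>0$, the $-s'$ case of Lemma \ref{l:escapeFlow}(1) forces $\lim_{s\to\infty}|x_{-s}^\pm(\omega)| = \infty$, contradicting backward trappedness.

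I do not expect any genuine obstacle here: all the dynamical content lives in Lemma \ref{l:escapeFlow}, and the present lemma is simply the specialization of that dichotomy to trajectories that start inside the ball $\{|x|\leq R\}$. The only points requiring care are bookkeeping: that $\max\{R,|x_0^\pm(\omega)|\} = R$ thanks to the initial condition $|\omega_x|\leq R$, and that the rescaled forward trapped set $\homogplusminusforwardtrappedset$ is contained in $\plusminusforwardtrappedset$, so that forward-in-$s$ boundedness of the spatial component is available to contradict the escape conclusion (and likewise $\homogplusminusbackwardtrappedset \subseteq \plusminusbackwardtrappedset$ for the backward case).
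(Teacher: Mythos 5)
Your proof is correct and follows essentially the same route as the paper's: argue by contradiction, observe that $\max\{R,|x_0^\pm(\omega)|\}=R$ since the initial point of $\omega\in\pmhomogftrappedcompactset$ lies in $\{|x|\leq R\}$, apply Lemma~\ref{l:escapeFlow}(1) to force $|x_s^\pm(\omega)|\to\infty$, and contradict forward (resp.\ backward) trappedness. The one small refinement you add --- explicitly ruling out $s'=0$ via the definition of $\pmhomogftrappedcompactset$ so the hypothesis $s'>0$ of Lemma~\ref{l:escapeFlow}(1) is met --- is a harmless bit of extra bookkeeping that the paper leaves implicit.
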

        \begin{proof}
            Suppose not, so there exists $s'\geq 0$ such that $|x_{s'}^\pm(\omega)|>R$. 
            Since $\omega\in\pmhomogftrappedcompactset$, we have $R \geq x_0^\pm(\omega)$, and so $|x_s^{\pm}(\omega)| >R = \max\{R, |x_0^{\pm}(\omega)|\}$. Thus by Lemma \ref{l:escapeFlow} we have that $|x_s^\pm(\omega)|\to\infty$ as $s\to\infty$, which contradicts $\omega\in\pmhomogftrappedcompactset$.
        \end{proof}
        Next forward, respectively  backward, trapped trajectories with initial position $\leq R$ become arbitrarily close to trapped trajectories, where the size of $s$ required to achieve this is uniform over $\pmhomogftrappedcompactset$, respectively $\pmhomogbtrappedcompactset$.
        \begin{lemma}\label{l:halftrappedlemma}
            Let $R> R_0$. For all $\varepsilon>0$, there exists $T>0$, resp. $T<0$, such that for any $\omega\in \pmhomogftrappedcompactset$, resp. $\pmhomogbtrappedcompactset$, and for all $s\geq0$, resp. $s\leq0$, we have
                \begin{equation}
                    d_g\Big(\big(x^\pm_{s+T}(\omega),\xi^\pm_{s+T}(\omega)\big),\Pi_{x,\xi}\homogplusminustrappedset \Big)<\varepsilon. 
                \end{equation}
        \end{lemma}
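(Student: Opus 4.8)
\emph{Setup.} The plan is to reduce the statement to the autonomous $(x,\xi)$-dynamics and then argue by contradiction using compactness. Since $g$ is stationary, $p^\pm=\tau-b^\pm(x,\xi)$ is independent of $t$, so along $\varphi^\pm_s$ one has $\dot t=1$, $\dot\tau=0$, and $(\dot x,\dot\xi)$ depends only on $(x,\xi)$. Write $\psi^\pm_s$ for the induced flow on $\Trto$, so that $\Pi_{x,\xi}\varphi^\pm_s(\omega)=\psi^\pm_s(\Pi_{x,\xi}\omega)$ and the quantity in the lemma involves only $\psi^\pm$. I will prove the forward case with the $+$ sign; the $-$ sign and the backward case (in which one reverses the direction of $s$, replacing $T>0$ by $T<0$) are entirely analogous.

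\emph{Confinement.} If $\omega\in\pmhomogftrappedcompactset$ then $\omega\in\characteristicsetofPplus$ with $\omega_\tau=1$ (as recorded earlier for points of $\homogplusforwardtrappedset$), hence $b^+(\omega_x,\omega_\xi)=\omega_\tau=1$. Since $\homogplusforwardtrappedset$ is invariant under $\varphi^+$ and its points satisfy both $\tau=1$ and $\tau=b^+(x,\xi)$, the projected orbit $\psi^+_s(\Pi_{x,\xi}\omega)$ remains in $\{b^+=1\}$ for all $s$, while Lemma~\ref{l:starlemma} gives $|x^+_s(\omega)|\le R$ for all $s\ge0$. Thus for $s\ge0$ the point $\psi^+_s(\Pi_{x,\xi}\omega)$ lies in $\mathcal K:=\{(x,\xi):|x|\le R,\ b^+(x,\xi)=1\}$, and $\mathcal K$ is compact: it is closed and, by \eqref{eq:xibhomog}, $c\le|\xi|\le C$ on it.

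\emph{Compactness argument.} Suppose the conclusion fails for this case: there is $\varepsilon_0>0$ and, for each $n\in\Nb$, a point $\omega_n\in\pmhomogftrappedcompactset$ and $s_n\ge0$ such that, setting $\sigma_n:=s_n+n\to\infty$ and $w_n:=\psi^+_{\sigma_n}(\Pi_{x,\xi}\omega_n)$, we have $d_g\!\left(w_n,\Pi_{x,\xi}\homogplustrappedset\right)\ge\varepsilon_0$. By the confinement step $w_n\in\mathcal K$, so after passing to a subsequence $w_n\to w_\infty\in\mathcal K$. Fix $\rho\in\Rb$. For $n$ large enough that $\sigma_n+\rho\ge0$ we have $\psi^+_\rho(w_n)=\psi^+_{\sigma_n+\rho}(\Pi_{x,\xi}\omega_n)\in\mathcal K$ by confinement; letting $n\to\infty$ and using continuity of the flow and closedness of $\mathcal K$ gives $\psi^+_\rho(w_\infty)\in\mathcal K$. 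As $\rho$ is arbitrary, the entire $(x,\xi)$-orbit of $w_\infty$ has $|x|\le R$. Write $w_\infty=(x_\infty,\xi_\infty)$ and set $\hat w_\infty:=(0,x_\infty,1,\xi_\infty)$. Then $b^+(x_\infty,\xi_\infty)=1$ (since $w_\infty\in\mathcal K$), so $\hat w_\infty\in\characteristicsetofPplus$ and $\Phi^+(\hat w_\infty)=\hat w_\infty$, whence $\hat w_\infty\in\Phi^+(\Trfo)$; moreover $\sup_{s\in\Rb}|x^+_s(\hat w_\infty)|=\sup_{s\in\Rb}|\Pi_x\psi^+_s(w_\infty)|\le R<\infty$, so $\hat w_\infty\in\plustrappedset$. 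Therefore $\hat w_\infty\in\plustrappedset\cap\Phi^+(\Trfo)=\homogplustrappedset$ and $w_\infty=\Pi_{x,\xi}\hat w_\infty\in\Pi_{x,\xi}\homogplustrappedset$. Since $v\mapsto d_g(v,\Pi_{x,\xi}\homogplustrappedset)$ is continuous, $\varepsilon_0\le\lim_n d_g(w_n,\Pi_{x,\xi}\homogplustrappedset)=d_g(w_\infty,\Pi_{x,\xi}\homogplustrappedset)=0$, a contradiction.

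\emph{Main obstacle.} Everything here is soft — compactness and continuous dependence of the flow on initial data — except the final identification of the limit point $w_\infty$ with a point of the \emph{rescaled} trapped set $\homogplustrappedset$; this is where one must keep track of the normalization $\omega_\tau\equiv1$ and use that $\Phi^+$ fixes points of $\characteristicsetofPplus$ on $\{b^+=1\}$. The reduction to the autonomous $(x,\xi)$-flow at the outset is essential: $\pmhomogftrappedcompactset$ is not compact in the $t$-variable (the damping depends on $t$), so the compactness argument cannot be run directly on $\Trf$.
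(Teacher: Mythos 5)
Your proof is correct and follows essentially the same strategy as the paper's: argue by contradiction, project to the $(x,\xi)$-variables where the semi-trapped set is compact, extract a convergent subsequence, show the full orbit of the limit point stays in $\{|x|\leq R\}$, and then verify the normalization $\tau=1$ (equivalently $b^+=1$) to land in $\Pi_{x,\xi}\homogplusminustrappedset$. The only difference is cosmetic — you bundle the two conditions into the explicit compact set $\mathcal{K}$, whereas the paper verifies $|x_s|\le R$ and $\tau=1$ in two separate steps — but the underlying argument is identical.
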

        %\textcolor{red}{there's no need for $\Omega^{\pm}_{2R}$ in the statement because we already automatically have  $\plusminustrappedset \subset \{|x|\leq R\}$ by Lemma \ref{l:escapeFlow}}
        \begin{proof}
        We prove the $\pmhomogftrappedcompactset$ case and the $\pmhomogbtrappedcompactset$ case is analogous. 
            Suppose the desired conclusion does not hold. Then there exists $\varepsilon_0>0$ and sequences $\omega_j\in\pmhomogftrappedcompactset$, and $s_j\geq0$, such that 
                \begin{equation}
                    d_g\Big(\big(x^\pm_{s_j+j}(\omega_j),\xi^\pm_{s_j+j}(\omega_j)\big),\Pi_{x,\xi}\homogplusminustrappedset \Big)\geq\varepsilon_0. \label{EQN:lowerboundondistance}
                \end{equation}
            From the invariance of $\pmhomogftrappedcompactset$ under the flow and Lemma \ref{l:starlemma}, we have that for all $j\in\mathbb N$,
                \begin{equation}
                    |\xi_{s_j+j}(\omega_j)|\approx1 \text{ and }|x_{s_j+j}(\omega_j)|\leq R.
                \end{equation}
            Therefore, the sequence $\left(x^\pm_{s_j+j}(\omega_j),\xi^\pm_{s_j+j}(\omega_j)\right)_{j\in\mathbb N}\in T^*\mathbb R^3$ is bounded. Thus, after potentially passing to a subsequence, there exists $\big(\overline x,\overline\xi\big)\in T^*\mathbb R^3$ such that
                \begin{equation}\label{eq:xsjconverge}
                    \lim_{j\to\infty}\left(x^\pm_{s_j+j}(\omega_j),\xi^\pm_{s_j+j}(\omega_j)\right)=\big(\overline x,\overline\xi\big).
                \end{equation}
            We first claim that $\big(\overline x,\overline\xi\big)\in\Pi_{x,\xi}\plusminustrappedset$. Denote by $\tilde\varphi$ the null bicharacteristic flow on $T^*\mathbb R^3$. In particular, $\tilde\varphi_{s_j+j}(\omega_j)=\left(x^\pm_{s_j+j}(\omega_j),\xi^\pm_{s_j+j}(\omega_j)\right)$. From the group law, we know that
                \begin{equation}
                    \tilde\varphi_{s+s_j+j}(\omega_j)=\tilde\varphi_s\left(\tilde\varphi_{s_j+j}(\omega_j)\right).
                \end{equation}
            Now fix an $s\in\mathbb R$. By continuity of $\tilde\varphi_s(\cdot)$ and \eqref{eq:xsjconverge} for any $\e>0$ there exists $J \in \Nb$, $J >-s$, such that for all $j \geq J$

            %we have that for any $\varepsilon>0$, there exists $\delta>0$ such that if $\zeta\in T^*\mathbb R^3$ and $\big|\zeta-\big(\overline x,\overline\xi\big)\big|<\delta$, then $\big|\tilde\varphi_s(\zeta)-\tilde\varphi_s\big(\overline x,\overline\xi\big)\big|<\varepsilon$. Let $\varepsilon>0$ and pick $J\in\mathbb N$ such that $J>-s$ and for all $j\geq J$, $\big|\tilde\varphi_{s_j+j}(\omega_j)-\big(\overline x,\overline\xi\big)\big|<\delta$. Thus, for all $j\geq J$,
                \begin{equation}
                \big|\tilde\varphi_{s+s_j+j}(\omega_j)-\tilde\varphi_s\big(\overline x,\overline\xi\big)\big|<\varepsilon.
                \end{equation}
            That is, for each $s\in\mathbb R$, $\lim_{j\to\infty}\tilde\varphi_{s+s_j+j}(\omega_j)=\tilde\varphi_s\big(\overline x,\overline\xi\big)$. Since $s+s_j+j \geq 0$ and $\omega_j \in \pmhomogftrappedcompactset$, by Lemma \ref{l:starlemma} we have $|x_{s+s_j+j}(\omega_j)|\leq R$. Projecting the previous limit onto its $x$-coordinate yields $\big|x_s\big(\overline x,\overline\xi\big)\big|\leq R$ for all $s\in\mathbb R$. By definition, $\big(\overline x,\overline\xi\big)\in\Pi_{x,\xi} \plusminustrappedset$.

            Now we will show $(\overline x, \overline \xi) \in \Pi_{x,\xi} \homogplusminustrappedset$, which will contradict \eqref{EQN:lowerboundondistance}. Let 
            \begin{equation}
                \tau_j= \bpm(x^{\pm}_{s_j+j}(\omega_j), \xi_{s_j+j}^{\pm}(\omega)).
            \end{equation}
            Then by continuity of $\bpm$ if we define $\tau=\lim_{j \ra \infty} \tau_j$, then $\tau=\bpm(\overline x, \overline \xi)$. Now since $\omega_j \in \pmhomogftrappedcompactset$, $\tau_j=1$ for all $j$. Thus $\tau=1$ and so for any $t$ we have $(t, \overline{x}, 1, \overline{\xi}) \in \homogplusminustrappedset$. Thus $(\overline{x}, \overline{\xi}) \in \Pi_{x,\xi} \homogplusminustrappedset$.
        \end{proof}
        We have a second equivalent statement to the time-dependent geometric control condition, but involving only forward and backward trapped trajectories with initial position $\leq R$.
        \begin{proposition}\label{prop:oneSidedGCC}
            Definition \ref{Defn:TGCC} holds if and only if there exist $T_2,\Cm_2>0$ such that for every $T'\geq T_2$, the following holds: If $\omega\in \pmhomogftrappedcompactset$, then
                \begin{equation}
                    \frac{1}{T'}\int_0^{T'}a\left(t_s^{\pm}(\omega), x_s^{\pm}(\omega)\right)ds\geq \Cm_2.
                \end{equation}
            Similarly, if $\omega\in\pmhomogbtrappedcompactset$, then
                \begin{equation}
                    \frac{1}{T'}\int_{-T'}^0a\left(t_s^{\pm}(\omega), x_s^{\pm}(\omega)\right)ds\geq \Cm_2.
                \end{equation}
        \end{proposition}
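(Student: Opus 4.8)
The plan is to deduce both directions from the symmetric criterion over $\homogplusminustrappedset$ furnished by Proposition \ref{prop:TGCCplusminus}. The implication ``one-sided bounds $\Rightarrow$ Definition \ref{Defn:TGCC}'' is immediate: let $w\in\homogplusminustrappedset$. Then $w\in\homogplusminusforwardtrappedset$ and, since $\homogplusminustrappedset\subset\plusminustrappedset$, the second part of Lemma \ref{l:escapeFlow} gives $|x_s^\pm(w)|\le R_0<R$ for all $s\in\Rb$, so in fact $w\in\pmhomogftrappedcompactset\cap\pmhomogbtrappedcompactset$. Applying the forward bound on $[0,T']$ and the backward bound on $[-T',0]$ and adding yields
\begin{equation}
\frac{1}{2T'}\int_{-T'}^{T'}a\big(t_s^\pm(w),x_s^\pm(w)\big)\,ds\ge\Cm_2,\qquad T'\ge T_2,
\end{equation}
which is exactly the hypothesis of Proposition \ref{prop:TGCCplusminus} with $\Cm_1=\Cm_2$, $T_1=T_2$; hence Definition \ref{Defn:TGCC} holds.

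For the converse I would write only the forward statement, the backward one following verbatim after $s\mapsto-s$ and $\pmhomogftrappedcompactset\mapsto\pmhomogbtrappedcompactset$. First, Proposition \ref{prop:TGCCplusminus} supplies $\Cm_1,T_1>0$ with $\frac{1}{2T}\int_{-T}^Ta(t_s^\pm(w),x_s^\pm(w))\,ds\ge\Cm_1$ for all $w\in\homogplusminustrappedset$ and $T\ge T_1$. Since $\homogplusminustrappedset$ is $\varphi^\pm$-invariant, applying this to $\varphi_{T_1}^\pm(w)$ and changing variables converts it into the one-sided trapped bound
\begin{equation}
\frac{1}{L_0}\int_0^{L_0}a\big(t_s^\pm(w),x_s^\pm(w)\big)\,ds\ge\Cm_1,\qquad w\in\homogplusminustrappedset,\quad L_0:=2T_1.
\end{equation}

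Next I fix a small $\e>0$ (pinned down below) and take $T=T(\e)>0$ from Lemma \ref{l:halftrappedlemma}. Given $\omega\in\pmhomogftrappedcompactset$ and $T'$ large, I partition $[T,T']$ into $K=\lfloor(T'-T)/L_0\rfloor$ windows $I_k=[T+kL_0,T+(k+1)L_0]$. By Lemma \ref{l:halftrappedlemma}, at the left endpoint of $I_k$ the point $(x_{T+kL_0}^\pm(\omega),\xi_{T+kL_0}^\pm(\omega))$ is within $g$-distance $\e$ of $\Pi_{x,\xi}\homogplusminustrappedset$; since $p^\pm=\tau-b^\pm(x,\xi)$ forces $\tau\equiv1$ and $t_s^\pm=\omega_t+s$ along the flow, and since $\homogplusminustrappedset$ is invariant under translation in $t$ (its defining $(x,\tau,\xi)$-flow ignores $t$), I can choose $w_k\in\homogplusminustrappedset$ with $|\varphi_{T+kL_0}^\pm(\omega)-w_k|\lesssim\e$. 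Smooth dependence of the flow on initial data over the fixed horizon $L_0$ — all trajectories lying in one fixed compact set by Lemma \ref{l:starlemma}, the trapping of $w_k$, and $|\xi|\approx1$, after normalizing away the common $t$-translation (which does not affect the spatial flow) — gives $|\varphi_{s+T+kL_0}^\pm(\omega)-\varphi_s^\pm(w_k)|\le\delta(\e)$ for $s\in[0,L_0]$, with $\delta(\e)\to0$ uniformly in $k,\omega$. Uniform continuity of $a$ then produces a modulus $\eta(\e)\to0$ with
\begin{equation}
\int_{I_k}a\big(t_s^\pm(\omega),x_s^\pm(\omega)\big)\,ds\ge\int_0^{L_0}a\big(t_s^\pm(w_k),x_s^\pm(w_k)\big)\,ds-L_0\eta(\e)\ge L_0\big(\Cm_1-\eta(\e)\big),
\end{equation}
the last step being the one-sided trapped bound at $w_k$. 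Fixing $\e$ so that $\eta(\e)\le\Cm_1/2$, discarding the contributions of $[0,T]$ and the last incomplete window (using $a\ge0$), and noting $KL_0\ge T'-T-L_0\ge T'/2$ once $T'\ge2(T+L_0)$, I conclude $\frac{1}{T'}\int_0^{T'}a\ge\Cm_1/4=:\Cm_2$ for all $T'\ge T_2:=2(T+L_0)$.

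The step I expect to be the main obstacle is this uniformity in time: the windows $I_k$ march off to $t=+\infty$, so comparing $a$ along the near-trapped trajectory with $a$ along $w_k$ requires a single modulus of continuity valid for all times. This is where the standing hypothesis that $a$ is uniformly continuous is essential, together with the $t$-independence of the spatial flow (which keeps its modulus of continuity uniform in $t$) and the uniformity in $\omega$ of the time $T(\e)$ in Lemma \ref{l:halftrappedlemma}; without these the constant $\Cm_2$ could degrade as $T'\to\infty$.
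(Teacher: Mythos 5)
Your proof is correct and uses the same key ingredients as the paper: Proposition \ref{prop:TGCCplusminus} to pass to a symmetric criterion on the homogeneous trapped set, Lemma \ref{l:halftrappedlemma} to shadow near a fully trapped point after a uniform time, uniform continuity of $a$, and the time-independence of the spatial flow. The only organizational difference is that the paper performs a single shadowing comparison over one window $[\ti T,\ti T+2T_1]$ to obtain a bound at the fixed scale $T_2=\ti T+2T_1$ uniformly over all $\omega\in\pmhomogftrappedcompactset$, and then extends to arbitrary $T'$ by invoking the tiling argument already present in step 2 of the proof of Proposition \ref{prop:TGCCplusminus} (using that $\varphi^\pm_{jT_2}(\omega)$ remains in $\pmhomogftrappedcompactset$), whereas you tile $[T,T']$ directly and perform a fresh shadowing comparison in each window. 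Both routes are valid; the paper's is marginally cleaner since the delicate comparison step happens only once, but yours makes the source of uniformity (the fact that Lemma \ref{l:halftrappedlemma}'s shift time is independent of $\omega$, and the modulus of continuity of $a$ is uniform in $t$) more visible.
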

        \begin{proof}
            1) First assume that the integral lower bound holds. 
            Note that $\homogplusminustrappedset = \homogplusminusforwardtrappedset \cap \homogplusminusbackwardtrappedset$ and by Lemma \ref{l:escapeFlow} $\homogplusminustrappedset \subset \{|x| \leq R\}$. 
            Therefore $\homogplusminustrappedset \subset \pmhomogftrappedcompactset \cap \pmhomogbtrappedcompactset$. 
            Thus for any $T' \geq T_2$ and any $\omega \in \homogplusminustrappedset$ we have 
            \begin{align}
                &\frac{1}{2T'} \int_{-T'}^{T'} a(t_s^{\pm}(\omega), x_s^{\pm}(\omega)) ds \\
                &= \frac{1}{2} \left(\frac{1}{T'} \int_0^{T'} a(t_s^{\pm}(\omega), x_s^{\pm}(\omega)) ds + \frac{1}{T'} \int_{-T'}^0 a(t_s^{\pm}(\omega), x_s^{\pm}(\omega)) ds \right) \\
                &\geq \Cm_2.
            \end{align}
            Then by Proposition \ref{prop:TGCCplusminus}, Definition \ref{Defn:TGCC} holds. 
        
            2) Now assume Definition \ref{Defn:TGCC} holds. We will prove the case $\omega \in \pmhomogftrappedcompactset$ in detail. The case $\omega \in \pmhomogbtrappedcompactset$ is analogous. 
            
            Since Definition \ref{Defn:TGCC} holds, we can apply Proposition \ref{prop:TGCCplusminus} to obtain $\Cm_1, T_1 >0$, such that for every $\omega \in \homogplusminustrappedset$ and $T \geq T_1$
                \begin{equation}
                    \frac{1}{2T} \int_{-T}^T a(t_s^{\pm}(\omega), x_s^{\pm}(\omega)) ds \geq \Cm_1.
                \end{equation}
            By uniform continuity of $a$, there exists $\delta>0$ such that if $|x-y|<\delta$, then
                \begin{equation}
                    \left|a(t,x)-a(t,y)\right|\leq\frac{\Cm_1}{2}.
                \end{equation}
                Next, by the compactness of $\Pi_{x,\xi} \homogplusminusforwardtrappedset \times[0,2T_1]_s$, the flow $\tilde\varphi_s^{\pm}=\Pi_{x,\xi} \varphi_s^{\pm}$ is uniformly continuous there in $(x,\xi)$ and $s$. Thus, there exists $\varepsilon>0$ such that for all $s_0, s_1\in[0,2T_1]$, if 
            \begin{equation}
                |s_0-s_1|+|x_0-x_1|+|\xi_0-\xi_1|<\varepsilon,
            \end{equation}
            then 
            \begin{equation}
                |\tilde\varphi_{s_0}^{\pm}(x_0,\xi_0)-\tilde\varphi_{s_1}^{\pm}(x_1,\xi_1)|<\delta.
            \end{equation} 
            From Lemma \ref{l:halftrappedlemma}, there exists $\tilde T>0$ so that for each $\omega\in\homogplusminusforwardtrappedset$, there exists $(\overline x,\overline\xi)\in\Pi_{x,\xi}\homogplusminustrappedset$ such that $|x_{\tilde T}^+(\omega)-\overline x|+|\xi_{\tilde T}^+(\omega)-\overline\xi|<\varepsilon$. Thus, for all $s\in[0,2T_1]$, we have
                \begin{equation}
                    |\tilde\varphi_{s}^{\pm}(\ti \varphi_{\ti T}(\omega))-\tilde\varphi_{s}^{\pm}(\overline x,\overline\xi)|<\delta.
                \end{equation}
            It follows that for all $t\in\mathbb R$,
                \begin{equation}
                    \left|a(t,x_{s}^{\pm}(\overline x,\overline\xi))-a(t,x_{\tilde T+s}^{\pm}(\omega))\right|\leq\frac{\Cm_1}{2}. \label{eq:gccequiv2Intermed}
                \end{equation}
            Set $\ti{\omega}=\varphi^{\pm}_{T_1}\left(t_{\tilde T}(\omega), \overline x, \tau_0(\omega)=1, \overline\xi\right)$. Since $(\overline x,\overline\xi)\in\Pi_{x,\xi}\homogplusminustrappedset$, we have that $\ti \omega \in\homogplusminustrappedset$. By Proposition \ref{prop:TGCCplusminus}, and then letting $s=s'+T_1$, we have 
                \begin{equation}
                    \Cm_1 \leq\frac{1}{2T_1}\int_{-T_1}^{T_1}a\left(t_s^{\pm}(\ti \omega), x_s^{\pm}(\ti \omega )\right)ds=\frac{1}{2T_1}\int_{0}^{2T_1}a\left(t_{\tilde T+s'}^{\pm}(\omega),x_{s'}^{\pm}(\overline x,\overline\xi)\right)ds'.
                \end{equation}
            By \eqref{eq:gccequiv2Intermed} we have 
                \begin{equation}
                    \frac{1}{2T_1}\int_{0}^{2T_1}\left|a\left(t_{\tilde T+s'}^{\pm}(\omega), x_{s'}^{\pm}(\overline x,\overline\xi)\right)-a\left(t_{\tilde T+s'}^{\pm}(\omega), x_{\tilde T+s'}^{\pm}(\omega)\right)\right|ds'\leq\frac{\Cm_1}{2}.
                \end{equation}
            Therefore,
                \begin{equation}
                    \frac{\Cm_1}{2}\leq\frac{1}{2T_1}\int_{\ti{T}}^{2T_1+\ti{T}} a\left( t_r^{\pm}(\omega), x_r^{\pm}(\omega)\right) dr 
                    \leq \frac{1}{2T_1}\int_0^{2T_1+\tilde T}a\left(t_{r'}^{\pm}(\omega), x_{r'}^+(\omega)\right)dr'.
                \end{equation}
            So,
                \begin{equation}
                    \frac{1}{2T_1+\tilde T}\int_0^{2T_1+\tilde T}a\left(t_s^{\pm}(\omega), x_s^{\pm}(\omega)\right)ds\geq\frac{T_1\Cm_1}{2T_1+\tilde T}=:\Cm_2.
                \end{equation}
            Define $T_2:=\tilde T+2T_1$. 

            For $T\geq T_2$, replace $ \Cm_2$ by half of its original value and apply the same type of argument as in the second part of step 2 of Proposition \ref{prop:TGCCplusminus}. 
        \end{proof}

    We conclude this section by showing that when $a$ does not depend on $t$, our Definition \ref{Defn:TGCC} is equivalent to the geometric control condition in \cite[Definition 2.2]{Kofroth23}.
        \begin{lemma}\label{l:gccequiv}
            Suppose $a(t,x)=a(x)$. Definition \ref{Defn:TGCC} holds if and only if for all $\omega \in \trappedset$, there exists $s \in \Rb$, such that $a(x_s(\omega))>0$.
        \end{lemma}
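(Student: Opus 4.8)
The plan is to prove the two implications separately; the forward direction is immediate, while the converse requires a compactness argument after reducing to the half-wave flows.

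Suppose first that Definition \ref{Defn:TGCC} holds, and fix $\omega\in\trappedset$. Choosing $T\ge T_0/\omega_\tau$ and using $a(t,x)=a(x)$, we get
\[
\frac{1}{2T}\int_{-T}^{T}a\big(x_s(\omega)\big)\,ds\ge\Cm>0 .
\]
Since $s\mapsto a(x_s(\omega))$ is continuous and nonnegative, a strictly positive average forces $a(x_s(\omega))>0$ for at least one $s$, which is the forward direction.

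For the converse, assume every $\omega\in\trappedset$ admits an $s$ with $a(x_s(\omega))>0$. By Proposition \ref{prop:TGCCplusminus} it suffices to produce $\Cm_1,T_1>0$ so that $\frac{1}{2T}\int_{-T}^{T}a(x_s^\pm(\omega))\,ds\ge\Cm_1$ for every $\omega\in\homogplusminustrappedset$ and $T\ge T_1$; I carry this out for $p^+$, the $p^-$ case being identical. Two facts make this tractable. First, since $p^+=\tau-b^+(x,\xi)$ has no $t$-dependence and its $x,\xi$-equations do not involve $\tau$, the curves $x^+_s(\omega),\xi^+_s(\omega)$ depend only on $v:=(\omega_x,\omega_\xi)$, and together with $a=a(x)$ this makes $\int_{-T}^{T}a(x^+_s(\omega))\,ds$ a function of $v$ alone. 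Second, $K^+:=\Pi_{x,\xi}\homogplustrappedset$ is compact: it is bounded because Lemma \ref{l:escapeFlow} gives $|x|\le R_0$ and \eqref{eq:xibhomog} gives $|\xi|\simeq 1$ (with $\tau=1$) on $\homogplustrappedset$, and it is closed by the continuity-of-the-flow argument used for Proposition \ref{prop:partition}; it is moreover invariant under the $(x,\xi)$-flow of $p^+$. Finally the hypothesis transfers to $K^+$: for $v\in K^+$ pick any $t_0$, so $\omega^*=(t_0,v_x,1,v_\xi)\in\homogplustrappedset\subset\plustrappedset\subset\trappedset$; the hypothesis gives $s$ with $a(x_s(\omega^*))>0$, and by Lemma \ref{l:CorrespondenceOfFlows} the $p$-orbit and the $p^+$-orbit through $\omega^*$ have the same spatial image — the reparametrization there is a bijection of $\Rb$ on a trapped trajectory since $g^{00}p^-$ is bounded above and below along it by \eqref{eq:pminusbound} — so $a(x^+_{s'}(\omega^*))=a(x^+_{s'}(v))>0$ for some $s'$.

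Now run the compactness argument. For each $v\in K^+$ choose $s_0(v)$ with $a(x^+_{s_0(v)}(v))>0$; by continuity of $a$ and of the flow there are $\varepsilon_v,\delta_v>0$ and a relative neighborhood $U_v\subset K^+$ of $v$ with $a(x^+_s(w))\ge\delta_v$ whenever $w\in U_v$ and $|s-s_0(v)|\le\varepsilon_v$. Cover $K^+$ by finitely many $U_{v_1},\dots,U_{v_N}$ and set $T_1:=\max_i(|s_0(v_i)|+\varepsilon_{v_i})$, $\delta_0:=\min_i\delta_{v_i}$, $\varepsilon_0:=\min_i\varepsilon_{v_i}$; then every $v\in K^+$ lies in some $U_{v_i}$, so $\int_{-T_1}^{T_1}a(x^+_s(v))\,ds\ge 2\varepsilon_{v_i}\delta_{v_i}\ge 2\varepsilon_0\delta_0$, giving the bound at $T=T_1$ with $\Cm_1':=\varepsilon_0\delta_0/T_1$. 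For general $T\ge T_1$, use flow-invariance of $K^+$ to split $[-T,T]$ into $\lfloor T/T_1\rfloor$ subintervals of length $2T_1$, translate each back to $[-T_1,T_1]$ along the flow, and sum; exactly as in Step 2 of Proposition \ref{prop:TGCCplusminus} this yields $\frac{1}{2T}\int_{-T}^{T}a(x^+_s(v))\,ds\ge\Cm_1'/2=:\Cm_1$ for all $T\ge T_1$. Repeating for $p^-$ and invoking Proposition \ref{prop:TGCCplusminus} gives Definition \ref{Defn:TGCC}. The substantive part is this converse, and its main obstacle is getting constants uniform over the whole trapped set despite its noncompactness in the $(\tau,\xi)$-cone direction and in $t$; the reduction to $p^\pm$ is exactly what fixes this, since homogeneity removes the $(\tau,\xi)$-scaling and autonomy of the $p^\pm$-flow in $t$ makes the relevant data live on the genuinely compact set $K^\pm$.
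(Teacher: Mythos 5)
Your proof is correct and takes a genuinely different route from the paper's. For the converse, the paper reduces via Proposition~\ref{prop:oneSidedGCC} to \emph{one-sided} averages on the half-trapped compact sets $\pmhomogftrappedcompactset$, which forces it to cite \cite[Proposition~2.12]{Kofroth23} to transfer the hypothesis (stated only for fully trapped $\omega$) to those sets; it then lets $f(\omega)=\inf\{s\ge 0:a(x_s^{\pm}(\omega))>0\}$ attain a maximum $T_0$ over the compact set and takes a minimum of $\omega\mapsto a(x_{T_0}^{\pm}(\omega))$. You instead reduce via Proposition~\ref{prop:TGCCplusminus} to two-sided averages on $K^{\pm}=\Pi_{x,\xi}\homogplusminustrappedset$, where the hypothesis applies verbatim since $\homogplusminustrappedset\subset\trappedset$ once one observes that the $p$- and $p^{\pm}$-orbits through a trapped point have identical spatial images (which you justify correctly: the reparametrization $s(r)$ is a bijection of $\Rb$ because $g^{00}p^{-}$ is bounded away from zero by \eqref{eq:pminusbound}); you then carry out a direct finite covering of the compact set $K^{\pm}$. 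Your route is self-contained within the paper, avoiding the external citation for the hypothesis transfer, and the covering argument is tighter: it records, patch by patch, both a neighborhood in $K^{\pm}$ and a time window where $a$ is uniformly positive, so the lower bound on the integral drops out immediately, whereas the paper's extremal-time step does not on its face yield $a(x_{T_0}^{\pm}(\omega))>0$ for \emph{all} $\omega$ at the single time $T_0$ (an orbit could enter and leave $\{a>0\}$ before time $T_0$).
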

        \begin{proof}
        1) Assume the condition does not hold. That is, for some $\omega \in \trappedset$ we have $a(x_s(\omega)) =0$ for all $s \in \mathbb{R}$. Then 
        \begin{equation}
            \frac{1}{T} \int_{-T}^T a(x_s(\omega)) ds =0, \qquad \text{ for all } T>0,
        \end{equation}
        so Definition \ref{Defn:TGCC} does not hold. 
        
        2) Using Proposition \ref{prop:oneSidedGCC}, it is enough to show that there exist $T_2, \Cm_2>0$ such that for every $T' \geq T_2$, for all $\omega \in \pmhomogftrappedcompactset$
            \begin{equation}
                \frac{1}{T'} \int_0^{T'} a(x_s^{\pm}(\omega)) ds \geq \Cm_2,
            \end{equation}
            and for all $\omega \in \pmhomogbtrappedcompactset$
            \begin{equation}
                \frac{1}{T'} \int_{-T'}^{0} a(x_s^{\pm}(\omega)) ds \geq \Cm_2.
            \end{equation}
            We will prove the statement for $\omega \in \pmhomogftrappedcompactset$ and the proof for $\omega \in \pmhomogbtrappedcompactset$ is analogous. 

            Consider the map $f:\pmhomogftrappedcompactset \ra \Rb_+$ defined by
            \begin{equation}
                f(\omega)=\inf\{s: a(x_s^{\pm}(\omega))>0\}.
            \end{equation}
            By \cite[Proposition 2.12]{Kofroth23}, the assumption on $\trappedset$ implies that there exists $s \geq 0$, such that $a(x^{\pm}_s(\omega)) >0$, so $f$ is always finite.  
            By the continuity of the flow and $a$, $f$ is continuous in $\omega$. Furthermore, $f$ depends only on the $x$ and $\xi$ components of $\omega$. Since $\pmhomogftrappedcompactset$ is compact in $x$ and $\xi$, $f$ attains a maximum over $\pmhomogftrappedcompactset$. Call the maximum $T_0$. Then $a(x_{T_0}^{\pm}(\omega))>0$ for all $\omega \in \pmhomogftrappedcompactset$. Now define 
            \begin{equation}
                g(\omega)=a(x_{T_0}^{\pm}(\omega)),
            \end{equation}
            and note it is continuous in $\omega$ and depends only on $x$ and $\xi$. By the compactness of $\pmhomogftrappedcompactset$ in $x$ and $\xi$, $g$ attains a minimum $m_0>0$. That is $a(x_{T_0}^{\pm}(\omega)) \geq m_0>0$ for all $\omega \in \pmhomogftrappedcompactset$. Now by uniform continuity of $a$, there exists $\e>0$, such that $a(x_s^{\pm}(\omega)) >m_0/2$ for all $\omega \in \pmhomogftrappedcompactset$ and $s \in (T_0-\e, T_0)$. Therefore for all $\omega \in \pmhomogftrappedcompactset$ 
            \begin{equation}
                \frac{1}{T_0} \int_{0}^{T_0} a(x_s^{\pm}(\omega)) ds \geq \frac{m}{4T_0} \e.
            \end{equation}
            Using the same argument as in the second part of step 2 of Proposition \ref{prop:TGCCplusminus}, we have for all $T \geq T_0$ and all $\omega \in \pmhomogftrappedcompactset$
            \begin{equation}
                \frac{1}{T} \int_{0}^{T} a(x_s^{\pm}(\omega)) ds \geq \frac{m}{8T_0} \e:=\Cm_2.
            \end{equation}
            This is exactly the desired integral lower bound.
        \end{proof}

\newcommand{\escapeFunctionStatement}{Fix $0<\d\ll1$ from Definition \ref{d:asymflat2}. There exists $\kappa\geq 1, C>0$ and symbols $\ti{q}_j \in S^j(\Trf)$ and $m \in S^0(\Trf)$, all supported in $|\xi| \geq 1$, $|\tau| \geq 1$, such that for $q=\tau \ti{q}_0+\ti{q}_1$,
    \begin{equation}
        (H_p q +2 \kappa \tau a q + pm)(t,x,\tau,\xi) \geq C \mathbbm{1}_{|\xi|\geq 1} \mathbbm{1}_{|\tau| \geq 1} \<x\>^{-2-2\d}(\tau^2+|\xi|^2).
    \end{equation}}

\section{Escape Function Construction}\label{s:escapeFunction}
In this section we construct an escape function which we use in Section \ref{s:propagation} to prove the high frequency estimate. Specifically, we show:
\begin{proposition}\label{p:EscapeFunction}
\escapeFunctionStatement
\end{proposition}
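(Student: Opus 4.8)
The plan is to construct $q$ on the characteristic set $\characteristicsetofP$ and then to use $m$ to propagate the bound onto the elliptic set, working throughout with the half-wave factorization $p=g^{00}(\tau-\bpm)(\tau-b^\mp)$ of \eqref{Eqn:HalfWaveFactorization}. On $\characteristicsetofPplusminus$ one has $\Hp=g^{00}(\tau-b^\mp)\Hpm$ with the scalar prefactor $g^{00}(\tau-b^\mp)$ nonvanishing and comparable to $|\xi|$ by \eqref{eq:pminusbound}, so it is enough to produce, for each sign, a symbol with a good lower bound for $\Hpm q^\pm+2\kappa\tau a\,q^\pm$ on $\characteristicsetofPplusminus$. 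Since $\bpm$ is $1$-homogeneous in $\xi$ and $t$-independent, Proposition \ref{Propn:RescalingPropn} and the rescaling map $\Phipm$ let me carry out the construction on the rescaled characteristic set $\{\tau=1,\ |\xi|\approx1\}$ and extend by degree-one homogeneity in $(\tau,\xi)$ afterwards, so that a lower bound of size $\<x\>^{-2-2\d}$ at the fixed scale becomes $\<x\>^{-2-2\d}(\tau^2+|\xi|^2)$. The cutoffs to $\{|\xi|\geq1\}$ and $\{|\tau|\geq1\}$ are installed last: the $\tau$-cutoff is harmless because $\p_tp=0$ forces $\Hp$ to annihilate it, while the $\xi$-cutoff is chosen adapted to the construction so that the commutator error it produces, supported in a bounded frequency shell, is absorbed using the smallness parameter $\mathbf c$ of Definition \ref{d:asymflat2} in the asymptotically flat region and the surplus in the non-trapping bound on $\{|x|\lesssim R_0\}$.

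On $\characteristicsetofPplusminus$ I split, via Proposition \ref{prop:partition}, into a neighborhood of the semi-trapped set and its complement, where every bicharacteristic escapes to infinity. The novel ingredient is the construction near the trapped set $\homogplusminustrappedset$, whose $x$-projection is compact and contained in $\{|x|\leq R_0\}$ by Lemma \ref{l:escapeFlow}. Around each trapped base point I pass to flow-box coordinates for $\varphi^\pm$ valid over a bounded time interval (with $t$ itself as flow parameter, since $\p_\tau p^\pm=1$) and build a local symbol $q_v$ supported in a tube around the trajectory whose derivative along the flow records the damping accumulated along it; concretely $q_v$ solves a transport equation with source built from $a$, solved in the time direction along which the factor $2\kappa\tau a\geq0$ makes the natural integrating factor decay. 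The one-sided geometric control condition of Proposition \ref{prop:oneSidedGCC}, whose constants $T_2,\Cm_2$ do not depend on $t$, guarantees that each such trajectory spends a definite fraction of time in $\{a\geq\Cm_2/2\}$ and that $q_v$ can be arranged with $t$-uniform bounds. Patching these $q_v$ against a partition of unity that is finite in $(x,\xi)$ but only locally finite in $t$ produces a single $q_{tr}^\pm\in S^1$ with $t$-uniform symbol estimates and, for $\kappa$ large enough,
\[
\Hpm q_{tr}^\pm+2\kappa\tau a\,q_{tr}^\pm\ \geq\ c\,\mathbbm1_{V^\pm}\quad\text{on }\characteristicsetofPplusminus,
\]
where $V^\pm$ is an open neighborhood of the semi-trapped set that can be taken invariant in $t$ because the spatial flow, hence $\homogplusminustrappedset$, is $t$-independent. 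Obtaining this single escape function — with symbol bounds and a positivity set that are genuinely uniform, respectively invariant, in $t$ out of a merely time-averaged damping hypothesis — is the step I expect to be the main obstacle.

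On $\characteristicsetofPplusminus\setminus V^\pm$ the flow is non-trapping (Lemma \ref{l:escapeFlow}, Proposition \ref{prop:partition}), and since $V^\pm$ is $t$-invariant this region decouples from the damping; I adapt the standard constructions of \cite{Kofroth23,BoucletRoyer2014,MST20,MMT08}: a compactly supported escape function for the non-trapped part of a fixed ball, glued to a Morawetz-type radial multiplier carrying a weight comparable to $\<x\>^{-2\d}$ in the asymptotically flat region, whose error terms are controlled by the slowly varying sequence $c_j\approx2^{-\d j}$, $\sum_jc_j\ll1$, from Definition \ref{d:asymflat2}. This produces $q_\infty^\pm\in S^1$ with $\Hpm q_\infty^\pm\gtrsim\<x\>^{-1-\d}-C\,\mathbbm1_{V^\pm}$ at the fixed scale (modulo the small asymptotically flat errors). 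Taking $q^\pm=Nq_\infty^\pm+q_{tr}^\pm$ with $N$ large so that the first term dominates $C\,\mathbbm1_{V^\pm}$, then fixing $\kappa$ large (this is where $\kappa$ is chosen, as in Lemma \ref{l:combineEscapeFunc}), gives a lower bound $\gtrsim\<x\>^{-1-\d}$ for $\Hpm q^\pm+2\kappa\tau a\,q^\pm$ on $\characteristicsetofPplusminus$ at the fixed scale. Combining the two signs — legitimate since $p^+$ and $p^-$ never vanish simultaneously, so a partition of unity separates $\characteristicsetofPplus$ from $\characteristicsetofPminus$ — and extending by homogeneity (using $\Hp=g^{00}(\tau-b^\mp)\Hpm$ and that $g^{00}(\tau-b^\mp)\approx|\xi|$ on the characteristic set) yields $q=\tau\ti{q}_0+\ti{q}_1$ with $\Hp q+2\kappa\tau a\,q\geq C\,\<x\>^{-2-2\d}(\tau^2+|\xi|^2)$ on $\characteristicsetofP$.

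It remains to extend the bound to all of $\Trf$ using $m$. On the elliptic region $\{p\neq0\}$ one has $|p|\gtrsim\<x\>^{-N_0}(\tau^2+|\xi|^2)$ from ellipticity of $g^{ij}$ and $-g^{00}\geq1/C$, so with $\psi\in S^0$ supported off $\characteristicsetofP$ and equal to $1$ where $|p|$ is bounded below, the choice $m=M\psi p$ gives $pm=M\psi p^2\geq0$ and $pm\gtrsim M\<x\>^{-2N_0}(\tau^2+|\xi|^2)$ there; since $\Hp q+2\kappa\tau a\,q$ is a symbol of order $2$ and $\tau a\geq0$, taking $M$ large absorbs its negative part on the elliptic set. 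Interpolating the two regimes with a partition of unity subordinate to a neighborhood of $\characteristicsetofP$ and the elliptic region, reinstalling the frequency cutoffs as described above, and checking that $\ti{q}_j\in S^j$ and $m\in S^0$ with constants uniform in $t$ — which uses the $t$-uniform bounds from the trapped construction together with the $t$-independence of the non-trapped and elliptic pieces — completes the proof.
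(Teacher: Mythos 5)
Your overall architecture matches the paper closely: half-wave factorization $p=g^{00}(\tau-b^+)(\tau-b^-)$, construction of a semi-trapped escape function by solving a transport equation along the flow with source built from the damping, a locally finite cover in $t$ using time-invariance and the $t$-uniformity of the TGCC constants, and a non-trapped piece via a compactly supported escape function glued to a Morawetz-type radial multiplier. Those parts are essentially the paper's.

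There are two genuine gaps in the later steps, though, and both are places where the paper's construction is not the one you propose.

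\textbf{The combination of trapped and non-trapped pieces.} You take a linear combination $q^\pm=Nq_\infty^\pm+q_{tr}^\pm$. The paper instead forms $\qpmlg=\exp(-\sigma(\qpmo+\qpmt))\chi_{>1}$ and $q=p^+\qmlg+p^-\qplg$. The exponential is not cosmetic: it makes $\qpmlg>0$ everywhere, and combined with the structure $p^\mp|_{\tau=b^\pm}=b^\pm-b^\mp$ (so $b^+(b^+-b^-)>0$ and $b^-(b^--b^+)>0$), it guarantees $2\kappa\tau a\,q\geq0$ on \emph{both} branches of $\characteristicsetofP$ --- see \eqref{eq:escapecombine2}. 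In your linear combination, $q_\infty^\pm$ has no definite sign (neither $\qpmin$ nor $\qpmout$ does), so $2\kappa\tau a\cdot Nq_\infty^\pm$ can be negative. Since $a$ is only asymptotically flat, not compactly supported, $a\sim\<x\>^{-1}$ is of the same order as the non-trapped lower bound $c_j2^{-j}$; with $\kappa$ and $N$ both large this wrong-signed term can overwhelm the lower bound. Separating the damping term so it only multiplies a positive quantity is precisely what the exponential/$p^+q^-+p^-q^+$ structure accomplishes.

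\textbf{The elliptic extension.} The claim that $|p|\gtrsim\<x\>^{-N_0}(\tau^2+|\xi|^2)$ on $\{p\neq0\}$ is false: as $\tau\to b^+(x,\xi)$ with $|\xi|\to\infty$ one has $|p|/(\tau^2+|\xi|^2)\to0$. So $\psi$ must vanish on a full conic neighborhood of $\characteristicsetofP$, and in the transition region (where $0<\psi<1$ and $pm$ contributes essentially nothing) the lower bound you established \emph{on} $\characteristicsetofP$ does not propagate automatically; you would need a quantitative uniform-continuity argument in $\tau$, and you give none. Separately, $m=M\psi p$ lies in $S^2$, not $S^0$ as the statement requires; you would have to divide by an order-$2$ elliptic weight. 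The paper sidesteps both problems with an algebraic, not interpolation-based, argument: it writes $H_pq+2\kappa\tau aq=E\tau^2+F\tau+G$, picks $\tilde m$ to be the vertex of the discriminant quadratic $\mathcal P(\tilde m)$, and verifies $\mathcal P(\tilde m)=-4(b^+-b^-)^{-2}\big((H_pq+2\kappa\tau aq)|_{\tau=b^+}\big)\big((H_pq+2\kappa\tau aq)|_{\tau=b^-}\big)<0$ together with $E-\tilde m>0$, so that $E\tau^2+F\tau+G+pm$ is a positive-definite quadratic in $\tau$ everywhere with an explicit lower bound (see \eqref{eq:discriminant} and \eqref{eq:aminusm}). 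This handles the transition region for free and produces $\tilde m\in S^0$ directly. You need something of this kind; a partition-of-unity interpolation does not close the argument as stated.
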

Note the difference in the power of $\<x\>$ compared to \cite[Lemma 2.4]{Kofroth23}, \cite[Lemma 4.1]{MST20} and our Lemma \ref{l:combineEscapeFunc}. However, this difference is irrelevant to the final application of this Lemma in the proof of Lemma \ref{l:highprop}. In that proof we eventually restrict to $|x|<2R_0$ and so the power on $\<x\>$ is simply absorbed into a constant depending on $R_0$.

We follow the general approach of \cite[Section 2.4]{Kofroth23} and \cite[Lemma 4.1]{MST20}. In particular, we construct our escape function separately on the trapped and non-trapped regions, then combine them together and construct an elliptic correction term $m$. As in \cite{Kofroth23} we construct our escape function on the trapped set first in neighborhoods of individual $\omega$ and then combine these to obtain an escape function for the entire trapped set. 

However, due to the time dependence of our damping we must work on $\Trf$ rather than $T^* \Rb^3$. Because of this, we cannot use the compactness of $(\pmhomogftrappedcompactset \cup \pmhomogbtrappedcompactset) \cap T^* \Rb^3$. We instead must cover $\pmhomogftrappedcompactset \cup \pmhomogbtrappedcompactset \subset \Trf$, which is not compact in $t$, using these neighborhoods of $\omega$. In order to achieve this, we require some uniformity in $\omega$ of our escape functions. We then use the compactness of $(\pmhomogftrappedcompactset \cup \pmhomogbtrappedcompactset) \cap [0,T_2]$, where $T_2$ is from Propostion \ref{prop:oneSidedGCC}, to obtain a finite subcover and then extend to a locally finite cover of $\pmhomogftrappedcompactset \cup \pmhomogbtrappedcompactset$ via this uniformity and the time-independence of the $g$.

\subsection{Outline}\label{sec:escapeoutline}
Before starting our construction, we will first outline the main steps. The particular steps depend on the particular subset of $\Trf$ where the construction is taking place.
\begin{enumerate}
    \item \textbf{\underline{On $\characteristicsetofP$.}} 
    We consider $\omega$ as the initial data of null bicharacteristics of $P^{\pm}$. There are two regimes to consider: the null bicharacteristics are semi-trapped, or non-trapped. After constructing escape functions in each separate regime, we then combine the results to obtain an escape function on the characteristic set of $P$ in Lemma \ref{l:combineEscapeFunc}. 
    \begin{enumerate}
        \item \textbf{\underline{$\{|x| \leq R\}$, Semi-Trapped Null-Bicharacteristics.}} 
        Here, we work with $\omega \in \{|x| \leq R \} \subset \Trf$ producing trajectories of $P^{\pm}$ that are bounded forward or backward in time. Our construction proceeds by constructing escape functions $\qo$ and corrections $\aco$ for individual $\omega$. In particular, we obtain 
        \begin{equation}
            \Hpm \qo + \aco \geq c,
        \end{equation}
        on neighborhoods of uniform width around $\omega$. We cover the semi-trapped region with these neighborhoods and then use time-invariance of $g$ and compactness to reduce to a locally finite cover in Lemma \ref{l:fulltrappedescapelemma}.
        We perform our construction separately on two subsets of this region.
            \begin{enumerate}
                 \item \textbf{\underline{Where the Damping is Large:}} Recall $\Cm_2$ from Proposition \ref{prop:oneSidedGCC}. In this region, we have $a(\omega) \geq \frac{\Cm_2}{2}$. Because $a$ is bounded from below, $\qo$ can be taken to be identically $0$ and we still obtain 
                 \begin{equation}
                     \Hpm \qo + a >0.
                 \end{equation}
                 Some additional care is required to ensure compatibility with the locally-finite argument. This case is addressed in Lemma \ref{l:escapeLargeDamp}.
                 
                \item \textbf{\underline{Where the Damping is Small:}} In this region, we have $a(\omega) \leq \frac{\Cm_2}{2}$. Here, the damping is not large enough to reinforce local energy decay and so an escape function is needed to quantify how energy flows into the region where the damping is large. We begin by constructing coordinates around each semi-trapped null-bicharacteristic in Lemma \ref{l:productCoord}. In these coordinates, we explicitly construct the escape function in terms of averages of the damping along the trajectory in Section \ref{s:smallDampEscape}. It is here that the time-dependent geometric control condition from Proposition \ref{prop:oneSidedGCC} is used.
            \end{enumerate} 
        \item \textbf{\underline{Non-Trapped Null-Bicharacteristics.}}  For the non-trapped trajectories in $\{|x| \leq R\}$ the energy flows away from compact sets which naturally produces local energy decay. Because our semi-trapped escape function has uniformity in $t$, we are able to separate this step from the damping and its time-dependence. Thus we follow the approach of \cite[Lemma 2.16]{Kofroth23} and \cite{BoucletRoyer2014}. In the region $\{|x| \geq R\}$, the trapping and damping are irrelevant and we use the same multiplier approach from \cite{Kofroth23}, \cite[Lemma 4.1]{MST20} and \cite{MMT08}.
    \end{enumerate} 
    \item \textbf{\underline{On the elliptic set of $P$:}} We finally construct a lower order correction term $m$ to ensure positivity away from the characteristic set. We identify $m$ using the same quadratic equation analysis of \cite[Lemma 4.1]{MST20} and then estimate the behavior of $H_p q + 2\kappa \tau a + pm$ for large $x,\xi$, and $\tau$. This completes the proof of Proposition \ref{p:EscapeFunction}.
\end{enumerate}

\subsection{Semi-Trapped Escape Function Construction}\label{s:semitrapescape}
To set some notation, we will write together $(t,x)=z$ or $(\tpmso, \xpmso)=\zpmso$ and $(\tau, \xi)=\zeta$ or $(\taupmso, \xipmso)=\zetapmso$. We will write $\Piz, \Pizeta$ for projections onto these coordinates.  We will write $a(\gopms):=a(\Piz(\gopms)),$ and $a(\omega):=a(\Piz \omega)$. We define also $\Pi_t^{\bot}(t, x, \tau, \xi)=(x,\tau, \xi)$. 

%In order to cover our semi-trapped region with a locally in $t$ finite number of null-bicharachteristic coordinate neighborhoods we require some compactness in $x,\xi,$ and $\tau$. This is provided in $x$ by cutting off to $|x| \leq R$, while in $\xi$ and $\tau$ this is provided by working on the $\bpm$ analog of the cosphere bundle, $\Phipm(\Trfo)$. We then extend to all of $\Trfo$ using homogeneity. The lack of compactness in $t$ will be accounted for via the time-independence of the space-time, and uniform in $t$ behavior of the escape function. 

To begin our semi-trapped escape function construction, we define two subsets of $\Trf$:
\begin{align} 
\pmsemitrappedset&= \pmftrappedcompactset \cup \pmbtrappedcompactset,\\% \subset \characteristicsetofPplusminus \subset \Trfo.\\
\homogpmsemitrappedset &= \pmsemitrappedset \cap \Phi^{\pm}(\Trfo) = \pmhomogftrappedcompactset \cup \pmhomogbtrappedcompactset.
\end{align}
Note by Proposition \ref{Propn:RescalingPropn} and Lemma \ref{l:starlemma}, that $\homogpmsemitrappedset$ is compact in $x, \xi,$ and $\tau$.
%we will write the null-bicharacteristic of $\ppm$ starting from $\omega=(\omega_t, \omega_x, \omega_{\tau}, \omega_{\xi})$ as $\gopms=(\tpmso, \xpmso,  \taupmso,  \xipmso)$. 

\subsubsection{Escape Functions Near Large Damping}
We first consider null bicharacteristics with initial data $\omega$ satisfying $a(\omega) \geq \frac{\Cm_2}{2}$. For such $\omega$ we can bound $\Hpm \qo +\aco$ from below by taking $\qo \equiv 0$ and $\aco$ given by a cutoff version of the damping.
 
\begin{lemma}\label{l:escapeLargeDamp}
    There exists a constant $r_1 >0$, such that for any $\omega \in \homogpmsemitrappedset\cap\{a \geq\frac{\overline C_2}{2}\}$, there exist $\aco, \ro \in \Cc(\Phipm(\Trfo))$ such that 
    \begin{enumerate}
        \item $0 \leq \ro \leq \aco \leq a$,
        \item $\ro \geq \frac{\Cm_2}{4}$ on $B(\omega,r_1) \subset \Phi^{\pm}(\Trfo)$,
        \item
        \begin{equation}
            \Pi_t \supp(\aco), \Pi_t \supp(\ro) \subset \{t \in [\omega_t-1,\omega_t+1]\}, \qquad \text{and}
        \end{equation}
        \item for each multi-index $\alpha, \beta$, there exists $C_{\alpha\beta} >0$, such that for all $\omega$ and all $(z,\zeta) \in \Phipm(\Trf)$
        \begin{align}
             |D_z^{\beta} D_{\zeta}^{\alpha} \aco(z,\zeta) |, |D_z^{\beta} D_{\zeta}^{\alpha} \ro(z,\zeta) | \leq C_{\alpha,\beta}.
        \end{align}
    \end{enumerate}
\end{lemma}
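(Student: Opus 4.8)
The plan is to treat this as a purely soft construction: where $a$ is already bounded below there is no dynamical input, and $\aco,\ro$ can simply be smooth cutoff versions of $a$ and of the constant $\tfrac{\Cm_2}{4}$, respectively. The construction is identical for the two signs, so I would suppress the superscript $\pm$ throughout. First I would fix the scales. Let $\Cm_2>0$ be the constant from Proposition \ref{prop:oneSidedGCC}. Since $a=a(z)$ depends only on $z=(t,x)$, uniform continuity of $a$ lets me choose $\d_0\in(0,1]$ with $|z-z'|\le\d_0\Rightarrow|a(z)-a(z')|\le\tfrac{\Cm_2}{4}$; hence whenever $\omega\in\homogpmsemitrappedset$ has $a(\omega)\ge\tfrac{\Cm_2}{2}$ we get $a(z)\ge\tfrac{\Cm_2}{4}$ for all $|z-\Piz\omega|\le\d_0$. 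I would also record two uniformity facts used below: by the asymptotic flatness of $a$ in Definition \ref{d:asymptoticFlat1}, the function $a$ and all of its $z$-derivatives are bounded on $\Rb\times\Rb^3$ uniformly in $t$; and by the normalization valid on $\homogplusminusforwardtrappedset\cup\homogplusminusbackwardtrappedset$ together with the definitions of $\pmhomogftrappedcompactset,\pmhomogbtrappedcompactset$, every $\omega\in\homogpmsemitrappedset$ satisfies $\omega_\tau=1$, $c\le|\omega_\xi|\le C$, and $|\omega_x|\le R$ for fixed $0<c<C<\infty$, so that the $(\tau,\xi)$-components of all such $\omega$ lie in a single fixed compact set.

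Next I would fix, once and for all, three compactly supported smooth bump functions: $\Psi\in\Cc(\Rb^4)$ in the $z$-variables with $0\le\Psi\le1$, $\Psi\equiv1$ on $\{|z|\le\d_0/2\}$, $\supp\Psi\subset\{|z|<\d_0\}$; a cutoff $\eta\in\Cc(\Rb^4)$ in the $\zeta=(\tau,\xi)$-variables with $0\le\eta\le1$ and $\eta\equiv1$ on $\{|\tau|\le2,\ |\xi|\le C+1\}$; and $B_0\in\Cc(\Rb^8)$ with $0\le B_0\le1$, $B_0\equiv1$ on the ball of radius $r_1$ about the origin and $\supp B_0$ in the ball of radius $2r_1$, where I set $r_1:=\min\{\tfrac14,\tfrac{\d_0}{8}\}$. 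For $\omega\in\homogpmsemitrappedset$ with $a(\omega)\ge\tfrac{\Cm_2}{2}$ I would then define, as functions on $\Phipm(\Trfo)$,
\begin{equation*}
    \aco(z,\zeta):=a(z)\,\Psi(z-\Piz\omega)\,\eta(\zeta),\qquad \ro(z,\zeta):=\frac{\Cm_2}{4}\,B_0\big((z,\zeta)-\omega\big).
\end{equation*}

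Finally I would verify the four conclusions. Nonnegativity of $\ro$ and $0\le\aco\le a$ are immediate from $0\le\Psi,\eta\le1$ and $a\ge0$. For $\ro\le\aco$: on $\supp B_0(\cdot-\omega)$ one has $|z-\Piz\omega|\le 2r_1<\d_0/2$ and $|(\tau,\xi)-(\omega_\tau,\omega_\xi)|\le 2r_1\le\tfrac12$, so using $\omega_\tau=1$ and $|\omega_\xi|\le C$ we get $\Psi(z-\Piz\omega)=\eta(\zeta)=1$, hence $\aco=a\ge\tfrac{\Cm_2}{4}\ge\ro$ there, while $\ro=0$ off that support; this gives (1), and $\ro\equiv\tfrac{\Cm_2}{4}$ on the radius-$r_1$ ball about $\omega$ gives (2). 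Since $\d_0\le1$ and $2r_1\le1$, the supports of $\aco$ and $\ro$ have $t$-projection in $[\omega_t-1,\omega_t+1]$, which is (3). For (4), the factors $\Psi(\cdot-\Piz\omega)$, $\eta$, $B_0(\cdot-\omega)$ are translates of fixed compactly supported smooth functions and $a$ together with all of its derivatives is bounded on $\Rb\times\Rb^3$ uniformly in $t$, so the Leibniz rule yields $|D_z^\beta D_\zeta^\alpha\aco|,|D_z^\beta D_\zeta^\alpha\ro|\le C_{\alpha\beta}$ with $C_{\alpha\beta}$ independent of $\omega$ and of $(z,\zeta)$.

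There is no deep obstacle here; the only care needed is twofold. First, $r_1$ must be chosen small relative to $\d_0$ so that $\aco$ actually equals $a$ on the whole support of $\ro$ (this is what makes (1) and (2) compatible), and one needs $\d_0\le1$, $2r_1\le1$ for the time-localization (3). Second, the uniformity in $\omega$ demanded in (2) and (4) is exactly why every ingredient is a translate of a fixed bump and why one must invoke the \emph{time-uniform} bounds on $a$ coming from asymptotic flatness rather than mere smoothness of $a$ — this is the "additional care" alluded to in the outline, and it is what allows this lemma to feed into the later locally finite covering argument over the (non-$t$-compact) semi-trapped set.
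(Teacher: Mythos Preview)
Your proposal is correct and follows essentially the same approach as the paper: use uniform continuity of $a$ to find a radius on which $a\ge\tfrac{\Cm_2}{4}$, then take $\aco$ as a smooth cutoff of $a$ and $\ro$ as $\tfrac{\Cm_2}{4}$ times a fixed bump, with all cutoffs being translates of $\omega$-independent functions so that the derivative bounds are uniform. The paper's construction is in fact even simpler in that it gives $\aco$ and $\ro$ no $\zeta$-dependence whatsoever ($\aco=a(t,x)\psi(t-\omega_t)$ with only a $t$-cutoff, and $\ro=\tfrac{\Cm_2}{4}\rho(t,x)$ with a $z$-bump), which is harmless since these functions are ultimately precomposed with $\Phipm$; your added $\zeta$-cutoffs $\eta$ and the $(z,\zeta)$-bump $B_0$ are not needed but do make the compact support claim in the statement literally true.
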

\begin{proof}
    Let $\psi \in \Cc([-1,1],[0,1])$ satisfy $\psi(t) \equiv 1$ for $t \in [-1/2, 1/2]$. 
    Then let $\aco = a(t,x) \psi(t-\omega_t)$. Clearly $\aco \leq a.$
    
    Now, by the uniform continuity of $a$, there exist $r_1 \in (0,1/4)$ such that $a(z) \geq \frac{\Cm}{4}$ for $|z-\omega_z|<2r_1$ with $z \in \Rb^4$. 
    Let $\rho \in \Cs(B(\omega_z,2r_1),[0,1])$ with $\rho \equiv 1$ on $B(\omega_z,r_1) \subset \Rb^4.$ Then 
    \begin{equation}
        \ro(t,x) = \frac{\Cm}{4} \rho(t,x),
    \end{equation}
    satisfies
    \begin{equation}
        \aco = a(t,x) \psi(t-\omega_t) \geq  \frac{\Cm}{4} \rho(t,x)=\ro(t,x)  \geq 0.
    \end{equation}
    Note that $r_1$ does not depend on $\omega$ and $\ro \geq \frac{\Cm_2}{4}$ on $B(\omega, r_1) \subset \Phipm(\Trfo)$.
    Clearly, the $t$ supports of $\aco$ and  $\ro$ are contained in a ball of radius $1$ around $\omega_t$. Finally, since the construction of $\aco, \ro$ is uniform in $\omega$, it is immediate that they satisfy the desired derivative estimates uniformly in $\omega$. Note also that, although we consider $\aco, \ro$ as functions on $\Phipm(\Trfo),$ they have no $\zeta$ dependence.
\end{proof}

\subsubsection{Product Coordinates Around Null Bicharacteristics}
    We now turn our attention to $\omega$ satisfying $a(\omega) \leq \frac{\Cm_2}{2}$.

    To begin, we define product coordinates around the null bicharacteristic starting from $\omega$, and point out a continuity property of the damping $a$ in these coordinates when the null bicharacteristic parameter $s$ is held constant.
\begin{lemma}\label{l:productCoord}
    (Product coordinates for $\omega \in \homogpmsemitrappedset$) Let $\omega \in \pmhomogftrappedcompactset$, resp. $\pmhomogbtrappedcompactset$, and let $\Sigma_{\omega} = \{t = \omega_t\} \times \Rb^3$ be a hypersurface in $\Rb^4$ transverse to $\gopms$, then define
    \begin{align}
        &\Psiopm: [-2, T_2+2] \times \Sigma_{\omega} \times \Pizeta(\Phipm(\Trf)) \ra \Phipm(\Trfo),  \quad\text{resp.} [-T_2-2,2]\\
        &\Psiopm(s, \ti z,\ti \zeta):= \varphi^{\pm}_s(\ti{z}, \ti{\zeta}).
    \end{align}
    This $\Psiopm$ is a diffeomorphism onto its image.

    Furthermore, there exists $r_0>0$ such that for all $\omega \in \pmhomogftrappedcompactset$, resp. $\pmhomogbtrappedcompactset$, all $s \in [-2,T_2+2], \text{ resp. }s \in [-T_2-2,2]$, all $\ti z\in \{\omega_t\} \times B(\omega_x, 2r_0) \subset \Sigma_{\omega}$, and all $\ti \zeta\in B(\omega_z, 2r_0) \subset \Pizeta(\Phipm(\Trfo))$, 
    we have
    \begin{equation}\label{eq:productCoordaest}
        \left| a(\Psiopm(s,\ti z,\ti \zeta))-a(\Psiopm(s,\omega_z, \omega_{\zeta})) \right| \leq \frac{\Cm_2}{4}.
    \end{equation}
\end{lemma}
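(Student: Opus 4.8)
The plan is to prove the two assertions in turn, using only that the half-wave symbol has the form $\ppm = \tau - \bpmx$ and that $g$ (hence $\bpm$) is stationary. For the chart property, $\Psiopm$ is smooth since $\varphi^\pm$ depends smoothly on its initial data, so it suffices to show $\Psiopm$ is injective and an immersion, whence it is a diffeomorphism onto its open image. Because $\partial_\tau \ppm \equiv 1$, any null bicharacteristic of $\ppm$ satisfies $\tfrac{d}{ds}\tpms = 1$; thus for $\ti z \in \Sigma_\omega$ the time coordinate of $\varphi^\pm_s(\ti z,\ti\zeta)$ equals $\omega_t + s$ — this is exactly the transversality of $\Sigma_\omega$ to $\gopms$ asserted in the statement. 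Injectivity follows immediately: if $\Psiopm(s_1,\ti z_1,\ti\zeta_1) = \Psiopm(s_2,\ti z_2,\ti\zeta_2)$, comparing time coordinates forces $s_1 = s_2$, and then invertibility of the time-$s_1$ flow map gives $(\ti z_1,\ti\zeta_1) = (\ti z_2,\ti\zeta_2)$. For the immersion property, at the point mapping to $v = \varphi^\pm_s(\ti z,\ti\zeta)$ one has $\partial_s\Psiopm = \Hpm(v)$ and $\partial_{(\ti z,\ti\zeta)}\Psiopm = D\varphi^\pm_s$; since $D\varphi^\pm_s$ is a linear isomorphism carrying $\Hpm(\ti z,\ti\zeta)$ to $\Hpm(v)$, and $\Hpm(\ti z,\ti\zeta)$ has nonvanishing $dt$ while the tangent directions of $\Sigma_\omega$ and of the rescaled $\zeta$-variables all have vanishing $dt$, the columns of $D\Psiopm$ are linearly independent. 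As $\varphi^\pm$ preserves $\Phipm(\Trfo)$, this is precisely the flow-box chart of the rescaled flow adapted to the transversal $\Sigma_\omega$.

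For the uniform estimate \eqref{eq:productCoordaest}, the issue is that $r_0$ must be chosen uniformly in $\omega$ — in particular in $\omega_t$ — even though $\pmhomogftrappedcompactset$ is unbounded in $t$. The decisive point is stationarity: $\ppm$ does not depend on $t$, so the $(x,\xi)$-equations in \eqref{eq:flowdef} decouple from the $(t,\tau)$-equations, the spatial part of the flow is the induced null-bicharacteristic flow $\tilde\varphi^\pm$ on $\Trto$, and $\xpms(\ti z,\ti\zeta)$ depends only on $(\ti z_x,\ti\zeta_\xi)$. Combined with the chart property — both $\Psiopm(s,\ti z,\ti\zeta)$ and $\Psiopm(s,\omega_z,\omega_\zeta)$ have time coordinate $\omega_t + s$ — this yields
\begin{equation}
 a\big(\Psiopm(s,\ti z,\ti\zeta)\big) - a\big(\Psiopm(s,\omega_z,\omega_\zeta)\big) = a\big(\omega_t+s,\ \Pi_x\tilde\varphi^\pm_s(\ti z_x,\ti\zeta_\xi)\big) - a\big(\omega_t+s,\ \Pi_x\tilde\varphi^\pm_s(\omega_x,\omega_\xi)\big),
\end{equation}
so only the spatial separation of the two flows at parameter $s$ enters and the time dependence of $a$ has been rendered harmless.

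I would then close the estimate by compactness on $\Trto$. Uniform continuity of $a$ provides $\delta>0$ with $|a(t,x)-a(t,y)|\le\Cm_2/4$ whenever $|x-y|<\delta$, uniformly in $t$. By \eqref{eq:xibhomog} and the definition of $\pmhomogftrappedcompactset$, the projection $\Pi_{x,\xi}\pmhomogftrappedcompactset$ lies in a fixed compact subset of $\Trto$; enlarging it to a compact $K\subset\Trto$ containing a fixed-radius neighborhood of each of its points, $\tilde\varphi^\pm$ is uniformly continuous on the compact set $[-T_2-2,T_2+2]\times K$ (resp. $[-T_2-2,2]\times K$). Hence there is $\eta>0$, depending only on $a$, $\Cm_2$, $R$, and the constants in \eqref{eq:xibhomog} but not on $\omega$, such that $|v-v'|<\eta$ forces $|\tilde\varphi^\pm_s(v)-\tilde\varphi^\pm_s(v')|<\delta$ for all $s$ in that interval. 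Choosing $r_0>0$ with $4r_0<\eta$ and with $B(\omega_x,2r_0)\times B(\omega_\xi,2r_0)\subset K$ for every $\omega\in\pmhomogftrappedcompactset$, the displayed identity and the choice of $\delta$ give \eqref{eq:productCoordaest}. The backward case is identical.

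The main point is structural rather than computational: one must locate where the uniformity in $t$ comes from. Once stationarity is used to strip all $t$-dependence from the generator of the spatial flow, the problem collapses onto the compact base $\Trto$, where uniform continuity of the smooth, globally defined flow over a compact parameter window does the rest; the chart property is then routine given $\partial_\tau \ppm \equiv 1$.
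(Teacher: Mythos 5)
Your proof is correct and takes essentially the same approach as the paper's: stationarity of $g$ is used to strip the $t$-dependence from the flow, reducing the uniform estimate to compactness of $\Pi_{x,\xi}\pmhomogftrappedcompactset$ together with uniform continuity of the flow over the compact parameter interval and uniform continuity of $a$. The only cosmetic difference is that you work directly with the projected flow $\tilde\varphi^\pm$ on $T^*\mathbb R^3$, whereas the paper packages the same compactness/continuity argument through an auxiliary diameter function $f(\omega,s,\eta)$ whose uniform continuity (and vanishing at $\eta=0$) gives the bound; you also spell out the diffeomorphism claim (which the paper calls immediate) via the $dt$-component argument, which is fine.
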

    See Figure \ref{fig:prodCoords} for a diagram of these sets and points.
    
    \begin{figure}[ht]
        \centering
        	\begin{tikzpicture}[scale=1.5]
		
		\def\radius{1}
		\def\startAngle{-20}
		\def\endAngle{20}
		\def\tiltAngle{45}
		
		\begin{scope}[rotate=\tiltAngle]
			\draw[thick] (0,0) -- (\startAngle:\radius);
			\draw[thick] (0,0) -- (\endAngle:\radius);
			\draw[imayou, thick] (\startAngle:\radius) arc (\startAngle:\endAngle:\radius) node[above left, imayou] {$B(\omega_{\zeta}, 2r_0)$};
			\draw[persred, ->] (0,0)--(0:\radius+.2) node[above]{$\omega_{\zeta}$};
		\end{scope}
		\draw[-](-3,0)--(3,0) node[right] {$t=\omega_t$};
		
		\draw[->] (-3,-1) -- (3,-1) node[right] {$x \in \mathbb{R}^3$};
		\draw[->] (-3,-1) -- (-3,2) node[above] {$t$};
		
		\draw[sand, very thick] (-.5,0)--(.5,0) node[below right] {$B(\omega_x, 2r_0)$};
		
		\node at (0,0) [below] {$\omega_{x}$};
		
		\draw[-] (-3.1,-.5)--(-2.9,-.5);
		\node at (-3,-.5) [left] {$s=-2\;$};
		\draw[-] (-3.1,1.5)--(-2.9,1.5);
		\node at (-3,1.5) [left] {$s=T_{2}+2\;$};

	\end{tikzpicture}
        \caption{Key sets and points used in the product coordinate construction.}
        \label{fig:prodCoords}
        %What used to be O_{\omega} is now B(\Pi_z \omega, 2r) and what used to be V_{\omega} is now B(\Pi_{\zeta} \omega, 2r)
    \end{figure}
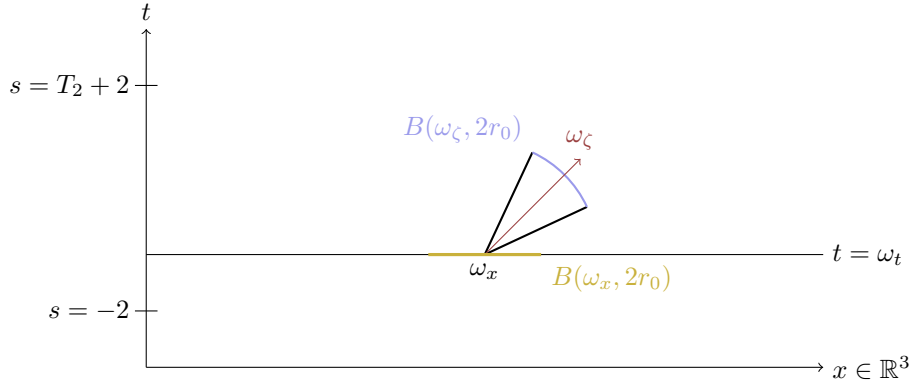

\begin{proof}
    It is immediate that $\Psiopm$ is a diffeomorphism onto its image, where we note that the Hamilton flow maintains inclusion in $\Phipm(\Trfo)$ and we do not track $\tau$, as it is constant under the Hamilton flow. 

    We prove that $r_0$ can be chosen uniformly for all $\omega \in \pmhomogftrappedcompactset$, as the proof for $\omega \in \pmhomogbtrappedcompactset$ is analogous. Note that by the uniform continuity of $a$, there exists a $\d>0$ such that $|z_1-z_2|<\d$ implies $|a(z_1)-a(z_2)| \leq \Cm/4.$ Then define
    \begin{equation}
        f(\omega, s, \eta) = \text{diam}(\{\Psi_{\omega}(s,\ti z, \ti \zeta); \ti z \in \{\omega_t\}\times B(\omega_x, 2\eta), \ti \zeta \in B(\omega_{\zeta}, 2 \eta)\}).
    \end{equation}
    This $f$ is continuous in $\omega, s,$ and $\eta,$ by continuity of the flow. 
    Note also, because the space-time, is stationary $f(\omega,s, \eta)$ does not depend on the $t$ coordinate of $\omega.$ 
    Since $s\in[-2,T_2+2]$, $\eta \in [0,1]$, and $\omega \in \pmhomogftrappedcompactset$ is compact in $x,\xi$ and $\tau$, $f$ is continuous on a compact set. Thus $f$ is uniformly continuous, and there exists $r_1>0$ such that $dist((\omega_1, s_1, \eta_1),(\omega_2,s_2,\eta_2))<r_1$ implies $|f(\omega_1,s_1,\eta_1)-f(\omega_2,s_2,\eta_2)|<\d.$ 
    This along with $f(\omega,s,\eta=0)=0$ means $|f(\omega, s, r_1)| <\d,$ for all $\omega \in \pmhomogftrappedcompactset, s\in [-2,T_2+2]$. 
    So, choosing $r_0<r_1$ guarantees that for all $\omega$ with $\ti z \in \{\omega_t\}\times B(\omega_x, 2r_0), \ti \zeta \in B(\omega_{\zeta}, 2r_0)$ that \eqref{eq:productCoordaest} is satisfied. 
\end{proof}
Although we define our coordinates for  $s\in [-2,T_2+2]$, to ensure that our escape function can turn off smoothly we will restrict its $s$ support to a smaller interval. We define the key values used to determine this smaller interval below, and point out the key properties involving the TGCC time and the damping $a$, which we will use later.
\begin{lemma}\label{l:epsilonCoordinateDefine}
    For $\omega \in \pmhomogftrappedcompactset$, resp. $\omega \in \pmhomogbtrappedcompactset$, and $\omega \in \{a(\omega) \leq \frac{\Cm_2}{2} \}$, let $T_{\omega}$ be the smallest $T$ such that 
   \begin{align}\label{eq:smallestTgcc}
        \frac{1}{T} \int_0^{T} a(\gopms) ds &= \Cm_2\\
        \text{resp.} \qquad \frac{1}{T} \int_{-T}^0 a(\gopms) ds &= \Cm_2.  
    \end{align}
    Then $\Topm \leq T_2$. Furthermore, $a(\gopmr{\Topm}) \geq \Cm_2$ and there exists $\e \in (0,1)$ such that for all $\omega \in \pmhomogftrappedcompactset$, resp. $\omega \in \pmhomogbtrappedcompactset$, we have
    \begin{equation}
        a(\gopms) \geq \frac{3 \Cm_2}{4} \text{ for } s \in [\Topm, \Topm+\e],\quad \text{ resp. } s \in [-\Topm-\e, -\Topm].
    \end{equation}
    \end{lemma}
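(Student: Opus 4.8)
The plan is to establish the three assertions in order, working throughout in the forward case $\omega\in\pmhomogftrappedcompactset$ with $a(\omega)\le \Cm_2/2$; the backward case follows by the obvious reflection $s\mapsto-s$.

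\emph{Existence of $\Topm$ and the bound $\Topm\le T_2$.} Put $g(T):=\tfrac1T\int_0^T a(\gopms)\,ds$ for $T>0$. Since $s\mapsto\gopms$ is smooth and $a\in C^\infty$, the integrand $s\mapsto a(\gopms)=a(\Piz\gopms)$ is continuous, so $g$ is continuous on $(0,\infty)$ and extends continuously to $T=0$ with $g(0)=a(\omega)\le \Cm_2/2<\Cm_2$. On the other hand, Proposition~\ref{prop:oneSidedGCC} gives $g(T)\ge \Cm_2$ for every $T\ge T_2$. Hence by the intermediate value theorem the set $\{T\in(0,T_2]:g(T)=\Cm_2\}$ is nonempty; being the preimage of a point under a continuous map it is closed, so it has a minimum, which is $\Topm$, and $\Topm\le T_2$. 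Continuity of $g$ together with $g(0)<\Cm_2$ also forces $g(T)<\Cm_2$ for all $T\in(0,\Topm)$, since otherwise $g$ would already attain $\Cm_2$ strictly before $\Topm$.

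\emph{The bound $a(\gopmr{\Topm})\ge \Cm_2$.} For $0<T<\Topm$ we have, using $g(T)<\Cm_2$, that $\int_T^{\Topm}a(\gopms)\,ds=\Topm\,\Cm_2-T\,g(T)\ge(\Topm-T)\,\Cm_2$. Dividing by $\Topm-T$ and letting $T\uparrow\Topm$, the left-hand side tends to $a(\gopmr{\Topm})$ by the fundamental theorem of calculus (the integrand is continuous), which yields $a(\gopmr{\Topm})\ge \Cm_2$. (Equivalently, $\phi(T):=\int_0^T a(\gopms)\,ds-\Cm_2 T$ is $C^1$ with $\phi'(T)=a(\gopmr T)-\Cm_2$, satisfies $\phi(\Topm)=0$ and $\phi<0$ on $(0,\Topm)$, so its derivative at $\Topm$ is $\ge 0$.)

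\emph{The uniform $\e$.} By uniform continuity of $a$ there is $\d>0$ such that $|a(z)-a(z')|<\Cm_2/4$ whenever $|z-z'|<\d$. Since $\pmhomogftrappedcompactset\subset\homogplusminusforwardtrappedset\cap\{|x|\le R\}$, Lemma~\ref{l:starlemma} gives $|x_s^\pm(\omega)|\le R$ for all $s\ge0$, and, by the flow-invariance of $\homogplusminusforwardtrappedset$ and the bounds recorded just after its definition, $c\le|\xi_s^\pm(\omega)|\le C$ and $\tau_s^\pm(\omega)=1$ for all $s$. Thus every relevant trajectory segment $\{(x_s^\pm(\omega),\xi_s^\pm(\omega)):s\ge0\}$ lies in the one fixed compact set $K=\{(x,\xi):|x|\le R,\ c\le|\xi|\le C\}\subset\Trto$, on which $\nabla_\xi p^\pm$ --- a function of $(x,\xi)$ alone, because $g$ is stationary --- is bounded by some $M>0$. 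Hence $|\tfrac{d}{ds}\Piz\gopms|=\bigl|\bigl(1,\nabla_\xi p^\pm(\gopms)\bigr)\bigr|\le\sqrt{1+M^2}=:M'$ for all $s\ge0$, with $M'$ independent of $\omega$. Set $\e:=\min\{\tfrac12,\ \d/(2M')\}\in(0,1)$. Then for $s\in[\Topm,\Topm+\e]$ we get $|\Piz\gopms-\Piz\gopmr{\Topm}|\le M'|s-\Topm|<\d$, so $a(\gopms)\ge a(\gopmr{\Topm})-\Cm_2/4\ge \Cm_2-\Cm_2/4=\tfrac{3\Cm_2}{4}$ by the previous step; and $\e$ depends only on $\d$ and $M'$, hence not on $\omega$.

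The single delicate point is this last uniformity in $\omega$: it hinges on the fact that all the trajectory pieces involved stay in one fixed compact subset of $\Trto$ --- this is where Lemma~\ref{l:starlemma} and the invariance and boundedness of $\homogplusminusforwardtrappedset$ enter --- together with the stationarity of $g$, which makes the $z=(t,x)$ component of the Hamilton flow speed bounded uniformly in $t$. Everything else is a routine intermediate value theorem and fundamental theorem of calculus argument.
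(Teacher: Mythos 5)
Your proof is correct and follows essentially the same route as the paper: the intermediate value argument for $\Topm \leq T_2$ (the paper leaves this as "immediate"), the sign-of-derivative argument at the first zero of $\Cm_2 T - \int_0^T a(\gopms)\,ds$ for $a(\gopmr{\Topm}) \geq \Cm_2$, and a uniform bound on the $z$-component of the Hamilton flow speed combined with uniform continuity of $a$ to get an $\e$ independent of $\omega$. The only cosmetic discrepancy is a sign in $\nabla_\xi p^\pm = -\nabla_\xi b^\pm$, which is harmless since only magnitudes enter.
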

    \begin{proof}
    %\pk{I deleted the comment about T_{\omega} and T_2 because it is not written in the statement of the Lemma that T_{\omega} \leq T_2} 
   We prove the case where $\omega \in \pmhomogftrappedcompactset$, as the argument when $\omega \in \pmhomogbtrappedcompactset$ is analogous.  
   It is immediate from Proposition \ref{prop:oneSidedGCC} that $\Topm \leq T_2$. 

   By definition, $\Topm$ is the first positive zero of the differentiable function
   \begin{equation}
       f(T)=\Cm_2 T - \int_0^T a(\gopms) ds.
   \end{equation}
   We have that $f(0)=0, f'(0) \geq \Cm_2 - \frac{\Cm_2}{2}>0$. Therefore $f$ is positive for $0<T<\Topm$, so we have $0 \geq f'(\Topm)=\Cm_2-a(\gopmr{\Topm})$. That is $a(\gopmr{\Topm}) \geq \Cm_2.$

    Now, for all $\omega \in \pmhomogftrappedcompactset,$ $s \in \Rb$, and %(resp. $s\leq 0$), 
    $\e>0$ %(resp. $\e<0$) 
    by the definition of the flow \eqref{eq:flowdef} we have
    %(resp. $\omega \in \pmhomogbtrappedcompactset$), 
    \begin{equation}
         \xpms(t+\e)-\xpms(t)=\int_t^{t+\e} \frac{d}{ds} \xpms ds =\int_t^{t+\e} \nabla_{\xi} \ppm(\phipm_s(\omega))ds = \int_t^{t+\e} \nabla_{\xi} \bpmxs ds.
    \end{equation}
    Note also that $\omega_{\tau}=1$, since $\omega \in \homogplusminusforwardtrappedset$, so $\frac{d}{ds} t_{s}^{\pm}=1$. Therefore
    \begin{equation}
        \bigg| \Piz\bigg(\gopms-\gopmr{s+\e}\bigg)\bigg|\leq|x^{\pm}_{s+\e}(\omega)-x^{\pm}_s(\omega)| + |t^{\pm}_{s+\e}(\omega)-t^{\pm}_s(\omega)| \leq \e (\max_{\omega \in \pmhomogftrappedcompactset} \left(|\nabla_{\xi}\bpm(x,\xi)| +1  \right). 
    \end{equation}
    Note that $\nabla_{\xi} b$ attains a maximum, as a continuous function on a compact set (of course $\pmhomogftrappedcompactset$ is not compact in $t$, but $b$ does not depend on $t$). Then by uniform continuity of $a$, there exists $\e>0$, such that $|z_1-z_2| < \e \max(|\nabla_{\xi} b|+1)$, implies $|a(z_1)-a(z_2)| \leq \frac{\Cm_2}{4}.$ Applying this to $a(\gopmr{\Topm}) \geq \Cm_2$ proves the desired claim. 
    \end{proof}

\subsubsection{Escape Functions Around Individual Null Bicharacteristics - Small Damping}\label{s:smallDampEscape}
We now construct escape functions along null bicharacteristics with initial data $\omega$ satisfying $a(\omega) \leq \frac{\Cm_2}{2}$. 

We do this across four lemmas. 
\begin{enumerate}
    \item In Lemma \ref{l:escapeqdef}, we define the escape function $\qo$ and estimate $\Hpm \qo$.
    \item In Lemma \ref{l:escapeadef}, we define a correction term $\aco$ which is bounded by a multiple of the damping $a$, and which we add to $\Hpm \qo$ to ensure positivity
    \item In Lemma \ref{l:escaperdef}, we define $\ro$ and show it bounds $\Hpm \qo + \aco$ from below, and is uniformly positive in a fixed width neighborhood of $\omega$.
    \item Finally in Lemma \ref{l:escapedefUniform}, we demonstrate fixed size $t$ support and derivative bounds of $\qo, \aco$, and $\ro$ which are uniform in $\omega$. This uniformity is key to our combination of these functions from multiple null bicharacteristics to obtain a global semi-trapped escape function.
\end{enumerate}

In all of these lemmas we make use of  the product coordinates $\Psiopm$ from Lemma \ref{l:productCoord}. Recall also $r_0$ from that lemma, and let 
\begin{equation}
    \phiopm \in \Cc(\{\omega_t\} \times B(\omega_x, 2r_0)), \qquad  \chiopm \in \Cc(B(\omega_{\zeta}, 2r_0)),
\end{equation}
be non-negative and $1$ on $\{\omega_t\} \times B(\omega_x, r_0)$, resp. $B(\omega_{\zeta}, r_0) \subset\Pizeta(\Phipm(\Trfo)).$ These functions $\phiopm, \chiopm$ will determine the $\ti z$ and $\ti \zeta$ dependence of $\qo, \aco,$ and $\ro$. Only the $s$ dependence will change between $\qo,\aco,$ and $\ro$. 

Recall also $\Cm_2, T_2$ from Proposition \ref{prop:oneSidedGCC} and $\Topm, \e$ from Lemma \ref{l:epsilonCoordinateDefine}. From Lemma \ref{l:epsilonCoordinateDefine} $\Topm \leq T_2$ and $\e \in (0,1)$, so $[-1,\Topm+\e] \subset [-2, T_2+2]$ and $[-\Topm-\e,1] \subset [-T_2-2,2]$. The functions $\qo, \aco,$ and $ \ro$ will be compactly supported for $s\in [-1,\Topm+\e]$, or $[-\Topm-\e,1]$, and then extended by zero for all other values of $s$.

We begin by defining the escape function $\qo$ and compute the Hamilton flow applied to it.
\begin{lemma}\label{l:escapeqdef}
    There exist constants $C^*, \e_1>0$ such that for all $\omega \in \pmhomogftrappedcompactset \cap\{a(\omega)\leq\frac{\overline C_2}{2}\}$, resp. $\omega \in \pmhomogbtrappedcompactset\cap\{a(\omega)\leq\frac{\overline C_2}{2}\}$, there exists a nonnegative, resp. non-positive, function $\qo \in \Cc(\Phipm(\Trfo))$ given by
    \begin{align}
        &\qo(s,\ti z,\ti \zeta) = \alphaopm(s) \phiopm(\ti z) \chiopm(\ti \zeta)\\
        &\alphaopm(s) = \left(\Cm_2s - \int_0^s a(\gopmr{r})dr + \e^2\right), \quad \text{for } s \in [0,\Topm],\\
        \text{resp. } &\alphaopm(s) = \left(\Cm_2s + \int_s^0 a(\gopmr{r})dr - \e^2\right), \quad \text{for } s \in [-\Topm,0],
    \end{align}
    such that 
    \begin{align}
        &\Hpm \qo(s,\ti z,\ti \zeta) = \p_s \alphaopm(s) \phiopm(\ti z) \chiopm(\ti \zeta), \qquad \text{ and } \\
        &\p_s \alphaopm(s)  \geq \begin{cases}
        0 & s \in [-1,0], \quad \text{resp. } s\in [0,1]\\
        \frac{\Cm_2}{4} & s \in [-\e_1, \e_1]\\
        (\Cm - a(\gopms))  & s\in [0,\Topm], \quad \text{resp. } s\in [-\Topm,0]\\
        -C^* & s \in [\Topm, \Topm+\e], \quad \text{resp. } s\in [-\Topm-\e, -\Topm].
        \end{cases} \label{eq:alphaconditions}
    \end{align}
\end{lemma}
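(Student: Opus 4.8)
The plan is to carry out the whole construction in the product coordinates $\Psiopm$ of Lemma \ref{l:productCoord}. Since $\Psiopm(s,\ti z,\ti\zeta)=\varphi^\pm_s(\ti z,\ti\zeta)$, each curve $s\mapsto\Psiopm(s,\ti z,\ti\zeta)$ is an integral curve of $\Hpm$, so $(\Psiopm)_*\partial_s=\Hpm$ and hence $(\Hpm f)\circ\Psiopm=\partial_s(f\circ\Psiopm)$ for every smooth $f$ on $\Phipm(\Trfo)$. We therefore simply \emph{define} $\qo$ by $\qo\circ\Psiopm(s,\ti z,\ti\zeta)=\alphaopm(s)\,\phiopm(\ti z)\,\chiopm(\ti\zeta)$ on the range of $\Psiopm$ and by zero elsewhere; this gives a well-defined element of $\Cc(\Phipm(\Trfo))$ as soon as $\alphaopm$ is smooth with $s$-support inside $(-2,T_2+2)$ (forward case), because $\phiopm,\chiopm$ are already compactly supported in the transversal variables. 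With this definition $\Hpm\qo\circ\Psiopm=(\partial_s\alphaopm)\phiopm\chiopm$ is exactly the displayed push-forward identity, and, since $\phiopm,\chiopm\geq0$, the sign of $\qo$ is the sign of $\alphaopm$. So the lemma reduces to building the scalar profile $\alphaopm$ on $\mathbb R$ with the stated sign and with $\partial_s\alphaopm$ obeying \eqref{eq:alphaconditions}, uniformly in $\omega$; the remaining $\omega$-uniform support and symbol estimates for $\qo$ are deferred to Lemma \ref{l:escapedefUniform}.

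On the core interval $[0,\Topm]$ we take $\alphaopm$ to be the formula in the statement. Differentiating and using \eqref{eq:flowdef} gives $\partial_s\alphaopm(s)=\overline C_2-a(\gopms)$, which is the third line of \eqref{eq:alphaconditions}. Writing $f(s)=\overline C_2 s-\int_0^s a(\gopmr r)\,dr$, Lemma \ref{l:epsilonCoordinateDefine} gives $f\geq0$ on $[0,\Topm]$ with $f(0)=f(\Topm)=0$ and $a(\gopmr{\Topm})\geq\overline C_2$; hence $\alphaopm=f+\e^2\geq\e^2>0$ on $[0,\Topm]$, $\alphaopm(0)=\alphaopm(\Topm)=\e^2$, and $\partial_s\alphaopm(\Topm^-)=\overline C_2-a(\gopmr{\Topm})\leq0$. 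For the second line near $0$: since $a(\omega)\leq\tfrac{\overline C_2}{2}$, the uniform continuity of $a$ together with the uniform bound on the flow speed along $\gopms$ (as in the proof of Lemma \ref{l:epsilonCoordinateDefine}) furnish an $\e_1\in(0,1)$, independent of $\omega$, with $a(\gopms)\leq\tfrac{3\overline C_2}{4}$ for $s\in[0,\e_1]$, so $\partial_s\alphaopm\geq\tfrac{\overline C_2}{4}$ there; in particular $\partial_s\alphaopm(0)=\overline C_2-a(\omega)\geq\tfrac{\overline C_2}{2}$.

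It remains to extend $\alphaopm$ across $[-1,0]$ and $[\Topm,\Topm+\e]$ — with $\e\in(0,1)$ and $\Topm\leq T_2$ uniform in $\omega$ by Lemma \ref{l:epsilonCoordinateDefine} and Proposition \ref{prop:oneSidedGCC} — and by zero outside $[-1,\Topm+\e]$. On $[\Topm,\Topm+\e]$ we take a smooth function decreasing monotonically from $\e^2$ to $0$, matching the one-sided jet of the core formula at $\Topm$ and vanishing to infinite order at $\Topm+\e$; since the drop $\e^2$ occurs over length $\e$ and the matching slope at $\Topm$ lies in $[-\lp{a}{\infty},0]$, we can keep $\partial_s\alphaopm\geq-C^*$ with $C^*$ depending only on $\overline C_2,\lp{a}{\infty},\e$. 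On $[-1,0]$ we take a smooth function increasing monotonically from $0$ to $\e^2$, matching the core jet at $0$ and vanishing to infinite order at $-1$; by concentrating the rise near $0$ we also arrange $\partial_s\alphaopm\geq\tfrac{\overline C_2}{4}$ on $[-\e_1,0]$, which is compatible with $\alphaopm\geq0$ after possibly shrinking the $\omega$-independent constant $\e_1$. Gluing the three pieces and extending by zero yields $\alphaopm$ satisfying all of \eqref{eq:alphaconditions}. The backward case $\omega\in\pmhomogbtrappedcompactset$ is the mirror image on $[-\Topm,0]$ and produces a non-positive $\alphaopm$ by the same argument with the orientation reversed.

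The only real obstacle is this last gluing: one must produce transition profiles making $\alphaopm$ genuinely $C^\infty$ at $s=0$ and $s=\Topm$ while keeping $\partial_s\alphaopm$ of the correct sign on each piece — the binding point being $\partial_s\alphaopm\geq\tfrac{\overline C_2}{4}$ on the two-sided neighborhood $[-\e_1,\e_1]$, where the core formula and the left transition must cooperate and which is what pins $\e_1$ against $\e^2$ — and, crucially, doing all of this with constants free of $\omega$. Building a smooth function on $[-1,0]$ (resp.\ $[\Topm,\Topm+\e]$) with prescribed jet at the interior endpoint, flat jet at the exterior endpoint, and the stated one-sided derivative bound is routine; the content is that the data being interpolated — the endpoint values $\e^2$ and the one-sided jets at $0$ and $\Topm$ of $s\mapsto\overline C_2 s-\int_0^s a(\gopmr r)\,dr$, whose low-order entries are $0,\ \overline C_2-a(\omega)$ and $0,\ \overline C_2-a(\gopmr{\Topm})$ and whose higher entries are bounded through derivatives of $a$ and of the flow on the spatially compact region $|x|\leq R$, $|\xi|\approx1$, $\tau=1$ to which $\gopms$ is confined — are all controlled \emph{uniformly in $\omega$} by asymptotic flatness of $g$, the derivative bounds on $a$, Lemmas \ref{l:productCoord} and \ref{l:epsilonCoordinateDefine}, Proposition \ref{prop:oneSidedGCC}, and the uniform continuity of $a$. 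This uniformity is precisely what the time-dependent geometric control condition buys and is what distinguishes the construction from the stationary case.
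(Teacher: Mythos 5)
Your proof takes essentially the same route as the paper: work in the product coordinates of Lemma \ref{l:productCoord} so that $\Hpm$ becomes $\partial_s$, use the explicit core formula on $[0,\Topm]$ (observing $\partial_s\alphaopm=\overline C_2-a(\gopms)$ and $\alphaopm\geq\e^2$ there by minimality of $\Topm$), obtain $\e_1$ from uniform continuity of $a$, and glue smooth transition profiles on $[-1,0]$ and $[\Topm,\Topm+\e]$ with the desired derivative bounds. This is exactly the paper's construction. You do spend more words than the paper on two points that it leaves implicit: the $C^\infty$ jet-matching at $s=0$ and $s=\Topm$, and the compatibility constraint that $\e_1$ must be taken small enough relative to the fixed $\e^2$ (roughly $\e_1\lesssim\e^2/\overline C_2$) so that requiring $\partial_s\alphaopm\geq\overline C_2/4$ on $[-\e_1,0]$ does not force $\alphaopm$ negative; the paper absorbs both of these into ``it is straightforward to see that $\alphaopm$ can be defined so that\ldots''. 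Making the constraint on $\e_1$ explicit, as you do, is a genuine (if minor) improvement in rigor, and the emphasis that the uniformity in $\omega$ of the interpolated jet data is the real content is a fair reading of the step.
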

\begin{proof}
    We assume $\omega \in \pmhomogftrappedcompactset$, as the proof with $\omega \in \pmhomogbtrappedcompactset$ is analogous. 
    In the product coordinates from $\Psiopm$, we have
    \begin{equation}
        \Hpm\qo = \p_s \qo(s,\ti z,\ti \zeta) = \p_s \alphaopm(s) \phiopm(\ti z) \chiopm(\ti \zeta).
    \end{equation}
    Therefore $\qo$ solves
    \begin{equation}
        \begin{cases}
            \Hpm \qo = (\Cm_2 - a(\gopms)) \phiopm(\ti z) \chiopm(\ti \zeta) \\
            \qo = \e^2 \phiopm(\ti z) \chiopm(\ti \zeta) \quad \text{on } \quad\Psiopm(s=0,\ti z,\ti \zeta),
        \end{cases}
    \end{equation}
    for $s \in [0,\Topm]$. Note that $\alphaopm \geq \e^2$ on $[0,\Topm]$, by definition of $\Topm$ as the smallest $T$ such that $\frac{1}{T} \int_0^T a(\gopms) ds = \Cm_2$, since $a(\gopmr{0}) \leq \frac{\Cm_2}{2}$.
    
    To extend $\qo$ to all of $\Phipm(\Trfo)$ we will extend $\alphaopm$ to a smooth non-negative function compactly supported in $s \in [-1,\Topm+\e]$, which is compatible with the conditions in \eqref{eq:alphaconditions}.

    First recall $a(\gopmr{0}) \leq \frac{\Cm_2}{2}$, and so, by uniform continuity of $a$, there exists $\e_1$ such that $a(\gopms) \leq \frac{3\Cm_2}{4}$ for all $\omega$ and all $s \in [0,
    \e_1]$. Therefore $\p_s \alphaopm(s) = \Cm_2 - a(\gopms) \geq \frac{\Cm_2}{4}$ for $s \in [0,\e_1]$. 
    
    Since $\alphaopm(-1)=0$, $\alphaopm(0)=\e^2>0$, and $\p_s \alphaopm|_{s=0} = \Cm_2 - \alpha(\gopmr{0}) \in [\frac{\Cm_2}{2}, \Cm_2]$, it is straightforward
    to see that $\alphaopm$ can be defined so that $\p_s \alphaopm \geq 0$ on $[-1,0]$ and $\p_s \alphaopm \geq \frac{\Cm_2}{4}$ on $[-\e_1, 0]$.
    
    %To see the second recall that for all $\omega$, $a(\gopmr{0}) \leq \frac{\Cm}{100}$. Thus by the uniform continuity of $a$ there exists $\e_1$ such that $a(\gopms) \leq \frac{3\Cm}{4}$ for all $\omega$ and all $s \in [0,\e_1]$.
    %To see these, note that by the uniform continuity of $a$, and the fact that for all $\omega,$ $a(\gopmr{0}) \leq \Cm/100,$ there exists $\e_1>0,$ such that for all $\omega$, $a(\gopms) \leq \frac{3 \Cm}{4}$ for $s \in [0,\e_1].$
    %Therefore $\p_s \alphaopm(s) = \Cm - a(\gopms) \geq \Cm/4$ for $s \in [0,\e_1].$ Note that $\alphaopm$ does not depend on $\omega$ in $[-1,0]$ or $[-\e_1, 0],$ because it is defined in terms of $\e, \e_1,$ and $\Cm$ there, which do not depend on $\omega$. \mm{Previous sentence not true: the derivative at $0$ does depend on $\omega$ but is bounded above and below independently of $\omega$.}

    Finally, since $|\p_s \alphaopm(\Topm)| = |\Cm - a(\gopmr{\Topm}| \leq \lp{a}{\infty}$, and $\alphaopm(\Topm)=\e^2$, then $\alphaopm$ can be constructed so that for some $C^* \geq 0$ and all $\omega \in \pmhomogftrappedcompactset$
    \begin{equation}
        \p_s \alphaopm(s) \geq -C^* \text{ for } s \in [\Topm, \Topm+\e].
    \end{equation}
\end{proof}
We now define the function we add to $\Hpm \qo$ to obtain positivity for $s \in [\e_1, \Topm +\e]$.
\begin{lemma}\label{l:escapeadef}
    
    Let $\ti{a} \in \Cc\left(\left\{a \geq \frac{\Cm_2}{2}\right\}\right)$ be a non-negative, smooth function with uniform derivative bounds such that 
    \begin{align}
        &\ti{a} \leq a, \text{ and} \\
        &\ti{a}=a \text{ on } \left\{a \geq \frac{3\Cm_2}{4} \right\}.
    \end{align} 
    For $\omega \in \pmhomogftrappedcompactset$, resp. $ \omega \in \pmhomogbtrappedcompactset$, let $\rhopm$ be a smooth nonnegative function, bounded by 1, such that 
    \begin{enumerate}
        \item $\rhopm$ is compactly supported in $[-2, \Topm+2]$, resp. $[-\Topm-2, 2]$.
        \item $\rhopm \equiv 1$ on $[-1, \Topm+\e]$, resp. $[-\Topm-\e,1]$.
        \item $\rhopm$ is chosen independent of $\omega$ on $[-2,-1]\cup[\Topm+\e,\Topm+2]$, resp. $[-\Topm-2, -\Topm-\e] \cup[1,2]$.
    \end{enumerate}
    Then, recalling $C^*$ from Lemma \ref{l:escapeqdef}, define
    \begin{equation}
        \aco(s, \ti z, \ti \zeta) = 2\left(1+\frac{C^*}{\Cm_2}\right) \ti{a}(\gopms) \rhopm(s) \phiopm(\ti z) \chiopm(\ti \zeta).
    \end{equation}
    There exists $C_1>0$ such that for any $\omega \in \homogpmsemitrappedset\cap\{a(\omega)\leq\frac{\overline C_2}{2}\}$, we have 
    \begin{equation}
        0 \leq \aco \leq C_1 a.
    \end{equation}
\end{lemma}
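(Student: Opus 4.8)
The plan is to read off the two-sided bound directly from the form of $\aco$ in the product coordinates $\Psiopm$ of Lemma \ref{l:productCoord}. The lower bound $0 \le \aco$ needs nothing: $\ti a$, $\rhopm$, $\phiopm$, $\chiopm$ are non-negative, while $C^* \ge 0$ (Lemma \ref{l:escapeqdef}) and $\Cm_2 > 0$ (Proposition \ref{prop:oneSidedGCC}). So the content is the upper bound $\aco \le C_1 a$ with $C_1$ uniform in $\omega$, and the key is to compare $\ti a(\gopms)$ --- evaluated along the central null bicharacteristic through $\omega$ --- with the damping $a$ evaluated at the actual phase-space point $\Psiopm(s,\ti z,\ti\zeta)$ (recall $a(z,\zeta):=a(\Piz(z,\zeta))$, so that $\aco\le C_1a$ is an inequality of functions on $\Phipm(\Trfo)$).

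First I would pin down the support of $\aco$. In the coordinates $\Psiopm$, $\aco$ vanishes unless both $\rhopm(s)\phiopm(\ti z)\chiopm(\ti\zeta)\ne 0$ and $\ti a(\gopms)\ne 0$. The first condition forces $s\in\supp\rhopm\subset[-2,\Topm+2]$ (resp.\ $[-\Topm-2,2]$), $\ti z\in\{\omega_t\}\times B(\omega_x,2r_0)$, and $\ti\zeta\in B(\omega_{\zeta},2r_0)$; since $\Topm\le T_2$ by Lemma \ref{l:epsilonCoordinateDefine}, these parameters lie in the range for which \eqref{eq:productCoordaest} of Lemma \ref{l:productCoord} is valid. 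The second condition, together with $\supp\ti a\subset\{a\ge\tfrac{\Cm_2}{2}\}$, gives $a(\gopms)\ge\tfrac{\Cm_2}{2}$ on $\supp\aco$.

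Next I would combine these facts. On $\supp\aco$, the product-coordinate estimate \eqref{eq:productCoordaest} gives $|a(\Psiopm(s,\ti z,\ti\zeta))-a(\gopms)|\le\tfrac{\Cm_2}{4}$, hence $a(\Psiopm(s,\ti z,\ti\zeta))\ge a(\gopms)-\tfrac{\Cm_2}{4}\ge\tfrac{\Cm_2}{4}$, and in particular $a(\gopms)\le a(\Psiopm(s,\ti z,\ti\zeta))+\tfrac{\Cm_2}{4}\le 2\,a(\Psiopm(s,\ti z,\ti\zeta))$. Taking $\rhopm,\phiopm,\chiopm$ with values in $[0,1]$ (as we may) and using $\ti a\le a$, this yields
\begin{equation*}
\aco \;\le\; 2\Big(1+\tfrac{C^*}{\Cm_2}\Big)\ti a(\gopms) \;\le\; 2\Big(1+\tfrac{C^*}{\Cm_2}\Big)a(\gopms) \;\le\; 4\Big(1+\tfrac{C^*}{\Cm_2}\Big)a(\Psiopm(s,\ti z,\ti\zeta)),
\end{equation*}
so $C_1:=4\big(1+C^*/\Cm_2\big)$ works on $\supp\aco$, and off $\supp\aco$ one has $\aco=0\le C_1 a$ since $a\ge 0$. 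As $C^*$ and $\Cm_2$ do not depend on $\omega$, neither does $C_1$, which is the uniformity asserted.

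The argument is short and there is no genuine obstacle; the only point requiring care is the first step, namely checking that the $s$-support of $\rhopm$ and the $\ti z,\ti\zeta$-supports of $\phiopm,\chiopm$ remain inside the region where Lemma \ref{l:productCoord} supplies \eqref{eq:productCoordaest}. This reduces to $\Topm\le T_2$ from Lemma \ref{l:epsilonCoordinateDefine} together with the uniform radius $r_0$ furnished there, after which the conclusion follows from the displayed chain of inequalities.
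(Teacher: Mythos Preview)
Your proof is correct and follows essentially the same route as the paper's: both restrict to the support of $\aco$, use $\supp\ti a\subset\{a\ge\Cm_2/2\}$ together with the product-coordinate estimate \eqref{eq:productCoordaest} to get $a(\gopms)\le 2\,a(\Psiopm(s,\ti z,\ti\zeta))$, and arrive at the identical constant $C_1=4(1+C^*/\Cm_2)$. If anything you are slightly more careful than the paper in verifying that the full $s$-support $[-2,\Topm+2]\subset[-2,T_2+2]$ lies in the range where \eqref{eq:productCoordaest} applies.
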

\begin{remark}
    It would be more straightforward if we could obtain positivity for 
    $$\Hpm \qo + Ca(\Psi(s,\ti z, \ti \zeta)).$$
    However, we must be more careful because $\Hpm \qo$ is expressed in terms of $a(\gopms)$ and 
    \begin{equation}
     a=a(\Psiopm(s,\ti z,\ti \zeta)) \neq a(\Psiopm(s,\omega_{z}, \omega_{\zeta}))=a(\gopms).
    \end{equation}
    In particular, the former depends on $(s, \ti z,\ti \zeta)$ while the latter depends only on $s$.
    
\end{remark}
\begin{proof}
    Again we only prove the case $\omega \in \pmhomogftrappedcompactset$, as the proof when $\omega \in \pmhomogbtrappedcompactset$ is analogous. 
    
    Since $\gopms=\Psiopm(s, \omega_z, \omega_{\zeta})$, by Lemma \ref{l:productCoord} we have
    \begin{equation}
        \left| a(\gopms) - a(\Psiopm(s,\ti z,\ti \zeta)) \right| \leq \frac{\Cm_2}{4}, \quad (s,\ti z,\ti \zeta) \in [-1, \Topm+\e] \times\{\omega_t\}\times B(\omega_x, 2r_0) \times B(\omega_{\zeta}, 2r_0).
    \end{equation}
    Therefore, when $a(\gopms) \geq \frac{\Cm_2}{2}$, i.e. where $\ti{a} \geq 0$, we have  
    \begin{equation}
        a(\Psiopm(s, \ti z,\ti \zeta)) \geq a(\gopms) - \frac{\Cm_2}{4} \geq \frac{\Cm_2}{2}-\frac{\Cm_2}{4}=\frac{\Cm_2}{4},
    \end{equation}
    then rearranging
    \begin{equation}
        \ti{a}(\gopms) \leq a(\gopms) \leq a(\Psiopm(s,\ti z,\ti \zeta)) + \frac{\Cm_2}{4} \leq 2 a(\Psiopm(s,\ti z,\ti \zeta)).
    \end{equation}
    %That is, we need $a(\gopms) \geq \Cm/2$ to control $\frac{\Cm}{4}$ by $a(\Psiopm)$. Put another way, if $a(\gopms)$ is small then $a(\Psiopm)$ can shutoff before $\chiopm$ and $\phiopm$ do. 
    Thus, there exists $C_1:=4(1+\frac{C^*}{\Cm_2})>0$, so that $\aco(s,\ti z,\ti \zeta) \leq C_1 a(\Psiopm(s,\ti z,\ti \zeta)).$ 
\end{proof}
We now show that $\Hpm \qo+\aco$ is bounded from below in a fixed width neighborhood of $\omega$ and characterize this lower bound.
\begin{lemma}\label{l:escaperdef}
    Recall the constant $\e_1>0$ from Lemma \ref{l:escapeqdef}. Let $\mu(s) \in \Cs([-\e_1, \e_1]:[0,1])$ satisfy $\mu(s) =1$ for $s \in \left[-\frac{\e_1}{2}, \frac{\e_1}{2}\right]$. 
    There exists a constant $r_1>0$ such that for any $\omega \in \homogpmsemitrappedset\cap\{a(\omega)\leq\frac{\overline C_2}{2}\}$, if we define
    \begin{equation}
        \ro(s,\ti z,\ti \zeta) = \frac{\Cm_2}{4} \mu(s) \phiopm(\ti z) \chiopm(\ti \zeta),
    \end{equation}
    then we have 
    \begin{enumerate}
        \item $\ro \geq 0$
        \item $\ro \geq \frac{\Cm_2}{4}$ on $B(\omega, r_1) \subset \Phipm(\Trfo)$, and 
        \item For all $(z,\zeta) \in \Phipm(\Trfo)$
        \begin{equation}
            \Hpm \qo(z,\zeta) + \aco(z,\zeta) \geq \ro(z,\zeta).
        \end{equation}
    \end{enumerate}
\end{lemma}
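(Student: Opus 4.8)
The plan is to check the three assertions one at a time, transporting everything to the $s$-variable via the product coordinates $\Psiopm$ of Lemma \ref{l:productCoord}. Assertion (1) is immediate since $\mu,\phiopm,\chiopm\ge0$ and $\Cm_2>0$. For (2), note that in these coordinates $\omega$ sits at $(s,\ti z,\ti\zeta)=(0,\omega_z,\omega_\zeta)$, that $\mu\equiv1$ on $[-\e_1/2,\e_1/2]$, $\phiopm\equiv1$ on $\{\omega_t\}\times B(\omega_x,r_0)$, and $\chiopm\equiv1$ on $B(\omega_\zeta,r_0)$; hence $\ro\equiv\tfrac{\Cm_2}{4}$ on the $\Psiopm$-image of $[-\tfrac{\e_1}{2},\tfrac{\e_1}{2}]\times(\{\omega_t\}\times B(\omega_x,r_0))\times B(\omega_\zeta,r_0)$. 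Since $\Psiopm$ is a diffeomorphism onto its image, this image contains a ball $B(\omega,r_1)\subset\Phipm(\Trfo)$, and a uniform choice of $r_1$ is possible because $g$ is stationary: the flow commutes with $t$-translation, so together with the $x,\xi,\tau$-compactness of $\homogpmsemitrappedset$ the maps $\Psiopm$ and $(\Psiopm)^{-1}$ are equicontinuous in $\omega$.

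The substance is assertion (3). The organizing observation is that $\Hpm\qo$, $\aco$, and $\ro$ all factor through the common nonnegative weight $\phiopm(\ti z)\chiopm(\ti\zeta)$ --- for $\Hpm\qo$ this is precisely the identity $\Hpm\qo=\p_s\alphaopm(s)\phiopm(\ti z)\chiopm(\ti\zeta)$ of Lemma \ref{l:escapeqdef}. Outside the image of $\Psiopm$, and for $s\notin[-1,\Topm+\e]$ (where $\qo$ is extended by zero, hence so is $\Hpm\qo$, while $\ro=0$ and $\aco\ge0$), the inequality is trivial. So after dividing by $\phiopm\chiopm$ it remains to show, for $s\in[-1,\Topm+\e]$ (where $\rhopm\equiv1$), that
\begin{equation*}
    \p_s\alphaopm(s)+2\Big(1+\tfrac{C^*}{\Cm_2}\Big)\ti{a}(\gopms)\;\ge\;\tfrac{\Cm_2}{4}\mu(s).
\end{equation*}
Here one first records that $\e_1<\Topm\le T_2$, the first inequality because $a(\gopms)\le\tfrac{3\Cm_2}{4}<\Cm_2$ for $s\in[0,\e_1]$ forces the minimal time $\Topm$ to exceed $\e_1$. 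I would then split $[-1,\Topm+\e]$ into four pieces and apply the matching line of \eqref{eq:alphaconditions}: on $[-\e_1,\e_1]$, $\p_s\alphaopm\ge\tfrac{\Cm_2}{4}\ge\tfrac{\Cm_2}{4}\mu$ and $\ti{a}\ge0$; on $[-1,-\e_1]$, $\mu=0$, $\p_s\alphaopm\ge0$, $\ti{a}\ge0$; on $[\e_1,\Topm]$, $\mu=0$ and $\p_s\alphaopm(s)=\Cm_2-a(\gopms)$ (by the explicit formula for $\alphaopm$), which is $\ge0$ when $a(\gopms)\le\Cm_2$, while if $a(\gopms)>\Cm_2>\tfrac{3\Cm_2}{4}$ then $\ti{a}(\gopms)=a(\gopms)$ and the left side equals $\Cm_2+(1+\tfrac{2C^*}{\Cm_2})a(\gopms)>0$; and on $[\Topm,\Topm+\e]$, $\mu=0$, $\p_s\alphaopm\ge-C^*$, and Lemma \ref{l:epsilonCoordinateDefine} gives $a(\gopms)\ge\tfrac{3\Cm_2}{4}$, so $\ti{a}(\gopms)=a(\gopms)$ and the left side is $\ge-C^*+\tfrac32(\Cm_2+C^*)=\tfrac32\Cm_2+\tfrac12C^*\ge0$. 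Overlaps at the endpoints $\pm\e_1,\Topm$ are covered by whichever adjacent case one likes. The $\pmhomogbtrappedcompactset$ case is the mirror image under $s\mapsto-s$.

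The main obstacle is the last window $[\Topm,\Topm+\e]$, where $\Hpm\qo$ is genuinely negative (only $\ge-C^*$, the cost of smoothly turning $\alphaopm$ off), so positivity has to be supplied entirely by the correction $\aco$. This succeeds exactly because Lemma \ref{l:epsilonCoordinateDefine} --- which is where the time-dependent geometric control condition, via Proposition \ref{prop:oneSidedGCC}, enters --- pins $a(\gopms)$ below by $\tfrac{3\Cm_2}{4}$ on that window, so that $\ti{a}(\gopms)=a(\gopms)$ is bounded away from $0$, and because the constant $2(1+C^*/\Cm_2)$ built into $\aco$ was chosen precisely to outweigh $C^*$. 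The only other delicate point is the uniformity of $r_1$ in $\omega$ in assertion (2); as with the uniformity of $\e,\e_1,C^*,C_1$ in the surrounding lemmas, this is bought by the stationarity of $g$, which collapses the relevant compactness considerations onto the $x,\xi,\tau$-compact set $\homogpmsemitrappedset$.
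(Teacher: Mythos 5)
Your proof is correct and follows essentially the same route as the paper's: in the product coordinates, divide out the common weight $\phiopm\chiopm$, reduce to a pure-$s$ inequality, and handle the intervals using the bounds from \eqref{eq:alphaconditions} for $\p_s\alphaopm$ together with Lemma \ref{l:epsilonCoordinateDefine} to feed the correction term $\aco$ on the final window $[\Topm,\Topm+\e]$. The only cosmetic difference is that you split into four intervals $[-1,-\e_1]\cup[-\e_1,\e_1]\cup[\e_1,\Topm]\cup[\Topm,\Topm+\e]$ whereas the paper uses three cases (with a sub-split in case ii) based on whether $a\lessgtr\tfrac{3\Cm_2}{4}$); the arithmetic is the same, and you usefully make explicit the inequality $\e_1<\Topm$ that both arguments implicitly rely on.
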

\begin{proof}
    We again specialize to the case $\omega \in \pmhomogftrappedcompactset$, as the proof when $\omega \in \pmhomogbtrappedcompactset$ is analogous.
    
    1) Note that by construction $\ro \geq 0.$

    2) Since $\phiopm \equiv 1$ on $B(\omega_x, r_0), \chiopm\equiv 1$ on $B(\omega_{\zeta}, r_0)$ we have $\ro(s,\ti z,\ti \zeta) \geq \frac{\Cm}{4}$ for $(s,\ti z,\ti \zeta) \in (-\frac{\e_1}{2}, \frac{\e_1}{2}) \times \{\omega_t\}\times B(\omega_x, r_0) \times B(\omega_{\zeta}, r_0)$. Now using the continuity of the flow, the compactness of $\Pi_t^{\bot} \pmhomogftrappedcompactset$, and the time-independence of the flow, there exists $r_1 >0$, such that for all $\omega \in \pmhomogftrappedcompactset$, the set $B(\omega, r_1) \subset \Phipm(\Trfo)$ satisfies the inclusion
    \begin{equation}
        B(\omega, r_1) \subset \Psiopm\left(  (-\frac{\e_1}{2}, \frac{\e_1}{2}) \times \{\omega_t\}\times B(\omega_x, r_0) \times B(\omega_{\zeta}, r_0) \right).
    \end{equation}
    Therefore $\ro \geq \frac{\Cm_2}{4}$ on $B(\omega, r_1)$ as desired. 
    
    %By the uniform lower bound on the size of $O_{\omega}, V_{\omega}$ and since $\mu$ does not depend on $\omega$, there exists $r_0 >0,$ such that for all $\omega \in \homogpmsemitrappedset,$ $\ro(s=0, z, \zeta) \geq 
    3) To prove the lower bound, beginning with Lemmas \ref{l:escapeqdef} and \ref{l:escapeadef} we have
    \begin{equation}
        (\Hpm \qo + \aco)(s,\ti z, \ti \zeta) = \phiopm( \ti z ) \chiopm(\ti \zeta) ( \p_s \alphaopm(s) + 2 \left(1+\frac{C^*}{\Cm_2} \right) \ti{a}(\gopms))\rhopm(s).
    \end{equation}
    Thus it is enough to show that for $s \in [-2, \Topm +2]$
    \begin{equation}
    \p_s \alphaopm(s) + 2 \left(1+\frac{C^*}{\Cm_2} \right)\ti{a}(\gopms) \rhopm(s) \geq \frac{\Cm_2}{4} \mu(s).
    \end{equation}
     For $s \in [-2,-1]\cup [\Topm+\e, \Topm+2]$ we have $\mu=\alphaopm=0,$ and $ \ti{a}, \rhopm \geq 0$, so the desired statement is immediately true there. Since $\rhopm(s) \equiv 1$ for $s \in [-1,\Topm+\e]$ we drop it from the remaining computations. We consider three cases
    \begin{enumerate}[label=\roman*)]
        \item $s \in [-1,0]$,
        \item $s \in [0,\Topm]$,
        \item $s \in [\Topm, \Topm+\e]$.
    \end{enumerate}
    In case i) $\mu(s)\leq 1$ on $[-\e_1, 0]$ and $\mu=0$ elsewhere, so by \eqref{eq:alphaconditions}
    \begin{equation}
        \p_s \alphaopm(s) + 2 \left(1+ \frac{C^*}{\Cm_2}\right) \ti{a}(\gopms) \geq \p_s \alphaopm(s) \geq \frac{\Cm_2}{4} \mu(s).
    \end{equation}
    In case ii) consider two subcases 

    \noindent a) On the set where $\{a \leq \frac{3\Cm_2}{4}\}$, $\p_s \alphaopm=\Cm_2 -a(\gopm) \geq \frac{\Cm_2}{4}$ and so 
    \begin{equation}
        \p_s \alphaopm(s) + 2\left(1+\frac{C^*}{\Cm_2}\right) \ti{a}(\gopms) \geq \frac{\Cm_2}{4} \geq \frac{\Cm_2}{4} \mu(s).
    \end{equation}
    b) On the set where $\{a \geq \frac{3 \Cm_2}{4}\}$, $\ti{a}(\gopms)=a(\gopms) \geq 0$ and so 
    \begin{equation}
        \p_s \alphaopm(s) + 2\left(1+\frac{C^*}{\Cm_2}\right) \ti{a}(\gopms) \geq \Cm_2 - a(\gopms) + 2 a(\gopms) \geq \Cm_2 \geq \frac{\Cm_2}{4} \mu(s). 
    \end{equation}
    In case iii) by Lemma \ref{l:epsilonCoordinateDefine}, $a(\gopms) \geq \frac{3\Cm_2}{4}$, so $\ti{a}(\gopms)=a(\gopms)\geq \frac{3\Cm_2}{4}.$ Also by \eqref{eq:alphaconditions}, $\p_s \alphaopm(s) \geq - C^*$ here. Thus 
    \begin{equation}
        \p_s \alphaopm(s) + 2\left(1+\frac{C^*}{\Cm_2}\right)  \ti{a}(\gopms) \geq -C^* + \frac{2C^*}{\Cm_2}\frac{3\Cm_2}{4} \geq 0 = \frac{\Cm_2}{4}\mu(s),
    \end{equation}
    because $\mu=0$ outside of $[-\e_1, \e_1].$
\end{proof}
We now mention some properties of these functions that are uniform in $\omega$. This uniformity is a key part of our next step and follows from the preceeding constructions.
\begin{lemma}\label{l:escapedefUniform}
    For $\omega \in \pmhomogftrappedcompactset$ (resp. $\pmhomogbtrappedcompactset$) the functions $\qo, \aco, \ro$ defined in Lemmas \ref{l:escapeqdef}, \ref{l:escapeadef}, and \ref{l:escaperdef} satisfy
    \begin{enumerate}
        \item  
        %if \mm{$C_{\bpm}$ not needed - see proof} $C_{\bpm}=\sup_{(x,\xi) \in \pmsemitrappedset} \frac{|\xi|}{|\bpmx|}$, then 
        \begin{align}
            \Pi_t \supp(\qo), \Pi_t \supp(\aco), \Pi_t \supp(\ro) \subset \{t \in [\omega_t-2,\omega_t+T_2+2)]\}, \\
            \text{resp. }\Pi_t \supp(\qo), \Pi_t \supp(\aco), \Pi_t \supp(\ro) \subset \{t \in [\omega_t-T_2-2,\omega_t+2)]\},
        \end{align}
        \item for each multi-index $\alpha, \beta$, there exists $C_{\alpha\beta} >0$, such that for all $\omega$ and all $(z,\zeta) \in \Phipm(\Trf)$
        \begin{align}
             |D_z^{\beta} D_{\zeta}^{\alpha} \qo(z,\zeta) |, |D_z^{\beta} D_{\zeta}^{\alpha} \aco(z,\zeta) |, |D_z^{\beta} D_{\zeta}^{\alpha} \ro(z,\zeta) | \leq C_{\alpha,\beta}.
        \end{align}
    \end{enumerate}
\end{lemma}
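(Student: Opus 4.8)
The plan is to establish the two assertions separately, proving them in the forward case $\omega\in\pmhomogftrappedcompactset$; the backward case $\omega\in\pmhomogbtrappedcompactset$ is entirely analogous after replacing $s$ by $-s$.

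For the $t$-support bound I would first observe that, since $\omega\in\homogpmsemitrappedset$ we have $\omega_\tau=1$, so the flow equations \eqref{eq:flowdef} for $\ppm=\tau-\bpm(x,\xi)$ give $\tfrac{d}{ds}t^\pm_s\equiv 1$; as the transversal $\Sigma_\omega$ in the product coordinates of Lemma \ref{l:productCoord} is $\{t=\omega_t\}\times\Rb^3$, the $t$-coordinate of $\Psiopm(s,\ti z,\ti\zeta)$ equals $\omega_t+s$. From their definitions, $\qo$, $\aco$, $\ro$ are supported in $s\in[-1,\Topm+\e]$, $s\in[-2,\Topm+2]$, and $s\in[-\e_1,\e_1]$, respectively. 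Since $\Topm\leq T_2$ and $\e\in(0,1)$ by Lemma \ref{l:epsilonCoordinateDefine}, and we may assume $\e_1\in(0,1)$ (shrinking it only weakens \eqref{eq:alphaconditions}), each of these intervals lies in $[-2,T_2+2]$, which yields $\Pi_t\supp(\qo),\Pi_t\supp(\aco),\Pi_t\supp(\ro)\subset\{t\in[\omega_t-2,\omega_t+T_2+2]\}$.

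For the derivative bounds, the idea is that in the product coordinates $(s,\ti z,\ti\zeta)=(\Psiopm)^{-1}(z,\zeta)$ each of $\qo,\aco,\ro$ is a product of a function of $s$ alone with the spatial cutoffs $\phiopm(\ti z)\chiopm(\ti\zeta)$, and that all of these factors --- and the coordinate change itself --- carry bounds on every derivative that are uniform in $\omega$. The cutoffs $\phiopm$, $\chiopm$ are translates, in the $x$- and $(\tau,\xi)$-variables, of fixed bump functions on balls of the $\omega$-independent radius $r_0$ from Lemma \ref{l:productCoord}, and $\mu$, together with $\rhopm$ on its transition intervals (a fixed profile reparametrized to an interval of fixed length, $\rhopm\equiv1$ on $[-1,\Topm+\e]$), is chosen independently of $\omega$, so these factors are handled. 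For the remaining $s$-factors, note that for $s\in[-2,T_2+2]$ and $\omega\in\pmhomogftrappedcompactset$ the orbit $\gopms$ and all its $s$-derivatives stay in a fixed compact subset of $\Trf$ in the $(x,\tau,\xi)$ variables --- using Lemma \ref{l:starlemma}, compactness of $\Pi_t^{\bot}\homogpmsemitrappedset$, continuity of the flow, and that $\tfrac{d}{ds}\gopms=(1,-\nabla_\xi\bpm,0,\nabla_x\bpm)$ with $\bpm$ $t$-independent --- while $a$ and the fixed function $\ti a$ have all derivatives bounded on that compact spatial region uniformly in $t$, by the asymptotic flatness assumptions of Definition \ref{d:asymptoticFlat1}. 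Hence $s\mapsto a(\gopms)$ and $s\mapsto\ti a(\gopms)$ have $\omega$-uniform derivative bounds by Fa\`a di Bruno's formula; integrating, so does $\alphaopm$ on $[0,\Topm]$, and its extensions to $[-1,0]$ and $[\Topm,\Topm+\e]$ can be chosen $\omega$-uniformly because the interpolation data (endpoint values and first derivatives) are $\omega$-uniformly bounded and the interval lengths are fixed.

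It remains to bound the derivatives of $(\Psiopm)^{-1}$ uniformly in $\omega$ on the compact region where $\phiopm\chiopm\neq 0$, and this is the step I expect to be the crux. Here I would use that $g$, hence $\bpm$ and $\ppm$, is stationary, so the Hamilton flow of $\ppm$ commutes with $t$-translation; this reduces matters to $\omega_t=0$, whereupon $\omega$ ranges over the compact set $\Pi_t^{\bot}\homogpmsemitrappedset$ and $(s,\ti z,\ti\zeta)$ over a fixed compact neighborhood, so the diffeomorphism $\Psiopm$ of Lemma \ref{l:productCoord} and its inverse have all derivatives bounded, uniformly in $\omega$, by the inverse function theorem. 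Combining this with the previous paragraph through the chain and Leibniz rules, and recalling that $\qo,\aco,\ro$ vanish to infinite order at the boundary of their supports so that extending by zero off the image of $\Psiopm$ is harmless, gives $|D_z^\beta D_\zeta^\alpha\qo|,|D_z^\beta D_\zeta^\alpha\aco|,|D_z^\beta D_\zeta^\alpha\ro|\leq C_{\alpha\beta}$ on $\Phipm(\Trfo)$ with $C_{\alpha\beta}$ independent of $\omega$. The only genuinely non-bookkeeping point is this uniformity of the coordinate change: the domain of the escape functions is not compact in $t$, and it is precisely the time-independence of the metric that lets us trade the non-compact $\omega$-range for a compact parameter set.
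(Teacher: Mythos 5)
Your proof is correct and follows essentially the same route as the paper's, verifying $\omega$-uniform derivative bounds factor by factor in the product-coordinate expressions of $\qo$, $\aco$, and $\ro$. The one place you go beyond the paper's argument is in explicitly controlling the diffeomorphism $\Psiopm$ and its inverse uniformly in $\omega$: the paper's proof leaves this implicit, but since the lemma asserts bounds on $D_z^{\beta}D_{\zeta}^{\alpha}$-derivatives in the ambient $(z,\zeta)$ variables while the functions are built in the $(s,\tilde z,\tilde\zeta)$ coordinates, the chain rule through $(\Psiopm)^{-1}$ does require uniform control. Your reduction to $\omega_t=0$ via stationarity, followed by compactness of $\Pi_t^{\bot}\homogpmsemitrappedset$ and the inverse function theorem, is the right way to supply this, and it mirrors how the paper already exploits stationarity and compactness in the proof of Lemma \ref{l:productCoord}.
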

\begin{proof}    
    We again only prove the case $\omega \in \pmhomogftrappedcompactset$ as the proof when $\omega \in \pmhomogbtrappedcompactset$ is analogous. 
    
    To see property (1), first note that for $\omega \in \homogpmsemitrappedset \subset \Phipm(\Trfo) \cap \characteristicsetofPplusminus$, we have
    $\omega_{\tau}=1$. Since $\frac{d}{ds} \Pi_t \gopms = \p_{\tau} \ppm=1$, we have
    \begin{equation}
        |\Pi_t \gopms - \omega_t | = s.% \leq s |\omega_{\tau}| = s \left| \frac{\xi_0}{\bpm(\omega_x, \xi_0)} \right| \leq s \sup_{(x,\xi) \in \pmsemitrappedset} \frac{|\xi|}{|\bpmx|}= s C_{\bpm}.
    \end{equation}
    By their constructions $\qo, \aco, \ro$ have compact $s$ support in $[-2,\Topm+2]$, and so their support in $t$ is contained in $[-2+ \omega_t, \Topm+2 + \omega_t]$ as desired. 
    
    To see property (2), first note that for each $\omega$ such a $C_{\alpha \beta}$ exists by the smoothness and compact support of $\qo, \aco, \ro$. To see that this $C_{\alpha \beta}$ can be taken uniformly in $\omega$, first note by their construction $\phiopm, \chiopm$ have a uniform upper bound on their derivatives. Similarly, by  the construction of $\alphaopm$ in $[-1,0] \cup [\Topm, \Topm+\e]$, there is a uniform bound on its derivatives there. Similarly, because the bounds on the derivatives of $a$ have uniform upper bounds, the same is true for derivatives of $\ti{a},$ where we note that $\ti{a}$ is defined globally and does not depend on $\omega$. This, along with the choice of $\rhopm$ means there are bounds on the derivatives of $\qo, \aco$ which are uniform in $\omega$. Finally $\mu(s)$ does not depend on $\omega$ and so there are bounds on the derivatives of $\ro$, which are uniform in $\omega$.
\end{proof}

\subsubsection{Reduction to a Locally Finite Number}
At this point, we have constructed an escape function for each $\omega\in\homogpmsemitrappedset$. We reduce to a locally finite number of escape functions by covering $\homogpmsemitrappedset$ with  neighborhoods where $\{\ro \geq \frac{\Cm_2}{4}\}$ and using compactness and time-invariance of the space-time. Then, for that locally finite number of escape functions, we extend them to $\Trf$ via homogeneity.

\begin{lemma}\label{l:fulltrappedescapelemma}
Fix $R>R_0$. There exists an open set $\semitrappedescapeset \supset \pmsemitrappedset$, a constant $C_a>0$, and functions $\qpm, \acpm, \rpm \in \Cs(\Trfo)$, with $\acpm, \rpm$ nonnegative such that 
\begin{enumerate}
    \item 
    \begin{equation}
        \Hpm \qpm + \acpm \geq \rpm \geq \frac{\Cm_2}{4} \mathbbm{1}_{\semitrappedescapeset}.
    \end{equation}
    \item The symbol $\acpm$ is uniformly bounded by a multiple of the damping
    \begin{equation}
        \acpm(t,x,\tau,\xi) \leq C_a a(t,x) \quad \text{ for all } (t,x,\tau,\xi)\in \Trfo.
    \end{equation}
    \item The symbols $\qpm, \acpm, \rpm$, are $0$-homogeneous in $\xi$ and $\tau$. That is there exist $\qpmo, \acpmo, \rpmo \in \Cc(\Trfo)$ such that 
    \begin{equation}
        \qpm = \qpmo \circ \Phipm, \quad \acpm =\acpmo \circ \Phipm, \quad \rpm = \rpmo \circ \Phipm.
    \end{equation}
    \item The set $\semitrappedescapeset$ is uniform in $t$. That is, $\semitrappedescapeset$ is a product of a set that does not depend on $t$, with $\Rb_t$
    \begin{equation}
        \semitrappedescapeset = \Rb_t \times \Pi_t^{\bot} \semitrappedescapeset.
    \end{equation}
    \item Finally $\qpm, \acpm, \rpm$ satisfy $S^0(\Trfo)$ symbol estimates from %with constants uniform in $t$. 
    Definition \ref{def:Symbol}. 
    % That is, for any multi-indices $\alpha, \beta$ there exists a constant $C_{\alpha,\beta}$ such that for all $(z,\zeta) \in \Trf$
    % \begin{align}
    %     &|D_z^{\beta} D_{\zeta}^{\alpha} \qpm(z,\zeta) | \leq C_{\alpha,\beta} (1+|\zeta|)^{-|\alpha|}\\
    %     &|D_z^{\beta} D_{\zeta}^{\alpha} \acpm(z,\zeta) | \leq C_{\alpha,\beta} (1+|\zeta|)^{-|\alpha|}\\
    %     &|D_z^{\beta} D_{\zeta}^{\alpha} \rpm(z,\zeta) | \leq C_{\alpha,\beta} (1+|\zeta|)^{-|\alpha|}.
    % \end{align}
\end{enumerate}
\end{lemma}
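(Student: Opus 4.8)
The plan is to glue together the escape functions attached in Section~\ref{s:semitrapescape} to individual semi-trapped null bicharacteristics --- namely the data $(\qo,\aco,\ro)$ on $\Phipm(\Trfo)$ produced by Lemma~\ref{l:escapeLargeDamp} when $a(\omega)\geq\Cm_2/2$ (with $\qo\equiv 0$ there) and by Lemmas~\ref{l:escapeqdef}--\ref{l:escaperdef} when $a(\omega)\leq\Cm_2/2$ --- into a single locally finite sum on $\Phipm(\Trfo)$, and then to push the result out to all of $\Trfo$ by $0$-homogeneity. Recall that for every $\omega\in\homogpmsemitrappedset$ these data satisfy $\Hpm\qo+\aco\geq\ro\geq 0$ (using $\qo\equiv 0$ and $\aco\geq\ro$ in the large-damping case), $\ro\geq\Cm_2/4$ on a ball $B(\omega,r_1)\subset\Phipm(\Trfo)$ whose radius $r_1>0$ is independent of $\omega$ (take the smaller of the two radii), $0\leq\aco\leq C_1 a$ for a fixed $C_1\geq 1$, and --- this is the decisive point --- $t$-supports of width at most $2T_2+4$ and all derivatives bounded, both uniformly in $\omega$, by Lemmas~\ref{l:escapeLargeDamp} and~\ref{l:escapedefUniform}.

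First I would extract a locally finite cover. Because $g$ is stationary, the $(x,\xi)$-subsystem of $\varphi_s^\pm$ is autonomous and independent of the $t$-coordinate of the initial point, so $\homogpmsemitrappedset=\Rb_t\times K$ with $K:=\Pi_t^{\bot}\homogpmsemitrappedset$, and $K$ is compact by Lemma~\ref{l:starlemma} and Proposition~\ref{Propn:RescalingPropn}. I would choose $\eta_1,\dots,\eta_N\in K$ with $\bigcup_{j}B(\eta_j,r_1/2)\supset K$ and set $\omega_{j,k}:=(kr_1,\eta_j)\in\homogpmsemitrappedset$ for $1\leq j\leq N$ and $k\in\Zb$. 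Given $(t,\eta)\in\homogpmsemitrappedset$, picking $j$ with $|\eta-\eta_j|<r_1/2$ and $k$ with $|t-kr_1|\leq r_1/2$ places $(t,\eta)\in B(\omega_{j,k},r_1)$, so $\{B(\omega_{j,k},r_1)\}_{j,k}$ covers $\homogpmsemitrappedset$; moreover on each time-slice the ball whose center is nearest in $t$ already contains $\bigcup_j B(\eta_j,\tfrac{\sqrt{3}}{2}r_1)$, a neighborhood of $K$ independent of the slice, and so I would put $\homogsemitrappedescapeset:=\Rb_t\times\bigcup_{j=1}^N B(\eta_j,\tfrac{\sqrt{3}}{2}r_1)$, which is open, invariant in $t$, contained in $\bigcup_{j,k}B(\omega_{j,k},r_1)$, and contains $\homogpmsemitrappedset$. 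Since the $t$-support of the data at $\omega_{j,k}$ lies in $kr_1+[-T_2-2,T_2+2]$, at most a number $M=M(N,T_2,r_1)$ of these supports meet any point: the family is locally finite with uniform multiplicity.

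Next I would sum: on $\Phipm(\Trfo)$ set $\qpmo:=\sum_{j,k}q^{\pm}_{\omega_{j,k}}$, $\acpmo:=\sum_{j,k}\ac^{\pm}_{\omega_{j,k}}$, $\rpmo:=\sum_{j,k}r^{\pm}_{\omega_{j,k}}$. Local finiteness makes these smooth, the uniform per-summand derivative bounds and the multiplicity bound give $|\partial^\alpha\qpmo|+|\partial^\alpha\acpmo|+|\partial^\alpha\rpmo|\leq MC_\alpha$, and summing the pointwise inequalities yields $\Hpm\qpmo+\acpmo\geq\rpmo\geq 0$ with $\rpmo\geq\tfrac{\Cm_2}{4}$ on $\homogsemitrappedescapeset$ (one summand there is $\geq\Cm_2/4$ and the rest are nonnegative) and $\acpmo\leq MC_1 a=:C_a a$. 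Then I would extend by homogeneity: $\Phipm$ is $0$-homogeneous in $(\tau,\xi)$, and since $\tau$ and $b^\pm$ are constant along $\varphi_s^\pm$, Proposition~\ref{Propn:RescalingPropn} shows $\Phipm$ intertwines $\varphi_s^\pm$ with itself, hence commutes with $\Hpm$ acting by composition. Thus $\qpm:=\qpmo\circ\Phipm$, $\acpm:=\acpmo\circ\Phipm$, $\rpm:=\rpmo\circ\Phipm$ and $\semitrappedescapeset:=(\Phipm)^{-1}(\homogsemitrappedescapeset)$ have all the asserted properties: $\Hpm\qpm+\acpm=(\Hpm\qpmo+\acpmo)\circ\Phipm\geq\rpm\geq\tfrac{\Cm_2}{4}\mathbbm{1}_{\semitrappedescapeset}$; the symbols are $0$-homogeneous and smooth on $\Trfo$ (each summand is either independent of $(\tau,\xi)$ or supported where its $\Phipm$-preimage is a cone bounded away from $\{\xi=0\}$, where $\Phipm$ is smooth), with $S^0(\Trfo)$ estimates following from the homogeneity, the uniform derivative bounds, and the control of $\Phipm$ and its derivatives by asymptotic flatness of $g$ and $b^\pm\simeq|\xi|$; $\acpm(t,x,\tau,\xi)=\acpmo(\Phipm(t,x,\tau,\xi))\leq C_a a(t,x)$ since $a$ does not depend on $(\tau,\xi)$; and $\semitrappedescapeset=\Rb_t\times\Pi_t^{\bot}\semitrappedescapeset$ contains $\pmsemitrappedset$ because $\Phipm$ fixes $t$ and maps $\pmsemitrappedset$ into $\homogpmsemitrappedset\subset\homogsemitrappedescapeset$.

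I expect the main obstacle to be the locally finite cover. In the stationary-damping setting of \cite{Kofroth23} one simply covers a compact set, whereas here $\homogpmsemitrappedset$ is unbounded in $t$, so compactness is not directly available; what makes the reduction work is precisely the uniformity in $\omega$ --- crucially in the $t$-component of $\omega$ --- of the constructions in Lemmas~\ref{l:epsilonCoordinateDefine}, \ref{l:productCoord} and~\ref{l:escapeqdef}--\ref{l:escapedefUniform}, which itself rests on the bound $\Topm\leq T_2$ furnished by the time-dependent geometric control condition (Proposition~\ref{prop:oneSidedGCC}) together with the stationarity of $g$. A secondary but necessary point is carving out of the chosen balls an honestly $t$-invariant open neighborhood $\homogsemitrappedescapeset$ on which the lower bound persists; this is the role of the radius $\tfrac{\sqrt{3}}{2}r_1$, which is the largest slice-radius guaranteed once the nearest center in $t$ is within $r_1/2$.
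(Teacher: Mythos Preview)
Your proposal is correct and follows essentially the same route as the paper: cover $\homogpmsemitrappedset$ by the uniform-radius balls from Lemmas~\ref{l:escapeLargeDamp} and~\ref{l:escaperdef}, reduce to a locally finite family using compactness in the $t$-independent variables together with $t$-translation, sum, and extend to $\Trfo$ via $\Phipm$. The only differences are organizational --- you cover $K=\Pi_t^{\bot}\homogpmsemitrappedset$ directly and lattice the centers at $t=kr_1$ (which yields the explicit $t$-invariant neighborhood of slice-radius $\tfrac{\sqrt{3}}{2}r_1$), whereas the paper covers the compact slab $\homogpmsemitrappedset\cap\{t\in[0,T_2]\}$ and translates by $kT_2$; and you deduce $\Hpm(\qpmo\circ\Phipm)=(\Hpm\qpmo)\circ\Phipm$ from the flow-intertwining of $\Phipm$ (via Proposition~\ref{Propn:RescalingPropn} and constancy of $b^\pm$ along the flow), whereas the paper checks this identity by a direct chain-rule computation.
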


\begin{proof}
    We first construct $\qpmo, \acpmo,\rpmo$ and $\homogsemitrappedescapeset$, then extend them to define $\qpm, \acpm, \rpm$ and $\semitrappedescapeset$.
    
    The set $\homogpmsemitrappedset \cap \{t \in [0, T_2]\}:= \homogpmcompactsemi$ is compact. For $\omega \in \homogpmcompactsemi$, if $a(\omega) \geq \frac{\Cm_2}{2}$ we let $\aco$ and $\ro$ be as in Lemma \ref{l:escapeLargeDamp} and $\qo \equiv 0$. If $a(\omega) \leq \frac{\Cm_2}{2}$ we let $\qo, \aco, $ and $\ro$ be as in Lemma \ref{l:escaperdef}, where if $\omega \in \pmhomogftrappedcompactset \cap \pmhomogbtrappedcompactset$ we choose the $\qo, \aco, $and $\ro$ from $\pmhomogftrappedcompactset$.
    
    We also let $r_1$ be the minimum of the $r_1$'s from Lemmas \ref{l:escapeLargeDamp} and \ref{l:escaperdef}, so that for all $\omega \in \homogpmcompactsemi$ we have $\{\ro \geq \Cm_2/4\} \supset B(\omega, r_1)$. Then 
    \begin{equation}
    \bigcup_{\omega \in \homogpmcompactsemi} B(\omega, r_1) \supset \homogpmcompactsemi.
    \end{equation}
    Using compactness we reduce this cover to a finite subcover 
    \begin{equation}
        \bigcup_{j=1}^n B(\omega_{j}, r_1) \supset \homogpmcompactsemi= \homogpmsemitrappedset \cap \{t \in [0, T_2]\}.
    \end{equation}
    Note that because the space-time is stationary, $\homogpmsemitrappedset$ does not depend on $t$. 
    Thus there exists $\mathcal{U}_R$, an open set in $\Pi_t^{\bot} \Trf$, such that 
    \begin{equation}
         \bigcup_{j=1}^n B(\omega_{j}, r_1) \supset [0,T_2] \times \mathcal{U}_R  \supset  \homogpmsemitrappedset \cap \{t \in [0, T_2]\}.
    \end{equation}
    Furthermore translating the $t$-coordinate of each $\omega_j$ by $kT_2$ for any $k \in \Zb$, provides a finite cover of $\homogpmsemitrappedset \cap \{t \in [kT_2, (k+1)T_2]\}$. That is, defining 
    \begin{equation}
        \omega_{j,k}=((\omega_j)_t+kT_2, (\omega_j)_x, (\omega_j)_{\tau}, (\omega_j)_{\xi}),
    \end{equation}
    we have
    \begin{equation}
        \bigcup_{j=1}^n B(\omega_{j,k}, r_1) \supset [kT_2, (k+1)T_2] \times\mathcal{U}_R \supset \homogpmsemitrappedset \cap \{t \in [kT_2, (k+1)T_2]\}.
    \end{equation}
    Then defining 
    \begin{equation} \label{eq:VRdef}
        \homogsemitrappedescapeset :=  \Rb_t \times \mathcal{U}_R, \text{ we have } 
       \homogpmsemitrappedset \subset  \homogsemitrappedescapeset \subset \bigcup_{k=-\infty}^{\infty} \bigcup_{j=1}^n B(\omega_{j,k}, r_1).
    \end{equation}
    We further define 
    \begin{align}
        \qpmo(t, x, \tau, \xi) =\sum_{k=-\infty}^{\infty} \sum_{j=1}^n q^{\pm}_{\opmjk}(t, x, \tau, \xi), \\
        \acpmo(t, x, \tau, \xi)=\sum_{k=-\infty}^{\infty} \sum_{j=1}^n \ac^{\pm}_{\opmjk}(t, x, \tau, \xi), \\
        \rpmo(t, x, \tau, \xi)=\sum_{k=-\infty}^{\infty} \sum_{j=1}^n r^{\pm}_{\opmjk}(t, x, \tau, \xi). 
    \end{align}
    For any $t_0 \in \Rb,$ there exists exactly one $k \in \Zb$ such that $t_0 \in [kT_2, (k+1)T_2)$. Without loss of generality we may assume $T_2 \geq 1$. Then by Lemmas \ref{l:escapeLargeDamp} and \ref{l:escapedefUniform} we have for $l \not \in\{k-2, k-1, k, k+1, k+2\}$
    \begin{equation}
        q^\pm_{\omega_{j,l}}(t_0,\cdot) \equiv \ac^\pm_{\omega_{j,l}}(t_0,\cdot)  \equiv r^\pm_{\omega_{j,l}}(t_0,\cdot)  \equiv 0.
    \end{equation}
    Therefore all of the above sums are locally finite in $t$. That is, for a given $t$, the number of non-zero terms is finite. 
    Thus $\qpmo, \acpmo, \rpmo$ are all smooth, and have compact support in $x, \xi$ and $\tau$. Note also that there exists $C_a >0$, such that $|\acpmo(t,x,\tau,\xi)| \leq C_a a$ for all $(t,x,\tau,\xi) \in \Phipm(\Trfo)$ because each $\ac^{\pm}_{\opmjk}$ satisfies the bound, and there at most $5n$ of the $\ac^{\pm}_{\opmjk}$ which contribute to the value of $\acpmo$ at any point. Furthermore combining  \eqref{eq:VRdef} and Lemmas \ref{l:escapeLargeDamp} and \ref{l:escaperdef} we have 
    \begin{equation}
        \Hpm \qpmo + \acpmo \geq \rpmo \geq \frac{\Cm_2}{4} \mathbbm{1}_{\homogsemitrappedescapeset},
    \end{equation}
    since the above holds,  with $\homogsemitrappedescapeset$ replaced by $B(\omega_{j,k},r_1)$, for each individual $\omega_{j,k}$ in the sums used to define $\qpmo, \acpmo, \rpmo$.

    Now we will extend these functions from $\Phipm(\Trfo)$ to $\Trfo$ via the rescaling. First, we define $V_R^\pm=(\Phi^\pm)^{-1}(\homogsemitrappedescapeset)$. Then define the functions $\qpm, \acpm, \rpm: \Trfo \ra \Rb$ by 
    \begin{equation}
        \qpm=\qpmo \circ \Phipm, \quad \acpm = \acpmo \circ \Phipm, \quad \rpm = \rpmo \circ \Phipm.
    \end{equation}
    Note that using the same $C_a >0$, $|\acpm(t,x,\tau,\xi)| \leq C_a a$ for all $(t,x,\tau,\xi) \in \Trfo$.
    Now to prove the lower bound on $\Hpm \qpm + \acpm$ we will first relate $\Hpm \qpm$ and $\Hpm \qpmo$. Let $(\tpms, \xpms, \taupms, \xipms) =\gpms(t,x,\tau,\xi)$, then
    \begin{equation}
        \Hpm \qpm |_{(t,x,\tau,\xi)} = \frac{d}{ds} (\qpm(\tpms, \xpms, \taupms, \xipms))|_{s=0}.
    \end{equation}
    Since $\bpm$ is constant under the Hamilton flow $\frac{d}{ds} \bpmxs=0$.
    Then, letting 
        $$
            \gpmsd=\left( \tpms, \xpms, \frac{\taupms}{|\bpmxs|}, \frac{\xipms}{|\bpmxs|} \right),
        $$ 
    we have 
    \begin{align}
        \frac{d}{ds} (\qpm(\gpms)) &= \frac{d}{ds} \qpmo(\gpmsd)\\
        &= (\nabla_x \qpmo)(\gpmsd) \frac{d}{ds}{\xpms} + (\nabla_{\xi} \qpmo)(\gpmsd)  \frac{d}{ds} \left(\frac{\xipms}{|\bpmxs|}\right)  \\ 
        &\quad + (\p_t \qpmo)(\gpmsd)  \frac{d}{ds}{\tpms}+ (\p_{\tau} \qpmo)(\gpmsd)  \frac{d}{ds}\left(\frac{\tau_s}{|\bpmxs|} \right) \\
        &=(\nabla_x \qpmo)(\gpmsd)\cdot(\nabla_{\xi} \ppm)(\gpms) - \frac{1}{|\bpmxs|} (\nabla_{\xi} \qpmo)(\gpmsd) \cdot (\nabla_x \ppm)(\gpms)\\
        &\quad+ (\p_t \qpmo)(\gpmsd)  +0 \\
        &=(\nabla_x \qpmo)(\gpmsd)  \cdot (\nabla_{\xi} \ppm)(\gpmsd) - (\nabla_{\xi} \qpmo)(\gpmsd)  \cdot(\nabla_x \ppm)(\gpmsd)\\
        &\quad+ (\p_t \qpmo)(\gpmsd) (\p_{\tau} \ppm)(\gpmsd) - (\p_{\tau} \qpmo)(\gpmsd)(\p_t \ppm)(\gpmsd)\\
        &= \Hpm \qpmo |_{\gpmsd},
    \end{align}
where the 1-homogeneity of $\bpm$, and thus of $\ppm$, allowed us to pull the factor $|\bpmxs|^{-1}$ inside of $\nabla_x \ppm(\gpms)$ in the penultimate equality. Similarly $\nabla_{\xi} \ppm$ is $0$-homogeneous so $(\nabla_{\xi} \ppm)(\gpms) = (\nabla_{\xi} \ppm)(\gpmsd)$.

So then
\begin{align}
    \Hpm \qpm |_{(z,\zeta)} + \acpm |_{(z,\zeta)} &= \Hpm \qpmo |_{\left(z, \frac{\zeta}{|\bpm(x,\xi)|}\right)} + \acpmo |_{\left(z, \frac{\zeta}{|\bpm(x,\xi)|}\right)} \\
    &\geq \rpmo |_{\left(z, \frac{\zeta}{|\bpm(x,\xi)|}\right)} = \rpm|_{(z,\zeta)}.
\end{align}
Now note
\begin{equation}
    \left\{\rpm \geq \frac{\Cm_2}{4}\right\} = \left\{\rpmo \circ \Phipm \geq \frac{\Cm_2}{4}\right\} \supseteq  (\Phipm)^{-1} (\homogsemitrappedescapeset)=\semitrappedescapeset \supset \pmsemitrappedset,
\end{equation}
so indeed $\Hpm \qpm + \acpm \geq \frac{\Cm_2}{4} \mathbbm{1}_{\semitrappedescapeset}$.

To prove the symbol estimates, consider 
\begin{equation}\label{eq:symbolestEscape1}
    D_z^{\beta} D_{\zeta}^{\alpha}\qpm(z,\zeta) = D_z^{\beta} D_{\zeta}^{\alpha} \left( \qpmo \left( z, \frac{\zeta}{b(x,\xi)} \right) \right).
\end{equation}
By the chain rule, every differentiation in $z$ produces a term growing at most like $\frac{\zeta \bpm_x(x,\xi)}{\bpm(x,\xi)^2}$ which is bounded by a constant, since $\bpm(x,\xi) \simeq \bpm_x(x,\xi) \simeq \zeta$ on $\supp \qpmo \subset \Phipm(\Trfo)$. Similarly every differentiation in $\zeta$ produces a term growing at most like 
\begin{equation}\label{eq:symbolestEscape2}
    \frac{|\bpm(x,\xi)| + |\zeta \bpm_{\zeta}(x,\xi)|}{|\bpm(x,\xi)|^2} \leq \frac{C}{|\zeta|,}
\end{equation}
where the bound follows since $\bpm(x,\xi) \simeq \zeta$ and $|\bpm_{\zeta}(x,\xi)| \leq C$ on $\supp \qpmo \subset \Phipm(\Trfo)$.

Now note that by the construction of $\qpmo$ in terms of the $q_{\omega_j}^{\pm}$ and the uniform control of the derivatives of the $q_{\opmj}$
\begin{equation}
    \left|(D_z^{\beta} D_{\zeta}^{\alpha} \qpmo) \left(z, \frac{\zeta}{\bpm(x,\xi)}\right) \right| \leq C_{\alpha\beta}.
\end{equation}
Combining this with \eqref{eq:symbolestEscape1} and \eqref{eq:symbolestEscape2}, $\qpm$ satisfies the desired symbol estimates. An analogous argument applies to $\rpm$ and $\acpm$.
\end{proof}

\subsection{Non-trapping escape function construction}\label{s:nontrapescape}
In this section, we construct an escape function $\qpm$, such that $\Hpm \qpm$ is bounded from below near the initial data for non-trapped trajectories in $\{|x|\leq R\}$ and everywhere on $\{|x| \geq R\}$.  
Note that because our set $\semitrappedescapeset$ is a product $\Rb_t \times \Pi_t^{\bot} \semitrappedescapeset$ the time-dependence of $a$ does not influence our construction beyond working in $\Trf$ rather than $T^* \Rb^3$. Thus we follow the approach of \cite[Lemma 2.16]{Kofroth23}  and \cite[Lemma 4.1]{MST20}.

As a preliminary, when we write $|x| \simeq 2^j$ in this section, we mean $2^{j-1} \leq |x| \leq 2^{j+1}$. Now, we recall \cite[Prop 2.4]{Kofroth23}, see also \cite[Section 2]{MT12}. 
\begin{proposition}\label{p:slowvaryf}
    Let $\sigma>0$. Recall $c_j$ from Definition \ref{d:asymflat2}. Then, there exists $f \in C^{\infty}(\Rb)$ and $c_{\sigma}, C_{\sigma}>0$, such that  $c_{\sigma} \leq f(r)\leq C_{\sigma}$, when $r>R_0$. Furthermore, when $r \simeq 2^j > R_0$.
    \begin{equation}
        \frac{\sigma}{2} c_j 2^{-j}f(r) \leq f'(r)  \leq 8\sigma c_j 2^{-j} f(r).
    \end{equation}
    %Probably need to rewrite this proof, so that this more careful statement is actually shown.
\end{proposition}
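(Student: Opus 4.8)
The plan is to take $f$ in exponential form. For $r\geq R_0$, set
\[
    f(r)=\exp\left(\int_{R_0}^{r}g(s)\,ds\right),
\]
for a suitable $g\in\Cs((R_0,\infty))$ with $g\geq 0$; then $f'(r)=g(r)f(r)$, so the claimed two-sided bound is equivalent to
\[
    \tfrac{\sigma}{2}\,c_j2^{-j}\leq g(r)\leq 8\sigma\,c_j2^{-j}\qquad\text{whenever }r\simeq 2^j>R_0.
\]
Granting such a $g$ with additionally $\int_{R_0}^{\infty}g(s)\,ds<\infty$, the uniform bounds on $f$ are immediate: since $g\geq 0$, $f$ is nondecreasing, so $f(r)\geq f(R_0)=1=:c_\sigma$; and $f(r)\leq\exp\big(\int_{R_0}^{\infty}g\big)=:C_\sigma$. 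Finally one extends $f$ to a smooth, strictly positive function on all of $\Rb$; the extension is unconstrained for $r<R_0$ and exists because $f$ and all of its derivatives extend continuously to $r=R_0$, and it does not affect the estimates for $r>R_0$.

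To build $g$, I would interpolate dyadically between the target values $a_j:=2\sigma\,c_j2^{-j}$. Extending the sequence $(c_j)$ to all $j\in\Zb$ keeping $\tfrac1C2^{-\d j}\leq c_j\leq C2^{-\d j}$ (only finitely many negative indices matter since $r>R_0>1$), the slow variation of $(c_j)$ from Definition \ref{d:asymflat2} gives $a_{j+1}/a_j=\tfrac12\,c_{j+1}/c_j\in[\tfrac12 2^{-\d},\tfrac12 2^{\d}]$, a factor close to $\tfrac12$; in particular $a_{j+1}<a_j$ once $\d\leq1$. Define $g$ on each dyadic block $[2^j,2^{j+1}]$ to be a smooth, monotone decreasing interpolant from $a_j$ to $a_{j+1}$ with all derivatives vanishing at the endpoints, so the pieces glue to a $\Cs$ function; then $g>0$ and, on $[2^j,2^{j+1}]$, takes values in $[a_{j+1},a_j]$. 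A short computation using only $c_{j\pm1}/c_j\leq 2^{\d}$ and $\d\leq1$ then checks that for every $r\simeq 2^{j_0}$ one has $g(r)\in[\tfrac{\sigma}{2}c_{j_0}2^{-j_0},\,8\sigma c_{j_0}2^{-j_0}]$: on the lower half $[2^{j_0-1},2^{j_0}]$ one has $g(r)\in[a_{j_0},a_{j_0-1}]$, on the upper half $[2^{j_0},2^{j_0+1}]$ one has $g(r)\in[a_{j_0+1},a_{j_0}]$, and in each case both endpoints are comparable to $\sigma c_{j_0}2^{-j_0}$ with the stated numerical constants. For integrability, $\int_{2^j}^{2^{j+1}}g(r)\,dr\leq a_j\,2^j=2\sigma c_j$, hence $\int_{R_0}^{\infty}g\leq 2\sigma\sum_j c_j\leq C\sigma\,\textbf{c}<\infty$ again by Definition \ref{d:asymflat2}.

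There is no serious obstacle here; the only thing requiring care is that the dyadic annuli $\{2^{j-1}\leq r\leq 2^{j+1}\}$ overlap, so $g(r)$ must lie simultaneously in the admissible windows attached to two consecutive scales. This is exactly why one builds $g$ from the intermediate values $a_j=2\sigma c_j2^{-j}$ rather than from a sharp profile, and why $\d\ll1$ is imposed: with $\d\leq1$ the two windows overlap and a single monotone interpolant fits inside both. This argument is essentially \cite[Proposition 2.4]{Kofroth23}; see also \cite[Section 2]{MT12}.
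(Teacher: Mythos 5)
The paper does not supply its own proof of this statement; it is quoted from \cite[Proposition 2.4]{Kofroth23} (see also \cite[Section 2]{MT12}), and your argument is a correct reconstruction of that standard ``slowly varying weight'' construction: exponentiate a dyadically interpolated logarithmic derivative $g$ with $g\simeq \sigma c_j 2^{-j}$ on each block. The numerics check out --- on the overlapping annulus $[2^{j_0-1},2^{j_0+1}]$ one has $g\in[a_{j_0+1},a_{j_0-1}]$, and with $a_{j\pm1}=2\sigma c_{j\pm1}2^{-(j\pm1)}$ and the slow-variation bound $c_{j\pm1}/c_{j_0}\leq 2^{\delta}\leq 2$ this lands inside $[\tfrac{\sigma}{2}c_{j_0}2^{-j_0},\,8\sigma c_{j_0}2^{-j_0}]$ --- and the uniform bounds on $f$ follow from $g\geq 0$ together with $\int g\lesssim \sigma\sum_j c_j\lesssim\sigma\,\mathbf{c}<\infty$, exactly as you wrote.
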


Now we proceed with our escape function construction.
\begin{lemma}\label{l:nontrapescape}
    For all $R > R_0$ large enough, there exists $W^{\pm} \subset \plusminusnontrappedset, \qpm \in S^0(\Trfo),$ and $C_W \geq 0$ so that 
    \begin{enumerate}
        \item $\semitrappedescapeset \cup W^{\pm} = \characteristicsetofPplusminus$.
        \item For $j\gg1$, 
        \begin{equation}
            \Hpm \qpm \geq C_W c_j 2^{-j} \mathbbm{1}_{W^{\pm}} \text{ on } 2^{j-1} \leq |x| \leq 2^{j+1}.
        \end{equation}
    \end{enumerate}
    Furthermore $\qpm = \e \qpmin + \qpmout$, where $\qpmin = \qpmint \circ \Phipm$ with $\qpmin \in C^{\infty}(\Trfo)$ supported in $\{|x|\leq 4R\},$ $\qpmout \in S^0(\Trfo)$, and $\e>0$ is sufficiently small.
    %The next sentence in the notes is incorrect and came from when I was trying to figure out how to combine these together in the following step.
\end{lemma}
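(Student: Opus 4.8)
The plan is to build $\qpm$ as a sum $\qpm=\e\qpmin+\qpmout$, following \cite[Lemma 2.16]{Kofroth23} and \cite[Lemma 4.1]{MST20}: $\qpmout$ is an honest $S^0(\Trfo)$ radial multiplier that takes care of the asymptotically flat region $\{|x|\geq R\}$, while $\e\qpmin$ is a small correction, homogeneous of degree zero and compactly supported in $\{|x|\leq 4R\}$, that takes care of the non-trapped directions inside $\{|x|\leq R\}$ not already covered by $\semitrappedescapeset$. Since $\semitrappedescapeset=\Rb_t\times\Pi_t^\bot\semitrappedescapeset$ is a product (Lemma \ref{l:fulltrappedescapelemma}) and $g$ is stationary, both pieces are built in the $(x,\xi)$-variables on the homogeneous slice $\Phipm(\Trfo)$ and lifted, so neither interacts with the time-dependence of $a$. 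For $\qpmout$: along the half-wave flow $\dot t_s^\pm=1$, hence $\frac{d}{ds}|x_s^\pm|^2=2\,x_s^\pm\cdot\nabla_\xi p^\pm(\varphi_s^\pm)$, and by Lemma \ref{l:magnitudeDeriv} the function $s\mapsto|x_s^\pm|^2$ is uniformly convex wherever $|x|>R_0$. Taking $\qpmout=\chigr(x)\,f(\langle x\rangle)\,\langle x\rangle^{-1}\,x\cdot\nabla_\xi p^\pm(x,\xi)$ with $f$ the slowly varying function of Proposition \ref{p:slowvaryf} for $\sigma$ small and $\chigr$ the cutoff of Section \ref{s:cutoffdef}, one computes $\Hpm\qpmout$: the convexity gives the main positive term, while the two-sided bound $\tfrac{\sigma}{2}c_j2^{-j}f\leq f'\leq 8\sigma c_j2^{-j}f$ is exactly what absorbs the cross terms produced by the variable coefficients of $g$. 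This yields, for $j$ large,
\begin{equation}
\Hpm\qpmout\geq c\,c_j2^{-j}\quad\text{on }\{2^{j-1}\leq|x|\leq 2^{j+1}\},
\end{equation}
with the only sign-indefinite contribution, coming from the derivative of $\chigr$, supported in the fixed annulus $\{R\leq|x|\leq 2R\}$; since this uses only the convexity valid for $|x|>R_0$, it holds at every $\omega\in\characteristicsetofPplusminus$ with $|x|\geq 2R$, trapped or not. Homogeneity of $p^\pm$ in $(\tau,\xi)$ gives $\qpmout\in S^0(\Trfo)$.

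Next I would build $\qpmin$. By Lemma \ref{l:escapeFlow} the trapped trajectories stay in $\{|x|\leq R_0\}\subset\semitrappedescapeset$, and every non-trapped trajectory leaves $\{|x|\leq 2R\}$ in finite time; moreover, by Lemma \ref{l:starlemma}, \eqref{eq:xibhomog} and Proposition \ref{prop:partition}, the set $K^\pm$ of points of $\characteristicsetofPplusminus\cap\Phipm(\Trfo)\cap\{|x|\leq R\}$ not lying in $\semitrappedescapeset$ is a compact subset of $\plusminusnontrappedset$, and there is a uniform $T'$ with $|x_{\pm T'}^\pm(\omega)|>2R$ for all $\omega\in K^\pm$ (Proposition \ref{prop:partition}(3)). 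A standard flow-out and partition-of-unity construction (as in \cite[Lemma 2.16]{Kofroth23}, \cite{BoucletRoyer2014}) then produces $\qpmint\in\Cc(\Trfo)$ supported in $\{|x|\leq 4R\}$ with $\Hpm\qpmint\geq 0$ everywhere and $\Hpm\qpmint\geq c'>0$ on a neighborhood of $K^\pm$; set $\qpmin=\qpmint\circ\Phipm$, which is $0$-homogeneous and supported in $\{|x|\leq 4R\}$. Define $W^\pm$ to be an open subset of $\plusminusnontrappedset$ containing $\characteristicsetofPplusminus\setminus\semitrappedescapeset$, so that $\semitrappedescapeset\cup W^\pm=\characteristicsetofPplusminus$; this is possible by the preceding compactness facts together with Lemma \ref{l:escapeFlow} (the trapped set being confined to $\{|x|\leq R_0\}$) and the spatial boundedness of the semi-trapped set underlying $\semitrappedescapeset$.

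Finally I would set $\qpm=\e\qpmin+\qpmout$ and choose $R$ large, then $\e$ small. On $\{|x|>4R\}$ we have $\qpm=\qpmout$, so item (2) follows from the first step (and holds a fortiori when restricted to $W^\pm$). On the transition region $\{R\leq|x|\leq 4R\}$, $\Hpm\qpmout$ is bounded below by a fixed positive constant ($\gtrsim c_R\langle R\rangle^{-1}$) while $\e\,\Hpm\qpmin=O(\e)$, so for $\e$ small enough $\Hpm\qpm>0$ there — here one also checks that the $\chigr$-error has a favorable sign against the escaping directions or is dominated by the convexity term once $R$ is large. On $\{|x|\leq R\}$ one uses $\Hpm\qpmin\geq c'$ on the relevant compact set against $|\Hpm\qpmout|=O(1)$. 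Collecting these cases gives items (1)--(2), and the decomposition $\qpm=\e\qpmin+\qpmout$ with $\qpmin=\qpmint\circ\Phipm$, $\qpmint\in\Cc$ supported in $\{|x|\leq 4R\}$, $\qpmout\in S^0$, is built in by construction.

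The main obstacle is the bookkeeping in the last two steps: first, arranging $W^\pm$ so that simultaneously $W^\pm\subset\plusminusnontrappedset$ and $\semitrappedescapeset\cup W^\pm=\characteristicsetofPplusminus$ --- i.e.\ verifying that every point of $\characteristicsetofPplusminus$ outside $\semitrappedescapeset$ is non-trapped, which relies on the confinement of trapped trajectories to $\{|x|\leq R_0\}$ (Lemma \ref{l:escapeFlow}) and on $\semitrappedescapeset$ being taken large enough around the (spatially bounded) semi-trapped set; and second, ordering the choices of $\sigma$, $R$, and $\e$ so that the fixed positive lower bound of $\Hpm\qpmout$ on the transition annulus dominates both the $O(\e)$ error from $\qpmin$ and the $\chigr$-cutoff error. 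The convexity input (Lemma \ref{l:magnitudeDeriv}) together with the slowly varying weight (Proposition \ref{p:slowvaryf}) is what delivers the sharp $c_j2^{-j}$ rate in the far region, while Proposition \ref{prop:partition} is what makes the interior flow-out construction finite and independent of $t$.
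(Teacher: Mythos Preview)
Your proposal follows essentially the same approach as the paper: the outer piece $\qpmout$ is the same radial multiplier built from the slowly varying weight of Proposition \ref{p:slowvaryf}, and $\qpmin$ is the same flow-out construction on the homogeneous slice lifted via $\Phipm$, combined with $\e$ small so that $\qpmout$ absorbs the interior errors on the transition annulus. A few small corrections: the paper takes $\sigma\geq 8$ (not $\sigma$ small) to make the $f'$ term dominate; the $\chi_{>R}'$ contribution in $\Hpm\qpmout$ actually has a favorable sign, so there is no cutoff error from $\qpmout$; $\qpmout\equiv 0$ on $\{|x|\leq R\}$, so there is no competition there; and contrary to your claim that $\Hpm\qpmint\geq 0$ everywhere, the paper's $\qpmint$ \emph{does} produce a sign-indefinite error on $\{2R\leq|x|\leq 4R\}$ from its own spatial cutoff $\chi_{<2R}$ --- this is precisely what the smallness of $\e$ is used to absorb into the strict positivity of $\Hpm\qpmout$ on that annulus.
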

\begin{proof}
    Recall, from \eqref{eq:xibhomog} there exists $c^{\pm}, C^{\pm} >0$ such that 
    \begin{equation}
        c^{\pm} |\bpm(x,\xi)| \leq |\xi| \leq C^{\pm}|\bpm(x,\xi)|,
    \end{equation}
    on $\Trfo$. Now choose $\psipm \in \Ci(\Trfo)$ such that 
    \begin{align}
        \supp\psipm \subset \plusminusnontrappedset \cap \{|x| \leq R\} \cap \left\{\frac{\cpm}{2} <|\xi|, |\tau| < \Cpm+1\right\}, \\
        \psipm \equiv 1 \text{ on } \mathring{U}_R^{\pm} :=\bigg(\plusminusnontrappedset \cap \{|x|\leq R\} \cap \Phipm(\Trfo) \bigg) \backslash \homogsemitrappedescapeset,
    \end{align}
    where we recall $\homogsemitrappedescapeset=\Phipm(\semitrappedescapeset)$. Note that we can construct such a $\psipm$ because by Proposition \ref{prop:partition}(2), $\plusminusnontrappedset$ is open and $\mathring{U}_R^{\pm}$ is a compact subset of $\plusminusnontrappedset$. Recall by the proof of Lemma \ref{l:fulltrappedescapelemma}, $\homogsemitrappedescapeset = \Rb_t \times \Pi_t^{\bot} \homogsemitrappedescapeset$. Note further that because the space-time is stationary, $\plusminusnontrappedset = \Rb_t \times \Pi_t^{\bot} \plusminusnontrappedset$. Therefore this $\psipm$ can be defined so that $\psipm(t, x, \tau, \xi)=\psipm(x,\xi,\tau)$, that is it does not depend on $t$.

    %Note that because the space-time is stationary $\plusminusnontrappedset=\Rb_t \times \Pi_t^{\bot} \plusminusnontrappedset$ and $\homogsemitrappedescapeset = \Rb_t \times \Pi_t^{\bot} \homogsemitrappedescapeset$, the function $\psipm$ can be defined in a way taht does not depend on time. 

    Now %recall for $(x,t,\xi, \tau) \in \Trfo$, $\gpms(t, x, \tau, \xi)$ is the null-bicharacteristic flow of $\ppm$ starting at $(t, x, \tau, \xi)$ and 
    define
    \begin{equation}
        \qpmint(t, x, \tau, \xi) = -\chi_{<2R}(|x|) \int_0^{\infty} \psipm \circ \gpms(t, x, \tau, \xi) ds.
    \end{equation}
    Note that because $\psipm \circ \gpms$ does not depend on $t$, neither does $\qpmint$.

    %Recall by Proposition \ref{prop:partition} (c) each non-trapped null-bicharacteristic exits $\{|x| <2R\}$ in finite time, so this integral is well defined. Note that semi-trapped null-bicharacteristics never pass into the support of $\psipm$ because of the $\plusminusnontrappedset$ in the definition of $\supp \psipm$.

    Now we claim there exists $T'<\infty$ such that all null-bicharacteristics spend at most $T'$ within $\supp \psipm$. 
    To see this, let $\Ypm$ be an open neighborhood of $\supp \psipm$ such that $\overline{\Ypm} \subset \plusminusnontrappedset$. Such a $\Ypm$ exists because $\plusminusnontrappedset$ is open and $\supp \psipm \subset \plusminusnontrappedset$ is closed. 
    Apply Proposition \ref{prop:partition}(3) with  $K=\overline{\Ypm}$ and let $T'$ be as in that result. 
    Now we will show that for all $\omega \in \Trfo$ there exists a neighborhood of $\omega,$ $U_{\omega}^{\pm}$, and $s_{\omega}^{\pm} \geq  0$ such that $\psipm \circ \gpms(z)=0$ for all $z \in U_{\omega}^{\pm}$ and $s  \in [0,s_{\omega}^{\pm}]\cup[s_{\omega}^{\pm}+T',\infty)$. 
    There are three cases to consider 
    \begin{enumerate}
        \item For $\omega \in \supp(\psipm) \subset \Ypm$, this is satisfied with $s_{\omega}^{\pm}=0$ and $U_{\omega}^{\pm}=Y^{\pm}$ by Proposition  \ref{prop:partition}(3).
        \item For $\omega \not \in \bigcup_{s \in [0,\infty)} \gpms(\supp \psipm)=:X^{\pm}$, note that $X^{\pm}$ is closed by Proposition  \ref{prop:partition}(3). Thus, there exists a neighborhood $U_{\omega}^{\pm}$ of $\omega$ such that $X^{\pm} \cap U_{\omega}^{\pm} =\emptyset.$ Thus, for each $z \in U_{\omega}^{\pm}$ we have $\gpms(z) \not \in \supp \psipm$ for all $s \in [0, \infty).$ That is, $\psipm \circ \gpms(z)=0$ for $s \geq 0$.
        \item If $\omega \in X^{\pm} \backslash \supp(\psipm)$, then $\gopmr{s'} \in \supp(\psipm)$ for some $s'>0$. Then by continuity of the flow, there exists $s_{\omega}>0$ such that $\gopmr{s_{\omega}} \in \Ypm$ and $\gpms(\omega) \not \in \supp(\psipm)$ for all $s \in [0,s_{\omega}].$ 
        By continuity of the flow in its initial data, we can extend the previous sentence to: there exists a neighborhood $U_{\omega}^{\pm} \ni \omega$ such that for all $z \in U_{\omega}^{\pm}$, $\varphi^{\pm}_{s_{\omega}^{\pm}}(z) \in \Ypm$ and $\psipm \circ \gpms(z) =0$ for all $s \in [0, s_{\omega}^{\pm}].$ Then by Proposition  \ref{prop:partition}(3), $\psipm \circ \gpms(z)=0$ for all $z \in U_{\omega}^{\pm}$ and $s \in [0,s_{\omega}^{\pm}] \cup [s_{\omega}^{\pm}+T', \infty).$
    \end{enumerate}
Thus, the integrand present in $\qpmint$ is non-zero for $s$ in an interval of maximal length $T'$. Thus, the function is well-defined and differentiation under the integral sign is not problematic. Combining this with regularity of the flow, we have $\qpmint \in \Ci(\Trfo)$. Furthermore, because of the $\chi_{<2R}(x)$, $\qpmint$ is compactly supported in $\{|x|\leq 4R\}$, and by compactness is bounded in all derivatives on $\{|x| \leq 4R\} \cap \Phipm(\Trfo).$

Let $\qpmin=\qpmint \circ\Phipm$ be defined on $\Trfo.$ As in the construction of the trapped escape function, in the proof of Lemma \ref{l:fulltrappedescapelemma}
\begin{equation}
    \Hpm \qpmin |_{(x,\xi)} = \Hpm \qpmint |_{\Phipm(x,\xi)}.
\end{equation}
Now note that $\qpmint$ does not depend on $t$, so $\p
_{\tau} \ppm \p_t \qpmint=0$, we have 
\begin{align}
    \Hpm \qpmint =& \p_{\tau} \ppm \p_t \qpmint - \p_t \ppm \p_{\tau} \qpmint + \nabla_{\xi} \ppm \nabla_x \qpmint - \nabla_x \ppm \nabla_{\xi} \qpmint \\
    =&0 - \chi\left(\frac{|x|}{2R}\right) \Hpm \int_0^{\infty} \psipm \circ \gpms\left(t,x,\tau, \xi \right) ds \\
    &+ \left( \nabla_{\xi} \bpm(x,\xi) \cdot \nabla_x \chi \left( \frac{|x|}{2R} \right)\right)\int_0^{\infty} \psipm \circ \gpms(t,x,\tau, \xi) ds.
\end{align}
Now evaluating on the image of $\Phipm$, using that $\Hpm \psipm\circ \gpms = \p_s (\psipm \circ \gpms)$, $\varphi_0^{\pm}(t, x, \tau, \xi)=(t, x, \tau, \xi)$, and that all null bicharacteristics exit $\supp \psipm$ in finite time, we have 
\begin{align}
    \Hpm& \qpmint |_{\Phipm(x,\xi)} = \chi_{<2R}(|x|) \psipm\left(t, x,  \frac{\tau}{|\bpmx|}, \frac{\xi}{|\bpmx|} \right)\\
    &+\frac{1}{2R} \nabla_{\xi} \bpm\left(x,\frac{\xi}{|\bpmx|}\right) \cdot \frac{x}{|x|} \chi'\left(\frac{|x|}{2R}\right) \int_0^{\infty} \psipm \circ \gpms\left(t,x, \frac{\tau}{|\bpmx|}, \frac{\xi}{|\bpmx|}\right) ds.
\end{align}
The first term is non-negative, supported in $\plusminusnontrappedset \cap \{|x| \leq R\}$, and is equal to $1$ on $U^{\pm}_R:=\Phi^{-1}(\mathring{U}_R^{\pm})$. The second term is an error term supported in $\{2R \leq |x| \leq 4R\}$ and will be absorbed by $\qpmout$. 

To define $\qpmout$, let $f$ be the function from Proposition \ref{p:slowvaryf} and define
\begin{equation}
    \qpmout(t, x, \tau, \xi) = \qpmout(x,\xi) = -\chi_{>R}(|x|) f(|x|) \nabla_{\xi}\bpmx \cdot \frac{x}{|x|}.
\end{equation}
Note that $\qpmout$ is smooth, bounded in all $x$ derivatives by asymptotic flatness, and homogeneous in $\xi$ and $\tau$ of degree $0$, so $\qpmout \in S^0(\Trfo).$

Now noting that $\qpmout$ does not depend on $t$ and $\tau$, and recalling $\ppm = \tau - \bpmx$ we have 
\begin{align}
    \Hpm \qpmout =& \nabla_{\xi} \ppm \nabla_x \qpmout - \nabla_x \ppm \nabla_{\xi} \qpmout \\
    =&\nabla_{\xi}\bpm \cdot \nabla_x\left(\chi_{>R}(|x|) f(|x|) \nabla_{\xi} \bpm \cdot \frac{x}{|x|}\right)\\
    &-\nabla_x \bpm \cdot \nabla_{\xi} \left(\nabla_{\xi} \bpm \cdot \frac{x}{|x|}\right) \chi_{>R}(|x|) f(|x|).
\end{align}
Evaluating the terms on the right hand side and recalling the definition of $\chi_{>R}$
\begin{align}
    \Hpm \qpmout=&-\nabla_{\xi} \bpm \cdot \frac{x}{R|x|} \chi'\left(\frac{|x|}{R}\right) f(|x|) \nabla_{\xi}\bpm\cdot \frac{x}{|x|} \\
    \label{eq:hpoutMain} &+ \nabla_{\xi}\bpm \cdot \frac{x}{|x|} f'(|x|) \nabla_{\xi} \bpm \cdot \frac{x}{|x|} \chi_{>R}(|x|) \\
    &+\chi_{>R}(|x|) f(|x|) \p_{\xi_k} \bpm \left(\d_{kl} - \frac{x_k x_l}{|x|^2}\right) \p_{\xi_l} b  \frac{1}{|x|}\\
    &+\left(-\nabla_x \bpm \cdot \nabla_{\xi}(\nabla_{\xi} \bpm \cdot \frac{x}{|x|}) + \nabla_{\xi} \bpm \cdot \nabla_x \nabla_{\xi} \bpm \cdot \frac{x}{|x|}\right) \chi_{>R}(|x|) f(|x|).
\end{align}
Note the last term 
\begin{align}
    &\left(-\nabla_x \bpm \cdot \nabla_{\xi}(\nabla_{\xi} \bpm \cdot \frac{x}{|x|}) + \nabla_{\xi} \bpm \cdot \nabla_x \nabla_{\xi} \bpm \cdot \frac{x}{|x|}\right) \chi_{>R}(|x|) f(|x|)
    =O(\<x\> |\p g| )\chi_{>R}(|x|) |x|^{-1}.
\end{align}
which is small for $|x|>R$ by the definition of $b$ in terms of $g$, \eqref{eq:bdef}, and asymptotic flatness, Definition \ref{d:asymptoticFlat1}, and is localized to that region by $\chi_{>R}$. The first term on the right hand side of \eqref{eq:hpoutMain} is non-negative because $\chi'\leq0$. So to obtain a lower bound on $\Hpm \qpmout$ it is sufficient to obtain a lower bound on 
\begin{equation}
    \left( \nabla_{\xi} \bpm \cdot \frac{x}{|x|}\right)^2 f'(|x|) \chi_{>R}(|x|) + \chi_{>R}(|x|) f(|x|) \p_{\xi_k} \bpm \left(\d_{kl} - \frac{x_k x_l}{|x|^2}\right) \p_{\xi_l} b  \frac{1}{|x|}.
\end{equation}
For $|x| \simeq 2^j$, $f'(|x|) \geq \frac{\sigma}{2} c_j 2^{-j} f(|x|)$ by Proposition \ref{p:slowvaryf}. Using this and re-writing the Einstein notation, we have on $|x| \simeq 2^j$
\begin{align}\label{eq:hpmoutpreCauchy}
    \Hpm \qpmout &\geq \chi_{>R}(|x|) f(|x|) \left( \frac{\sigma}{2} c_j 2^{-j} \frac{|x \cdot \nabla_{\xi} \bpm|^2}{|x|^2} + \frac{1}{|x|} \left(|\nabla_{\xi}\bpm|^2 - \frac{|x\cdot \nabla_{\xi}\bpm|^2}{|x|^2} \right) \right).
\end{align}
Choosing $\sigma \geq 8$, we can rewrite the terms in parentheses as 
\begin{align}
    &\frac{\sigma}{2} c_j 2^{-j} \frac{|x \cdot \nabla_{\xi} \bpm|^2}{|x|^2} + \frac{1}{|x|} \left(|\nabla_{\xi}\bpm|^2 - \frac{|x\cdot \nabla_{\xi}\bpm|^2}{|x|^2} \right) \\
    &= \left(\frac{\sigma}{2} c_j 2^{-j}-\frac{2c_j}{|x|}\right) \frac{|x \cdot \nabla_{\xi}\bpm|^2}{|x|^2}+ \frac{1-2c_j}{|x|} \left(|\nabla_{\xi} \bpm|^2 - \frac{|x\cdot \nabla_{\xi} \bpm|^2}{|x|^2}\right) + \frac{2c_j}{|x|}|\nabla_{\xi}\bpm|^2,
\end{align}
since $2^{j-1} \leq |x|$ and $\sigma \geq 8$, the first term is non-negative. Applying Cauchy-Schwarz to see $\frac{|x\cdot \nabla_{\xi} \bpm|}{|x|^2} \leq |\nabla_{\xi} \bpm|^2$, and since $c_j<\frac 12$, for $|x|>R$ when $R$ is taken large enough, we see the second term is non-negative. Finally, since $|x| \leq 2^{j+1}$ the final term is bounded from below by $c_j 2^{-j}|\nabla_{\xi} \bpm|^2$.

Plugging this back into \eqref{eq:hpmoutpreCauchy} we have
\begin{align}
    \Hpm \qpmout 
    %& \geq \chi_{>R}(|x|) f(|x|)\left(\frac{\sigma}{4}c_j 2^{-j} |\nabla_{\xi} \bpm|^2 + (1-\frac{\sigma}{4}c_j) |\nabla_{\xi} \bpm|^2 2^{-j} + (-1+\frac{\sigma}{4}c_j) \frac{|x \cdot \nabla_{\xi} \bpm|^2}{|x|^2}2^{-j} \right) \\
    &\geq \chi_{>R}(|x|) f(|x|) c_j2^{-j} |\nabla_{\xi} \bpm|^2.
\end{align}
Now, by asymptotic flatness, $|\nabla_{\xi} \bpm|^2 \simeq 1$, and by Proposition \ref{p:slowvaryf}, $f \simeq 1$, so for $|x| \geq R$ and for some $C>0$,
\begin{equation}
    \Hpm \qpmout \geq C c_j 2^{-j} \chi_{>R}(x), \quad \text{ on } |x| \simeq 2^j.
\end{equation}
In particular, $\Hpm \qpmout$ is non-negative, and strictly positive for $|x| \geq R.$

Since the error term from $\Hpm \qpmin$ is bounded and supported in $\{2R \leq |x|\leq 4R\}$ and $\Hpm \qpmout$ is strictly positive on the support of this error, we can choose $\e>0$ small enough, so that 
\begin{equation}
    \qpm:=\e \qpmin + \qpmout \in \Ci(\Trfo),
\end{equation}
has 
\begin{equation}
    \Hpm \qpm >C c_j 2^{-j} \chi_{>R} \text{ for } |x| \simeq 2^j \text{ in }W^{\pm}:= U^{\pm}_R \cup \{(x,t,\xi, \tau) \in \characteristicsetofPplusminus : |x| > R\}.
\end{equation}
Now note computing directly and applying Proposition \ref{prop:partition} 
\begin{align}
    \semitrappedescapeset \cup U_R^{\pm} &= \semitrappedescapeset \cup ( (\plusminusnontrappedset \cap \{|x|\leq R\})\backslash \semitrappedescapeset) \\
    &=(\pmsemitrappedset \cup \plusminusnontrappedset) \cap\{|x|\leq R\} \\
    &=\characteristicsetofPplusminus \cap \{|x|\leq R\}.
\end{align}
Therefore $\semitrappedescapeset \cup W^{\pm} = \characteristicsetofPplusminus$.

We have $\Hpm \qpm = 1$ for $(t, x, \tau, \xi) \in U_R^{\pm}$ and 
\begin{equation}
    \Hpm \qpm \geq C c_j 2^{-j} \chi_{>R} \quad \text{ for }|x| \simeq 2^j, |x|>R.
\end{equation}
By compactness of $\{|x|\leq R\}$, there exists some $C_W>0$ such that 
\begin{equation}
    \Hpm \qpm \geq C_W c_j 2^{-j} \mathbbm{1}_{W^{\pm}} \quad \text{ for } |x| \simeq 2^j.
\end{equation}
\end{proof}

\subsection{Combination of escape function constructions}
In this section, we combine the escape functions constructed in Sections \ref{s:semitrapescape} and \ref{s:nontrapescape} to obtain an escape function $q$ such that 
\begin{equation}
    H_p q + 2 \kappa \tau a q \geq C,
\end{equation}
on $\characteristicsetofP$. We then choose $m \in S^0$ so that adding $mp$ to the left hand side gives positivity away from $\characteristicsetofP$. 

We roughly follow the approach of \cite[Lemma 2.4]{Kofroth23} and \cite[Lemma 4.1]{MST20}. However our argument necessarily differs from both of these references. Comparing to \cite{Kofroth23}, we must work on $\Trf$ rather than $T^* \Rb^3$ to account for the time dependence of our damping. Comparing to \cite{MST20}, our space-time is trapping and so we have two separate escape functions which we must combine before choosing $m$. We have also made expositional changes such as splitting this step into two lemmas. 

We first combine our semi-trapped and non-trapped escape functions and show that the combination is bounded from below on the characteristic set of $P$. It is at this step in our escape function construction that we convert back from the half-wave decomposition of $\ppm$ to the full wave operator $P$. 
\begin{lemma}\label{l:combineEscapeFunc}
Fix $0<\d\ll1$ from Definition \ref{d:asymflat2}. There exists $\kappa \geq 1, C>0$ and symbols $\ti{q}_j \in S^j(\Trf)$  supported in $|\xi| \geq 1$, $|\tau| \geq 1$, such that for $q=\tau \ti{q}_0+\ti{q}_1$ and for $(t,x,\tau, \xi) \in \characteristicsetofP$
    \begin{equation}
        (H_p q +2 \kappa \tau a q)(t,x,\tau,\xi) \geq C \mathbbm{1}_{|\xi|\geq 1} \mathbbm{1}_{|\tau| \geq 1} \<x\>^{-1-\d}(\tau^2+|\xi|^2).
    \end{equation}
\end{lemma}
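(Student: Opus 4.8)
The plan is to assemble the half-wave escape functions $\qpm$ produced in Lemma \ref{l:fulltrappedescapelemma} (on the semi-trapped set) and Lemma \ref{l:nontrapescape} (on the non-trapped set and the asymptotic region) into a single symbol on $\Trf$, and then convert from the half-wave picture back to $P$. First I would fix $R>R_0$ large enough that both preparatory lemmas apply, and recall that $\semitrappedescapeset\cup W^\pm=\characteristicsetofPplusminus$, so the two escape functions together control all of $\characteristicsetofPplusminus$. Writing $\qpm_{\mathrm{tr}}$ for the semi-trapped escape function and $\qpm_{\mathrm{nt}}$ for the non-trapping one, I would form a combination of the shape $Q^\pm = A\,\qpm_{\mathrm{tr}} + \qpm_{\mathrm{nt}}$ on $\characteristicsetofPplusminus$ for a large constant $A$. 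On $W^\pm$ we have $\Hpm\qpm_{\mathrm{nt}}\gtrsim c_j2^{-j}\mathbbm 1_{W^\pm}$, which (using $c_j\simeq 2^{-\d j}$ from Definition \ref{d:asymflat2}) is $\gtrsim \<x\>^{-1-\d}$; on $\semitrappedescapeset$ we have $\Hpm\qpm_{\mathrm{tr}}+\acpm\geq \frac{\Cm_2}{4}\mathbbm 1_{\semitrappedescapeset}$ with $0\le\acpm\le C_a a$. Here the key point is that $\acpm$ is $0$-homogeneous and bounded by a multiple of the damping, so the term $2\kappa\tau a\,\qpm_{\mathrm{tr}}$ — which comes from the $iaD_t$ part of $P$ — can be used to dominate $-\acpm$ once $\kappa$ is taken large, since $\tau\ge1$ on the relevant support and $\qpm_{\mathrm{tr}}$ can be arranged to have the correct sign (nonnegative, matching $\rpm\ge0$). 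This is the step that fixes the value of $\kappa$, as promised in the text after the rescaling proposition.

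Next I would address the passage from $p^\pm$ to $p$. Using the factorization $p=g^{00}p^+p^-$ and the sign/size bounds $c^*\le |g^{00}p^\mp|\le C^*$ on the relevant characteristic components (established in the proof of Proposition \ref{prop:TGCCplusminus}), one has, along $\characteristicsetofPplus$, $H_p = g^{00}p^-\,H_{p^+}$ up to terms vanishing on $\characteristicsetofP$, and symmetrically on $\characteristicsetofPminus$. So if I set $\ti q_0$ (roughly) to be $(\text{sign-corrected combination of }Q^+\text{ and }Q^-)$ localized to the appropriate half of $\characteristicsetofP$ via cutoffs in $\tau - b^\pm$, and then homogenize correctly in $(\tau,\xi)$ to get $q=\tau\ti q_0 + \ti q_1\in S^1$, the bound $H_pq + 2\kappa\tau aq \gtrsim \<x\>^{-1-\d}$ on $\characteristicsetofP$ follows from the two half-wave estimates, with the factor $|g^{00}p^\mp|$ absorbed into the constant $C$. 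I would include the $\ti q_1\in S^1$ term and the $\ti q_0\in S^0$ / $\tau\ti q_0$ structure precisely to absorb the commutator/lower-order discrepancies between $H_p(\tau\ti q_0)$ and $\tau H_p\ti q_0$ and between the reparametrized flows; these are all $S^1$ errors supported where things are already controlled. To get the stated right-hand side $\<x\>^{-1-\d}(\tau^2+|\xi|^2)$, I would note that on $\characteristicsetofP$ one has $\tau^2+|\xi|^2\simeq |\xi|^2\simeq|\tau|^2$ by the half-wave bounds \eqref{eq:xibhomog} and $\p_t$ timelike, so a lower bound by a constively positive constant (times $\<x\>^{-1-\d}$) upgrades to the quadratic form at the cost of a constant. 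The cutoffs to $|\xi|\ge1$, $|\tau|\ge1$ are inserted at the end; on $\characteristicsetofP$ with $|\xi|\ge1$ automatically $|\tau|=|b^\pm|\simeq|\xi|\ge c$, and a harmless rescaling/cutoff arranges $|\tau|\ge1$ as well.

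The main obstacle I expect is the bookkeeping around the $\acpm$ correction term and the sign of $\qpm_{\mathrm{tr}}$: one needs $2\kappa\tau a\,\qpm \ge \acpm$ pointwise on $\semitrappedescapeset$ (and $\ge$ a negligible negative error off it), which requires that wherever $\qpm_{\mathrm{tr}}$ is small the quantity $\Hpm\qpm_{\mathrm{tr}}$ alone already beats $\acpm$, and wherever $\acpm$ is non-negligible $\qpm_{\mathrm{tr}}$ is bounded below — this is exactly why Lemma \ref{l:escaperdef} was arranged so that $\Hpm\qo+\aco\ge\ro$ with $\ro$ controlling $\aco$ on the same neighborhood. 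Threading $\kappa$ through so it is chosen \emph{after} $A$ and $R$ but \emph{before} the final homogenization, and checking that all the $S^1$ symbol estimates survive the reparametrization (as in the $\qpm=\qpmo\circ\Phipm$ computation in Lemma \ref{l:fulltrappedescapelemma}), is the delicate part; the rest is the routine algebra of combining finitely many nonnegative contributions plus one damping-dominated negative term.
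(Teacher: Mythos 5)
Your plan has the right skeleton (combine the two half-wave escape functions of Lemmas~\ref{l:fulltrappedescapelemma} and~\ref{l:nontrapescape}, pass from $\ppm$ to $P$ via $p=g^{00}p^+p^-$, and use the damping term to absorb the correction $\acpm$), but the combination mechanism you propose differs from the paper's and does not close.

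The paper does not form a linear combination $A\,\qpmo+\qpmt$. It uses an exponential: $\qpmlg=\exp\bigl(-\sigma(\qpmo+\qpmt)\bigr)\chigl(|\bpm|)\chigl(|\tau|)$, and then $q=p^+\qmlg+p^-\qplg$. The exponential is essential because $\Hpm\qpmlg=-\sigma\qpmlg\Hpm(\qpmo+\qpmt)$, so the large parameter $\sigma$ multiplies the \emph{derivative} $\Hpm(\qpmo+\qpmt)$ while the correction $\acpm$ is not amplified. After factoring $|\xi|^2\qpmlg$ out of $(H_pq+2\kappa\tau aq)|_{\tau=\bpm}$, the paper is left with a bracket of the schematic form $\sigma\Hpm\qpmo+\sigma\Hpm\qpmt+\kappa a-\text{error}$; choosing $\kappa=C_a\sigma/2$ lets the $\kappa a$ term beat $\sigma\acpmo$ \emph{inside that bracket} and recovers $\sigma(\Hpm\qpmo+\acpmo)\geq\sigma\rpmo$, after which $\sigma$ large kills the error. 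The $p^\pm$ factors in $q$ serve a second role: $q|_{\tau=\bpm}=(\bpm-\bmp)\qpmlg$, so the damping contribution $2\kappa\tau aq|_{\tau=\bpm}=2\kappa\bpm(\bpm-\bmp)a\qpmlg$ is nonnegative for both signs of $\tau$, since $\bpm(\bpm-\bmp)>0$ and $\qpmlg>0$.

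Your proposal has a genuine gap at the absorption step. You assert that $2\kappa\tau a\,\qpmo$ can dominate $\acpm$ ``once $\kappa$ is taken large, since $\tau\geq1$ \ldots\ and [the trapped escape function] can be arranged to have the correct sign (nonnegative).'' Both clauses fail. First, $\qpmo$ does \emph{not} have a definite sign: Lemma~\ref{l:escapeqdef} produces a nonnegative $\qo$ for forward-trapped $\omega$ but a nonpositive $\qo$ for backward-trapped $\omega$, and $\qpmo$ is the sum over both. Second, even on the nonnegative piece, $\qo\equiv0$ in the large-damping regime of Lemma~\ref{l:escapeLargeDamp}, precisely where $\acpm\simeq a$ is largest; so $2\kappa\tau a\,\qpmo$ vanishes there and cannot dominate $\acpm$ no matter how large $\kappa$ is. This is why the paper never attempts a pointwise domination $2\kappa\tau aq\geq\acpm$; the absorption happens at the level of $\kappa a$ versus $\sigma\acpmo$ inside a common bracket, after the strictly positive factor $\qpmlg$ has been factored out. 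The remainder of your outline --- reparametrization via $g^{00}p^\mp$, converting $c_j2^{-j}$ to $\<x\>^{-1-\d}$, the final frequency cutoffs and the elementary equivalence $\tau^2+|\xi|^2\simeq|\xi|^2$ on the characteristic set --- is consistent with what the paper does.
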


\begin{proof}
      We will first define $q$, and then compute $H_p q + 2\kappa a q$ on $\characteristicsetofP =\{\tau = b^+\} \cup \{\tau = b^-\}$.
    
    Let $\qpm_1$ be the escape function defined in Lemma \ref{l:fulltrappedescapelemma} and let $\qpm_2$ be the escape function defined in Lemma \ref{l:nontrapescape}. Then define
    \begin{equation}
        \qpmlg = \exp(-\sigma(\qpmo+\qpmt)) \chigl(|b^{\pm}|) \chigl(|\tau|).
    \end{equation}
    By its construction $\qpmlg \in S^0(\Trf)$. Further define
    \begin{equation}
        q= p^+ \qmlg + p^- \qplg = (\tau - b^+) \qmlg + (\tau-b^-)\qplg.
    \end{equation}
    This $q$ will be our escape function. Note that indeed $q= \tau \ti{q_0} + \ti{q_1}$, $\ti{q_j} \in S^j (\Trf)$, where
    \begin{align}
        \tau \ti{q_0} &= \tau(\qplg + \qmlg) \\
        \ti{q}_1 &= -b^+ \qmlg - b^-\qplg.
    \end{align}

    Now to compute $H_p q + 2\kappa \tau a q$, first note that by definition of $q$
    \begin{equation}\label{eq:escapecombine2}
        (H_p q + 2\kappa \tau a q) |_{\tau =\bpm} = H_p q|_{\tau =\bpm} + 2\kappa \bpm (\bpm-\bmp) a \qpmlg.
    \end{equation}
    To compute $H_p q$, first recall $p=g^{00} p^+ p^-, \ppm = \tau- \bpm$. By the product rule
    \begin{equation}
        H_p q = g^{00}p^- H_{p^+} q  + g^{00} p^+ H_{p^-}q + p^+ p^- H_{g^{00}} q.
    \end{equation}
    Then since $\ppm|_{\tau=\bpm}=0$, we have  
    \begin{equation}\label{eq:escapecombinehpq}
        H_p q |_{\tau =\bpm}=g^{00}(p^{\mp} \Hpm q)|_{\tau =\bpm}.
    \end{equation}
    % Therefore 
    % \begin{equation}
    %     H_p q |_{\characteristicsetofP} = - p^- H_{p^+ }q|_{\tau =b^+} - p^+ H_{p^-}q|_{\tau=b^-}.
    % \end{equation}
    Now we write $\Hpm q$ in terms of $\qpmlg$, noting that $\Hpm \ppm =0$
    \begin{align}\label{eq:escapecombinehpcomp}
        \Hpm q &= \Hpm (p^+ \qmlg + p^- \qplg) \\
        %&= \ppm \Hpm \qmplg + \Hpm(p^{\mp}\qpmlg) \\
        &= \ppm \Hpm \qmplg + p^{\mp} \Hpm \qpmlg + \qpmlg \Hpm p^{\mp}.
    \end{align}
    Note the first term on the right hand side evaluates to $0$ at $\tau = \bpm$. We compute the third term on the right hand side directly 
    \begin{align}
        \Hpm p^{\mp}|_{\tau=\bpm} &= \nabla_{\zeta} (\tau - \bpm)\cdot\nabla_z (\tau -\bmp) - \nabla_{\zeta}(\tau-\bmp)\cdot\nabla_z(\tau-\bpm)\\
        &=\nabla_{\xi} \bpm \cdot\nabla_x b^{\mp} - \nabla_{\xi} b^{\mp} \cdot\nabla_x \bpm.
    \end{align}
    Combining this with \eqref{eq:escapecombinehpq} and \eqref{eq:escapecombinehpcomp} and using $p^{\mp}|_{\tau=\bpm}= \bpm - b^{\mp}$,
    \begin{align}
        H_p q |_{\tau = \bpm} &= g^{00} p^{\mp} \left( 0 + p^{\mp} \Hpm \qpmlg + \qpmlg \Hpm p^{\mp} \right)\bigg|_{\tau = \bpm} \\
        &=g^{00} \bigg((b^+ - b^-)^2 \Hpm \qpmlg + (\bpm - \bmp) \qpmlg(\nabla_{\xi} \bpm \nabla_x \bmp - \nabla_x \bpm \nabla_{\xi} \bmp)\bigg) \bigg|_{\tau = \bpm}. \label{eq:escapecombinehpintermediate}
    \end{align}
    Now we compute $\Hpm \qpmlg$ in terms of $\qpmo$ and $\qpmt$ 
    \begin{align}
        \Hpm \qpmlg &= \Hpm \left( e^{-\sigma (\qpmo +\qpmt)} \chi_{> 1}(|\bpm|) \chigl (|\tau|)\right)\\
        &=-\sigma \qpmlg\Hpm(\qpmo+\qpmt)  + e^{-\sigma(\qpmo+\qpmt)} \Hpm\left( \chigl(|\bpm|) \chigl(|\tau|) \right) \\
        &= -\sigma \qpmlg \Hpm(\qpmo+\qpmt)  + 0, 
    \end{align}
    where the last equality follows because the cutoffs and $\ppm$ are functions of $\bpm$ and $\tau$.
    Combining this with \eqref{eq:escapecombinehpintermediate} 
    \begin{align}
        H_p q |_{\tau=\bpm}
        &= -g^{00} \bigg(\sigma(b^+ - b^-)^2 \qpmlg \Hpm(\qpmo + \qpmt) -(\bpm - \bmp) \qpmlg (\nabla_{\xi} \bpm \nabla_x \bmp - \nabla_x \bpm \nabla_{\xi} \bmp)\bigg) \bigg|_{\tau =\bpm}.\label{eq:escapecombine1}
    \end{align}
    % Now we will bound $\Hpm\qpmo$ and $\Hpm\qpmt$ using Lemmas \ref{l:fulltrappedescapelemma} and \ref{l:nontrapescape} repsectively, in order to estimate $H_p q + 2\kappa \tau a q|_{\tau=\bpm}$.

    Recalling the definition of $b$ in terms of $g$ from \eqref{eq:bdef}, and the asymptotic flatness of $g$, Definition \ref{d:asymflat2}, there exist $c_j \simeq 2^{-\d j}$, such that  
    \begin{equation}\label{eq:escapecombine3}
        |(\bpm - \bmp)  (\nabla_{\xi} \bpm \nabla_x \bmp - \nabla_x \bpm \nabla_{\xi} \bmp) | \lesssim c_j |\xi|^2 \<x\>^{-1} \simeq c_j 2^{-j} |\xi|^2 \text{ on } \<x\> \simeq 2^j.
    \end{equation}
    Applying $g^{00} \geq -C$, and combining \eqref{eq:escapecombine2}, \eqref{eq:escapecombine1}, and \eqref{eq:escapecombine3}: there exists $C_1 >1 $ such that on $\<x\> \simeq 2^j$ 
    \begin{align}
        (H_p q + 2\kappa \tau a q)_{\tau=\bpm}\geq  C \qpmlg \bigg(&C_1^{-1} \sigma |\xi|^2  \Hpm (\qpmo +\qpmt) -C_1 c_j|\xi|^2 2^{-j}+2\kappa \bpm(\bpm-\bmp)a \bigg) .
    \end{align}
    Now using that $\frac{\bpm}{\bpm-\bmp}\simeq 1$ and $|\bpm-\bpm|^2\simeq |\xi|^2$
    \begin{equation}
        (H_pq+2\kappa \tau a q)|_{\tau=\bpm} \geq C |\xi|^2 \qpmlg \left(C_1^{-1} \sigma\left(\Hpm\qpmo+\frac{2\kappa}{\sigma}a\right) + C_1^{-1} \sigma \Hpm \qpmt - C_1 c_j 2^{-j}\right) \text{ on } \<x\> \simeq 2^j.
    \end{equation}
     By Lemma \ref{l:fulltrappedescapelemma} and Lemma \ref{l:nontrapescape}, choosing $\kappa = \frac{C_a \sigma}{2}$
    \begin{align}
        (H_p q +2\kappa \tau a q)|_{\tau=\bpm} \geq &|\xi|^2 \qpmlg \left( C_1^{-1} \frac{\Cm_2}{2} \sigma \mathbbm{1}_{\semitrappedescapeset} + C_1^{-1} C_W \sigma c_j 2^{-j}  \mathbbm{1}_{W^{\pm}} - C_1 c_j 2^{-j}\right) \text{ on }\<x\> \simeq 2^j.
    \end{align}
    So for $\sigma$ large enough, since $\semitrappedescapeset \cup W^{\pm} \supset \characteristicsetofPplusminus = \{\tau = \bpm\}$, on $\<x\> \simeq 2^j$
    \begin{align}
        (H_p q+2\kappa \tau a q)|_{\tau = \bpm} &\geq C |\xi|^2 \qpmlg\left( \frac{\Cm_2 \sigma}{4} \mathbbm{1}_{\semitrappedescapeset} + \frac{C_W \sigma}{2} c_j 2^{-j} \mathbbm{1}_{W^{\pm}} \right)\\
        &\geq C |\xi|^2 \chigl(|\bpm|) \chigl(|\tau|) \exp(-\sigma(\qpmo+\qpmt)) \left( \mathbbm{1}_{\semitrappedescapeset} + c_j 2^{-j} \mathbbm{1}_{W^{\pm}} \right).
        \end{align}
        Now note that since $\qpmo,\qpmt \in S^0(\Trf)$, in particular they are bounded, then $\exp(-\sigma(\qpmo+\qpmt)) \geq C>0$. This, along with  $\semitrappedescapeset \cup W^{\pm} \supset \characteristicsetofPplusminus = \{\tau=\bpm\}$ gives on $\<x\> \simeq 2^j$
        \begin{align}
        (H_p q+2\kappa \tau a q)|_{\tau = \bpm} &\geq C |\xi|^2 \chigl(|\bpm|) \chigl(|\tau|) \left( \mathbbm{1}_{\semitrappedescapeset} + c_j 2^{-j} \mathbbm{1}_{W^{\pm}} \right) \\
        &\geq C|\xi|^2 \mathbbm{1}_{|\xi|>1} \mathbbm{1}_{|\tau|> 1} c_j 2^{-j}.
        \end{align}
        Now since $c_j \geq 2^{-\d j}$, and $\<x\> \simeq 2^j$, we have 
        \begin{align}
        (H_p q+2\kappa \tau a q)|_{\tau = \bpm}  &\geq C |\xi|^2 \mathbbm{1}_{|\xi|>1} \mathbbm{1}_{|\tau| >1} \<x\>^{-1-\d},\label{eq:hpdesiredinequality}
    \end{align}
    Since $\characteristicsetofP=\{\tau = \bpm\}$ and thus $|\tau|^2=|\bpm|^2 \simeq |\xi|^2$, this gives the desired bound on the characteristic set. 
\end{proof}

We now construct an elliptic correction term to ensure positivity away from $\characteristicsetofP$ and conclude the proof of Proposition \ref{p:EscapeFunction}, which we restate here for the convenience of the reader.
\setcounter{definition}{0}
\begin{proposition}
\escapeFunctionStatement
\end{proposition}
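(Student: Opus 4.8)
The plan is to take Lemma \ref{l:combineEscapeFunc} as a black box and add an elliptic correction $m$. Lemma \ref{l:combineEscapeFunc} already furnishes $\kappa\geq1$, $C>0$, and $q=\tau\ti{q}_0+\ti{q}_1$ with $\ti{q}_j\in S^j(\Trf)$ supported in $\{|\xi|\geq1,|\tau|\geq1\}$ such that $e:=H_p q+2\kappa\tau a q$, a symbol in $S^2(\Trf)$ supported in $\{|\xi|\geq1,|\tau|\geq1\}$, obeys $e\geq C\mathbbm{1}_{|\xi|\geq1}\mathbbm{1}_{|\tau|\geq1}\<x\>^{-1-\d}(\tau^2+|\xi|^2)$ on $\characteristicsetofP$. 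Fix $c\in(0,C)$ and set $F:=e-c\<x\>^{-2-2\d}(\tau^2+|\xi|^2)\chigl(|\xi|)\chigl(|\tau|)$. Since $\<x\>^{-1-\d}\leq1$, on $\characteristicsetofP$ we get $F\geq(C-c)\<x\>^{-1-\d}(\tau^2+|\xi|^2)\geq0$, with strict positivity where the frequency cutoffs are active and $F\equiv0$ on the remainder of $\characteristicsetofP$. Hence it suffices to build $m\in S^0(\Trf)$, supported in $\{|\xi|\geq1,|\tau|\geq1\}$, with $F+pm\geq0$ on all of $\Trf$; then $H_p q+2\kappa\tau a q+pm=F+pm+c\<x\>^{-2-2\d}(\tau^2+|\xi|^2)\chigl(|\xi|)\chigl(|\tau|)$ yields the desired lower bound and completes the proof of Proposition \ref{p:EscapeFunction}.

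For the construction of $m$ I follow the quadratic-in-$\tau$ analysis of \cite[Lemma 4.1]{MST20} and \cite[Lemma 2.4]{Kofroth23}, using the half-wave factorization \eqref{Eqn:HalfWaveFactorization}, $p=g^{00}(\tau-b^+)(\tau-b^-)$. Since $b^-<0<b^+$ with $b^+-b^-\simeq|\xi|$, we have $\{p>0\}=\{b^-<\tau<b^+\}$ and $\{p<0\}=\{\tau<b^-\}\cup\{\tau>b^+\}$; and, because $g^{00}\leq-C<0$ and $|b^\pm|\simeq|\xi|$, outside the conic neighborhood $U_\e:=\{|\tau-b^+|<\e|\xi|\}\cup\{|\tau-b^-|<\e|\xi|\}$ of $\characteristicsetofP$ one has $|p|\gtrsim\tau^2+|\xi|^2$ (with implied constant depending on $\e$) uniformly in $x$. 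Choose a $0$-homogeneous $\psi\in S^0(\Trf)$ equal to $+1$ on $\{b^-+\e|\xi|<\tau<b^+-\e|\xi|\}$ and to $-1$ on $\{\tau>b^++\e|\xi|\}\cup\{\tau<b^--\e|\xi|\}$, so that $\psi=\operatorname{sgn}(p)$ on $\Trf\setminus U_\e$, and a $0$-homogeneous conic cutoff $\chi_{\mathrm{near}}$ with $\chi_{\mathrm{near}}\equiv1$ on $U_\e$ and $\supp\chi_{\mathrm{near}}\subset\{F\geq\tfrac{C-c}{2}\<x\>^{-1-\d}(\tau^2+|\xi|^2)\}$. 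This is possible for $\e$ small: $F/(\tau^2+|\xi|^2)$ exceeds $(C-c)\<x\>^{-1-\d}$ on $\characteristicsetofP$ and, modulo compactly $x$-supported pieces, agrees with an explicitly controlled model (built from $\qpmout$-type terms) as $|x|\to\infty$, so continuity together with compactness in bounded $x$ gives a uniform conic neighborhood of $\characteristicsetofP$ on which $F$ dominates a fixed fraction of that weight. Finally set $m:=N\,\psi\,(1-\chi_{\mathrm{near}})\,\chigl(|\xi|)\chigl(|\tau|)\in S^0(\Trf)$ with $N$ to be fixed.

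To verify $F+pm\geq0$: on $\{\chi_{\mathrm{near}}=1\}$ we have $m=0$, so $F+pm=F\geq\tfrac{C-c}{2}\<x\>^{-2-2\d}(\tau^2+|\xi|^2)$; on $\{0<\chi_{\mathrm{near}}<1\}$ we are outside $U_\e$, hence $\psi=\operatorname{sgn}(p)$ and $pm=N|p|(1-\chi_{\mathrm{near}})\chigl(|\xi|)\chigl(|\tau|)\geq0$, while $F\geq\tfrac{C-c}{2}\<x\>^{-1-\d}(\tau^2+|\xi|^2)$ there since $\supp\chi_{\mathrm{near}}$ lies in that region; and on $\{\chi_{\mathrm{near}}=0\}\subset\Trf\setminus U_\e$ we have $pm=N|p|\chigl(|\xi|)\chigl(|\tau|)\gtrsim N(\tau^2+|\xi|^2)$ on $\{|\xi|,|\tau|\geq1\}$ whereas $|F|\lesssim\tau^2+|\xi|^2$ uniformly (as $e\in S^2(\Trf)$), so $F+pm\geq0$ for $N$ large. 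The $S^0$ estimates for $m$ are immediate from the $0$-homogeneous conic definitions of $\psi$ and $\chi_{\mathrm{near}}$ and the symbol bounds on $b^\pm$. The step requiring the most care — and the main obstacle — is the bookkeeping near $|\xi|=1$ and $|\tau|=1$: the lower bound of Lemma \ref{l:combineEscapeFunc}, and with it the strict positivity of $F$ on $\characteristicsetofP$, degrades in the annulus where $\chigl(|\xi|)$ and $\chigl(|\tau|)$ are still transitioning, so one takes these frequency cutoffs (here and in Lemma \ref{l:combineEscapeFunc}) at a scale slightly below $1$, equal to $1$ on $\{|\xi|\geq1\}\cap\{|\tau|\geq1\}$, and absorbs the transition region into the constants. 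This is the same device used in the cited references, and — as the remark following Proposition \ref{p:EscapeFunction} observes — it is responsible for the (harmless) weakening of the weight from $\<x\>^{-1-\d}$ to $\<x\>^{-2-2\d}$.
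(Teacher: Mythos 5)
Your approach is genuinely different from the paper's. The paper writes $e := H_pq + 2\kappa\tau a q = E\tau^2 + F\tau + G$, then picks $\tilde m$ as the vertex of the discriminant quadratic $\mathcal{P}(\tilde m)$; the key identities
\[
E-\tilde m = (b^+-b^-)^{-2}\bigl(e|_{\tau=b^+}+e|_{\tau=b^-}\bigr),
\qquad
\mathcal{P}(\tilde m) = -4(b^+-b^-)^{-2}\,e|_{\tau=b^+}\,e|_{\tau=b^-}
\]
convert the on-characteristic-set lower bound of Lemma~\ref{l:combineEscapeFunc} directly into $E-\tilde m\gtrsim\<x\>^{-1-\d}$ and $\mathcal{P}(\tilde m)<0$, after which completing the square and a case split in $\tau$ give the final weight. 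You instead build $m=N\psi(1-\chi_{\mathrm{near}})\chigl\chigl$ with $\psi=\operatorname{sgn}(p)$ off a conic tube around $\characteristicsetofP$ and $\chi_{\mathrm{near}}$ a cutoff near it, so that $pm\gtrsim N(\tau^2+|\xi|^2)$ beats $|F|$ in the elliptic region while $F$ is nonnegative where $m$ vanishes. Conceptually this is the standard ``ellipticity away from the characteristic variety'' construction rather than the discriminant trick, and it sidesteps the algebra entirely.

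The place where your argument has a real gap is the construction of $\chi_{\mathrm{near}}$. You need $\supp\chi_{\mathrm{near}}\subset\{F\geq\tfrac{C-c}{2}\<x\>^{-1-\d}(\tau^2+|\xi|^2)\}$ while $\chi_{\mathrm{near}}\equiv1$ on the \emph{fixed} conic tube $U_\e=\{|\tau-b^\pm|<\e|\xi|\}$. This requires $F$ to retain a $\<x\>^{-1-\d}|\xi|^2$ lower bound as $\tau$ moves a distance $\e|\xi|$ off $\tau=b^\pm$. But $F$ is a quadratic in $\tau$ whose $\tau$-derivative at $\tau=b^\pm$ has a priori only the $S^2$ bound $O(|\xi|)$, with no built-in $\<x\>$-decay; a change of size $O(\e|\xi|^2)$ then overwhelms the target bound $\<x\>^{-1-\d}|\xi|^2$ once $|x|\gg\e^{-1/(1+\d)}$. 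So the neighborhood $U_\e$ cannot be taken of fixed conic width uniformly in $x$ unless you show $\p_\tau e|_{\tau=b^\pm}$ itself decays like $\<x\>^{-1-\d}|\xi|$ for $|x|>R_0$. That is plausible given the explicit form of $q^\pm_{>1}$ (only $\qpmout$ and the asymptotically small $a$ survive at large $|x|$) but it is exactly the kind of estimate you would have to carry out, and your appeal to ``continuity together with compactness in bounded $x$'' plus an unspecified ``explicitly controlled model'' leaves it unaddressed. The paper's discriminant identities package this bookkeeping automatically, which is the real advantage of that route. Also, the weakening of the weight to $\<x\>^{-2-2\d}$ in the paper comes from the final case split $|\tau|\gtrless C_0|\xi|\<x\>^{(1+\d)/2}$ needed to recover the $\tau^2$ growth, not (as you suggest) from absorbing the $\chigl$ transition annulus near $|\xi|=1$, $|\tau|=1$.
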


\begin{proof}[Proof of Proposition \ref{p:EscapeFunction}]
    Let $q$ and $\kappa \geq 1$ be as in Lemma \ref{l:combineEscapeFunc}. To extend the bound from $\characteristicsetofP$ to all of $\{|\xi| \geq 1, |\tau| \geq 1\}$, we construct a correction term on the elliptic set of $p$. 
    
    Since $q \in S^1(\Trf)$ and $p \in S^2(\Trf)$, we can write 
    \begin{equation}
        H_p q(t, x, \tau, \xi) + 2\kappa a(t,x) \tau q(t, x, \tau, \xi) = E(t, x, \tau, \xi) \tau^2 + F(t, x, \tau, \xi)\tau +G(t, x, \tau, \xi),
    \end{equation}
    where $E \in S^0(\Trf), F=F_1 F_2$, $F_1 \in S^0(\Trf), F_2 \in S^1(T^*\Rb^3)$ and $G=G_1 G_2$ with $G_1 \in S^0(\Trf), G_2 \in S^2(T^* \Rb^3)$. In Lemma \ref{l:combineEscapeFunc} we have shown that 
    \begin{equation}\label{eq:Hppolypositive}
        \left(E\tau^2 + F \tau + G \right)|_{\tau=\bpmx} \geq C|\xi|^2 \mathbbm{1}_{|\xi| \geq 1}\mathbbm{1}_{|\tau|\geq 1}\<x\>^{-1-\d} .
    \end{equation}
    Recalling that $p=g^{00} (\tau-b^+)(\tau-b^-)$, for an $m:=-\frac{\ti m}{g^{00}} \in S^0(\Trf)$ to be determined, we can rewrite
    \begin{equation}\label{eq:polyplusm}
        E \tau^2 + F \tau + G \tau + p m = (E- \ti m) \tau^2 + (F+(b^+ +b^-) \ti m) \tau + (G-b^+ b^- \ti m).
    \end{equation}
    This is quadratic in $\tau$, so if the second order coefficient is positive
    \begin{equation}
        E-\ti m>0 \text{ on } |\xi|, |\tau| \geq 1,
    \end{equation}
    and the discriminant is negative
    \begin{equation}
        \mathcal{P}(\ti m):=(F+(b^+ + b^-) \ti m)^2 - 4(E-\ti m)(G-b^+b^- \ti m) < 0 \text{ on } |\xi|, |\tau| \geq 1,
    \end{equation}
    then we will have 
    \begin{equation}
        E \tau^2 + F \tau + G + pm >0 \text{ on } |\xi|,|\tau| \geq 1.
    \end{equation}
    We now will define an $\ti m$, and show that it satisfies these inequalities. We can rewrite the discriminant 
    \begin{equation}
        \mathcal{P}(\ti m)=(b^+-b^-)^2 \ti m^2 + (2F(b^++b^-) + 4 Eb^+ b^- + 4G) \ti m +(F^2-4EG),
    \end{equation}
    as a quadratic polynomial in $\ti m$. Note that its second-order coefficient is positive. Thus the minimum value of $\mathcal{P}(\ti m)$ is attained at its vertex, which is exactly
    \begin{equation}
        \ti m := - \frac{F(b^+ + b^-)+2Eb^+ b^- + 2G}{(b^+-b^-)^2}.
    \end{equation}
    Note that on $|\tau|, |\xi| \geq 1$, $\ti m \in S^0(\Trf)$ because $E \in S^0, F=F_1 F_2 \in S^1, G=G_1G_2 \in S^2, b^{\pm} \in S^1$ and $(b^+ - b^-)^{2} \gtrsim |\xi|^2$. Furthermore, since $g^{00} \geq -C$ and is asymptotically flat, we have $m \in S^0(\Trf)$.

    Writing $Z:=-\big(F(b^+ +b^-) + 2(Eb^+ b^-+G)\big)$ and plugging $\ti m=(b^+-b^-)^{-2} Z$ into the original form of $\mathcal{P}(\ti m)$ gives
    \begin{align}
        \mathcal{P}(\ti m)&=(b^+ - b^-)^{-4} \bigg((F(b^+-b^-)^2 + (b^++b^-)Z)^2 \\
        &\qquad\qquad\qquad\qquad-4(E(b^+-b^-)^2-Z)(G(b^+-b^-)^2-b^+b^- Z)\bigg)\\
        &=(b^+-b^-)^{-2}\bigg(Z^2+Z(2F(b^++b^-)+4(Eb^+b^-+G)) \\
        &\qquad \qquad\qquad\qquad+(F^2-4EG)(b^+-b^-)^2\bigg)\\
        &=(b^+-b^-)^{-2}(-Z^2+(F^2-4EG)(b^+-b^-)^2)\\
        &= -4(b^+-b^-)^{-2} (E(b^+)^2+F b^+ +G)(E(b^-)^2+F b^-+G)\\
        &= -4(b^+-b^-)^{-2} \left(\left(H_p q+2\kappa \tau a q\right) |_{\tau=b^+}\right)  \left(\left(H_p q+2\kappa \tau a q\right) |_{\tau=b^-}\right)<0,\label{eq:discriminant}
    \end{align}
    where the final inequality follows from \eqref{eq:hpdesiredinequality}.
    Furthermore 
    \begin{align}
        E - \ti m &= \frac{E(b^+ - b^-)^2 +2Eb^+b^- + F(b^+ + b^-) + 2G}{(b^+ - b^-)^2} \\
        &= (b^+ - b^-)^{-2} \left( E( b^+)^2 + F(b^+) + G + E(b^-)^2 + F b^- + G\right) \\
        &= (b^+ -b^-)^{-2} \bigg( ( H_p q + 2 \kappa \tau a q)|_{\tau=b^+} + (H_p q + 2\kappa \tau a q) |_{\tau=b^-} \bigg)\\
        &\geq C \mathbbm{1}_{|\xi| \geq 1} \mathbbm{1}_{|\tau| \geq 1} \<x\>^{-1-\d} >0,
        \label{eq:aminusm}
    \end{align}
    where the final inequality follows from \eqref{eq:hpdesiredinequality} and the fact that $(b^+ - b^-)^{-2} |\xi|^2 \simeq 1.$ Since $E-\ti m>0$ and the discriminant $\mathcal{P}(\ti m)<0$, we indeed have
    \begin{equation}
        E \tau^2 + F\tau + G + pm = H_p q + 2 \kappa \tau a q + pm >0 \text{ on } |\xi|, |\tau| \geq 1.
    \end{equation}

    It remains to be seen that we have the desired growth in $\xi, \tau$, and $\<x\>$. To see this, we will consider two cases: $|\tau| \leq C_0|\xi| \<x\>^{\frac{1+\d}{2}}$ and $|\tau| \geq C_0|\xi|\<x\>^{\frac{1+\d}{2}}$, with $C_0 > 1$ to be determined. First we rewrite \eqref{eq:polyplusm} by completing the square
    \begin{equation}\label{eq:hpqfinal}
         H_p q + 2\kappa a \tau q + pm = (E-\ti m) \left( \tau +\frac{F+\ti m(b^++b^-)}{2(E-\ti m)}\right)^2 - \frac{\mathcal{P}(\ti m)}{4(E-\ti m)}.
    \end{equation}
    Now note that by \eqref{eq:discriminant}, and \eqref{eq:aminusm}, and then applying \eqref{eq:hpdesiredinequality}
    \begin{align}
         - \frac{\mathcal{P}(\ti m)}{4(E-\ti m)}&=\frac{\left(\left(H_p q+2\kappa \tau a q\right) |_{\tau=b^+}\right)  \left(\left(H_p q+2\kappa \tau a q\right) |_{\tau=b^-}\right)}{\left( ( H_p q + 2 \kappa \tau a q)|_{\tau=b^+} + (H_p q + 2\kappa \tau a q) |_{\tau=b^-} \right)}\\
         &\geq C\min\left\{\left(H_p q+2\kappa \tau a q\right)|_{\tau=b^+},  \left(H_p q+2\kappa \tau a q\right)|_{\tau=b^-}\right\}\\
         &\geq C \mathbbm{1}_{|\xi| \geq 1} \mathbbm{1}_{|\tau| \geq 1} \<x\>^{-1-\d}|\xi|^2 . \label{eq:Pmamfinal}
    \end{align}
    Now we consider the case $\tau \geq C_0|\xi| \<x\>^{\frac{1+\d}{2}}$. Note by the definition of $\mathcal{P}(m)$,
    \begin{align}
        \left| \frac{F+(b^+ + b^-)\ti m}{2(E-\ti m)}\right| &= \frac{\sqrt{(\mathcal{P}(\ti m)+4(E-\ti m)(G-b^+b^-\ti m)}}{2(E-\ti m)}\\
        &= \frac{1}{2(E-\ti m)^{1/2}}\sqrt{\frac{\mathcal{P}(\ti m)}{E-\ti m}+4(G-b^+ b^-\ti m)}.
    \end{align}
    Then applying \eqref{eq:Pmamfinal}, using that $|\bpm| \simeq|\xi|$, $G, \ti m \in S^0$, and applying \eqref{eq:aminusm}, there exists $C^* >0$ such that
    \begin{align}
        \left| \frac{F+(b^+ + b^-)\ti m}{2(E-\ti m)}\right|& \leq \frac{1}{(E-\ti m)^{1/2}} \sqrt{G-b^+ b^-\ti m}
        \leq \frac{C}{(E-\ti m)^{1/2}}|\xi| \leq C^*|\xi|\<x\>^{\frac{1+\d}{2}}. \label{eq:Fbpm}
    \end{align}
    Therefore choosing $C_0 >\max(2C^*,1)$, if we apply \eqref{eq:Fbpm} and use that $\tau \geq C_0|\xi| \<x\>^{\frac{1+\d}{2}}$, we have
    \begin{align}
        (E-\ti m)\left( \tau+ \frac{F+(b^+ + b^-)\ti m}{2(E-\ti m)} \right)^2 &\geq C \mathbbm{1}_{|\xi| \geq 1} \mathbbm{1}_{|\tau| \geq 1} \<x\>^{-1-\d} \left( \tau - C^*|\xi|\<x\>^{\frac{1+\d}{2}} \right)^2\\
        &\geq C \mathbbm{1}_{|\xi| \geq 1} \mathbbm{1}_{|\tau| \geq 1} \<x\>^{-1-\d} \tau^2.
    \end{align}
    Combining this with \eqref{eq:hpqfinal} and \eqref{eq:Pmamfinal} we obtain
    \begin{equation}
        H_p q+ 2\kappa a \tau q+pm \geq \mathbbm{1}_{|\xi|\geq 1}\mathbbm{1}_{|\tau| \geq 1} \<x\>^{-1-\d} (\tau^2 + |\xi|^2).\label{eq:finalcombine1}
    \end{equation}
    Since $\<x\>^{-1-\d} \geq \<x\>^{-2-2\d}$, this shows the desired behavior when $\tau  \geq C_0 |\xi| \<x\>^{\frac{1+\d}{2}}$.
    
    We now consider the other case $\tau \leq C_0|\xi|\<x\>^{\frac{1+\d}{2}}$. In this case we have $|\xi|^2 \geq \frac{1}{2}|\xi|^2+\frac{1}{2C_0^2}\<x\>^{-1-\d} \tau^2$. Combining this with \eqref{eq:hpqfinal}, using that 
    \begin{equation}
        (E-\ti m)\left(\tau+ \frac{F+\ti m(b^+ + b^-)}{2(E-\ti m)}\right)^2 \geq 0,
    \end{equation} and applying  \eqref{eq:Pmamfinal}, we have  
    \begin{align}
        H_p q + 2\kappa a \tau q + p m &\geq - \frac{\mathcal{P}(\ti m)}{4(E-\ti m)} \\
        &\geq C\mathbbm{1}_{|\xi| \geq 1} \mathbbm{1}_{|\tau| \geq 1} \<x\>^{-1-\d}|\xi|^2 \\
        &\geq C\mathbbm{1}_{|\xi| \geq 1} \mathbbm{1}_{|\tau| \geq 1} \left(\<x\>^{-1-\d}|\xi|^2 + \<x\>^{-2-2\d} \tau^2 \right)\\
        &\geq C\mathbbm{1}_{|\xi| \geq 1} \mathbbm{1}_{|\tau| \geq 1} \<x\>^{-2-2\d}(|\xi|^2+\tau^2).
    \end{align}
    This together with \eqref{eq:finalcombine1} give the desired growth behavior in $\tau, \xi$ and $\<x\>$ for all $\tau$.
\end{proof}

\section{Case Reduction}\label{s:caseReduce}
In this section we reduce the proofs of Theorems \ref{thm:iled} and \ref{thm:highfreq} to simpler problems.
Specifically we show the following.

\begin{proposition}\label{prop:iledCaseReduction}
    If there exists $C>0$, such that for all $T>0$ and $v$ with $v[0]=v[T]=0$, $Pv \in LE^*$ with $Pv$ compactly supported, we have 
    \begin{equation}
        \LEoT{v} \leq C \LEST{Pv},
    \end{equation}
    then there exists $C>0$, such that for all $T>0$, and $u$ with $u[0] \in \dot{H}^1 \times L^2$ we have 
    \begin{equation}\label{eq:iledCaseEst}
        \LEoT{u} + \nm{\p u}_{L^{\infty}_t L^2_x[0,T]} \leq C \left( \ltwo{\p u(0)}+ \LEsltxT{Pu}\right).
    \end{equation}
    That is, the conclusion of Theorem \ref{thm:iled} holds.
\end{proposition}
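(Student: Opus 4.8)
The plan is to peel off the $\nm{\p u}_{L^\infty_tL^2_x}$ term with the basic energy estimate, reduce to an $LE^1$ bound, and then decompose $u$ by linearity into a piece carrying the Cauchy data, a piece carrying the $LE^*$ part of the forcing, and a piece carrying the $L^1_tL^2_x$ part, treating each in turn. First I would establish the energy estimate: pairing $Pu$ with $\p_t u$ and using that $\p_t$ is uniformly timelike (so $\int_{\Rb^3}(|\p u|^2+\<x\>^{-2}|u|^2)\,dx$ is, uniformly in $t$, comparable to the natural energy), that $g$ is stationary, and that $a\geq0$ (so the damping contributes with a favorable sign when integrating forward in time), one obtains
\[
  \nm{\p u}_{L^\infty_tL^2_x[0,T]}\ \lesssim\ \ltwo{\p u(0)}+\LEsltxT{Pu}+\LEsltxT{Pu}^{1/2}\,\LEoT{u}^{1/2}.
\]
By Young's inequality the last term is $\leq\tfrac12\LEoT{u}$ plus an acceptable right-hand side, so it suffices to prove $\LEoT{u}\lesssim\ltwo{\p u(0)}+\LEsltxT{Pu}$. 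The same energy estimate, used on time intervals of fixed (order-one) length, will also furnish the control of $\p u$ at the auxiliary endpoints introduced below; note that solving backward in time on a fixed-length interval costs only a constant depending on $\nm{a}_{L^\infty}$ and the length, which is harmless there.

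Next I would split $Pu=f_1+f_2$ with $f_1\in LE^*[0,T]$, $f_2\in L^1_tL^2_x[0,T]$ near-optimal, and $u=u_0+u_1+u_2$, where $Pu_0=0$ with $u_0[0]=u[0]$, and $u_1,u_2$ solve the forward problem with zero Cauchy data and forcing $f_1,f_2$ respectively. For $u_2$, Duhamel's formula $u_2(t)=\int_0^t\mathcal{S}(t,s)f_2(s)\,ds$, with $\mathcal{S}(t,s)g$ the solution of $Pv=0$ on $[s,\infty)$ with data $(0,cg)$ at $t=s$, together with Minkowski's integral inequality gives
\[
  \LEoT{u_2}\ \leq\ \int_0^T\nm{\mathcal{S}(\cdot,s)f_2(s)}_{LE^1[s,T]}\,ds\ \lesssim\ \int_0^T\ltwo{f_2(s)}\,ds\ =\ \nm{f_2}_{L^1_tL^2_x[0,T]},
\]
\emph{provided} the zero-forcing local energy decay bound $\nm{\mathcal{S}(\cdot,s)(g_0,g_1)}_{LE^1[s,T]}\lesssim\nm{(g_0,g_1)}_{\dot{H}^1\times L^2}$ holds with a constant uniform in the interval; by stationarity this is exactly the bound needed for $u_0$. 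So everything reduces to bounding $u_0$ and $u_1$.

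For $u_1$ I would first reduce, by density of $LE^*_c$ in $LE^*$ and the a priori character of the estimate, to $f_1\in LE^*_c$; then, extending past $t=T$, solving $P\tilde u=f_1\mathbbm{1}_{[0,T]}$ with zero data on $[0,T+1]$, and multiplying by a cutoff $\eta(t)$ equal to $1$ on $[0,T]$ and vanishing near $T+1$, one produces $w=\eta\tilde u$ with zero Cauchy data at both ends of $[0,T+1]$ and $Pw=f_1+[P,\eta]\tilde u$, the commutator supported in $t\in[T,T+1]$ and controlled there, in $L^1_tL^2_x$, by the fixed-interval energy estimate; splitting off the commutator's contribution and applying the hypothesis to the remaining $LE^*_c$-forced solution yields $\LEoT{u_1}\lesssim\nm{f_1}_{LE^*[0,T]}$ after absorbing errors by Young's inequality. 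The hardest piece, $u_0$ — equivalently zero-forcing local energy decay $\LEoT{u_0}\lesssim\ltwo{\p u(0)}$ — I would handle by duality: test $\LEoT{u_0}$ against $h$ in the predual of $LE^1$, solve the adjoint equation $P^*w=h$ backward from $w[T]=0$, and integrate by parts over $[0,T]\times\Rb^3$; this annihilates the forcing (since $Pu_0=0$) and the $t=T$ boundary term (since $w[T]=0$), leaving only the $t=0$ boundary term, bounded by $\ltwo{\p u(0)}\,\ltwo{\p w(0)}$, and one closes using local energy decay for the adjoint problem. Here one uses that, after reversing time, $P^*$ is again a stationary asymptotically flat damped wave operator with nonnegative damping and that the time-dependent geometric control condition is symmetric under $s\mapsto-s$, so it is inherited; hence the constructions of Sections \ref{sec:Hamiltonian Dynamics}--\ref{s:propagation}, and in particular the hypothesis of the present proposition, apply to $P^*$ as well.

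The main obstacle is exactly this last step: the hypothesis is available only for solutions with zero Cauchy data at \emph{both} endpoints and with \emph{compactly supported} $LE^*$ forcing, whereas we must treat arbitrary finite-energy data at $t=0$, impose nothing at $t=T$, and keep every constant independent of $T$. Making the time-cutoff / fixed-interval-energy / Duhamel / duality bookkeeping consistent — in particular arranging that the $P$- and $P^*$-estimates decouple because the whole reduction is run uniformly over the admissible class of operators, so that no factor of $T$ survives — is where the real care lies; the remaining manipulations (triangle inequality, Young's inequality, the density step) are routine.
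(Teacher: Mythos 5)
Your strategy diverges fundamentally from the paper's, and as written it does not close. The paper proceeds by a chain of purely \emph{structural} reductions (Lemmas \ref{l:case1}--\ref{l:case3}): peel off $L^\infty_tL^2_x$ via energy, compare with a solution of a small asymptotically flat perturbation $\tilde\Box$ of $\Box_m$ to reduce to compactly supported data and forcing, and then chop $[0,T]$ into length-one intervals, building auxiliary $w_k$ with zero data at interval endpoints so that finite speed of propagation keeps every $w_k$ compactly supported in a ball of \emph{fixed} radius $CR_0$. At no point does the paper invoke homogeneous local energy decay or duality with $P^*$. Your proposal instead decomposes $u$ by linearity and ultimately funnels everything through a homogeneous LED bound and a duality argument for $P^*$, and this is where the gaps are.

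\textbf{(A) Circularity and the missing adjoint hypothesis.} Your Duhamel treatment of $u_2$ explicitly requires $\nm{\mathcal{S}(\cdot,s)g}_{LE^1}\lesssim\nm{g}_{\dot H^1\times L^2}$, i.e.\ homogeneous LED, which is exactly the $u_0$ estimate; so $u_2$ is not independent of $u_0$. You then propose to prove the $u_0$ estimate by duality, solving $P^*w=h$ backward and ``closing using local energy decay for the adjoint problem.'' But the hypothesis of the proposition is an estimate for $P$ only, for solutions with zero Cauchy data at both endpoints, and there is no mechanism by which that hypothesis transfers to $P^*$. Even granting that the TGCC is symmetric under $s\mapsto -s$, note that $P^*=\Box_g-iaD_t-i(D_t a)$ contains an extra zeroth-order potential not of the form $\Box_g+iaD_t$; the operator class considered in the paper does not include it, so one cannot simply cite the paper's machinery for $P^*$. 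Establishing the hypothesis for $P^*$ would amount to independently re-running the escape function and propagation arguments of Sections \ref{s:escapeFunction}--\ref{s:propagation} for a more general operator.

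\textbf{(B) $T$-dependent constants from growing support.} In your $u_1$ step, $\tilde u$ solves $P\tilde u = f_1\mathbbm{1}_{[0,T]}$ on $[0,T+1]$ with zero data. Finite speed of propagation only gives $\supp\tilde u(t)\subset\{|x|\lesssim R+t\}$, so on $[T,T+1]$ the support has radius $\sim T$. The commutator $[P,\eta]\tilde u$ is a first-order operator (with zeroth-order pieces) applied to $\tilde u$ on $[T,T+1]$, and its $LE^*$ norm carries a weight $\<x\>^{1/2}$ that is as large as $T^{1/2}$ on the support. Bounding the zeroth-order terms via Poincar\'e also incurs a factor of the support radius $\sim T$. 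The result is a constant growing in $T$, which defeats the uniformity that is the entire point of the estimate. The paper avoids this precisely by cutting into length-one pieces: each $w_k$ has compactly supported data \emph{and} forcing, so finite speed on a length-one interval keeps $\supp w_k\subset\{|x|\leq CR_0\}$, and the $\<x\>^{1/2}$ weight is a fixed constant. The only piece in the paper with $T$-growing support, $v_T$, is handled via the undamped operator $\tilde\Box$ (which is time-reversible, so the backward estimate is $T$-uniform) and a time cutoff $\psi$ with $|\p_t^k\psi|\lesssim T^{-k}$ that precisely cancels the $\<x\>^{1/2}\lesssim T^{1/2}$ growth; your proposal has no analogue of this cancellation.

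In short, your decomposition reduces the problem to a statement (homogeneous LED for $P$, or LED for $P^*$) that is neither assumed nor established here, and the time-extension step as written produces $T$-dependent constants; a fix along your lines would require both re-proving the paper's core results for $P^*$ and introducing the weight cancellations the paper builds in via its time-slicing.
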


\begin{proposition}\label{prop:CaseReduction}
    If there exists $C>0$, such that for all $T>0$ and $v(t)$ supported in $\{|x|\leq 2R_0\}$ for $t \in [0,T]$, with $v[0]=v[T]=0$ and $Pv \in LE^*_c$, we have 
    \begin{equation}
        \LEoT{v} \leq C \left( \LtxT{v} + \LEST{Pv}\right),
    \end{equation}
    then there exists $C>0$, such that for all $T>0$, and $u$ with $u[0] \in \dot{H}^1 \times L^2$ we have
    \begin{equation}\label{eq:highFreqEst}
        \LEoT{u} + \nm{\p u}_{L^{\infty}_t L^2_x[0,T]} \leq C \left( \ltwo{\p u(0)}+ \LET{\<x\>^{-2} u}  + \LEsltxT{Pu}\right).
    \end{equation}
    That is, the conclusion of Theorem \ref{thm:highfreq} holds.
\end{proposition}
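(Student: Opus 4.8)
The plan is to deduce the full estimate \eqref{eq:highFreqEst} from the compactly-supported, zero-Cauchy-data estimate by combining three ingredients: a dissipative energy estimate, a local energy estimate in the asymptotically flat exterior $\{|x|\ge R_0\}$ (where $P$ is a small asymptotically flat, non-trapping perturbation of the flat wave operator), and the hypothesized estimate applied to a space-time truncation of $u$. For the energy estimate, pairing $Pu=f$ with $\p_t\bar u$ and integrating over $[0,t]\times\Rb^3$, uniform timelikeness of $\p_t$ and spacelikeness of the time slices make the resulting quadratic form coercive, while the damping contributes $\nm{a^{1/2}\p_t u}_{L^2}^2\ge0$ with the dissipative sign. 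Splitting $f=f_1+f_2$ with $f_1\in LE^*$, $f_2\in L^1_tL^2_x$, bounding $|\<f_1,\p_t u\>|\lesssim\LEs{f_1}\LEo{u}$ by the $\ell^1$--$\ell^\infty$ duality of the dyadic pieces and $|\<f_2,\p_t u\>|\le\nm{f_2}_{L^1_tL^2_x}\nm{\p u}_{L^\infty_tL^2_x}$, then using Young's inequality, I get, for every $\d>0$,
\[
\nm{\p u}_{L^\infty_tL^2_x[0,T]}\le C\big(\ltwo{\p u(0)}+\LEsltxT{Pu}\big)+\d\,\LEoT{u},
\]
where $\d\,\LEoT{u}$ will be absorbed at the end (first running everything with a truncated norm so a priori finiteness is not in question); combined with Hardy's inequality $\nm{\<x\>^{-1}w}_{L^2}\lesssim\nm{\nabla w}_{L^2}$ this also controls $\nm{\<x\>^{-1}u(t)}_{L^2}$ for fixed $t$ by $\ltwo{\p u(0)}$ plus a time integral of $\nm{\p u}_{L^2}$.

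\emph{Exterior reduction.} For $|x|\ge R_0$ we have $\nm{(g-m,a)}_{AF\ge R_0}\le\mathbf c$ and, by Lemma \ref{l:escapeFlow}, $\trappedset\subset\{|x|\le R_0\}$, so the exterior is non-trapping. Applying a standard exterior local energy estimate for asymptotically flat, non-trapping wave operators (treating the small asymptotically flat damping as a perturbation, c.f.\ \cite{MST20}) to $\chi_{>R_0}u$, which solves $P(\chi_{>R_0}u)=\chi_{>R_0}f+[P,\chi_{>R_0}]u$ with first-order commutator supported in $\{R_0\le|x|\le2R_0\}$, and absorbing the low-frequency contribution into $\LET{\<x\>^{-2}u}$ exactly as described after Theorem \ref{thm:highfreq}, I obtain
\[
\LEoT{u}\le C\Big(\ltwo{\p u(0)}+\LET{\<x\>^{-2}u}+\LEsltxT{Pu}+\nm{u}_{LE^1(\{|x|\le2R_0\}\times[0,T])}\Big).
\]
It thus remains only to bound the compact piece $\nm{u}_{LE^1(\{|x|\le2R_0\}\times[0,T])}$.

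\emph{Interior reduction via the hypothesis.} After solving away the $L^1_tL^2_x$ part of $Pu$ with zero data at $t=0$, at a cost of $\nm{\cdot}_{L^1_tL^2_x}$ in the energy by the previous step, I may assume $Pu\in LE^*$. Extend $u$ to $[-1,T+1]$: forward on $[T,T+1]$ by the dissipative damped evolution from $u[T]$ (hence uniformly bounded), and backward on $[-1,0]$ by the evolution from $u[0]$ over a fixed unit interval (whose growth constant is harmless), giving $\tilde u$ with $P\tilde u=f\mathbbm{1}_{[0,T]}$ on $[-1,T+1]$. Let $\psi$ be smooth, $\equiv1$ on $\{|x|\le 3R_0/2\}$ and supported in $\{|x|\le2R_0\}$, and $\eta\in\Cc((-1,T+1))$ with $\eta\equiv1$ on $[0,T]$; then $v:=\psi\eta\tilde u$ is supported in $\{|x|\le2R_0\}$, has $v[-1]=v[T+1]=0$, and $Pv=\psi\eta f\mathbbm{1}_{[0,T]}+[P,\psi\eta]\tilde u$, so the hypothesis on $[-1,T+1]$ gives $\nm{u}_{LE^1(\{|x|\le2R_0\}\times[0,T])}\le\nm{v}_{LE^1[-1,T+1]}\le C\big(\nm{v}_{L^2_tL^2_x[-1,T+1]}+\nm{Pv}_{LE^*[-1,T+1]}\big)$. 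Here $\nm{v}_{L^2_tL^2_x[-1,T+1]}\lesssim_{R_0}\nm{\<x\>^{-2}\tilde u}_{L^2_tL^2_x(\{|x|\le2R_0\}\times[-1,T+1])}$, which on $[0,T]$ is $\le\LET{\<x\>^{-2}u}$ and on the two unit collars is controlled, via Hardy's inequality and the energy estimate, by the right side of \eqref{eq:highFreqEst} plus $\d\LEoT{u}$; and $\nm{Pv}_{LE^*}\lesssim\LEST{f}+\nm{[P,\psi\eta]\tilde u}_{LE^*}$, where the commutator splits into a piece supported in $\{3R_0/2\le|x|\le2R_0\}\subset\{|x|\ge R_0\}$ (fed back into the exterior estimate) and a piece carried by $\eta',\eta''$ on the unit collars (controlled by $\nm{\p\tilde u}_{L^\infty_tL^2_x}$ there via the energy estimate). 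Collecting the three steps, resolving the remaining coupling between the exterior and interior annular errors by a routine choice of cutoff radii and a finite iteration, and absorbing the $\d\LEoT{u}$ terms yields \eqref{eq:highFreqEst}.

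\emph{Main obstacle.} The technical heart is twofold. First, reducing to zero Cauchy data at \emph{both} time endpoints while keeping all constants uniform in $T$ — this is why the backward extension is only over a fixed unit interval (the damped evolution grows exponentially backward, but harmlessly over unit time) and why Hardy's inequality is needed to convert the Cauchy data on the collars into $\<x\>^{-2}u$-type terms. Second, closing the feedback loop between the exterior estimate (whose commutator error lies inside $\{|x|\le 3R_0/2\}\subset\{\psi\equiv1\}$) and the interior commutator error (which lies in $\{3R_0/2\le|x|\le2R_0\}\subset\{|x|\ge R_0\}$): these annular regions must be kept disjoint and the resulting iteration shown to converge, which is where care with the placement of the cutoffs is essential.
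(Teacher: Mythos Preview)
Your overall architecture---energy estimate, exterior control, then apply the hypothesis to a spatially truncated, time-cutoff version of $u$---is sound, and your time-endpoint reduction (extend by one unit on each side and multiply by $\eta$) is in fact simpler than the paper's unit-interval splitting in Lemma~\ref{l:case3}. That part is fine.

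The genuine gap is the interior--exterior feedback loop, which you flag as the ``main obstacle'' but then dismiss as resolvable by ``a routine choice of cutoff radii and a finite iteration.'' It is not. With your setup, the exterior estimate applied to $\chi_1 u$ produces a commutator error $\|[P,\chi_1]u\|_{LE^*}\lesssim\|u\|_{LE^1(\text{annulus}_1)}$, and the interior hypothesis applied to $v=\psi\eta\tilde u$ produces $\|[P,\psi]u\|_{LE^*}\lesssim\|u\|_{LE^1(\text{annulus}_2)}$. Even with the two annuli disjoint and each contained in the region where the \emph{other} cutoff is identically one, you get
\[
\|\chi_1 u\|_{LE^1}\le C_1\bigl(\text{good}+\|v\|_{LE^1}\bigr),\qquad
\|v\|_{LE^1}\le C_2\bigl(\text{good}+\|\chi_1 u\|_{LE^1}\bigr),
\]
which closes only if $C_1C_2<1$. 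But $C_1$ is the constant from the exterior local energy estimate and $C_2$ is the constant from the hypothesis---neither carries any smallness, and no choice of radii introduces one (the commutator coefficients scale like $R^{-1}$ but the $LE^*$ and $LE^1$ weights exactly compensate).

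The paper breaks this loop not by iteration but by invoking a different exterior estimate, Proposition~\ref{prop:exteriorCase} (from \cite{MST20}), whose interface term is $R^{-1}\|u\|_{LE(\{R/2<|x|<2R\})}$ \emph{without a derivative}. This is bounded directly by $\|\<x\>^{-2}u\|_{LE}$, which already sits on the right-hand side of \eqref{eq:highFreqEst}. Thus in Lemma~\ref{l:case4} the commutator errors on both sides are estimated, via Proposition~\ref{prop:exteriorCase}, by $\|\<x\>^{-2}w\|_{LE}+\|Pw\|_{LE^*}$, and no circularity arises. To repair your argument you must either invoke this specific exterior estimate or reproduce it; a ``standard exterior local energy estimate'' of the form of Theorem~\ref{thm:mt12} is not enough.
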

Note that the hypotheses and conclusions of the two propositions are different. Namely, Proposition \ref{prop:iledCaseReduction} does not restrict to $v$ supported in a fixed radius in the assumed estimate, and does not have $L^2_{t,x}$ or $LE$ terms on either right hand side.

Our proofs for these propositions follow a similar approach to that of \cite[Proposition 2.21, Section 3.3]{Kofroth23} and \cite[Section 4, Section 7]{MST20}. However our damping $a$ depends on time, which is not the case in \cite{Kofroth23, MST20} and the potential presence of our damping in the asymptotically flat region breaks time-reversal symmetry, which both other papers use. These differences are most significant in Section \ref{s:caseCauchy}. We also include complete details to emphasize that Propositions \ref{prop:iledCaseReduction} and \ref{prop:CaseReduction} follow from essentially the same arguments despite being written as separate arguments in each of \cite{Kofroth23} and \cite{MST20}.

We will prove these by successive reductions. Namely
\begin{enumerate}
    \item First, we show it suffices to control just the $LE^1$ norm of $u$ by the right hand side of \eqref{eq:iledCaseEst} or \eqref{eq:highFreqEst} (Lemma \ref{l:case1}).
    \item Next, we show that it suffices to consider $u$ with compactly supported Cauchy data $u[0]$ and inhomogeneity $Pu$ (Lemma \ref{l:case2}).
    \item Third, we show it suffices to consider $u$ with trivial Cauchy data $u[0]=u[T]=0$ and with $Pu \in LE^*_c$ (Lemma \ref{l:case3}).
    \item Finally, only for the proof of Proposition \ref{prop:CaseReduction}, we show that it suffices to consider $u$ supported within $\{|x|\leq 2R_0\}$ for all $t \in [0,T]$ with $u[0]=u[T]=0$ and $Pu \in LE^*$ (Lemma \ref{l:case4}).
\end{enumerate}
To prove these lemmas for both cases simultaneously, we include a term 
\begin{equation}
    \vartheta \LET{\<x\>^{-2} u},
\end{equation} 
on right hand sides and take $\vartheta \in \{0,1\}$. Specifically, to prove Proposition \ref{prop:iledCaseReduction} we take $\vartheta=0$, and to prove Proposition \ref{prop:CaseReduction} we take $\vartheta=1$.

We begin with a standard uniform energy inequality and a backwards-in-time version. 
\begin{lemma}\label{l:uniformEnergy}
    Let $P$ be a damped wave operator on a stationary space-time, with $\p_t$ uniformly time-like, and constant time-slices uniformly space-like.
    \begin{enumerate}
        \item There exists $C>0$ such that for all $T>0$ and $u[0] \in \dot{H}^1 \times L^2$
    \begin{equation}
        \ltwo{\p u(t)}^2 \leq C \left( \ltwo{\p u(0)}^2 + \int_0^T \int_{\Rb^3} |Pu \p_t u| dx dt \right), \quad 0 \leq t \leq T.
    \end{equation}
    \item Furthermore, there exists $C>0$ such that for all $T>0$ and $u[0] \in \dot{H}^1 \times L^2$
    \begin{equation}
        \ltwo{\p u(t)}^2 \leq C e^{2T\lp{a}{\infty}} \left( \ltwo{\p u(T)}^2 +  \int_0^T \int_{\Rb^3} |Pu \p_t u| dx dt  \right), \quad 0 \leq t \leq T.
    \end{equation}
    \end{enumerate}

\end{lemma}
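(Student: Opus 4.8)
The plan is to prove both parts via the standard energy identity obtained by pairing $Pu$ with $\partial_t u$ and integrating in time, combined with the divergence structure of $\Box_g$ and Gronwall's inequality to absorb the damping term.

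First I would derive the fundamental energy identity. Writing $\Box_g = D_\alpha g^{\alpha\beta} D_\beta$ in divergence form, the quantity $\Im \langle \Box_g u, \partial_t u\rangle$ over a time slab $[0,t]$ can be written, after integration by parts in $x$ and $t$, as a difference of boundary terms $E[u](t) - E[u](0)$ where $E[u](t) \simeq \ltwo{\p u(t)}^2$ is the natural energy on a constant-time slice, plus possibly a spatial flux term that vanishes (or is controlled) because we are on all of $\Rb^3$ and $\partial u(t,\cdot) \in L^2$ decays at spatial infinity; here I would invoke that $\partial_t$ is uniformly timelike and constant time-slices are uniformly spacelike, which guarantees the coercivity $E[u](t) \simeq \ltwo{\p u(t)}^2$ with constants uniform in $t$ because $g$ is stationary and asymptotically flat. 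For the damping contribution, $Pu = \Box_g u + i a D_t u$, so $\Im\langle i a D_t u, \partial_t u\rangle = \Im \langle i a \, \tfrac{1}{i}\partial_t u, \partial_t u\rangle = \langle a \partial_t u, \partial_t u\rangle \geq 0$ since $a \geq 0$. This sign is exactly what makes the forward estimate clean: rearranging gives
\begin{equation}
    E[u](t) + \int_0^t \int_{\Rb^3} a |\partial_t u|^2 \, dx\, ds = E[u](0) + \int_0^t \Im\langle Pu, \partial_t u\rangle \, ds,
\end{equation}
and dropping the non-negative damping integral on the left, together with $|\Im\langle Pu,\partial_t u\rangle| \leq |Pu\, \partial_t u|$ pointwise and the coercivity equivalence, yields part (1) after taking the supremum over $t \in [0,T]$.

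For part (2), the backwards-in-time estimate, the damping term now has the unfavorable sign: running the identity from $t$ to $T$ gives $E[u](t) = E[u](T) + \int_t^T \int a|\partial_t u|^2 + \int_t^T \Im\langle Pu,\partial_t u\rangle$, and the damping integral sits on the right with a positive sign. The standard fix is Gronwall: bound $\int_t^T \int a |\partial_t u|^2 \leq \lp{a}{\infty} \int_t^T E[u](s)\, ds$ (up to the coercivity constant), and absorb via the integral form of Gronwall's inequality, producing the factor $e^{2T\lp{a}{\infty}}$ (or $e^{C T \lp{a}{\infty}}$, which one rewrites as $e^{2T\lp{a}{\infty}}$ after adjusting the constant $C$ out front and using $\lp{a}{\infty} < \infty$, which holds since $a$ is bounded — it is uniformly continuous and asymptotically flat). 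The inhomogeneous term $\int_0^T |Pu\,\partial_t u|\,dx\,ds$ is monotone in the interval, so it can be kept as stated over $[0,T]$.

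The main obstacle — though it is more bookkeeping than conceptual — is justifying the energy identity rigorously at the level of regularity $u[0] \in \dot H^1 \times L^2$ with $Pu$ only in a weak space, and in particular controlling the spatial boundary/flux terms at $|x| = \infty$ when integrating by parts; the clean way is to first prove the identity for smooth compactly supported data (or Schwartz data), where everything is classical, and then pass to the limit using density and the a priori bound itself, exactly as one does for the basic well-posedness of $P$. One should also be slightly careful that $\Im\langle Pu, \partial_t u\rangle$ involves the real-valued metric coefficients so that the $\Box_g$ contribution is genuinely the time-derivative of a real energy; this is where writing $\Box_g = D_\alpha g^{\alpha\beta} D_\beta$ in divergence form (rather than Laplace--Beltrami form) is convenient, as noted in the remark following Theorem~\ref{thm:iled}, since it avoids extra lower-order terms in the integration by parts.
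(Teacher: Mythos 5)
Your proposal follows the same route as the paper: compute the rate of change of the natural coercive energy $E[u](t)\simeq\ltwo{\p u(t)}^2$ by pairing $Pu$ with $\p_t u$, observe that the damping enters with a definite sign, drop it for part (1), and absorb it via Gr\"onwall for part (2). This is exactly the paper's argument, with the reverse-in-time Gr\"onwall step isolated there as Lemma~\ref{l:easyGronwall}.

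One bookkeeping slip worth fixing: you consistently take the \emph{imaginary} part of $\langle Pu,\p_t u\rangle$, and in particular write $\Im\langle iaD_t u,\p_t u\rangle = \langle a\p_t u,\p_t u\rangle$. But $\langle iaD_tu,\p_tu\rangle=\int a|\p_t u|^2\,dx$ is real, so its imaginary part vanishes; what the argument actually uses is the \emph{real} part, which is what recovers $\int a|\p_t u|^2$ and the energy derivative from the $\Box_g$ piece. This matches the paper's identity
\begin{equation}
    \frac{d}{dt}E[u](t)=2\Re\int_{\Rb^3}\bar f\,\p_t u\,dx-2\int_{\Rb^3}a|\p_t u|^2\,dx.
\end{equation}
With $\Im$ replaced by $\Re$ throughout, your proof is correct and essentially identical to the paper's.
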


\begin{proof}
    Let $Pu=f$ and define the energy
    \begin{equation}
        E[u](t) = \int_{\Rb^3} D_i g^{ij} D_j u \bar{u} - g^{00} |\p_t u|^2 dx.
    \end{equation}
    Integrating by parts, using the uniform ellipticity of $g^{ij}$ and that $g^{00} \geq -C$ we have 
    \begin{equation}\label{eq:ElikeG}
        E[u](t) = \int_{\Rb^3} g^{ij} D_j u D_i \bar{u} -g^{00} |\p_t u|^2 dx \simeq \int_{\Rb^3} |\nabla u|^2 + |\p_t u|^2 dx = \ltwo{\p u(t)}^2.
    \end{equation}
    Thus we can study $E[u](t)$ in place of $\ltwo{\p u(t)}^2$. We differentiate the energy with respect to $t$, then integrate by parts and apply $Pu=f$
    \begin{align}
        \frac{d}{dt} E[u](t) &= \int_{\Rb^3} D_i g^{ij} D_j \p_t u \bar{u} + D_i g^{ij} D_j u \p_t \bar{u} dx - \int_{\Rb^3} g^{00} (\p_t^2 u \p_t \bar{u} + \p_t u \p_t^2 \bar{u}) dx \\
        &=\int_{\Rb^3}(g^{00} D_t^2 + D_i g^{ij} D_j ) u \p_t \bar{u} + \p_t u \overline{ (g^{00}D_t^2 + D_i g^{ij} D_j u)}\,dx \\
        &=\int_{\Rb^3} (f- (g^{0j}D_j D_t + D_j g^{0j} D_t + i aD_t))u \p_t \bar{u}dx \\
        &\quad+ \int_{\Rb^3} \p_t u \overline{(f- (g^{0j}D_j D_t + D_j g^{0j} D_t + i aD_t))u} \,dx.
    \end{align} 
    Performing another integration by parts, the $g^{0j} D_j D_t + D_j g^{0j} D_t$ cross-terms cancel, and we are left with 
    \begin{equation}\label{eq:energyDeriv}
        \frac{d}{dt} E[u](t) = 2 \Re \int_{\Rb^3} \bar{f} \p_t u \,dx- 2 \int_{\Rb^3} a |\p_t u|^2 dx.
    \end{equation}
    Now integrating from $t=0$ to $t=s$, for $s \in [0,T]$ and using that $a \geq 0$ we have
    \begin{align}
        E[u](s) &= E[u](0) + \int_0^s \int_{\Rb^3} 2\Re(\bar{f} \p_t u) - 2 a|\p_t u|^2 dx dt \\
        &\leq E[u](0) + 2\int_0^s \int_{\Rb^3} |f \p_t u| dx dt \\
        &\leq E[u](0) + 2\int_0^T \int_{\Rb^3}  |f \p_t u| dx dt.
    \end{align}
    Applying \eqref{eq:ElikeG} provides the first conclusion.

    To see the second conclusion, we again estimate \eqref{eq:energyDeriv}
    \begin{align}
        \frac{d}{dt} E(u)(t) & \geq -2 \int_{\Rb^3} |f \p_t u| dx - 2 \lp{a}{\infty} \int_{\Rb^3} |\p_t u|^2 dx \\
        &\geq - 2\int_{\Rb^3}|f \p_t u| dx - 2 \lp{a}{\infty} E(u)(t).
    \end{align}
    Then by Lemma \ref{l:easyGronwall} for any $t \in [0,T]$
    \begin{align}
        E(u)(t) &\leq e^{2(T-t)\lp{a}{\infty}}\left(E(u)(T) + 2\int_{t}^{T} \int_{\Rb^3} |f \p_t u| dxdt  \right)\\
        &\leq e^{2T\lp{a}{\infty}}\left(E(u)(T) + 2\int_{0}^{T} \int_{\Rb^3} |f \p_t u| dxdt  \right).
    \end{align}
    Applying \eqref{eq:ElikeG} provides the second conclusion. 
\end{proof}

Next we show that this energy estimate gives control over the $L^{\infty}_t L^2_x$ norm of $\p u$ by the right hand side of \eqref{eq:iledCaseEst} or \eqref{eq:highFreqEst}, plus an absorbable error term. We also record a backwards-in-time version of this $L^{\infty}_t L^2_x$ estimate for later use.
\begin{lemma}\label{l:Linftyeasy}
    Let $P$ be a damped wave operator on a stationary space-time, with $\p_t$ uniformly time-like and constant time-slices uniformly space-like.
    \begin{enumerate}
        \item Then there exists $C>0$ such that for all $T>0$, all $u$ with $u[0] \in \dot{H}^1 \times L^2$, and all $0<\e\ll1$
    \begin{align}
         &\nm{\p u}_{L^{\infty}_t L^2_x[0,T]} \leq C \left( \ltwo{\p u(0)} + \nm{Pu}_{L^1_t L^2_x[0,T]}\right), \\
         &\nm{\p u}_{L^{\infty}_t L^2_x[0,T]} \leq C \left( \ltwo{\p u(0)} + \LEST{Pu}^{1/2} \LEoT{u}^{1/2}\right),\\
        &\nm{\p u}_{L^{\infty}_t L^2_x[0,T]} \leq C \left( \ltwo{\p u(0)} + \e^{-1} \nm{Pu}_{LE^*+L^1_tL^2_x[0,T]}+\e \LEoT{u}\right).
    \end{align}
    \item Furthermore, there exists $C>0$ such that for all $S>0$, all $u$ with $u[0] \in \dot{H}^1 \times L^2$, and all $0<\e\ll1$
        \begin{align}
         &\nm{\p u}_{L^{\infty}_t L^2_x[0,S]} \leq C e^{CS \lp{a}{\infty}} \left( \ltwo{\p u(S)} + \nm{Pu}_{L^1_t L^2_x[0,S]}\right), \\
         &\nm{\p u}_{L^{\infty}_t L^2_x[0,S]} \leq  C e^{CS \lp{a}{\infty}} \left( \ltwo{\p u(S)} + \nm{Pu}_{LE^*[0,t]}^{1/2} \nm{u}_{LE^1[0,S]}^{1/2}\right),\\
        &\nm{\p u}_{L^{\infty}_t L^2_x[0,S]} \leq  C e^{CS \lp{a}{\infty}} \left( \ltwo{\p u(S)} + \e^{-1} \nm{Pu}_{LE^*+L^1_tL^2_x[0,S]}+\e \nm{u}_{LE^1[0,S]}\right).
    \end{align}
    \end{enumerate}
\end{lemma}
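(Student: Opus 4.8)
The plan is to deduce everything from Lemma~\ref{l:uniformEnergy}, which already reduces control of $\ltwo{\p u(t)}$ to control of the space-time bilinear quantity $\int_0^T\int_{\Rb^3}|Pu\,\p_t u|\,dx\,dt$. Taking the supremum over $t\in[0,T]$ in Lemma~\ref{l:uniformEnergy}(1) and using \eqref{eq:ElikeG} gives
\begin{equation*}
\nm{\p u}_{L^{\infty}_tL^2_x[0,T]}^2\leq C\left(\ltwo{\p u(0)}^2+\int_0^T\int_{\Rb^3}|Pu\,\p_t u|\,dx\,dt\right),
\end{equation*}
so the whole lemma reduces to estimating this integral in three ways and then absorbing. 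For the first bound I would use H\"older in $x$ then in $t$: $\int_0^T\int|Pu\,\p_t u|\leq\nm{Pu}_{L^1_tL^2_x[0,T]}\nm{\p u}_{L^{\infty}_tL^2_x[0,T]}$, producing an inequality of the shape $X^2\leq C(A^2+BX)$ with $X=\nm{\p u}_{L^{\infty}_tL^2_x[0,T]}$, $A=\ltwo{\p u(0)}$, $B=\nm{Pu}_{L^1_tL^2_x[0,T]}$, which yields $X\leq C(A+B)$ after completing the square.

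For the second bound I would instead use the $LE$--$LE^*$ duality: writing $\int_0^T\int=\sum_{j\geq0}\int_{[0,T]\times A_j}$ and applying Cauchy--Schwarz in each dyadic annulus with the weights $\<x\>^{1/2}$ and $\<x\>^{-1/2}$ gives $\int_0^T\int|Pu\,\p_t u|\leq\LEST{Pu}\,\LET{\p_t u}\leq\LEST{Pu}\,\LEoT{u}$, since $\LET{\p u}\leq\LEoT{u}$. This gives $X^2\leq C(A^2+\LEST{Pu}\,\LEoT{u})$, hence $X\leq C(A+\LEST{Pu}^{1/2}\LEoT{u}^{1/2})$, the second estimate. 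The third follows by splitting $Pu=f_1+f_2$ with $f_1\in LE^*$ and $f_2\in L^1_tL^2_x$, applying the two preceding bounds to the $f_1$ and $f_2$ contributions to the bilinear integral respectively, then using Young's inequality $\LEST{f_1}^{1/2}\LEoT{u}^{1/2}\leq\tfrac12(\e^{-1}\LEST{f_1}+\e\,\LEoT{u})$ together with $\nm{f_2}_{L^1_tL^2_x[0,T]}\leq\e^{-1}\nm{f_2}_{L^1_tL^2_x[0,T]}$ for $\e\ll1$, and finally taking the infimum over all such decompositions to recover the $LE^*+L^1_tL^2_x$ norm. The backwards-in-time statements in part (2) are obtained by the identical manipulations starting from Lemma~\ref{l:uniformEnergy}(2) rather than (1): the only differences are that the multiplicative constant picks up the factor $e^{2S\lp{a}{\infty}}$, the reference energy is taken at $t=S$, and every norm is taken over $[0,S]$.

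The one genuinely delicate point, which I would flag explicitly, is the legitimacy of the absorption step: passing from $X^2\leq C(A^2+BX)$ (or its $LE^1$ analogue) to $X\leq C(A+B)$ requires $X=\nm{\p u}_{L^{\infty}_tL^2_x[0,T]}$ to be finite a priori, and when $Pu$ lies only in $LE^*$ or $LE^*+L^1_tL^2_x$ this is not automatic. I would handle this exactly as in \cite{MST20,Kofroth23}: the estimates are immediate when $Pu\in L^1_tL^2_x$ (which forces $X<\infty$), and the general case follows by a density/truncation argument in the source term; alternatively, in every application of this lemma $u$ is already known to have finite $LE^1$ norm, so the absorption is directly justified. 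This is the main obstacle, though a standard one; the remaining steps are routine applications of H\"older, duality, and Young's inequality.
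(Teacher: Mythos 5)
Your proposal matches the paper's proof almost exactly: reduce to the bilinear integral via Lemma~\ref{l:uniformEnergy}, estimate it once by $L^1_tL^2_x$--$L^\infty_tL^2_x$ duality and once by $LE^*$--$LE$ duality on dyadic annuli, then split $Pu$ into two pieces (the paper uses a near-optimal $\varepsilon$-decomposition $f_1^n+f_2^n$ and sends $n\to\infty$, which is the same as your infimum) and absorb via Young's inequality. The a~priori finiteness point you flag is indeed the standard caveat, but on a finite interval $[0,T]$ it is essentially automatic here: $Pu\in LE^*[0,T]$ embeds into $L^1_tL^2_x[0,T]$ by Cauchy--Schwarz in $t$ and $\ell^1_j\subset\ell^2_j$, so the first bound already forces $\nm{\p u}_{L^\infty_tL^2_x[0,T]}<\infty$, and the absorption in the second and third estimates is directly justified.
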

We only prove the second set of statements. The first set of statements is exactly \cite[Corollary 2.19]{Kofroth23}, and the proof is similar.
\begin{proof}
1) By Lemma \ref{l:uniformEnergy} for $s \in [0,S]$ we have 
\begin{equation}\label{eq:reverseEasyenergy}
    \ltwo{\p u(s)}^2 \leq  C e^{CS \lp{a}{\infty}} \left( \ltwo{\p u(S)}^2 + \int_0^S \int_{\Rb^3} |Pu \p_t u| dx dt \right).
\end{equation}
Now we apply the Schwarz inequality and then the H\"older inequality to the second term
\begin{align}
    \ltwo{\p u(s)}^2 &\leq C e^{CS \lp{a}{\infty}}\left( \ltwo{\p u(S)}^2 + \int_0^S \ltwo{Pu(\cdot,t)}  \ltwo{\p u(\cdot,t)}dt \right) \\
    &\leq C e^{CS \lp{a}{\infty}}\left( \ltwo{\p u(S)}^2 + \nm{\p u}_{L^{\infty}_t L^2_x[0,S]} \nm{Pu}_{L^1_t L^2_x[0,S]} \right).
\end{align}
Now we take the supremum over $s \in [0,S]$ and apply Young's inequality for products to obtain for any $\e>0$
\begin{align}
    \nm{\p u}_{L^{\infty}_t L^2_x[0,S]}^2 &\leq C e^{CS \lp{a}{\infty}} \left( \ltwo{\p u(S)}^2 + \nm{\p u}_{L^{\infty}_t L^2_x[0,S]} \nm{Pu}_{L^1_t L^2_x[0,S]} \right). \\
    &\leq C e^{CS \lp{a}{\infty}} \left( \ltwo{\p u(S)}^2 + \frac{1}{\e} \nm{Pu}_{L^1_t L^2_x[0,S]}^2 +\e\nm{\p u}_{L^{\infty}_t L^2_x[0,S]}^2\right).
\end{align}
We can choose $\e>0$ small enough to absorb the final term back into the left hand side, then take square roots of both sides to obtain the desired inequality. 

2) To obtain the second estimate, we estimate the second term of \eqref{eq:reverseEasyenergy} by rewriting the integral, recalling the definition of $LE^*$, and applying the Schwarz inequality 
\begin{align}
   \int_0^S \int_{\Rb^3} |Pu \p_t u| dx dt &= \int_0^S \int_{\Rb^3} \left( \<x\>^{1/2} |Pu| \right) \left( \<x\>^{-1/2} |\p_t u| \right)dx dt\\
    &\leq \sum_{j=0}^{\infty} \int_0^S \int_{A_j} \left( \<x\>^{1/2} |Pu| \right) \left( \<x\>^{-1/2} |\p_t u| \right)dx dt  \\
    &\leq  \sum_{j=0}^{\infty} \nm{\<x\>^{1/2} Pu}_{L^2_tL^2_x([0,S]\times A_j)} \nm{\<x\>^{-1/2} \p_t u}_{L^2_t L^2_x ([0,S]\times A_j)}. 
\end{align}
Then computing directly and applying the definitions of $LE, LE^1$, and $LE^*$
\begin{align}
  \int_0^S \int_{\Rb^3} |Pu \p_t u| dx dt &\leq  \sup_{j \geq 0} \nm{\<x\>^{-1/2} \p_t u}_{L^2_t L^2_x ([0,S]\times A_j)}  \sum_{j=0}^{\infty} \nm{\<x\>^{1/2} Pu}_{L^2_tL^2_x([0,S]\times A_j)} \\
    &\leq \nm{\p u}_{LE[0,S]} \nm{Pu}_{LE^*[0,S]} \leq \nm{u}_{LE^1[0,S]} \nm{Pu}_{LE^*[0,S]}.
\end{align}
Plugging this back into \eqref{eq:reverseEasyenergy}, taking the supremum over $s \in [0,S]$ and taking square roots of both sides gives the second inequality.

3) To see the final estimate consider a fixed $u$ and $Pu=f$. By definition of the $LE^*+L^1_tL^2_x$ norm there exists $f_1^n \in L^1_t L^2_x[0,T]$ and $f_2^n \in LE^*[0,T]$ such that $f_1^n +f_2^n = f$ and 
\begin{equation}\label{eq:reverseEnergySplit}
    \LtxT{f_1^n} + \LEST{f_2^n}\leq \nm{f}_{LE^*+L^1_t L^2_x[0,T]}+\frac{1}{n}.
\end{equation}
We again estimate the second term of \eqref{eq:reverseEasyenergy}, beginning with the triangle inequality, then we estimate the first term using the approach in the first step and the second term using the approach in the second step to obtain 
\begin{align}
    \int_0^S \int_{\Rb^3} |Pu \p_t u| dx dt &\leq \int_0^S \int_{\Rb^3} |f_1 \p_t u| dx dt + \int_0^S \int_{\Rb^3} |f_2 \p_t u| dx dt\\
    &\leq  \nm{\p u}_{L^{\infty}_t L^2_x[0,S]} \nm{f_1^n}_{L^1_t L^2_x[0,S]} + \nm{u}_{LE^1[0,S]}\nm{f_2^n}_{LE^*[0,S]}.
\end{align}
Plugging this back into \eqref{eq:reverseEasyenergy}, taking the supremum over $s \in [0,S]$, and applying Young's inequality for products we obtain for any $\e>0$
\begin{align}
    \nm{\p u}_{L^{\infty}_t L^2_x[0,S]}^2 \leq C e^{CS \lp{a}{\infty}} \bigg( &\ltwo{\p u(S)}^2 + \e^{-1} (\nm{f_1^n}_{L^1_t L^2_x[0,S]}^2 + \nm{f_2^n}_{LE^*[0,S]}^2) \\
    &+ \e \nm{\p u}^2_{L^{\infty}_t L^2_x[0,S]} + \e \nm{u}_{LE^1[0,S]}^2 \bigg).
\end{align}
Taking $\e>0$ small enough we can absorb the $\p u$ term back into the left hand side. Then applying \eqref{eq:reverseEnergySplit} we have 
\begin{equation}
    \nm{\p u}_{L^{\infty}_t L^2_x[0,S]}^2 \leq C e^{CS\lp{a}{\infty}} \bigg( \ltwo{\p u(S)}^2 + \e^{-1} (\nm{Pu}_{LE^*+L^1_t L^2_x[0,S]}^2 + \frac{1}{n^2}) + \e \nm{u}_{LE^1[0,S]}^2 \bigg).
\end{equation}
We now take $n \ra \infty$, and take square roots to obtain the desired inequality. 
\end{proof}

\subsection{Removal of $L^{\infty}_tL^2_x$ term from left hand side}
We can now reduce \eqref{eq:iledCaseEst} and \eqref{eq:highFreqEst} to controlling just the $LE^1$ norm, using Lemma \ref{l:Linftyeasy}. 
\begin{lemma}\label{l:case1}
    Fix $\vartheta \in \{0,1\}$. If there exists $C>0$, such that for all $T>0$, and $u$ with $u[0] \in \dot{H}^1 \times L^2$ we have
    \begin{equation}\label{eq:highFreqLEo}
        \LEoT{u} \leq C \left( \ltwo{\p u(0)} + \vartheta \LET{\<x\>^{-2}u} + \nm{Pu}_{LE^* + L^1_t L^2_x[0,T]}\right),
    \end{equation}
    then there exists $C>0$, such that for all $T>0$, and $u$ with $u[0] \in \dot{H}^1 \times L^2$, we have
    \begin{equation}
        \LEoT{u} + \nm{\p u}_{L^{\infty}_t L^2_x[0,T]} \leq C \left( \ltwo{\p u(0)}+ \vartheta \LET{\<x\>^{-2} u}  + \LEsltxT{Pu}\right).
    \end{equation}
    That is, when $\vartheta=0$, resp. $\vartheta=1$, the inequality \eqref{eq:iledCaseEst}, resp. \eqref{eq:highFreqEst}, holds.
\end{lemma}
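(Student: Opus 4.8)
The plan is to combine the assumed estimate \eqref{eq:highFreqLEo} with the forwards-in-time $L^\infty_tL^2_x$ bound from Lemma \ref{l:Linftyeasy}(1). No genuinely new ideas are needed; the content is entirely in bookkeeping the three error terms.

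First I would apply the third inequality of Lemma \ref{l:Linftyeasy}(1) with a fixed small value of $\e$, say $\e=\tfrac12$, so that the factors $\e^{-1}$ and $\e$ are absorbed into the constant. This yields, for every $T>0$ and every $u$ with $u[0]\in\dot H^1\times L^2$,
\begin{equation}
    \nm{\p u}_{L^{\infty}_t L^2_x[0,T]} \leq C\left( \ltwo{\p u(0)} + \LEsltxT{Pu} + \LEoT{u} \right).
\end{equation}
Next I would insert the hypothesis \eqref{eq:highFreqLEo} to bound the $\LEoT{u}$ term appearing on the right-hand side, obtaining
\begin{equation}
    \nm{\p u}_{L^{\infty}_t L^2_x[0,T]} \leq C\left( \ltwo{\p u(0)} + \vartheta \LET{\<x\>^{-2} u} + \LEsltxT{Pu} \right).
\end{equation}

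Finally, adding \eqref{eq:highFreqLEo} to this last inequality gives the claimed bound on $\LEoT{u} + \nm{\p u}_{L^{\infty}_t L^2_x[0,T]}$, which is exactly \eqref{eq:iledCaseEst} when $\vartheta=0$ and \eqref{eq:highFreqEst} when $\vartheta=1$.

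I do not expect any real obstacle. In particular, no absorption argument is required at this stage, because $\LEoT{u}$ is controlled outright by the hypothesis rather than reappearing on the left with a small coefficient. The only things to keep track of are that Lemma \ref{l:Linftyeasy}(1) and the hypothesis \eqref{eq:highFreqLEo} both apply to precisely the same class of $u$ (those with $u[0]\in\dot H^1\times L^2$, with no support or compactness restriction), and that the $\vartheta$-term is carried along unchanged throughout.
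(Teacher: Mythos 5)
Your proposal is correct and uses the same key ingredient as the paper, namely the third inequality of Lemma \ref{l:Linftyeasy}(1) combined with the hypothesis \eqref{eq:highFreqLEo}. The only cosmetic difference is that you fix $\e=1/2$ at the outset and then substitute the hypothesis into the resulting $\LEoT{u}$ error term, whereas the paper keeps $\e$ free, adds the two estimates, and then absorbs $C\e\LEoT{u}$ into the left-hand side by choosing $\e$ small; both arrangements of the algebra are valid and deliver the same constant-tracking.
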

\begin{proof}
    By Lemma \ref{l:Linftyeasy} part 1, there exists $C>0$ such that for any $\e>0$
    \begin{equation}
        \nm{\p u}_{L^{\infty}_t L^2_x[0,T]} \leq C \left( \ltwo{\p u(0)} + \e^{-1} \nm{Pu}_{LE^*+L^1_tL^2_x[0,T]}+\e \LEoT{u}\right).
    \end{equation}
    Combining this with our assumed estimate we have 
    \begin{align}
        \LEoT{u}+\nm{\p u}_{L^{\infty}_t L^2_x[0,T]} \leq C \bigg(& \ltwo{\p u(0)} + \vartheta \LET{\<x\>^{-2}u} 
        \\&+ \e^{-1} \nm{Pu}_{LE^*+L^1_tL^2_x[0,T]}+\e \LEoT{u}\bigg).
    \end{align}
    Now choosing $\e>0$ small enough, we can absorb the final term on the right hand side back into the left hand side and obtain exactly \eqref{eq:iledCaseEst} when $\vartheta=0$, and \eqref{eq:highFreqEst} when $\vartheta=1$.
\end{proof}

\subsection{Reduction to compactly supported Cauchy data and inhomogeneity}

To prove \eqref{eq:highFreqLEo}, we first see that it suffices to consider $u$ with data $u[0]$ and inhomogeneity $Pu$ supported in a set of fixed radius.
Before proving this reduction, we define a small perturbation of $\Box_m$ and cite a local energy decay result for such perturbations.
\begin{definition}
    Consider 
    \begin{equation}
        \ti{P}(t,x,D) = D_{\alpha} c^{\alpha \beta}(t,x) D_{\beta} + b^{\alpha}(t,x) D_{\alpha}.
    \end{equation}
    Recalling the norms from Definition \ref{d:asymptoticFlat1}, we say $\ti{P}$ is a small asymptotically flat perturbation of $\Box_m$ if for some $\e>0$ sufficiently small 
    \begin{equation}
        \|c-m\|_2 +  \|\<x\>b\|_{1} < \e.
    \end{equation}
\end{definition}
Note that this definition of a small AF perturbation of $\Box_m$ is compatible with our definition of asymptotic flatness in Definition \ref{d:asymptoticFlat1}. That is, there are small AF perturbations of $\Box_m$ which agree with $P$ for $|x|>R_0$.
\begin{theorem}\label{thm:mt12}\cite[Theorem 1]{MT12}
    If $\ti{P}$ is a small asymptotically flat perturbation of $\Box_m$, then there exists $C>0$ such that for all $T>0$ and $u$ with $u[0] \in \dot{H}^1 \times L^2$ we have
    \begin{equation}
        \LEoT{u} + \nm{\p u}_{L^{\infty} L^2[0,T]} \leq C \left( \ltwo{\p u(0)} + \LEsltxT{\ti{P}u} \right).
    \end{equation}
\end{theorem}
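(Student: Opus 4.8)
Since Theorem \ref{thm:mt12} is quoted verbatim from \cite[Theorem 1]{MT12}, the honest ``proof'' is to invoke that reference; what follows is a sketch of the argument one would give, which also explains why the smallness hypothesis is exactly what is needed. The plan is first to reduce, exactly as in Lemma \ref{l:case1} and Lemma \ref{l:Linftyeasy} above, to proving the single estimate $\LEoT{u}\le C\big(\ltwo{\p u(0)}+\LEsltxT{\ti Pu}\big)$ without the $L^\infty_tL^2_x$ term on the left; the uniform energy inequality (the analogue of Lemma \ref{l:uniformEnergy} for $\ti P$) then recovers that term afterwards. Note that, unlike in Lemma \ref{l:case2} below, no finite-speed-of-propagation or compact-support reduction is needed here, because $\ti P$ is \emph{globally} close to $\Box_m$.

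For the main estimate I would run a positive commutator / Morawetz multiplier argument. Test $\ti Pu=f$ against $Qu$, where $Q=q^w-\tfrac i2 m_0^w$ is built from a radial multiplier of the form $q\sim\mu(|x|)\,\hat x\cdot\xi$ with $\mu$ a bounded, increasing, slowly varying function of $|x|$ (of the type produced by Proposition \ref{p:slowvaryf} with the flat metric in the role of $g$), together with a zeroth-order correction $m_0$ chosen to also capture $\p_t u$ and $\<x\>^{-1}u$ (the latter via a Hardy inequality). Computing $2\,\Im\<f,Qu\>$ in two ways produces, on the principal-symbol level, the \emph{flat} Morawetz bilinear form $\<i[\Box_m,q^w]u,u\>+\tfrac12\<(\Box_m m_0^w+m_0^w\Box_m)u,u\>$, which — together with the energy identity — controls $\LEoT{u}^2$, plus error terms coming from the difference $\ti P-\Box_m$ and the inhomogeneity $f$.

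The decisive point is that every error term carries a factor of the smallness parameter. Writing $\ti P-\Box_m=D_\alpha(c^{\alpha\beta}-m^{\alpha\beta})D_\beta+b^\alpha D_\alpha$, the contribution of this operator to $\Im\<(\ti P-\Box_m)u,Qu\>$ is, after one integration by parts that moves a derivative off the second-order term, bounded by $C\big(\|c-m\|_2+\|\<x\>b\|_1\big)\LEoT{u}^2\le C\e\,\LEoT{u}^2$, since the weights in the $AF$ norm are matched exactly to the $LE/LE^*$ weights; choosing $\e$ small absorbs this into the left-hand side. The forcing term is handled by $|\Im\<f,Qu\>|\le C\LEsltxT{f}\,\LEoT{u}$ followed by Young's inequality.

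The main obstacle is the flat local energy estimate itself, which is the genuine input: the bare radial multiplier only yields control of the angular derivative together with a boundary term at $|x|\simeq R_0$, so one must combine it with the energy identity and, if necessary, additional multipliers to recover $\p_tu$ and $\p_ru$, and one must separately rule out a zero-frequency obstruction — over $\Rb^{1+3}$ this is the classical absence of a zero resonance for $\Box_m$ \eqref{eq:zernoresonance}, which is precisely what makes the estimate hold with no compact-support or orthogonality hypothesis on $u$. Once that flat building block is in hand, the perturbative absorption described above is routine.
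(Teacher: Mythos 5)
The paper's own ``proof'' of this statement is a citation: Theorem~\ref{thm:mt12} is imported as a black box from \cite[Theorem 1]{MT12}, and you are correct that the honest answer is simply to invoke that reference. Your sketch of why the estimate should hold is broadly in the right spirit: a Morawetz-type multiplier $Q$ adapted to the flat metric (a slowly-varying radial function of the kind in Proposition~\ref{p:slowvaryf} applied to $m$), closure of the $L^\infty_tL^2_x$ piece via the energy identity as in Lemmas~\ref{l:uniformEnergy}--\ref{l:Linftyeasy}, and absorption of $\ti P - \Box_m$ as a perturbation whose contribution to the bilinear form is $O(\e)\LEoT{u}^2$ because the $AF$ weights are calibrated to match the $LE/LE^*$ weights — that matching is indeed the structural reason the smallness hypothesis is exactly what is needed, and your observation that no compact-support or finite-speed reduction is required (unlike in Lemma~\ref{l:case2}) is correct, since $\ti P$ is globally close to $\Box_m$.

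One small calibration: the explicit zero non-resonance condition \eqref{eq:zernoresonance} is a hypothesis of the MST20 framework for large, non-perturbative coefficients, where a separate low-frequency argument (Proposition~\ref{p:MST20low}) is genuinely needed. In the MT12 small-perturbation setting it is automatic — $D_i g^{ij} D_j$ is a uniformly small perturbation of $\Delta$ in $\mathcal{L}(\dot H^1,\dot H^{-1})$ — and there is no separate low-frequency pass; the whole estimate closes at once precisely because the perturbative errors are uniformly small across all frequencies. So citing \eqref{eq:zernoresonance} as an input here conflates the two arguments slightly, but the underlying point (absence of a zero-frequency obstruction over $\Rb^{1+3}$) is the right heuristic. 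Overall a reasonable sketch for a result that the paper quotes rather than proves.
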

We now state a preliminary lemma that allows us to separate our solution $u$ into a   a solution of a small AF perturbation of the Minkowski wave operator and a piece with compactly supported initial data and inhomogeneity.
\begin{lemma}\label{l:case2Pre}
    Let $\ti{P}$ be a small asymptotically flat perturbation of $\Box_m$ that agrees with $P$ for $|x| >R_0$. For $u$ with $u[0] \in \dot{H}^1 \times L^2$ let $v$ solve 
    \begin{equation}
        \begin{cases}
            \ti{P}v=Pu \\
            v[0]=u[0],
        \end{cases}
    \end{equation}
    and let $u_1=u-\chi_{>R_0} v$.
    \begin{enumerate}
        \item Then $u_1[0]$ and $Pu_1$ are compactly supported in $\{|x|\leq 2R_0\}$. 
        \item Furthermore, there exists $C>0$ such that
    \begin{align}
        &\LEoT{v} + \LEoT{\chi_{>R_0} v} \leq C \left( \ltwo{\p u(0)} + \LEsltxT{Pu} \right). 
    \end{align}
    \item Additionally,
    \begin{align}
        &\ltwo{\p u_1(0)} \leq C \ltwo{\p u(0)} \\
        &\LET{\<x\>^{-2} u_1} \leq \LET{\<x\>^{-2} u} + \LEoT{v}\\
        &\LEsltxT{Pu_1} \leq \LEsltxT{P u} + C \LEoT{v}.
    \end{align}
    \end{enumerate}
\end{lemma}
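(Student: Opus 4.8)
The plan is to prove the three items in turn, using two elementary observations. First, since $\tilde P$ and $P$ have the same coefficients on $\{|x|>R_0\}\supseteq\supp\chigro$, and $\tilde P v=Pu$, we have $\chigro Pv=\chigro\tilde Pv=\chigro Pu$ pointwise, so that
\[
Pu_1=Pu-P(\chigro v)=Pu-\chigro Pv-[P,\chigro]v=\chilro Pu-[P,\chigro]v .
\]
Second, because $\chigro$ depends only on $x$ it commutes with $iaD_t$, so $[P,\chigro]=[\Box_g,\chigro]$ is a first-order differential operator whose coefficients are built from $g,\p g,\p\chigro,\p^2\chigro$, hence are bounded by asymptotic flatness and supported in the compact annulus $\mathcal A:=\{R_0\le|x|\le 2R_0\}$. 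Likewise $u_1[0]=(1-\chigro)u[0]=\chilro u[0]$. Item (1) is then immediate: $\chilro u[0]$ and $\chilro Pu$ are supported in $\{|x|\le 2R_0\}$ by the definition of $\chilro$, and $[P,\chigro]v$ is supported in $\mathcal A$, so $Pu_1$ is supported in $\{|x|\le 2R_0\}$.

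For item (2) I would apply Theorem \ref{thm:mt12} to $v$, which solves $\tilde Pv=Pu$ with $v[0]=u[0]$ and $\tilde P$ a small asymptotically flat perturbation of $\Box_m$; this yields $\LEoT{v}\le C\big(\ltwo{\p u(0)}+\LEsltxT{Pu}\big)$. For the cutoff factor I would note that multiplication by $\chigro$ is bounded on $LE^1[0,T]$: one has $\LET{\<x\>^{-1}\chigro v}\le\LET{\<x\>^{-1}v}$, while $\p(\chigro v)=\chigro\p v+(\p\chigro)v$ with $\LET{\chigro\p v}\le\LET{\p v}$, and $(\p\chigro)v$ is supported in $\mathcal A$, where $\<x\>\simeq 1$ (with constants depending on $R_0$), so $\LET{(\p\chigro)v}\lesssim\LET{\<x\>^{-1}v}$. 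Hence $\LEoT{\chigro v}\lesssim\LEoT{v}$, which combined with the previous bound gives (2).

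For item (3): the first inequality follows from $u_1[0]=\chilro u[0]$, writing $\nabla u_1(0)=\chilro\nabla u(0)+(\nabla\chilro)u(0)$; the first term is bounded by $\ltwo{\nabla u(0)}$, and since $\nabla\chilro$ is supported in $\mathcal A$ where $\<x\>^{-1}\simeq 1$, Hardy's inequality on $\Rb^3$ gives $\nm{(\nabla\chilro)u(0)}_{L^2}\lesssim\nm{\<x\>^{-1}u(0)}_{L^2}\lesssim\nm{\nabla u(0)}_{L^2}$; together with $\p_tu_1(0)=\chilro\p_tu(0)$ this gives $\ltwo{\p u_1(0)}\le C\ltwo{\p u(0)}$. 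The second inequality uses $\<x\>^{-2}u_1=\<x\>^{-2}u-\<x\>^{-2}\chigro v$ and the pointwise bound $|\<x\>^{-2}\chigro v|\le|\<x\>^{-1}v|$, so $\LET{\<x\>^{-2}u_1}\le\LET{\<x\>^{-2}u}+\LET{\<x\>^{-1}v}\le\LET{\<x\>^{-2}u}+\LEoT{v}$. The third inequality uses the displayed identity for $Pu_1$: $\LEsltxT{\chilro Pu}\le\LEsltxT{Pu}$ because $\chilro$ is a bounded spatial cutoff (apply it to a near-optimal splitting $Pu=f_1+f_2$ with $f_1\in LE^*$, $f_2\in L^1_tL^2_x$), while $[P,\chigro]v$ is supported in $\mathcal A$, so $\LEsltxT{[P,\chigro]v}\le\LEST{[P,\chigro]v}\lesssim\nm{\p v}_{L^2_tL^2_x([0,T]\times\mathcal A)}+\nm{v}_{L^2_tL^2_x([0,T]\times\mathcal A)}\lesssim\LEoT{v}$, again since $\<x\>\simeq 1$ on $\mathcal A$; summing gives $\LEsltxT{Pu_1}\le\LEsltxT{Pu}+C\LEoT{v}$.

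The main difficulty here is not conceptual but bookkeeping: one must track the spatial weights in the $LE$, $LE^*$, and $LE^*+L^1_tL^2_x$ norms carefully when localizing to the compact annulus $\mathcal A$ (where all weights are comparable to $1$, with constants depending on $R_0$), use the three-dimensional Hardy inequality to absorb the low-order boundary term $(\nabla\chilro)u(0)$ into $\nm{\nabla u(0)}_{L^2}$, and crucially rewrite $Pu_1$ as $\chilro Pu-[P,\chigro]v$ rather than $Pu-P(\chigro v)$ so that the coefficient of $\LEsltxT{Pu}$ in the final estimate is exactly $1$.
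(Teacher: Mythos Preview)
Your proposal is correct and follows essentially the same approach as the paper: the identical formula $Pu_1=\chilro Pu-[P,\chigro]v$, the application of Theorem~\ref{thm:mt12} to $v$, the $LE^1$-boundedness of multiplication by $\chigro$, the Hardy inequality to handle $(\nabla\chilro)u(0)$, and the annulus localization for the commutator term. Your additional remarks (that $[P,\chigro]=[\Box_g,\chigro]$ since $\chigro$ commutes with $iaD_t$, and the explicit near-optimal splitting argument for $\chilro Pu$) are correct refinements but not substantive departures.
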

\begin{proof}
    1) First, note that 
    \begin{equation}
        u_1[0]=u[0]-\chi_{>R_0} v[0] = (1-\chi_{>R_0}) u[0],
    \end{equation}
    which is compactly supported in $\{|x|\leq 2R_0\}$. Furthermore 
    \begin{align}
        P u_1 &= Pu - P(\chi_{>R_0} v) \\
        &= Pu - \chi_{>R_0} Pv -[P, \chi_{>R_0}]v \\
        &=Pu-\chi_{>R_0} \ti{P}v -[P,\chi_{>R_0}]v \\
        &=(1-\chi_{>R_0}) Pu -[P,\chi_{>R_0}]v, \label{eq:caseReductionPu1formula}
    \end{align}
    and both terms on the right hand side are compactly supported in $\{|x| \leq 2R_0\}$. 
    
    2) Next, using that $\p \chi_{>R_0}$ is compactly supported in $\{|x| \leq 2R_0\}$ and $\<x\>^{-1} \geq c$ there, we have
    \begin{align}
        \LEoT{\chi_{>R_0} v} &= \LET{\p (\chi_{>R_0} v)} + \LET{\<x\>^{-1} \chi_{>R_0} v}\\
        &\leq \LET{\chi_{>R_0} \p v}+\LET{(\p \chi_{>R_0}) v } + \LET{\<x\>^{-1}v}\\
        &\leq C \left( \LET{\p v} + \LET{\<x\>^{-1}v} \right) \\
        &\leq C \LEoT{v}.
    \end{align}
    Since $\ti{P}$ is a small asymptotically flat perturbation of $\Box_m$, by Theorem \ref{thm:mt12} there exists $C>0$ such that for all $T>0$
    \begin{align}
        \LEoT{v} &\leq C \left( \ltwo{\p  v(0)} + \nm{\ti{P} v}_{LE^*+L^1_t L^2_x[0,T]} \right) \\
        &\leq C \left( \ltwo{\p u(0)} + \nm{Pu}_{LE^*+L^1_t L^2_x[0,T]} \right), 
    \end{align}
    where the second inequality follows from the definition of $v$ in terms of $u$. Therefore 
    \begin{equation}\label{eq:CaseReducechivR}
        \LEoT{v}+\LEoT{\chi_{>R_0}v} \leq C \left( \ltwo{\p u(0)} + \nm{Pu}_{LE^*+L^1_t L^2_x[0,T]} \right).
    \end{equation}
    3) Now we prove the remaining estimates. First 
    \begin{equation}
        \ltwo{\p u_1(0)} = \ltwo{\p(( 1-\chi_{>R_0})u)(0)} \leq \ltwo{\p u(0)} + \ltwo{(\nabla \chi_{>R_0}) u(0)}.
    \end{equation}
    To estimate the second term on the right hand side, we compute directly and then apply the Hardy inequality  
    \begin{align}
        \ltwo{(\nabla \chi_{>R_0}) u(0)} &\leq C \nm{u(0)}_{L^2(|x|<2R_0)} \\
        &\leq C \nm{|x|^{-1} u(0)}_{L^2(|x|<2R_0)}\\
        &\leq C \ltwo{|x|^{-1} u(0)} \leq C \ltwo{\nabla u(0)}. \label{eq:hardyApp}
    \end{align}
    Therefore 
    \begin{equation}
        \label{eq:caseReductionu1data}
        \ltwo{\p u_1(0)}\leq C\ltwo{\p u(0)}.
    \end{equation}
    To prove the next inequality we use the triangle inequality, $\<x\>^{-1} \leq 1$, and the definition of $\LEo{\cdot}$
    \begin{align}
        \LET{\<x\>^{-2} u_1} &\leq \LET{\<x\>^{-2}u} + \LET{\<x\>^{-2} \chi_{>R_0} v} \\
        &\leq \LET{\<x\>^{-2}u}+\LET{\<x\>^{-1}  v} \\
        &\leq \LET{\<x\>^{-2}u}+\LEoT{v}. \label{eq:caseReducex2u1}
    \end{align}
    To prove the final inequality we begin by applying  \eqref{eq:caseReductionPu1formula} and the triangle inequality to see
    \begin{align}\label{eq:caseReductionPu1intermed}
        \nm{Pu_1}_{LE^* + L^1_t L^2_x[0,T]}&\leq \nm{Pu}_{LE^* + L^1_t L^2_x[0,T]} + \nm{[P,\chi_{>R_0}]v}_{LE^* + L^1_t L^2_x[0,T]}.
    \end{align}
    To control the second term on the right hand side, first note that $[P,\chi_{>R_0}]$ is a first order space-time differential operator with coefficients compactly supported in $\{|x| < 2R_0\}$. Using this, along with the definition of the $LE^* + L^1_t L^2_x$ and $LE^*$ norms we have
    \begin{align}
        \nm{[P,\chi_{>R_0}]v}_{LE^* + L^1_t L^2_x[0,T]} &\leq \LEST{[P, \chi_{>R_0}]v} \\
        &\leq C \LEST{\chi_{<2R_0} \p v} + \LEST{\chi_{<2R_0}v}\\
         &\leq C \left( \LET{\p v} + \LET{\<x\>^{-1}v} \right)\\
        &\leq C\LEoT{v}.
    \end{align}
    Plugging this back into \eqref{eq:caseReductionPu1intermed} we obtain the desired inequality.
\end{proof}
The case reduction lemma is a straightforward consequence of the preceding one. 
\begin{lemma}\label{l:case2}
    Fix $\vartheta \in \{0,1\}$. Assume that there exists $C>0$, such that for all $T>0$ and $u_1$ with $u_1[0] \in \dot{H}^1 \times L^2$, and $u_1[0],Pu_1$ compactly supported in $\{|x| \leq 2R_0\}$,  the inequality \eqref{eq:highFreqLEo} holds with $\vartheta$. That is
    \begin{equation}
        \LEoT{u_1} \leq C \left( \ltwo{\p u_1(0)} + \vartheta \LET{\<x\>^{-2}u_1} + \nm{Pu_1}_{LE^* + L^1_t L^2_x[0,T]}\right).
    \end{equation}
    Then there exists $C>0$ such that, for all $T>0$ and $u$ with $u[0] \in \dot{H}^1 \times L^2$, 
    \eqref{eq:highFreqLEo} holds, with the same $\vartheta$.
\end{lemma}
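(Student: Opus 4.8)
The plan is to deduce this lemma directly from Lemma \ref{l:case2Pre}, which already isolates all the substantive work (the construction of an auxiliary solution with compactly supported data and forcing, the local energy bound for the small asymptotically flat perturbation via Theorem \ref{thm:mt12}, and the relevant commutator and Hardy estimates); what remains is bookkeeping. We may assume $\LEsltxT{Pu}<\infty$, since otherwise \eqref{eq:highFreqLEo} is vacuous. Given $u$ with $u[0]\in\dot H^1\times L^2$, fix a small asymptotically flat perturbation $\ti P$ of $\Box_m$ agreeing with $P$ on $\{|x|>R_0\}$ (such a $\ti P$ exists, as noted after the definition of small AF perturbations), let $v$ solve $\ti P v=Pu$ with $v[0]=u[0]$, and set $u_1=u-\chigro v$ as in Lemma \ref{l:case2Pre}.

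First I would verify that $u_1$ is an admissible test function for the hypothesized estimate. By Lemma \ref{l:case2Pre}(1), $u_1[0]$ and $Pu_1$ are supported in $\{|x|\le 2R_0\}$, and $u_1[0]=(1-\chigro)u[0]$ lies in $\dot H^1\times L^2$; the only point needing comment is that $(\nabla\chigro)u(0)\in L^2$, which follows from the Hardy inequality exactly as in \eqref{eq:hardyApp}. Hence the hypothesis applies to $u_1$:
\begin{equation}
    \LEoT{u_1}\le C\left(\ltwo{\p u_1(0)}+\vartheta\LET{\<x\>^{-2}u_1}+\LEsltxT{Pu_1}\right).
\end{equation}

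Next I would substitute the three inequalities of Lemma \ref{l:case2Pre}(3) into the right-hand side and then invoke Lemma \ref{l:case2Pre}(2) to bound $\LEoT{v}\le C(\ltwo{\p u(0)}+\LEsltxT{Pu})$, which yields
\begin{equation}
    \LEoT{u_1}\le C\left(\ltwo{\p u(0)}+\vartheta\LET{\<x\>^{-2}u}+\LEsltxT{Pu}\right).
\end{equation}
The key structural point is that $\LEoT{v}$ is controlled by the desired right-hand side with no appearance of $\LEoT{u}$ or $\LEoT{u_1}$, so no absorption argument is needed and the estimate closes at once. Finally, since $u=u_1+\chigro v$, the triangle inequality for $\LEoT{\cdot}$ together with the bound on $\LEoT{\chigro v}$ from Lemma \ref{l:case2Pre}(2) gives
\begin{equation}
    \LEoT{u}\le\LEoT{u_1}+\LEoT{\chigro v}\le C\left(\ltwo{\p u(0)}+\vartheta\LET{\<x\>^{-2}u}+\LEsltxT{Pu}\right),
\end{equation}
which is precisely \eqref{eq:highFreqLEo} with the same $\vartheta$. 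There is no real obstacle here; the only place to be slightly careful is to track that the $\vartheta$-weighted term $\LET{\<x\>^{-2}u_1}$ contributes exactly $\LET{\<x\>^{-2}u}$ plus an $\LEoT{v}$ error (rather than, say, an $\LEoT{u_1}$ term that would have to be absorbed), which is guaranteed by the precise form of Lemma \ref{l:case2Pre}(3).
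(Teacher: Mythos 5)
Your proof is correct and follows essentially the same approach as the paper's: form $u_1=u-\chigro v$ via Lemma \ref{l:case2Pre}, apply the hypothesis to $u_1$, bound the right-hand side with Lemma \ref{l:case2Pre}(3) and (2), and finish with the triangle inequality $\LEoT{u}\le\LEoT{u_1}+\LEoT{\chigro v}$. The only (welcome) addition is that you explicitly note $u_1[0]\in\dot H^1\times L^2$ via Hardy, which the paper leaves implicit in the estimate $\ltwo{\p u_1(0)}\le C\ltwo{\p u(0)}$.
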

\begin{proof}
    Let $\ti{P},v,$ and $u_1$ be as in Lemma \ref{l:case2Pre}. Then by Lemma \ref{l:case2Pre} part 1, $u_1[0]$ and $Pu_1$ are compactly supported. Therefore, by our assumption, there exists $C>0$ such that for all $T>0$
    \begin{equation}\label{eq:caseReduceu1Compact}
        \LEoT{u_1} \leq C \left( \ltwo{\p u_1(0)} +\vartheta  \LET{\<x\>^{-2}u_1} + \nm{Pu_1}_{LE^* + L^1_t L^2_x[0,T]} \right).
    \end{equation}
    Then estimating the terms on the right hand side using Lemma \ref{l:case2Pre} part 3 we have 
    \begin{equation}
        \LEoT{u_1} \leq C \left( \ltwo{\p u(0)} + \vartheta \LET{\<x\>^{-2}u}+\nm{Pu}_{LE^*+L^1_t L^2_x[0,T]} +\LEoT{v} \right). \label{eq:caseReduceu1cauchyCompact}
    \end{equation}
    Applying the triangle inequality, Lemma \ref{l:case2Pre} part 2 and \eqref{eq:caseReduceu1cauchyCompact} we obtain
    \begin{align}
        \LEoT{u} &\leq \LEoT{u_1} + \LEoT{\chigro v} \\
        &\leq C \left( \ltwo{\p u(0)} + \vartheta \LET{\<x\>^{-2} u} + \nm{Pu}_{LE^*+L^1_tL^2_x[0,T]} \right),
    \end{align}
    which is exactly \eqref{eq:highFreqLEo} as desired. 
\end{proof}

\subsection{Reduction to trivial Cauchy data}\label{s:caseCauchy}
We now show that it suffices to consider solutions with trivial Cauchy data, $w[0]=w[T]=0$, with $Pw \in LE^*_c$.

We begin with a preliminary lemma. We split the time interval $[0,T]$ into sub-intervals of length $1$ and obtain estimates for solutions of truncated versions of $Pu$ on each of these subintervals. 

Note that we only match a spatially cutoff version of $u[T]$ in our definition of the $w_k$ below. This is to ensure the size of the compact spatial support of $w_N$ does not depend on $T$ and is one key difference from \cite{MST20} and \cite{Kofroth23}.
\begin{lemma}\label{l:case3pre} 
Given $T>0$, and a function $u$ defined on $[0,T] \times \Rb^3$, such that $Pu$ is well defined, let $N$ be the largest integer strictly less than $T$. For integers $0 \leq k \leq N$, let $w_k$ solve
\begin{equation}
    \begin{cases}
        Pw_k = \mathbbm{1}_{[k,k+1]}(t) Pu \\
        w_0[0] = u[0] \\
        w_k[k]=0, \quad 1 \leq k \leq N-1 \\
        w_N[T]=\chi_{<2R_0} u[T].
    \end{cases}
\end{equation}
Further set $\alpha = \frac{1}{2}(T-N)$ and define $\chi_{[k,k+1]}(t) \in \Cc((k-\alpha, k+1+\alpha): [0,1])$ with $\chi_{[k,k+1]}(t) \equiv 1$ for $t \in [k,k+1]$. 

There exists $C>0$ such that for all $T>0$ and all $u$ with $u[0] \in \dot{H}^1\times L^2$ and $u[0], Pu$ supported in $\{|x| \leq 2R_0\}$,
\begin{enumerate}
    \item For all $0 \leq k\leq N$, the function $w_k(t)$ is compactly supported in $\{|x| \leq C R_0\}$ for $t \in [k-\alpha, k+1+\alpha]$.
    \item  We have
\begin{align}
    &\nm{\chi_{[k,k+1]}w_k}_{LE^1[k-\alpha, k+1+\alpha]} + \nm{w_k}_{LE^1[k-\alpha, k+1+\alpha]} \leq C \nm{\p w_k}_{L^{\infty}_t L^2_x[k-\alpha, k+1+\alpha]},
\end{align}
where when $k=0$, the time intervals on both sides are $[0,1+\alpha]$, and when $k=N$ the time intervals are $[N-\alpha,T]$.
\item Furthermore, for all $\e>0$
% \begin{align}
%     &\nm{w_0}_{L^{\infty}_t L^2_x[0,1+\alpha]} \leq C \left( \ltwo{\p u(0)} + \nm{Pu}_{LE^*+L^1_tL^2_x[0,1]} \right) \\
%     &\nm{w_k}_{L^{\infty}_t L^2_x[k-\alpha, k+1+\alpha]} \leq C \nm{Pu}_{LE^* +L^1_t L^2_x[k,k+1]}, \qquad 1\leq k \leq N-1\\
%     &\nm{w_N}_{L^{\infty}_t L^2_x[N-\alpha, T]} \leq C \left(\ltwo{\p u(0)}+  \e^{-1} \nm{Pu}_{LE^*+L^1_tL^2_x[N,T]} + \e \LEoT{u}\right).
% \end{align}
% Therefore, for all $\e>0$
\begin{align}
    \sum_{k=0}^N\nm{ \p w_k}_{L^{\infty}_t L^2_x[k-\alpha,k+1+\alpha]}
    %+ \nm{\sum_{k=0}^N \p(\chi_{[k,k+1]} w_k)}_{L^{\infty}_t L^2_x[0,T]}
    \leq C \left( \ltwo{\p u(0)} + \e^{-1} \LEsltxT{Pu} + \e \LEoT{u} \right).
\end{align}
\item Finally, for all $\e>0$
\begin{equation}
     \LEST{\sum_{k=0}^N [P,\chi_{[k,k+1]}]w_k} \leq C \left( \ltwo{\p u(0)} + \e^{-1} \LEsltxT{Pu} + \e \LEoT{u} \right).
\end{equation}

\end{enumerate}
\end{lemma}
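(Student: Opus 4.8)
The plan is to prove the four assertions of Lemma \ref{l:case3pre} in order, since later ones build on the earlier ones. I would start by recording the basic energy/finite-speed-of-propagation structure: each $w_k$ solves $Pw_k = \mathbbm{1}_{[k,k+1]}Pu$, a problem whose data $w_k[k]$ (or $w_0[0]=u[0]$, or $w_N[T]=\chi_{<2R_0}u[T]$) is compactly supported in $\{|x|\le 2R_0\}$ and whose inhomogeneity is supported there too. By finite speed of propagation for the wave operator $\Box_g$ — which applies since $\partial_t$ is uniformly timelike and the slices uniformly spacelike — on the time interval $[k-\alpha,k+1+\alpha]$ of length $1+2\alpha < 3$, the solution $w_k$ stays supported in $\{|x|\le 2R_0 + C\} \subset \{|x|\le CR_0\}$ for a universal $C$. (The lower-order damping term $iaD_t$ does not affect the propagation cone.) This gives part (1).

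For part (2), the key point is that on the compact spatial set $\{|x|\le CR_0\}$ the weight $\<x\>^{-1/2}$ is comparable to a constant, so the $LE^1$ and $LE$ norms over this set are comparable to unweighted $L^2_tL^2_x$ norms of $\partial w_k$ and $w_k$. Then the $LE^1$ estimate reduces to bounding $\nm{\partial w_k}_{L^2_tL^2_x}$ and $\nm{w_k}_{L^2_tL^2_x}$ over a time interval of length $O(1)$ by $\nm{\partial w_k}_{L^\infty_tL^2_x}$; the first is immediate by integrating in $t$ over a bounded interval, and the second uses a Poincaré/Hardy-type inequality on the compact spatial support (or simply $\nm{w_k(t)}_{L^2} \lesssim \nm{\nabla w_k(t)}_{L^2}$ since $w_k(t)$ is compactly supported, which is valid because one endpoint of the time interval has $w_k$ vanishing or spatially truncated, and we can integrate $\partial_t$ from there). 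The cutoff $\chi_{[k,k+1]}$ is harmless since it and its derivatives are bounded and supported in the same interval.

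For part (3), I would apply the backwards-in-time energy estimate Lemma \ref{l:Linftyeasy} part 2 to $w_N$ (matching data at $t=T$) and the forward estimate Lemma \ref{l:Linftyeasy} part 1 to $w_0$, and to the intermediate $w_k$ ($1\le k\le N-1$) which have zero data at $t=k$, use the forward estimate with vanishing initial data. Each gives $\nm{\partial w_k}_{L^\infty_tL^2_x[k-\alpha,k+1+\alpha]} \lesssim \nm{\partial w_k(k)}_{L^2} + \e^{-1}\nm{Pw_k}_{LE^*+L^1_tL^2_x} + \e\nm{w_k}_{LE^1}$, except $w_0$ has $\nm{\partial u(0)}_{L^2}$ and $w_N$ has $\nm{\partial(\chi_{<2R_0}u)(T)}_{L^2}$ (controlled by Hardy as in \eqref{eq:hardyApp} plus the uniform energy estimate Lemma \ref{l:uniformEnergy}, noting the time-length is $O(1)$ so $e^{CS\lp{a}{\infty}}$ is a universal constant only if $\lp a\infty$ is — more carefully, one applies the global uniform energy bound on $[0,T]$ once). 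The intermediate $\nm{\partial w_k(k)}_{L^2}$ terms: observe that $u - \sum_{j<k}w_j$ solves $P(\cdot) = \mathbbm{1}_{[k,T]}Pu$ with appropriate data, so one telescopes; alternatively, use Duhamel to write $w_k$ in terms of $\mathbbm{1}_{[k,k+1]}Pu$ only and bound $\nm{\partial w_k}_{L^\infty_tL^2_x}$ directly by $\nm{\mathbbm{1}_{[k,k+1]}Pu}_{LE^*+L^1_tL^2_x}$. Summing over $k$, the $\sum_k \nm{\mathbbm{1}_{[k,k+1]}Pu}_{LE^*+L^1_tL^2_x}$ terms reassemble (using the $\ell^1$-in-$j$ and $L^1$-in-$t$ structure of these norms, or Cauchy–Schwarz with the finite overlap of the intervals $[k-\alpha,k+1+\alpha]$) into $\LEsltxT{Pu}$, and the $\sum_k \e\nm{w_k}_{LE^1}$ terms are controlled by part (2) and then bootstrapped — but we must be careful to relate $\sum_k \nm{w_k}_{LE^1[k-\alpha,k+1+\alpha]}$ back to $\LEoT{u}$. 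This is where one uses that $\sum_k w_k$ reconstructs $u$ up to the commutator corrections (precisely the content of part (4) and the reduction in Lemma \ref{l:case3}), so at this stage one keeps $\e\LEoT{u}$ on the right. Combining all the pieces gives part (3).

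For part (4), the operator $[P,\chi_{[k,k+1]}]$ involves only $\partial_t$-derivatives of $\chi_{[k,k+1]}$ (since $\chi_{[k,k+1]}$ depends on $t$ only), producing a first-order operator in $\partial_t$ with coefficients supported in the thin time-slabs $(k-\alpha,k)\cup(k+1,k+1+\alpha)$ where $\chi'_{[k,k+1]} \ne 0$; these slabs have total length $2\alpha = T-N \le 1$ and, crucially, are disjoint across different $k$. Hence $\LEST{\sum_k [P,\chi_{[k,k+1]}]w_k} \le \sum_k \LEST{[P,\chi_{[k,k+1]}]w_k} \lesssim \sum_k \nm{\partial w_k}_{L^2_tL^2_x(\text{slab})} \lesssim \sum_k \nm{\partial w_k}_{L^\infty_tL^2_x[k-\alpha,k+1+\alpha]}$ (again using compact spatial support to drop weights and the $O(1)$ time-length of each slab), and then part (3) finishes it. The main obstacle I expect is the bookkeeping in part (3): getting the $\sum_k$ of the inhomogeneity pieces to reassemble cleanly into a single $\LEsltxT{Pu}$ (which requires exploiting the precise $\ell^1_j$/$L^1_t$-additivity in the definitions of $LE^*$ and $LE^*+L^1_tL^2_x$, together with the bounded overlap of the intervals), and simultaneously tracking that the intermediate-data terms $\nm{\partial w_k(k)}_{L^2}$ are genuinely controlled — either by a Duhamel representation of each $w_k$ using only $\mathbbm{1}_{[k,k+1]}Pu$, or by a careful telescoping argument — rather than accumulating an unbounded-in-$N$ constant.
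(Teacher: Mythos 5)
Your four-part structure matches the paper's, and (1), (2), and (4) are substantively right. The gaps are in (3). First, the telescoping/Duhamel passage for "the intermediate $\nm{\p w_k(k)}_{L^2}$ terms" is a detour that suggests a misreading: for $1\le k\le N-1$ you have $w_k[k]=0$ by construction, so there is \emph{no} Cauchy-data contribution at all, and moreover $Pw_k\equiv 0$ on $(k-\alpha,k)$ together with $w_k[k]=0$ forces $w_k\equiv 0$ there. The paper simply applies Lemma \ref{l:Linftyeasy} part (1) forward on $[k,k+1+\alpha]$ and part (2) backward on $[k-\alpha,k]$, both with zero data, and absorbs the $LE^1$ error via part (2) of the Lemma, yielding $\nm{\p w_k}_{L^\infty_tL^2_x[k-\alpha,k+1+\alpha]}\le C\nm{Pu}_{LE^*+L^1_tL^2_x[k,k+1]}$ directly. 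Also, in your (2) the justification "valid because one endpoint of the time interval has $w_k$ vanishing $\ldots$ and we can integrate $\partial_t$ from there" is confused; the paper's bound $\ltwo{w_k(t,\cdot)}\le C_p\ltwo{\nabla_x w_k(t,\cdot)}$ is a spatial Poincar\'e inequality at each fixed $t$, valid because $w_k(t,\cdot)$ is compactly supported in $\{|x|\le CR_0\}$ by part (1); no integration in $t$ enters.

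Second, the reassembly step you flag as "the main obstacle" is not resolved by the mechanism you offer. The "$\ell^1$-in-$j$ and $L^1$-in-$t$ structure" handles the $L^1_tL^2_x$ part of the norm, but $LE^*$ is $L^2$ in $t$, not $L^1$: for $f\in LE^*$ supported in $\{|x|\le 2R_0\}$ and constant in $t$, $\sum_{k=0}^{N}\nm{f}_{LE^*[k,k+1]}\approx\sqrt{T}\,\LEST{f}$, so the $\ell^1$-in-$k$ sum does not reassemble without a $\sqrt{T}$ loss. What reassembles cleanly is the $\ell^2$-in-$k$ sum, $\left(\sum_k\nm{Pu}_{LE^*+L^1_tL^2_x[k,k+1]}^2\right)^{1/2}\lesssim\nm{Pu}_{LE^*+L^1_tL^2_x[0,T]}$, and the bounded (indeed, for the commutator slabs, disjoint) overlap in $t$ of $\chi_{[k,k+1]}w_k$ and of $[P,\chi_{[k,k+1]}]w_k$ is exactly what lets the downstream estimates in Lemma \ref{l:case3} be closed in $\ell^2_k$ rather than $\ell^1_k$. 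Your "Cauchy--Schwarz with the finite overlap" alternative gestures in this direction, but Cauchy--Schwarz applied to $\sum_k a_k$ \emph{costs} a $\sqrt{N}$ rather than saving one; the bounded-overlap inequality you want is $\LE{\sum_k g_k}\lesssim\big(\sum_k\LE{g_k}^2\big)^{1/2}$ on the output side, not a manipulation of $\sum_k\nm{\p w_k}_{L^\infty_tL^2_x}$ itself. (This caveat applies equally to the $\ell^1$-in-$k$ sum as written in the lemma's own part (3); the $\ell^2$-in-$k$ version is the one that both is provable by the sketched method and suffices for Lemma \ref{l:case3}.)
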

\begin{proof}
    1) To begin we note that $u[0]$, and $Pu$ are supported in $\{|x| <2R_0\}$, $w_k[k]=0,$ and $\chi_{<2R_0} u[T]$ is supported in $\{|x| <4R_0\}$. Therefore, by finite speed of propagation there exists $C>0$ such that $w_k(t)$ is supported in $\{|x| <C R_0\}$ for $t \in [k-\alpha, k+1+\alpha]$.

    2) Because of this, by the Poincar\'e inequality, there exists $C_p=C_p(R_0)>0$ such that 
    \begin{align}\label{eq:case2Poincare}
        \ltwo{w_k(t, \cdot)} \leq C_p \ltwo{\nabla_x w_k (t, \cdot)},\quad t \in [k-\alpha, k+1+\alpha].
    \end{align}
    Now we can obtain the desired $LE^1$ estimates. By the definition of $LE^1$ and since $\<x\>^{-1} \leq 1$
    \begin{align}
        \nm{\chi_{[k,k+1]} w_k}_{LE^1[k-\alpha,k+1+\alpha]} +& \nm{w_k}_{LE^1[k-\alpha,k+1+\alpha]} \\
        %\leq C \bigg( &\nm{\<x\>^{-1} w_k}_{LE[k-\alpha,k+1+\alpha]}+ \nm{\p w_k }_{LE[k-\alpha,k+1+\alpha]}+\nm{w_k}_{LE[k-\alpha, k+1+\alpha]} \bigg)\\
        &\leq C \left( \nm{w_k}_{LE[k-\alpha,k+1+\alpha]} + \nm{\p w_k }_{LE[k-\alpha,k+1+\alpha]} \right).
    \label{eq:Case2LEowk}
    \end{align}
    Note that when $k=0$, we take $[0,1+\alpha]$ as the time interval in these norms, and when $k=N$, we take $[N-\alpha, T]$ as the time interval in these norms. 
    To control the first term on the right hand side of \eqref{eq:Case2LEowk}, we use the definition of $LE$, the Poincar\'e inequality \eqref{eq:case2Poincare}, and that $[k-\alpha, k+1+\alpha]$ has length $ \leq 2$ to see
    \begin{align}
        \nm{w_k}_{LE[k-\alpha,k+1+\alpha]} &\leq \left(\int_{k-\alpha}^{k+1+\alpha}\ltwo{w_k(s, \cdot)}^2 ds \right)^{1/2} \leq C\left(\int_{k-\alpha}^{k+1+\alpha} \ltwo{\nabla w_k(s, \cdot)}^2 ds \right)^{1/2}  \\
        &\leq C\nm{\p w_k}_{L^{\infty}_{t} L^2_x [k-\alpha, k+1+\alpha]}. \label{eq:Case2LEointer1}
    \end{align}
    For the second term on the right hand side of \eqref{eq:Case2LEowk}, again using the definition of $LE$ and that $[k-\alpha, k+1+\alpha]$ has length $ \leq 2$, we estimate  
    \begin{equation}
        \nm{\p w_k}_{LE[k-\alpha,k+1+\alpha]} \leq \left( \int_{k-\alpha}^{k+1+\alpha} \ltwo{\p w_k(s, \cdot)} ds \right)^{1/2} \leq C\nm{\p w_k}_{L^{\infty}_{t } L^2_x[k-\alpha, k+1+\alpha]}.\label{eq:Case2LEointer2}
    \end{equation}
    Plugging this and \eqref{eq:Case2LEointer1} into \eqref{eq:Case2LEowk} we obtain
    \begin{align}\label{eq:LEwkLinfty}
       \nm{\chi_{[k,k+1]}w_k}_{LE^1[k-\alpha,k+1+\alpha]} + \nm{w_k}_{LE^1[k-\alpha,k+1+\alpha]} \leq C  \nm{\p w_k}_{L^{\infty}_{t} L^2_x[k-\alpha, k+1+\alpha]},
    \end{align}
    which is the desired inequality.

    3) We now further estimate the $L^{\infty}_t L^2_x$ norms. By Lemma \ref{l:Linftyeasy} part 1 we have for any $0<\e\ll1$
    \begin{align}
        \nm{\p w_0}_{L^{\infty}_{t}L^2_x[0,1+\alpha]} &\leq C\left( \ltwo{\p w_0(0)} + \e^{-1} \nm{Pw_0}_{LE^*+L^1_t L^2_x[0,1+\alpha]} + \e\nm{w_0}_{LE^1[0,1+\alpha]} \right).
    \end{align}
    Now applying $w_0[0]=u[0], Pw_0=\mathbbm{1}_{[0,1]} Pu$, and part 1 of this Lemma, for any $\e>0$ we have
    \begin{equation}
        \nm{\p w_0}_{L^{\infty}_{t}L^2_x[0,1+\alpha]}  \leq C \left(\ltwo{\p u(0)}+\e^{-1} \nm{Pu}_{LE^*+L^1_t L^2_x[0,1]} + \e\nm{\p w_0}_{L^{\infty}_{t}L^2_x[0,1+\alpha]} \right).
    \end{equation}
    Choosing $\e>0$ small enough we can absorb the final term on the right hand side back into the left hand side to obtain 
    \begin{equation}
         \nm{\p w_0}_{L^{\infty}_{t}L^2_x[0,1+\alpha]}  \leq C \left(\ltwo{\p u(0)}+ \nm{Pu}_{LE^*+L^1_t L^2_x[0,1]} \right). \label{eq:case2w0}
    \end{equation}
    To control the terms for $1 \leq k \leq N-1$, we write 
    \begin{equation}
        \nm{\p w_k}_{L^{\infty}_tL^2_x[k-\alpha,k+1+\alpha]} \leq \nm{\p w_k}_{L^{\infty}_tL^2_x[k-\alpha, k]} + \nm{\p w_k}_{L^{\infty}_t L^2_x[k,k+1+\alpha]}.
    \end{equation}
    Then we estimate the first term using Lemma \ref{l:Linftyeasy} part 2, noting that $[k-\alpha, k]$ has length $\leq 1/2$, and the second term using Lemma \ref{l:Linftyeasy}, part 1. Combining these, recalling $Pw_k=\mathbbm{1}_{[k,k+1]} Pu$, $w_k[k]=0$, and applying part 1 of this lemma we obtain for any $0<\e\ll1$
    \begin{align}
        \nm{\p w_k}_{L^{\infty}_{t}L^2_x[k-\alpha,k+1+\alpha]} &\leq C\left( \ltwo{\p w_k(k)} + \e^{-1} \nm{Pw_k}_{LE^*+L^1_t L^2_x[k-\alpha,k+1+\alpha]} + \e \nm{w_k}_{LE^1[k-\alpha,k+1+\alpha]} \right) \\
        & \leq C\left(\e^{-1} \nm{P u}_{LE^*+L^1_t L^2_x[k,k+1]} + \e \nm{\p w_k}_{L^{\infty}_{t}L^2_x[k-\alpha,k+1+\alpha]} \right).
    \end{align}
    Again choosing $\e>0$ small enough, we can absorb the second term on the right hand side back into the left, to obtain 
    \begin{align}
        \nm{\p w_k}_{L^{\infty}_{t}L^2_x[k-\alpha,k+1+\alpha]}  &\leq C \nm{P u}_{LE^*+L^1_t L^2_x[k,k+1]}.      
        \label{eq:case2wk}
    \end{align}
    Finally we estimate $w_N$ using Lemma \ref{l:Linftyeasy} part 2, recalling that $T-(N-\alpha) < 2$, $w_N[T]=u[T]$, and $Pw_n=\mathbbm{1}_{[N,N+1]} Pu$, and using part 1 of this lemma, to see that for any $\e>0$
    \begin{align}
        \nm{\p w_N}_{L^{\infty}_t L^2_x[N-\alpha,T]}& \leq C \left( \ltwo{\p w_N(T)} + \e^{-1} \nm{Pw_N}_{LE^*+L^1_t L^2_x[N-\alpha,T]} +\e \nm{ w_N}_{LE^1[N-\alpha,T]} \right) \\
        &\leq C \left( \ltwo{\p u(T)} + \e^{-1} \nm{P u}_{LE^*+L^1_t L^2_x[N ,T]} +\e \nm{\p w_N}_{L^{\infty}_t L^2_x[N,T]}  \right).
    \end{align}
    We again choose $\e>0$ small enough so that we can absorb the third term on the right hand side back into the left to obtain
    \begin{align}
        \nm{\p w_N}_{L^{\infty}_t L^2_x[N-\alpha,T]}&\leq  C \left( \ltwo{\p u(T)} + \nm{P u}_{LE^*+L^1_t L^2_x[N,T]} \right).
        \label{eq:case2wN}
    \end{align}
    Now we apply Lemma \ref{l:Linftyeasy} part 1 to estimate $\p u(T)$ 
    \begin{align}
        \ltwo{\p u(T)}^2 &\leq \nm{\p u}_{L_t^{\infty}L^2_x[0,T]} \\
        &\leq C\left( \ltwo{\p u(0)} + \e^{-1} \nm{Pu}_{LE^*+L^1_tL^2_x[0,T]} + \e \nm{u}_{LE^1[0,T]}\right).
    \end{align}
    Combining this with \eqref{eq:case2wN} we have for any $\e>0$
    \begin{align}
        \nm{\p w_N}_{L^{\infty}_t L^2_x[N-\alpha,T]} \leq C\left( \ltwo{\p u(0)} + \e^{-1} \nm{Pu}_{LE^*+L^1_tL^2_x[0,T]} + \e \nm{u}_{LE^1[0,T]}\right). \label{eq:case2wNfinal}
    \end{align}
    Adding together the $L^{\infty}_t L^2_x$ inequalities for $0 \leq k \leq N$ gives the desired inequality.

    4) To see the final inequality we begin by recalling the definition of $LE^*$ and noting that because the $w_k$ are compactly supported inside $\{|x| \leq CR_0\}$ we have $\<x\> \leq CR_0$, so
    \begin{align}
        \LEST{[P,\chi_{[k,k+1]}(t)]w_k} &= \sum_{j=0}^{\infty} \nm{\<x\>^{1/2} [P,\chi_{[k,k+1]}(t)]w_k}_{L^2_t L^2_x([0,T] \times A_j)}\\
        &\leq C\LtxT{[P,\chi_{[k,k+1]}]w_k}.
    \end{align}
    Now note that $[P, \chi_{[k,k+1]}]$ is a first order space-time differential operator with $t$ support contained in $[k-\alpha,k+1+\alpha]$. 
    %Note that $[aD_t, \chi_k] \neq 0$, and so we must estimate that term. We do so using the in time $L^{\infty}$ bound on $a$, $\nm{a}_{L^{\infty}} \leq C$. 
    Using this and the Poincar\'e inequality \eqref{eq:case2Poincare}
    \begin{align}
        \LtxT{[P,\chi_{[k,k+1]}]w_k} &\leq C\left(\nm{\p w_k}_{L^2_{t}L^2_x[k-\alpha,k+1+\alpha]} + \nm{w_k}_{L^2_{t}L^2_x[k-\alpha,k+1+\alpha]}\right) \\
        &\leq C \nm{\p w_k}_{L^2_{t}L^2_x[k-\alpha,k+1+\alpha]}.
    \end{align}
    %Note again that when $k=0$, the time interval in the norms on the right hand side is actually $[0,1+\alpha]$, and when $k=N$, the time interval in the norms on the right hand side is actually $[N-\alpha,T]$.
    Now since the time interval $[k-\alpha,k+1+\alpha]$ has length $\leq 2$ we can replace $L^2_t$ by $L^{\infty}_t$ to obtain
    \begin{equation}
        \LtxT{[P,\chi_{[k,k+1]}]w_k}\leq C \nm{\p w_k}_{L^{\infty}_{t}L^2_x[k-\alpha,k+1+\alpha]}.
    \end{equation}
    Combining this with part 2 of this lemma gives the desired inequality.
\end{proof}
We now state the case reduction lemma. 
\begin{lemma}\label{l:case3}
Fix $\vartheta \in \{0,1\}$. Assume that there exists $C>0$, such that for all $T>0$, and $w$ with $w[0]=w[T]=0$, $Pw \in LE^*_c$, we have 
\begin{equation}\label{eq:highFreqCase2}
    \LEoT{w} \leq C \left( \vartheta \LET{\<x\>^{-2}w} + \LEST{P w}\right).
\end{equation}
Then there exists $C>0$, such that for all $T>0$, and $u_1$ with $u_1[0] \in \dot{H}^1 \times L^2$ and $u_1[0], Pu_1$ compactly supported in $\{|x| \leq 2R_0\}$, we have 
\begin{equation}
    \LEoT{u_1} \leq C \left( \ltwo{\p u_1(0)} + \vartheta \LET{\<x\>^{-2}u_1} + \nm{Pu_1}_{LE^* + L^1_t L^2_x[0,T]}\right).
\end{equation}
\end{lemma}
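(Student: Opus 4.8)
The plan is to decompose $u_1$ using the functions $w_k$ from Lemma \ref{l:case3pre} together with a spatial cutoff, apply the hypothesis \eqref{eq:highFreqCase2} to the piece with trivial Cauchy data at both endpoints, and handle the leftover piece — which lives in the asymptotically flat region — by comparison with a small perturbation of $\Box_m$ via Theorem \ref{thm:mt12}.

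First, with $N$, $\alpha$, $\{w_k\}_{k=0}^N$ and $\{\chi_{[k,k+1]}\}_{k=0}^N$ as in Lemma \ref{l:case3pre}, set $v:=\sum_{k=0}^N\chi_{[k,k+1]}w_k$. At each time at most two of the $\chi_{[k,k+1]}$ are nonzero and $\p\chi_{[k,k+1]}$ only involves $\p_t$, so $\LEoT{v}^2\lesssim\sum_{k=0}^N\big(\nm{\chi_{[k,k+1]}w_k}_{LE^1}^2+\nm{w_k}_{LE^1}^2\big)$ over the relevant subintervals, and Lemma \ref{l:case3pre} parts 2 and 3 give, for every $0<\e\ll1$,
\[
\LEoT{v}\le C\big(\ltwo{\p u_1(0)}+\e^{-1}\LEsltxT{Pu_1}+\e\,\LEoT{u_1}\big).
\]

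Second, set $\omega:=\chi_{<2R_0}u_1-v$. Since $u_1[0]$ is supported in $\{|x|\le 2R_0\}$ we have $\chi_{<2R_0}u_1[0]=u_1[0]$, and as only $\chi_{[0,1]}$ (which is $\equiv1$ near $t=0$) contributes to $v[0]$, we get $v[0]=w_0[0]=u_1[0]$ and hence $\omega[0]=0$; likewise only $\chi_{[N,N+1]}$ (which is $\equiv1$ near $t=T$) contributes to $v[T]$, so $v[T]=w_N[T]=\chi_{<2R_0}u_1[T]$ and hence $\omega[T]=0$. This cancellation at $t=T$ is exactly why Lemma \ref{l:case3pre} prescribes the spatially truncated data $\chi_{<2R_0}u_1[T]$ for $w_N$. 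Moreover, since $\chi_{<2R_0}\equiv 1$ on $\supp Pu_1$ and $Pv=Pu_1+\sum_k[P,\chi_{[k,k+1]}]w_k$ on $[0,T]$, one has $P\omega=[P,\chi_{<2R_0}]u_1-\sum_k[P,\chi_{[k,k+1]}]w_k$ on $[0,T]$, which is supported in $\{|x|\le CR_0\}$; thus $P\omega\in LE^*_c$ and the hypothesis \eqref{eq:highFreqCase2} applies to $\omega$. The term $\vartheta\LET{\<x\>^{-2}\omega}\le\vartheta\LET{\<x\>^{-2}u_1}+\vartheta\,\LEoT{v}$ is admissible, the commutator sum $\sum_k[P,\chi_{[k,k+1]}]w_k$ is controlled by Lemma \ref{l:case3pre} part 4, and $\chi_{<2R_0}u_1=v+\omega$; so $\LEoT{\chi_{<2R_0}u_1}$ is bounded by the right-hand side of \eqref{eq:highFreqLEo} plus a multiple of $\LEST{[P,\chi_{<2R_0}]u_1}$.

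Third, it remains to bound the exterior piece $(1-\chi_{<2R_0})u_1$ and the commutator $\LEST{[P,\chi_{<2R_0}]u_1}$ — both supported in $\{|x|\ge R_0\}$, where $P$ coincides with a small asymptotically flat perturbation $\tilde P$ of $\Box_m$. I would let $\tilde u$ solve $\tilde P\tilde u=Pu_1$, $\tilde u[0]=u_1[0]$, apply Theorem \ref{thm:mt12} to obtain $\LEoT{\tilde u}+\nm{\p\tilde u}_{L^\infty_tL^2_x[0,T]}\le C(\ltwo{\p u_1(0)}+\LEsltxT{Pu_1})$, and then compare $u_1$ with $\chi_{>R_0}\tilde u$ as in Lemma \ref{l:case2Pre}: $\LEoT{\chi_{>R_0}\tilde u}\lesssim\LEoT{\tilde u}$ is already controlled, while $u_1-\chi_{>R_0}\tilde u$ has data and inhomogeneity supported in $\{|x|\le 2R_0\}$. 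Combining this with the interior bound for $\chi_{<2R_0}u_1$ from Step two — and, where the data at $t=T$ intervenes, the $L^\infty_tL^2_x$ bound for $\p u_1(T)$ from Lemma \ref{l:Linftyeasy} — lets one estimate the exterior contribution by the right-hand side of \eqref{eq:highFreqLEo}, after which the $\e\,\LEoT{u_1}$ terms are absorbed by taking $\e$ small, giving the claim. I expect this last step to be the main obstacle: the failure of time-reversal symmetry (because $a$ depends on $t$) means one cannot simply reflect the data at $t=T$ into the data at $t=0$ of a reversed problem, as in \cite{MST20,Kofroth23}, so one must separate the near region (handled through $\omega$ and the hypothesis) from the asymptotically flat region (handled by Theorem \ref{thm:mt12}), and the delicate point is to verify that the cutoff commutator $[P,\chi_{<2R_0}]u_1$, which is supported in a fixed compact shell, does not reintroduce an uncontrolled multiple of $\LEoT{u_1}$.
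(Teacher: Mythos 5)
Your construction of $\omega=\chi_{<2R_0}u_1-v$ with $v=\sum_{k}\chi_{[k,k+1]}w_k$ is a genuinely different decomposition from the paper's, and the Cauchy-data cancellation you exhibit (using that $w_N[T]=\chi_{<2R_0}u_1[T]$) is correct. But it does not lead to a complete proof, and the obstruction you flag at the end is not just ``delicate''---it is a real dead end for the $\vartheta=0$ case, which is the one needed to prove Proposition~\ref{prop:iledCaseReduction}.

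Concretely, since $\chi_{<2R_0}\equiv1$ on $\supp Pu_1$, one has
\[
P\omega=[P,\chi_{<2R_0}]u_1-\sum_k[P,\chi_{[k,k+1]}]w_k,
\]
and to apply \eqref{eq:highFreqCase2} you must control $\LEST{[P,\chi_{<2R_0}]u_1}$. This commutator is first order, supported on the fixed shell $\{2R_0\le|x|\le4R_0\}$, so by the definition of $LE^*$ it is bounded by a fixed constant times $\nm{u_1}_{LE^1([0,T]\times\{2R_0\le|x|\le4R_0\})}$, hence by $C\LEoT{u_1}$ with no smallness. When $\vartheta=0$ you cannot trade this for $\LET{\<x\>^{-2}u_1}$, and Proposition~\ref{prop:exteriorCase} does not rescue you either: its boundary-shell term $R^{-1}\nm{u_1}_{LE}$ is again comparable to $\LEoT{u_1}$ on a fixed shell with a non-small constant. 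So the term cannot be absorbed. Your Step~three is circular: letting $\tilde u$ solve $\tilde P\tilde u=Pu_1$, $\tilde u[0]=u_1[0]$ and writing $u_1=(u_1-\chi_{>R_0}\tilde u)+\chi_{>R_0}\tilde u$ produces, as the leftover piece, a function with compactly supported data and inhomogeneity in $\{|x|\le2R_0\}$---exactly the class $u_1$ already belongs to, so you have not made progress on $\LEoT{u_1}$.

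The paper avoids the commutator entirely by never cutting $u_1$ spatially. Instead it introduces $v_T$ solving $\tilde\Box\,v_T=Pu_1$, $v_T[T]=u_1[T]$, where $\tilde\Box$ is a small AF perturbation of $\Box_m$ agreeing with $\Box_g$ (\emph{without} the damping) for $|x|>R_0$; this is solved backward from $t=T$ and the absence of damping in $\tilde\Box$ is essential so that the backward estimate of Lemma~\ref{l:Linftyeasy}~(2) carries no $e^{CT\|a\|_\infty}$ loss. Then $w=u_1-\sum_k\chi_{[k,k+1]}w_k-\chi_{>2R_0}(x)\psi(t)v_T$, with $\psi$ supported in $(T/4,2T)$, $\psi\equiv1$ near $t=T$, and $|\p_t^k\psi|\le CT^{-k}$, has trivial Cauchy data (because $\chi_{<2R_0}u_1[T]+\chi_{>2R_0}u_1[T]=u_1[T]$), and
\[
Pw=-\sum_k[P,\chi_{[k,k+1]}]w_k-P\big(\chi_{>2R_0}\psi\,v_T\big)
\]
contains no term of the form $[P,\chi_{<2R_0}]u_1$ whatsoever. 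The price is that $Pw$ is compactly supported only in a $T$-dependent ball $\{|x|\le C(T+1)\}$ (coming from $v_T$), which the hypothesis still permits, and one must use the decay $|\p_t^k\psi|\le CT^{-k}$ together with Theorem~\ref{thm:mt12} to beat the logarithmic-in-$T$ count of dyadic annuli in $\LEST{\cdot}$. Replacing the spatial truncation of $u_1$ by this backward, undamped, exterior auxiliary solve is precisely what your construction is missing.
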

Before proceeding with the proof of this Lemma, we note that Proposition \ref{prop:iledCaseReduction} is an immediate consequence of Lemmas \ref{l:case1}, \ref{l:case2}, and \ref{l:case3} with $\vartheta=0$.

Note also, we define $v_T$ below using a perturbation of $\Box_m$ rather than $P$ to ensure we can apply Lemma \ref{l:Linftyeasy} part 2 with no $T$ dependence. This is our replacement for the loss of time-reversal symmetry of $P$ outside $|x|>R_0$ as compared to \cite{Kofroth23}, as our $a$ need not be compactly supported.
\begin{proof}

    1) Suppose $Pu_1 \in LE^* +L^1_t L^2_x$ and $u_1[0] \in \dot{H}^1 \times L^2$ are both compactly supported in $\{|x| \leq 2R_0\}$.
    Let $N, \alpha, \chi_{[k,k+1]}$, and $w_k$ be as in Lemma \ref{l:case3pre} using $u_1$ in place of $u$. Note that from the same Lemma the $w_k$ are compactly supported. Now let $\ti{\Box}$ be a small asymptotically flat perturbation of $\Box_m$ such that $\ti{\Box}=\Box_g$ for $|x|>R_0$.
    Then let $v_T$ solve %\pk{defining $\ti{\Box}$ to not include the damping is one key way we avoid invoking compact support of $a$}\mm{It will help me to talk about this in our meeting - I do not remember how compact support of $a$ was previously used here. Was it that we took $R_0$ sufficiently large so that $a=0$?}
    \begin{equation}
        \begin{cases}
            \ti{\Box}v_T = P u_1 \\
            v_T[T]= u_1[T].
        \end{cases}
    \end{equation}
    Note that since $u_1[0]$ and $Pu_1$ are compactly supported in $\{|x| \leq R_0\}$, by finite speed of propagation $u_1[T]$ is compactly supported in $\{|x| \leq C(T+1)\}$. Again by finite speed of propagation $v_N(t)$ is compactly supported in $\{x<C(T+1)\}$ for $t \in [0,T]$.
    
    Now let $\psi \in \Cc((T/4, 2T):(0,1))$ have $\psi \equiv 1$ for $t \in [T/2,3T/2]$, and  $|\p_t^k \psi| \leq C T^{-k}$ for $k=1,2$. Then define 
    \begin{equation}
        w(t,x) = u_1(t,x) - \sum_{k=0}^N \chi_{[k,k+1]}(t) w_k(t,x) - \chi_{>2R_0}(x) \psi(t) v_T(t,x).
    \end{equation}
    Therefore to control the $LE^1$ norm of $u_1$ it is enough to control the $LE^1$ norm of $w$, $\chi_{>2R_0}(x) \psi(t) v_T$, and the $w_k$. 
    We will first control $v_T$. Then we will control $w$ using our assumed estimate. We will control the $w_k$ with Lemma \ref{l:case3pre}, then combine all these estimates to conclude. 

    2) To estimate $v_T$ we again apply Theorem \ref{thm:mt12}, and then Lemma \ref{l:Linftyeasy} part 2, to obtain for all $\e>0$
    \begin{align}
        \LEoT{v_T} &\leq C \left( \ltwo{\p  v_T(0)} + \LEsltxT{\ti \Box v_T} \right) \\
        &\leq C \left( \ltwo{\p v_T (T)} + \LEsltxT{\ti \Box v_T } + \e \LEoT{v_T}\right)
    \end{align}
     Note that the constant $C$ from Lemma \ref{l:Linftyeasy} does not depend on $T$ since $\ti{\Box}$ has no damping term $a$. Choosing $\e>0$ small enough to absorb the error term back into the left hand side and applying the definition of $v_T$ we obtain 
    \begin{align}
        \LEoT{v_T} &\leq C \left( \ltwo{\p u_1 (T)} + \LEsltxT{P u_1 } \right). \label{eq:case3vTLEo}
    \end{align}    
    We can similarly apply Theorem \ref{thm:mt12}, and then the definition of $\psi(t)$ and $v_T$ to obtain
    \begin{align}
        \LEoT{\chi_{>2R_0}(x) \psi(t) v_T} &\leq C \left( \ltwo{\p  (\chi_{>2R_0}(x) \psi(t) v_T)(0)} + \LEsltxT{\ti{\Box} (\chi_{>2R_0}(x) \psi(t) v_T)} \right) \\
        &\leq C \bigg(\LEsltxT{\chi_{>2R_0} P u_1} \\
        &\qquad \quad+ \LEsltxT{[\ti \Box, \chi_{>2R_0}(x)] \psi(t) v_T}\\
        &\qquad \quad+ \LEsltxT{\chi_{>2R_0}(x) [\ti \Box, \psi(t)] v_T} \bigg). \label{eq:case3vLEo}
    \end{align}
    Note that since $Pu_1$ is supported in $\{|x| \leq 2R_0\}$, the first term $\chi_{>2R_0} Pu_1 \equiv 0$. Now we estimate the two commutator terms on the right hand side. First note that $[\ti \Box,\chi_{>2R_0}]$ is compactly supported in $\{2 R_0 \leq |x|\leq 4 R_0\}$. In particular $\<x \> \leq CR_0$ on this set, and so by the definition of $LE^*$
    \begin{align}
        \LEsltxT{[\ti \Box, \chi_{>2R_0}(x)] \psi(t) v_T} &\leq \LEST{[\ti  \Box, \chi_{>2R_0}(x)] \psi(t) v_T} \\
        &\leq \sum_{j=0}^{\infty} \nm{\<x\>^{1/2} [\ti  \Box, \chi_{>2R_0}(x)] \psi(t) v_T}_{L^2_t L^2_x([0,T] \times A_j)} \\
        &\leq C \nm{[\ti  \Box, \chi_{>2R_0}(x)] \psi(t) v_T}_{L^2_t L^2_x[0,T]}.
    \end{align}
    Again using that  $[\ti \Box, \chi_{>2R_0}]$ is a first order space-time differential operator with compact support in $\{2R_0 \leq |x|\leq 4R_0\}$, we have $\<x\> \leq C R_0$ on the set and so 
    \begin{align}
        \nm{[\ti  \Box, \chi_{>2R_0}(x)] \psi(t) v_T}_{L^2_t L^2_x[0,T]} \leq &C \left( \nm{\p v_T}_{L^2_t L^2_x[0,T]} + \nm{\<x\>^{-1} v_T}_{L^2_t L^2_x[0,T]}\right) \\
        & \leq C \LEoT{v_T}. \label{eq:case3Pvcomm1}
    \end{align}
    On the other hand $\chi_{>2R_0}[\ti \Box, \psi(t)] v_T$ is supported in $\{|x|<C(T+1)\}$.  Therefore using the definition of the $LE^*$ norm and the fact that $\<x\> \leq C 2^j$ on $A_j$ we  have
    \begin{align}
        \LEsltxT{\chi_{>2R_0}(x)[\ti \Box, \psi(t)] v_T} &\leq \LEST{\chi_{>2R_0}[\ti  \Box, \psi(t)] v_T} \\
        &\leq \sum_{j=0}^{\infty} \nm{\<x\>^{1/2} [\ti \Box, \psi(t)] v_T}_{L^2_t L^2_x([0,T] \times A_j)}\\
        &\leq C \sum_{j=0}^{\log_2(C(T+1)) }2^{j} \nm{\<x\>^{-1/2} [\ti \Box, \psi(t)] v_T}_{L^2_t L^2_x([0,T] \times A_j)} \\
        &\leq C (T+1) \sup_{j \geq 0} \nm{\<x\>^{-1/2} [\ti \Box, \psi(t)] v_T}_{L^2_t L^2_x([0,T] \times A_j)},
        \label{eq:case3Pvcomm2inter}
        %&\leq C T^{1/2} \nm{[\ti \Box, \psi(t)] v_T}_{L^2_t L^2_x([0,T])}.
    \end{align}
    where we note that the $\log_2(C(T+1))$ comes from the number of $A_j =\{|x| \simeq 2^j\}$ contained in $\{|x| <C(R_0+T)\}$ and we use the geometric series partial sum formula to estimate 
    \begin{equation}
        \sum_{j=0}^{\log_2(C(T+1))} 2^j  = \frac{1-2^{\log_2(C(T+1))}}{1-2}\leq C(T+1).
    \end{equation}
    Recall that for $k=1,2$ we have $|\p_t^k \psi| \leq C T^{-k}$. Computing $[\ti \Box, \psi(t)]$ directly and using the asymptotic flatness of $\ti \Box$ we obtain 
    \begin{align}
        \LtxTAj{\<x\>^{-1/2} [\ti \Box, \psi(t)] v_T} \leq &T^{-1} \LtxTAj{\<x\>^{-1/2} \p v_T} \\
        &+ T^{-2} \LtxTAj{\<x\>^{-1/2} v_T}\\
        &+T^{-1} \LtxTAj{\<x\>^{-1/2} \<x\>^{-1} v_T}.
    \end{align}
    %\mm{I'm not sure that I agree. However, I think I'm able to arrive at the same conclusion nonetheless. $\tilde P$ can have first-order terms of the form $b^\alpha D_\alpha$. Computing that portion of the commutator yields $[b^\alpha D_\alpha, \psi(t)]v_T=[b^0 D_t, \psi(t)]v_T+[b^i D_i, \psi(t)]v_T=b^0D_t\psi(t)v_T$. So, the $D_t\psi$ only contributes a $T^{-1}$. I think we also need to incorporate the AF-smallness of $b^0$ to get the $|x|^{-1}$. If I calculated correctly, then we have that for $x\in\supp(\chi_{>2R_0})$ and $t\in[0,T]$, $$|b^0(t,x)|= \langle x\rangle^{-1}|\langle x\rangle b^0(t,x)|\leq C|x|^{-1}\sum_j\|\langle x\rangle b^0\|_{L^\infty([0,T]\times A_j)}\leq C\varepsilon|x|^{-1}.$$} \pk{7/29/25 I see the problem now and I agree that with your approach this is still okay. We have $\ti{P}=P$ on $|x|>R_0$ and our $P$ has first order terms like $(D_i g^{i0}) D_t$. Then via Asymptotic flatness of $g^{i0}$ we have $|(D_i g^{i0})| \leq C |x|^{-1}$} 
    %\pk{We need the full $T^{-2}$ on the second term to insert the $|x|^{-1}$ due to the support location of $v_T$.}
    Then since $v_T$ is supported in $\{|x| < C(T+1)\}$ we can write $1=T\<x\>^{-1}$ in the second term to obtain
    \begin{align}
        \LtxTAj{\<x\>^{-1/2} \ti  [\Box, \psi(t)] v_T} \leq C& T^{-1} \LtxTAj{\<x\>^{-1/2} \p v_T} \\
        &+C T^{-1} \LtxTAj{\<x\>^{-1/2} \<x\>^{-1} v_T}.
    \end{align}
    Plugging this back into \eqref{eq:case3Pvcomm2inter} and applying the definition of $LE^1$ we have 
    \begin{align}
        \LEsltxT{\chi_{>2R_0}(x)[\ti \Box, \psi(t)] v_T} \leq C \LEoT{v_T}.\label{eq:case3Pvcomm2}
    \end{align}
    Then, combining together \eqref{eq:case3vLEo}, \eqref{eq:case3Pvcomm1}, and \eqref{eq:case3Pvcomm2}, and then applying \eqref{eq:case3vTLEo} we have 
    \begin{align}
        \LEoT{\chi_{>2R_0} \psi(t) v_T}& + \LEoT{v_T}  \\
        &\leq C \left( \LEST{\ti{\Box}(\chi_{>2R_0} \psi(t) v_T)} + \LEoT{v_T}\right) \\
         &\leq C \left(\LEsltxT{P u_1}+ \LEoT{v_T} \right) \label{eq:case3vLEopenultimate}\\
        &\leq       
        C \left( \ltwo{\p u_1 (T)} + \LEsltxT{P u_1}\right)\\
        &\leq C \left( \ltwo{\p u_1(0)} + \e^{-1} \LEsltxT{Pu_1} + \e \LEoT{u_1} \right), 
        \label{eq:case3vLEofinal}
    \end{align}
    where the final inequality follows by Lemma \ref{l:Linftyeasy} part 1.

    3) At this point it is also convenient for us to estimate the $LE^*$ norm of $P(\chi_{>2R_0}(x) \psi(t) v_T)$. Since $\ti \Box = \Box_g=P-a\p_t$ for $|x| > R_0$
    \begin{align}
        P(\chi_{>2R_0}(x) \psi(t) v_T) &= \ti{\Box} (\chi_{>2R_0}(x) \psi(t) v_T) + a \p_t (\chi_{>2R_0}(x) \psi(t) v_T)\\
        &=\ti{\Box} (\chi_{>2R_0}(x) \psi(t) v_T) + a \chi_{>2R_0}(x)\psi(t)\p_t v_T+ a \chi_{>2R_0}(x) (\p_t \psi(t))v_T . \label{eq:case3AFdamp1}
    \end{align}
    We can use \eqref{eq:case3vLEopenultimate} to control the first term, so we focus on the second and third.
    To estimate the second term we use the definition of $LE^*$, compute directly, then use the asymptotic flatness of $a$ and the definition of $LE^1$
    \begin{align}
        \LEST{ a \chi_{>2R_0}(x)\psi(t)\p_t v_T} &\leq \LEST{\<x\>^{-1} \<x\> a \p_t v_T} \\
        &\leq \sum_{j=0}^{\infty} \nm{ \<x\> a \<x\>^{-1/2} \p_t v_T}_{L^2_t L^2_x([0,T] \times A_j)}\\
        &\leq \sum_{j=0}^{\infty} \nm{\<x\> a}_{L^{\infty}_{t,x}([0,T] \times A_j)} \nm{\<x\>^{-1/2} \p_t v_T}_{L^2_t L^2_x([0,T] \times A_j)}\\
        &\leq \nm{\<x\> a}_{l^1_j L^{\infty}_{t,x}([0,T] \times A_j)} \sup_{j \geq 0}\nm{\<x\>^{-1/2} \p_t v_T}_{L^2_t L^2_x([0,T] \times A_j)} \\
        &\leq C \LET{\p_t v_T} \leq C \LEoT{v_T}. \label{eq:case3AFdamp2}
    \end{align}
    To estimate the third term, we use that $|\p \psi| \leq C T^{-1}$ and apply the definition of $LE^*$ and the asymptotic flatness of the damping $a$, computing as above to obtain
    \begin{align}
        \LEST{a \chi_{>2R_0}(x) (\p_t \psi(t))v_T} &\leq CT^{-1} \LEST{\<x\>^{-1} \<x\> a v_T}\\
        &\leq CT^{-1} \LET{\<x\>^{-1/2} v_T}.
    \end{align}
    Now since $v_T$ is supported in $\{|x| \leq CT\}$ we have $T^{-1} \leq C\<x\>^{-1}$. Applying this and the definition of $LE^1$ we have
    \begin{align}
        \LEST{a \chi_{>2R_0}(x) (\p_t \psi(t))v_T} &\leq C \LET{\<x\>^{-1} \<x\>^{-1/2} v_T} \\
        &\leq C\LEoT{v_T}\label{eq:case3AFdamp3}
    \end{align}
    So now applying \eqref{eq:case3vLEopenultimate}, \eqref{eq:case3AFdamp2}, and \eqref{eq:case3AFdamp3} to estimate the terms in \eqref{eq:case3AFdamp1}, and then applying \eqref{eq:case3vLEofinal} we have 
    \begin{align}
        \LEST{P(\chi_{>2R_0}(x) \psi(t) v_T} &\leq C \left( \LEsltxT{Pu_1} + \LEoT{v_T} \right) \\
        &\leq C \left( \ltwo{\p u_1(0)} + \e^{-1} \LEsltxT{Pu_1} + \e \LEoT{u_1} \right). \label{eq:case3AFdampfinal}
    \end{align}
    4) Now we estimate the $LE^1[0,T]$ norm of $w$. To do so we will apply our assumed estimate, which requires  that $w$ has zero Cauchy data at $t=0$ and $t=T$, and that $Pw$ is compactly supported and in $LE^*$. 
    To see the Cauchy data is trivial, note that by the construction of $\chi_{[k,k+1]}(t)$ and $\psi(t)$ 
    \begin{align} 
        w[0]&=u_1[0]-\sum_{k=0}^N (\chi_{[k,k+1]} w_k)[0]-(\chi_{>2R_0} \psi v_T)[0]=u_1[0]-w_0[0]=0\\
        w[T]&=u_1[T]-\sum_{k=0}^N (\chi_{[k,k+1]} w_k)[T]-(\chi_{>2R_0}\psi v_T)[T] \\
        &=u_1[T]-\chi_{<2R_0}u_1[T]-\chi_{>2R_0} u_1[T]=0.
    \end{align}
    To see that $Pw$ is compactly supported in $\{|x| \leq C(T+1)\}$ for $t \in [0,T]$, note that by the construction of the $w_k$ and $\chi_k$
    \begin{align}
        Pw&= Pu_1- \sum_{k=0}^N \chi_{[k,k+1]}(t)P w_k - \sum_{k=0}^N [P,\chi_{[k,k+1]}(t)]w_k - P( \chi_{>2R_0}(x) \psi(t)  v_T) \\
        &=Pu_1- \sum_{k=0}^N \chi_{[k,k+1]}(t) \mathbbm{1}_{[k,k+1]}(t) Pu_1- \sum_{k=0}^N [P,\chi_{[k,k+1]}(t)]w_k - P( \chi_{>2R_0}(x) \psi(t)  v_T)\\
        &=- \sum_{k=0}^N [P,\chi_{[k,k+1]}(t)]w_k- P (\chi_{>2R_0}(x) \psi(t)  v_T). \label{eq:case2Pwdef}
    \end{align}
    Since the $w_k$ and $v_T$ are all compactly supported in $\{|x| \leq C(T+1)\}$, so is the right hand side. We now apply our assumed estimate to $w$ and obtain 
    \begin{align}
        \LEoT{w}&\leq C \left( \vartheta \LET{\<x\>^{-2} w} + \LEST{Pw} \right). \label{eq:case2assumptionTriangle}
    \end{align}
    To estimate the first term on the right hand side we use the triangle inequality, that $\<x\>^{-1} \leq 1$, and Lemma \ref{l:case3pre} parts 2 and 3, to see that for any $\e>0$ we have
    \begin{align}
        \LET{\<x\>^{-2} w} &\leq \LET{\<x\>^{-2} (u_1-\sum_k \chi_{[k,k+1]} w_k)} \\
        &\leq \LET{\<x\>^{-2}u_1} + \sum_k \LET{\<x\>^{-2} \chi_{[k,k+1]}  w_k} \\
        &\leq \LET{\<x\>^{-2}u_1}+ \sum_k \nm{w_k}_{LE^1[k-\alpha,k+1+\alpha]}\\
        &\leq \LET{\<x\>^{-2}u_1}+ C \sum_k \nm{\p w_k}_{L^{\infty}_{t}L^2_x[k-\alpha, k+1+\alpha]}\\
        & \leq \LET{\<x\>^{-2}u_1} + C \left( \ltwo{\p u_1(0)} + \e^{-1} \LEsltxT{Pu_1} + \e \LEoT{u_1} \right).
        \label{eq:case2x2final}
    \end{align}
    To estimate the $LE^*$ norm of $Pw$ we apply the triangle inequality and \eqref{eq:case2Pwdef}
    \begin{align}
        \LEST{Pw} &\leq \sum_{k=0}^N \LEST{[P,\chi_{[k,k+1]}(t)]w_k} + \LEST{P (\chi_{>2R_0}(x) \psi(t)  v_T)}.
    \end{align}
    We control the sum using Lemma \ref{l:case3pre} part 4 and we control the $v_T$ term using \eqref{eq:case3AFdampfinal}.
    Then for any $\e>0$ we have 
    \begin{equation}\label{eq:case2Pwfinal}
        \LEST{Pw}\leq C \left( \ltwo{\p u_1(0)} + \e^{-1} \LEsltxT{Pu_1} +\e \LEoT{u_1} \right).
    \end{equation}
    Combining \eqref{eq:case2x2final} and \eqref{eq:case2Pwfinal} we have for any $\e>0$
    \begin{equation}\label{eq:case2assumptionfinal}
        \LEoT{w} \leq C\left( \ltwo{\p u_1(0)} + \vartheta \LET{\<x\>^{-2} u_1} + \e^{-1} \LEsltxT{Pu_1} + \e \LEoT{u_1} \right).
    \end{equation}
    5) Now applying the triangle inequality, \eqref{eq:case3vLEofinal}, \eqref{eq:case2assumptionfinal}, and Lemma \ref{l:case3pre} parts 2 and 3, we have for all $\e>0$
    \begin{align}\label{eq:case2u1final}
        \LEoT{u_1} &\leq \LEoT{w} + \LEoT{\sum_k \chi_{[k,k+1]}(t) w_k} + \LEoT{\chi_{>2R_0}(x) \psi(t) v_T}\\
        &\leq C\left( \ltwo{\p u_1(0)} + \vartheta \LET{\<x\>^{-2} u_1} + \e^{-1} \LEsltxT{Pu_1} + \e \LEoT{u_1} \right).
    \end{align}
    Choosing $\e>0$ small enough, we can absorb the final term back into the left hand side and we are left with the desired inequality. 
\end{proof}

\subsection{Reduction to uniformly compactly supported solutions}
Before proceeding with the final case reduction, we quote an exterior estimate \cite[Proposition 3.2]{MST20}.
\begin{proposition}\label{prop:exteriorCase}
     If $P$ is asymptotically flat and $R \geq R_0$, then there exists $C>0$, such that for all $T>0$
     \begin{align}
         \nm{u}_{LE^1([0,T] \times \{R<|x|\})} \leq C \bigg( &\nm{\p u(0)}_{L^2) [0,T] \times \{R<|x|\})} + \nm{\p u(T)}_{L^2 ([0,T] \times \{R<|x|\})} \\
         &+ R^{-1} \nm{u}_{LE([0,T] \times \{\frac{R}{2}<|x|<2R\})} + \nm{Pu}_{LE^*([0,T] \times \{R<|x|\})}\bigg).
     \end{align}
\end{proposition}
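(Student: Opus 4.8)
The plan is to prove this by a Morawetz-type positive commutator estimate localized to the exterior region $\{|x|>R\}$; indeed this is essentially \cite[Proposition 3.2]{MST20}, and the point is that since $R\geq R_0$, on $\{|x|>R\}$ the metric $g$ is a small asymptotically flat perturbation of $m$, so a single radial multiplier suffices. First I would introduce a smooth cutoff $\chi=\chi_{>R/2}$, equal to $1$ on $\{|x|\geq R\}$ and supported in $\{|x|\geq R/2\}$, and work with $\chi u$, which satisfies $\Box_g(\chi u)=\chi\,\Box_g u+[\Box_g,\chi]u$; the commutator is a first-order operator supported in the annulus $\{R/2<|x|<R\}$ with top-order coefficients of size $O(R^{-1})$. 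Since $P=\Box_g+iaD_t$ with $a\geq0$ and $\nm{(g-m,a)}_{AF\geq R_0}\leq\textbf{c}$, the damping term enters the energy identity with a favorable sign and is otherwise absorbable as a small perturbation, so it suffices to prove the estimate with $P$ in place of $\Box_g$.

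The core computation uses the multiplier $Qv=f(r)\partial_r v+\tfrac{1}{r}f(r)v$, where $f$ is the slowly varying function from Proposition \ref{p:slowvaryf} for a fixed large $\sigma$, bounded above and below on $\{r>R_0\}$. I would compute $\mathrm{Re}\int_0^T\!\!\int_{\mathbb R^3}Pu\cdot\overline{Q(\chi u)}\,dx\,dt$ and integrate by parts. For the principal (Minkowski) part the resulting spatial bulk term is a positive quadratic form in $\partial u$ whose coefficients are bounded below using $f'(r)\gtrsim c_j2^{-j}f(r)$ on $|x|\simeq2^j$; together with the correction term $\tfrac1r f(r)u$ and a Hardy inequality on the exterior region, this controls $\nm{u}_{LE^1([0,T]\times\{|x|>R\})}^2$, including the $\langle x\rangle^{-1}u$ piece. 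The boundary terms in $t$ are bounded by $\nm{\p u(0)}_{L^2}^2+\nm{\p u(T)}_{L^2}^2$ on $\{|x|>R\}$ since $f$ is bounded. The source term is handled by Cauchy--Schwarz and Young's inequality, $|\langle Pu,Q(\chi u)\rangle|\lesssim\nm{Pu}_{LE^*}\nm{Q(\chi u)}_{LE}\lesssim\nm{Pu}_{LE^*}\nm{\chi u}_{LE^1}$, the $LE^1$ factor being absorbed. Finally, the contribution of $[\Box_g,\chi]u$, a first-order term with $O(R^{-1})$ top-order part supported in $\{R/2<|x|<2R\}$, produces precisely the term $R^{-1}\nm{u}_{LE([0,T]\times\{R/2<|x|<2R\})}$.

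It remains to absorb the error terms generated by $g-m$ and by the damping $a$. Each such term carries an extra factor bounded on $|x|\simeq2^j$ by the summable sequence $c_j$ (in particular by $\textbf{c}$), with the same symbolic structure as the main terms, so for the fixed small $\textbf{c}$ of Definition \ref{d:asymflat2} they are dominated by the positive bulk term; the damping in particular contributes $-2\int_0^T\!\!\int a|\partial_t u|^2$ to the energy balance and Morawetz commutator terms of size $O(\nm{\langle x\rangle a}_1)$, all favorable or absorbable. Dropping the cutoff $\chi$, which equals $1$ on $\{|x|>R\}$, yields the claimed inequality. I expect the main difficulty to be the bookkeeping of the boundary and error terms — specifically, verifying that the inner-boundary and commutator contributions genuinely collapse to the single term $R^{-1}\nm{u}_{LE([0,T]\times\{R/2<|x|<2R\})}$ with the stated gain in $R$, and that the asymptotically flat errors are controlled uniformly in $T$ by the smallness of $\textbf{c}$.
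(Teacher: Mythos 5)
The paper does not prove Proposition \ref{prop:exteriorCase}; it quotes it verbatim as \cite[Proposition 3.2]{MST20} with no further argument. So there is no internal proof to compare against, and the honest answer is that this statement can simply be cited. Your reconstruction of the Morawetz multiplier proof has the correct shape (slowly varying radial multiplier $Q=f(r)\partial_r+\tfrac1r f(r)$, small-AF absorption of $g-m$ and $a$ on $\{|x|>R_0\}$), which is indeed how \cite{MT12,MST20} proceed.

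Two cautions on the details. First, applying the multiplier to $\chi u$ and pairing $[\Box_g,\chi]u$ with $Q(\chi u)$ produces a bilinear form in $\partial u$ supported in the transition annulus; in $LE$ normalization this gives $\nm{\partial u}_{LE(\{R/2<|x|<2R\})}^2$ with \emph{no} $R^{-1}$ gain, not the claimed $R^{-1}\nm{u}_{LE}$. The cutoff in \cite{MST20} sits inside the multiplier, not on $u$, and the $R^{-1}$ gain requires a more careful rearrangement so that only zeroth-order terms survive on the annulus. You flag exactly this as the expected difficulty, and it is real; the sketch as written does not actually deliver the stated inequality without this rearrangement. Second, the remark that the damping enters the Morawetz identity ``with a favorable sign'' is inaccurate for the multiplier computation (only the $\partial_t u$ energy identity sees the damping with a sign); this does not matter here since $\nm{\langle x\rangle a}_{AF\geq R_0}\leq\mathbf{c}$ makes the term absorbable regardless, but the justification should say smallness, not sign.

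Given that the paper cites this as a black-box input, the appropriate move is the same citation; if you do want to reprove it, the commutator bookkeeping you correctly anticipate as delicate must be done in the form of \cite{MST20}, not via the $\chi u$ cutoff.
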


We make use of this proposition to prove the final case reduction. Namely, that it suffices to consider solutions which are supported within $\{|x|\leq 2R_0\}$ for all $t \in [0,T]$. We only apply this final lemma in the proof of Proposition \ref{prop:CaseReduction} and so we take $\vartheta=1$.
\begin{lemma}\label{l:case4}
     Assume that there exists $C>0$, such that for all $T>0$, and $u_2(t)$ supported in $\{|x| \leq 2R_0\}$ for $t \in [0,T]$, with $u_2[0]=u_2[T]=0$, $Pu_2 \in LE^*_c$,  we have 
    \begin{equation}
        \LEoT{u_2} \leq C \left( \LET{\<x\>^{-2} u_2 } + \LEST{Pu_2} \right).
    \end{equation}
    Then there exists $C>0$, such that for all $T>0$, all $w$ with $w[0]=w[T]=0$, and $Pw \in LE^*_c$ we have 
    \begin{equation}
        \LEoT{w} \leq C \left( \LET{\<x\>^{-2} w} + \LEST{Pw} \right).
    \end{equation}
\end{lemma}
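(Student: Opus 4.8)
The plan is to split $w$ with a spatial cutoff into an interior piece, handled by the hypothesized estimate, and an exterior piece, handled by Proposition~\ref{prop:exteriorCase}. Write $w=\chi_{<R_0}w+\chi_{>R_0}w=:u_2+w_{\mathrm{ext}}$. By the support properties of $\chi_{<R_0}$, the function $u_2$ is supported in $\{|x|\le 2R_0\}$ for all $t$, it inherits $u_2[0]=u_2[T]=0$ from $w$, and it satisfies $Pu_2=\chi_{<R_0}Pw+[P,\chi_{<R_0}]w$; since $[P,\chi_{<R_0}]$ is a first-order differential operator with coefficients supported in the annulus $\{R_0\le|x|\le 2R_0\}$ and $w\in LE^1[0,T]$, we get $Pu_2\in LE^*_c$. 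Applying the hypothesis to $u_2$ and using $|\chi_{<R_0}|\le 1$ gives $\LEoT{u_2}\le C(\LET{\<x\>^{-2}w}+\LEST{Pw}+\LEST{[P,\chi_{<R_0}]w})$.

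Next I would reduce the commutator term and the exterior piece $\LEoT{w_{\mathrm{ext}}}$ to $\nm{w}_{LE^1([0,T]\times\{R_0<|x|\})}$ plus a compactly supported error. Since the commutator output and $\nabla\chi_{>R_0}$ are both supported in the annulus, where $\<x\>\simeq 1$, one has $\LEST{[P,\chi_{<R_0}]w}\le C(R_0)\nm{w}_{LE^1([0,T]\times\{R_0<|x|\})}$ and $\LEoT{w_{\mathrm{ext}}}\le C\nm{w}_{LE^1([0,T]\times\{R_0<|x|\})}+C(R_0)\LET{\<x\>^{-2}w}$, the last error being admissible because on a compact annulus $\<x\>^{-1/2}\simeq\<x\>^{-5/2}$. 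Then I apply Proposition~\ref{prop:exteriorCase} with $R=R_0$; since $w[0]=w[T]=0$ the boundary energy terms vanish, leaving $\nm{w}_{LE^1([0,T]\times\{R_0<|x|\})}\le C(R_0^{-1}\nm{w}_{LE([0,T]\times\{R_0/2<|x|<2R_0\})}+\nm{Pw}_{LE^*([0,T]\times\{R_0<|x|\})})$. The first term again lives on a compact annulus, hence $\le C(R_0)\LET{\<x\>^{-2}w}$, and the second is $\le\LEST{Pw}$.

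Putting these together via $\LEoT{w}\le\LEoT{u_2}+\LEoT{w_{\mathrm{ext}}}$ yields $\LEoT{w}\le C(\LET{\<x\>^{-2}w}+\LEST{Pw})$, as required. There is no genuine analytic obstacle here — the content of the statement is already supplied by Proposition~\ref{prop:exteriorCase} together with the hypothesized interior estimate — so the one point that must be carried out carefully is the bookkeeping: arranging that the annulus produced by each commutator, together with the annulus appearing in the error term of Proposition~\ref{prop:exteriorCase}, all sit inside the single exterior region $\{|x|>R_0\}$, so that the exterior estimate closes the argument without ever reintroducing $\LEoT{w}$ on the right-hand side, and checking that every error term localized to a compact spatial annulus is dominated by $\LET{\<x\>^{-2}w}$.
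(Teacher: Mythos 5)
Your proposal is correct and takes essentially the same approach as the paper: the same spatial decomposition $w=\chi_{<R_0}w+\chi_{>R_0}w$, the hypothesis applied to the interior piece, and Proposition~\ref{prop:exteriorCase} for the exterior. The only difference is organizational — the paper invokes Proposition~\ref{prop:exteriorCase} twice (once on $w$ to absorb the commutator term $[P,\chi_{<R_0}]w$, once on $\chi_{>R_0}w$ for the exterior piece), whereas you funnel both the commutator and $\LEoT{\chi_{>R_0}w}$ through a single bound on $\nm{w}_{LE^1([0,T]\times\{R_0<|x|\})}$ before applying the exterior estimate once; this is a minor streamlining, not a different route.
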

Before proceeding with the proof we point out that Proposition \ref{prop:CaseReduction} is a direct consequence of successively applying Lemmas \ref{l:case1}, \ref{l:case2}, \ref{l:case3}, and \ref{l:case4} with $\vartheta=1$.
\begin{proof}
    Consider $w$ with $w[0]=w[T]=0$, and $Pw \in LE_c^*$, then write $w = \chilro w + \chigro w$. We will separately estimate these terms using the assumption and Proposition \ref{prop:exteriorCase}. 

    First, note that $\chilro w$ is supported in $\{|x| \leq 2R_0\}$ and satisfies $\chilro w[0] = \chilro w[T]=0$. Furthermore 
    \begin{equation}
        P \chi_{<R_0} w = \chi_{<R_0} Pw + [P, \chi_{<R_0}] w
    \end{equation}
    is compactly supported in $\{|x| \leq 2R_0\}$.  
    To estimate $P \chilro w$ in $LE^*$ we first compute directly via the triangle inequality
    \begin{align}
        \LEST{P \chilro w} \leq \LEST{Pw} + \LEST{[P,\chilro]w}. \label{eq:case4Pchilro1}
    \end{align}
    Now note $[P, \chilro]$ is a first order space-time differential operator with smooth coefficients, compactly supported in $\{R_0 \leq |x| \leq 2R_0\}$. 
    %Note that $[aD_t, \chilro]=0$, so the damping does not appear in this commutator. 
    Then using the definition of $LE^*$ and the compact spatial support of $\chi_{R_0<x<2R_0}$, we have  
    \begin{align}
        \LEST{[P,\chilro] w} &\leq C \left( \LEST{\chi_{R_0<x<2R_0} \p w} + \LEST{\chi_{R_0<x<2R_0} w} \right) \\
        &\leq C\left(\LET{\chi_{R_0<x<2R_0} \p w} + \LET{\chi_{R_0<x<2R_0} \<x\>^{-1} w} \right) \\
        &\leq C \nm{w}_{LE^1[0,T]\times \{R_0 \leq |x| \leq 2R_0\}}.
    \end{align}
    Now we apply Proposition \ref{prop:exteriorCase}, that $w[0]=w[T]=0,$ and use $\<x\>^{-2} \geq C$ on $|x| \leq 2R_0$, to estimate this $LE^1$ norm and obtain
    \begin{align}
        \LEST{[P,\chilro] w} 
        &\leq C \left( R_0^{-1} \nm{w}_{LE[0,T] \times \{\frac{R_0}{2} < |x| < 2R_0\}} + \nm{Pw}_{LE^*[0,T]\times \{R_0<|x|\}} \right) \\
        &\leq C \left( \LET{\<x\>^{-2} w}+ \LEST{Pw}\right).
    \end{align}
    Plugging this back into \eqref{eq:case4Pchilro1}, we have 
    \begin{equation}\label{eq:case4Pchilro2}
        \LEST{P \chilro w} \leq C \left( \LET{\<x\>^{-2} w} + \LEST{Pw}\right).
    \end{equation}
    Thus $P \chilro w \in LE^*$. Therefore we can apply our assumption and \eqref{eq:case4Pchilro2} to estimate 
    \begin{align}
        \LEoT{\chilro w} &\leq C \left( \LET{\<x\>^{-2} \chilro w} + \LEST{P \chilro w} \right) \\
        &\leq C\left(\LET{\<x\>^{-2} w} + \LEST{Pw}\right). \label{eq:case4smallR0}
    \end{align}
    
    We now estimate the $LE^1$ norm of $\chigro w$. Using Proposition \ref{prop:exteriorCase}, again noting $w[0]= w[T]=0$ and that $\<x\>^{-2} \geq C$ on $|x| \leq 2R_0$, we have
    \begin{align}
        \LEoT{\chigro w} \leq C \bigg(& R_0^{-1} \nm{\chigro w}_{LE[0,T] \times \{\frac{R_0}{2}<|x|<2R_0\}} \\
        &+ \nm{\chigro Pw}_{LE^*[0,T]\times\{R_0<|x|\}} \\&+\nm{[P,\chigro] w}_{LE^*[0,T]\times\{R_0<|x|\}}\bigg) \\
        \leq C \bigg( &\LET{\<x\>^{-2} w} + \LEST{Pw} +\nm{[P,\chigro] w}_{LE^*[0,T]} \bigg).
    \end{align}
    We can estimate $[P,\chigro]$ exactly as we estimated $[P,\chilro]$ to see 
    \begin{equation}
        \LEST{[P,\chigro] w} \leq C \left( \LET{\<x\>^{-2}w} + \LEST{Pw}\right).
    \end{equation}
    Therefore 
    \begin{equation}
        \LEoT{\chigro w} \leq C \left( \LET{\<x\>^{-2} w} + \LEST{Pw} \right).
    \end{equation}
    Combining this with \eqref{eq:case4smallR0} we obtain
    \begin{align}
        \LEoT{w} &\leq \LEoT{\chilro w} + \LEoT{\chigro} \\
        &\leq C \left( \LET{\<x\>^{-2} w} + \LEST{Pw} \right)
    \end{align}
    which is exactly the desired conclusion.
\end{proof}

\section{Propagation Argument}\label{s:propagation}
After applying the case reduction of Proposition \ref{prop:CaseReduction}, we arrive at.
\begin{proposition}\label{prop:main}
    To prove Theorem \ref{thm:highfreq}, it is enough to prove that there exists $C>0$, such that for all $T>0$ and $v(t)$ supported in $\{|x|\leq 2 R_0\}$ for all $t \in [0,T]$, with $v[0]=v[T]=0$, and $Pv \in LE^*_c$, we have
    \begin{equation}
        \nm{v}_{LE^1[0,T]} \leq C \left( \nm{v}_{L^2_t L^2_x[0,T]} + \nm{Pv}_{LE^*[0,T]} \right).
    \end{equation}
\end{proposition}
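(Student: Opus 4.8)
Proposition \ref{prop:main} itself requires essentially no argument: the displayed estimate is a verbatim copy of the hypothesis of Proposition \ref{prop:CaseReduction}, and we have already shown — by chaining Lemmas \ref{l:case1}, \ref{l:case2}, \ref{l:case3}, and \ref{l:case4} with $\vartheta=1$ — that this hypothesis implies the conclusion of Theorem \ref{thm:highfreq}. So the proof of Proposition \ref{prop:main} is simply to invoke Proposition \ref{prop:CaseReduction}. The real content of Section \ref{s:propagation} is the reduced estimate, and the plan for proving it is the following.

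First I would split $v$ by spatial frequency, $v=\chi_{|\xi|<\lambda}^w v+\chi_{|\xi|>\lambda}^w v$, with $\lambda\gg1$ a parameter to be fixed at the very end, and estimate $\nm{\cdot}_{LE^1[0,T]}$ of each piece separately. For the low spatial-frequency term I would run a second, time-frequency decomposition at scale $\sigma\lambda$ (a further parameter $\sigma\gg1$), exploiting that $v$ is supported in $\{|x|\le 2R_0\}$ so that $\nm{v}_{LE[0,T]}$ and $\LtxT{v}$ are comparable; the target output is an inequality of the shape $\LEoT{\chi_{|\xi|<\lambda}^w v}\le C\big(\sigma\lambda\,\LtxT{v}+(\sigma\lambda)^{-1}\LEST{Pv}+\sigma^{-1}\LEoT{v}\big)$, so that the last term is absorbable once $\sigma$ is large, at the price of a factor $\sigma\lambda$ on the harmless $L^2_tL^2_x$ term.

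The heart of the matter is the high spatial-frequency term, which I would handle by a positive commutator argument built on the escape function of Proposition \ref{p:EscapeFunction}. Quantizing the symbols $q\in S^1$ and $m\in S^0$ from that proposition and setting $Q=q^w-\tfrac{i}{2}m^w$, I would compute $\Im\<Pv,Qv\>$ in two ways. On one side, applying the Sharp G\r{a}rding inequality together with the pointwise lower bound $H_pq+2\kappa\tau a q+mp\gtrsim\mathbbm{1}_{|\xi|\ge1}\mathbbm{1}_{|\tau|\ge1}\<x\>^{-2-2\delta}(\tau^2+|\xi|^2)$ — with the $\<x\>^{-2-2\delta}$ weight swallowed into a constant since $|x|\le 2R_0$ on $\supp v$ — yields a lower bound $\gtrsim\nm{\chi_{|\xi|>\lambda}^w v}_{LE^1}^2-C(\lambda)\LtxT{v}^2-C(\lambda^{-1}+\rho^{-2})\LEoT{v}^2$. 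On the other side the expression is bounded by $C\LEST{Pv}\LEoT{v}+C(\lambda)\LtxT{v}^2+C\lambda^{-1/2}\LEoT{v}^2$; here one must carefully track the semiclassical error terms, in particular the contributions of $[aD_t,m^w]$ and of $(D_ta)q^w$ and $(D_ta)m^w$, which is exactly where the time dependence of the damping enters and where the $AF$ bound $\nm{\<x\>a}_1<\infty$ — and the uniformity in $t$ built into the escape function — are used.

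Finally I would combine the low- and high-frequency estimates, use Young's inequality to peel $\LEST{Pv}\LEoT{v}$ apart and absorb the small multiples of $\LEoT{v}$ back into the left-hand side, choosing $\sigma$ first and then $\lambda$ (and, if needed, $\rho$) large enough; what remains on the right is $\LtxT{v}$ with a now-fixed constant together with $\LEST{Pv}$, i.e. the desired conclusion. The main obstacle I anticipate is the positive commutator step — not the algebraic two-way computation of $\Im\<Pv,Qv\>$ itself, but verifying that every error term is genuinely either of $L^2_tL^2_x$-type or carries a negative power of $\lambda$ (or $\rho$), rather than merely being bounded by $C\LEoT{v}$, since only in the former case can it be absorbed. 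It is precisely this requirement that forces the delicate, $t$-uniform escape-function construction of Section \ref{s:escapeFunction} and a clean pseudodifferential treatment of the damping $a$.
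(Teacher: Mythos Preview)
Your proposal is correct and matches the paper's approach: the proposition itself is immediate from Proposition \ref{prop:CaseReduction}, and the reduced estimate is proved exactly as you outline --- a spatial-frequency split $v=v_{<\lambda}+v_{>\lambda}$, with the low piece handled via a secondary time-frequency split at scale $\sigma\lambda$ (Lemma \ref{l:lowfreq}) and the high piece via the positive-commutator argument built on the escape function (Lemma \ref{l:highfreq}), then combined by choosing $\lambda,\sigma$ large and $\varepsilon$ small. The only refinement your sketch omits is that the paper further splits $v_{>\lambda}$ into $|\tau|>1$ and $|\tau|<1$ pieces (Lemmas \ref{l:highprop} and \ref{l:highlow}), since the G\r{a}rding lower bound from Proposition \ref{p:EscapeFunction} lives only on $\{|\xi|\ge1,\ |\tau|\ge1\}$; the extra parameter you call $\rho$ does not actually appear in the final argument.
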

    We now prove this estimate using the escape function constructed in Section \ref{s:escapeFunction}. To begin, consider $v(t)$ supported in $\{|x|\leq 2R_0\}$ for all $t \in [0,T]$, and with $v[0]=v[T]=0$, and $Pv \in LE^*_c$.
    
    We extend $v$ by $0$ outside of $[0,T]$. Because of this, we have 
    \begin{align}
        \LEoT{v} = \LEo{v}, \quad \nm{v}_{L^2_tL^2_x[0,T]}=\Ltx{v}, \quad \LEST{Pv} = \LEs{Pv}.
    \end{align}
    Recall the cutoff notation from Section \ref{s:cutoffdef} and the definition of Weyl Quantization in Definition \ref{def:Pseudo}. For $\lambda \geq 1$, we define 
    \begin{equation}
        \vll = \chi_{|\xi|< \lambda}^w v, \quad \vgl = \chi_{|\xi|> \lambda}^w v, \quad \text{ so } v=\vll+\vgl.
    \end{equation}
    We estimate $v$ by estimating these low and high frequency pieces separately. This general approach is standard, see \cite[Section 2.6]{Kofroth23} and \cite[Section 4]{MST20}. However, working on $T^* \Rb^4$, rather than $T^* \Rb^3$ as in \cite{Kofroth23}, requires an additional step in the high frequency argument (Lemma \ref{l:highlow}). We also have some additional technicalities to handle $g^{00} \not \equiv -1$, and some additional error terms in our positive commutator argument due to the time-dependence of $a$.

    First, the low frequency estimate. 
    \begin{lemma}\label{l:lowfreq}
        There exists $C>0$, such that for all $\sigma \geq 1$, $T>0$, $\lambda \geq 1$ and $v$ supported in $\{|x| \leq 2R_0\}$ with $v[0]=v[T]=0$,
        \begin{align}
            %&\nm{\<x\>^{-1} \vll}_{LE[0,T]} \leq C \Ltx{v}\\
            \LEo{\vll} &\leq C \left( \sigma \lambda \Ltx{v}+ \frac{1}{ \sigma \lambda} \LEs{Pv} + \frac{1}{\sigma} \LEo{v} \right).
        \end{align}
    \end{lemma}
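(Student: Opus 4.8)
The plan is to decompose $\vll$ once more, now in \emph{time} frequency at scale $\sigma\lambda$: write $\vll=\vlsl+\vgll$ with $\vlsl=\chi_{|\tau|<\sigma\lambda}^w\vll$ and $\vgll=\chi_{|\tau|\geq\sigma\lambda}^w\vll$, and bound $\LEo{\vlsl}$ and $\LEo{\vgll}$ separately. It is convenient to first reduce to $\sigma\geq\sigma_0$ for a fixed constant $\sigma_0\geq1$ depending only on the bound $-g^{00}\geq c$ and the homogeneity constants of $\bpm$: indeed for $1\leq\sigma<\sigma_0$ one has $\sigma_0\lambda\leq\sigma_0\,\sigma\lambda$, $(\sigma_0\lambda)^{-1}\leq(\sigma\lambda)^{-1}$ and $\sigma_0^{-1}\leq\sigma^{-1}$, so the $\sigma=\sigma_0$ case of the lemma implies the general case with constant $C\sigma_0$.

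The low-time-frequency piece is handled by a crude bound. Since $\widehat{\vlsl}$ is supported in $\{|\xi|\lesssim\lambda,\ |\tau|\lesssim\sigma\lambda\}$ and $\sigma\lambda\geq\lambda\geq1$, every space-time derivative of $\vlsl$ costs at most $O(\sigma\lambda)$; combining $\LEo{w}\leq\Ltx{\p w}+\Ltx{w}$ with the $L^2$-boundedness of the Fourier multipliers involved gives $\LEo{\vlsl}\lesssim\sigma\lambda\,\Ltx{v}$.

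The high-time-frequency piece uses ellipticity. On $\supp\widehat{\vgll}\subset\{|\xi|<2\lambda,\ |\tau|>\sigma\lambda/2\}$ one has $|\xi|\ll|\tau|$ once $\sigma\geq\sigma_0$, so by the half-wave factorization $p=g^{00}(\tau-b^+)(\tau-b^-)$ with $|b^\pm|\simeq|\xi|$ and $-g^{00}\geq c$ the symbol obeys $|p|\gtrsim\tau^2\gtrsim\sigma\lambda\,|\tau|$ there; that is, $P$ is elliptic of order two on this set. A standard elliptic parametrix/inversion argument — absorbing the lower-order contributions $\Box_g-p^w$ and $iaD_t$, each bounded in $L^2$ by $\Ltx{\p\vgll}$ times a factor $\lesssim(\sigma\lambda)^{-1}\leq\frac12$ — then yields $\LEo{\vgll}\lesssim\frac{1}{\sigma\lambda}\Ltx{P\vgll}+\sigma\lambda\,\Ltx{v}$. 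Writing $P\vgll=\chi_{|\tau|\geq\sigma\lambda}^w\chi_{|\xi|<\lambda}^wPv+[P,\chi_{|\tau|\geq\sigma\lambda}^w\chi_{|\xi|<\lambda}^w]v$, the first term contributes $\frac{1}{\sigma\lambda}\Ltx{Pv}\leq\frac{1}{\sigma\lambda}\LEs{Pv}$. For the $\Box_g$ part of the commutator, stationarity of $g$ lets $\chi_{|\tau|\geq\sigma\lambda}^w$ commute through $\Box_g$, so it reduces to $[\Box_g,\chi_{|\xi|<\lambda}^w]=D_\alpha[g^{\alpha\beta},\chi_{|\xi|<\lambda}^w]D_\beta$ acting on $\chi_{|\tau|\geq\sigma\lambda}^wv$; since $[g^{\alpha\beta},\chi_{|\xi|<\lambda}^w]$ is a pseudodifferential operator of order $-1$ supported where $|\xi|\simeq\lambda$ and commutes with $D_t$, the spatial derivatives trade against the gained order, and using the equation to re-express $D_t^2 v$ together with $\supp v\subset\{|x|\leq2R_0\}$ (so $\Ltx{\p v}\lesssim\LEo{v}$) bounds this contribution in $L^2$ by $\lesssim\LEo{v}$, hence by $\frac{1}{\sigma}\LEo{v}$ after the $\frac{1}{\sigma\lambda}$ gain. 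Collecting the three estimates gives the lemma.

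The main obstacle is this last step, the commutator analysis — in particular showing that $[\Box_g,\chi_{|\xi|<\lambda}^w\chi_{|\tau|\geq\sigma\lambda}^w]v$, only ``order one'' in a naive count, is in fact controlled by $\LEo{v}$ once the $\frac{1}{\sigma\lambda}$ gain is taken into account; this relies crucially on stationarity of $g$ and on using the equation itself to trade second-order time derivatives. The genuinely new feature, which is why this lemma is not merely a citation of \cite{Kofroth23,MST20}, is the time dependence of the damping: $[iaD_t,\chi_{|\tau|\geq\sigma\lambda}^w\chi_{|\xi|<\lambda}^w]v=[a,\chi_{|\tau|\geq\sigma\lambda}^w\chi_{|\xi|<\lambda}^w]D_t v$ no longer vanishes, producing a $\p_\tau\chi_{|\tau|\geq\sigma\lambda}^w\cdot\p_t a$ term that must be absorbed using the bound $\|\<x\>a\|_1<\infty$ from Definition \ref{d:asymptoticFlat1} (which controls $\p_t a$), keeping this term within the $\frac{1}{\sigma}\LEo{v}+\sigma\lambda\,\Ltx{v}$ budget.
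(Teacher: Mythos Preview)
Your decomposition $\vll=\vlsl+\vgll$ and crude bound on $\vlsl$ match the paper exactly. The difference is in how you handle $\vgll$. You apply $P$ to $\vgll$ and then face the commutator $[P,\chi_{|\tau|\geq\sigma\lambda}^w\chi_{|\xi|<\lambda}^w]v$; the paper never does this. Instead, after the Plancherel step
\[
\LE{\p\vgll}\leq C\lambda\Ltx{v}+\frac{C}{\sigma\lambda}\Ltx{\chi_{|\xi|<\lambda}^w(\p_t^2 v)},
\]
the paper uses the equation directly on $v$ (not on $\vgll$): it writes $g^{00}D_t^2 v=Pv-(g^{0j}D_j+D_jg^{0j})D_tv-D_ig^{ij}D_jv-iaD_tv$ and bounds each term after the spatial cutoff $\chi_{|\xi|<2\lambda}^w$. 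The $\tau$-cutoff has already been removed by Plancherel, so no commutator with $\chi_{|\tau|\geq\sigma\lambda}^w$ ever appears. In particular, the damping enters only as $\Ltx{\chi_{|\xi|<2\lambda}^w(aD_tv)}\leq\lp{a}{\infty}\Ltx{\p v}\leq C\LEo{v}$, which is completely trivial and uses nothing about $\p_t a$.

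So what you flag as the ``main obstacle'' and the ``genuinely new feature'' (the $[a,\chi_{|\tau|\geq\sigma\lambda}^w]$ commutator) are artifacts of your detour through $P\vgll$; the paper's route sidesteps them entirely. Your approach can be made to work, but the elliptic-parametrix step and the commutator analysis you sketch require more care than you give them (for instance, bounding $D_t[g^{00},\chi_{|\xi|<\lambda}^w]D_t v$ forces you to invoke the equation again, producing a term $[g^{00},\chi^w](g^{00})^{-1}D_ig^{ij}D_jv$ of order one whose $\lambda$-dependence must be tracked), and the reduction to $\sigma\geq\sigma_0$ is unnecessary once you follow the paper's route, which works uniformly for $\sigma\geq1$.
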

    Second, the high frequency estimate.
    \begin{lemma}\label{l:highfreq}
        There exist $C>0,$ and  $C(\lambda)>0$, depending on $\lambda$, such that for all $T>0$, $\e>0$, $\lambda \geq 1$ and $v$ supported in $\{|x| \leq 2R_0\}$ with $v[0]=v[T]=0$
        \begin{align}
        \nm{v_{>\lambda}}_{LE^1_{\leq 2R_0}} \leq C \left( C(\lambda) \Ltx{v} + \left(\frac{1}{\lambda} + \frac{1}{\e}\right) \LEs{Pv}+\left(\frac{1}{\lambda^{1/4}}  + \e  \right) \LEo{v} \right).
        %\LEo{v_{>\lambda}} \leq& C(\lambda) \left( \LEs{Pv}^{\frac{1}{2}}\LEo{v}^{\frac{1}{2}} + \Ltx{v}\right) + C \frac{1}{\lambda} \LEs{Pv} \\
        %&+ C\left(\frac{1}{\lambda^{\frac{1}{2}}} + \frac{1}{\rho}\right) \LEo{v}.
        \end{align}
    \end{lemma}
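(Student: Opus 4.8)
The plan is to prove this estimate by a positive commutator argument built on the escape function of Proposition \ref{p:EscapeFunction}, rescaled to the dyadic frequency scale $\lambda$. First I would take the symbols $q=\tau\tilde q_0+\tilde q_1\in S^1(\Trf)$, $m\in S^0(\Trf)$ and the constant $\kappa\geq1$ produced by Proposition \ref{p:EscapeFunction}, and dilate them in $(\tau,\xi)$ by $\lambda$, so that they are supported in $\{|\xi|\geq\lambda,\ |\tau|\geq\lambda\}$, have symbol seminorms uniform in $\lambda$ under the rescaling, and satisfy
\begin{equation*}
H_pq+2\kappa\tau aq+pm\;\geq\;C\,\mathbbm{1}_{|\xi|\geq\lambda}\mathbbm{1}_{|\tau|\geq\lambda}\,\langle x\rangle^{-2-2\d}(\tau^2+|\xi|^2).
\end{equation*}
Since $v$ is supported in $\{|x|\leq 2R_0\}$, the weight $\langle x\rangle^{-2-2\d}$ is comparable to a constant on $\supp v$, so the exact power of $\langle x\rangle$ is irrelevant. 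I would then set $Q:=q^w-\tfrac i2 m^w\in\Psi^1(\Rb^4)$. Since $q$ and $m$ vanish on $\{|\tau|<\lambda\}$, where $p$ is non-characteristic, the portion of $\vgl$ supported there is controlled directly by an elliptic estimate contributing $\lesssim\lambda^{-1}\LEs{Pv}$ plus lower order terms; only the portion near $\characteristicsetofP$ requires the commutator argument.

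The core step is to compute $2\Im\langle Pv,Qv\rangle$ in two ways. Expanding $P=\Box_g+iaD_t$ and regrouping the $aD_t$ and $(D_ta)$ contributions yields the identity
\begin{align*}
2\Im\langle Pv,Qv\rangle&+\tfrac{i\kappa}{2}\langle[aD_t,m^w]v,v\rangle-\kappa\langle(D_ta)q^wv,v\rangle+\tfrac{i\kappa}{2}\langle(D_ta)m^wv,v\rangle\\
&=\langle i[\Box_g,q^w]v,v\rangle+\kappa\langle(q^waD_t+aD_tq^w)v,v\rangle+\tfrac12\langle(\Box_gm^w+m^w\Box_g)v,v\rangle.
\end{align*}
The right-hand side is the Weyl quantization of $H_pq+2\kappa\tau aq+pm$ modulo an operator of order $\leq1$; combining the lower bound above with the Sharp G\r{a}rding inequality (Proposition \ref{prop:Garding}), and estimating both the G\r{a}rding error and the lower order remainder by $C(\lambda)\Ltx{v}^2+C\lambda^{-1/2}\LEo{v}^2$ (via $\Ltx{v}\LEo{v}\leq C(\lambda)\Ltx{v}^2+\lambda^{-1/2}\LEo{v}^2$), I obtain
\begin{equation*}
(\mathrm{RHS})\;\geq\;C\nm{\vgl}_{LE^1_{\leq 2R_0}}^2-C(\lambda)\Ltx{v}^2-C\lambda^{-1/2}\LEo{v}^2.
\end{equation*}
On the left-hand side, $|\langle Pv,Qv\rangle|\leq\LEs{Pv}\,\nm{Qv}_{LE}\leq C\LEs{Pv}\LEo{v}$ because $q\in S^1$, $m\in S^0$ map $LE^1\to LE$ boundedly (using the spatial localization of $v$) and $Qv$ is microlocalized at frequencies $\gtrsim\lambda$; the remaining three terms, which involve $(D_ta)$ and the commutator $[aD_t,m^w]$ and have no analogue in the stationary theory of \cite{Kofroth23,MST20}, are bounded by $C(\lambda)\Ltx{v}^2+C\lambda^{-1/2}\LEo{v}^2$ using the asymptotic-flatness bounds on the derivatives of $a$ from Definition \ref{d:asymptoticFlat1} together with the frequency localization of $q^wv$ and $m^wv$.

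Combining the two inequalities gives $\nm{\vgl}_{LE^1_{\leq 2R_0}}^2\lesssim\LEs{Pv}\LEo{v}+C(\lambda)\Ltx{v}^2+\lambda^{-1/2}\LEo{v}^2$ (adding back the elliptic contribution). Taking square roots, distributing the root over the sum, and applying Young's inequality in the form $\LEs{Pv}^{1/2}\LEo{v}^{1/2}\leq\tfrac1{2\e}\LEs{Pv}+\tfrac\e2\LEo{v}$ produces
\begin{equation*}
\nm{\vgl}_{LE^1_{\leq 2R_0}}\leq C\Bigl(C(\lambda)\Ltx{v}+\bigl(\tfrac1\lambda+\tfrac1\e\bigr)\LEs{Pv}+\bigl(\lambda^{-1/4}+\e\bigr)\LEo{v}\Bigr)
\end{equation*}
after relabeling $C(\lambda)$, which is the claim. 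The main obstacle is the control of the three error terms created by the time-dependence of $a$ --- the multiplications by $D_ta$ composed with $q^w$ and $m^w$, and the commutator $[aD_t,m^w]$ --- which must each be shown to contribute only a $\lambda^{-1/2}$ multiple of $\LEo{v}^2$ (hence absorbable once $\lambda$ is taken large in the final argument) plus a $\lambda$-dependent multiple of the harmless $\Ltx{v}^2$. This is the source of the $\lambda^{-1/4}$ loss in the statement, and it is precisely why the escape function of Section \ref{s:escapeFunction} is constructed on $\Trf$ with estimates uniform in $t$.
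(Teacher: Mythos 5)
The rescaling step at the start of your proposal introduces a genuine error. You dilate $q$ and $m$ so that they are supported in $\{|\xi|\geq\lambda,\ |\tau|\geq\lambda\}$, and then assert that the complement $\{|\tau|<\lambda\}$ (intersected with the support of $\vgl$, i.e.\ $|\xi|>\lambda$) is non-characteristic, so that it can be dispatched by an elliptic estimate. This is false. By \eqref{eq:xibhomog}, on $\characteristicsetofP\cap\{|\xi|>\lambda\}$ we have $|\tau|=|\bpm(x,\xi)|\geq|\xi|/C>\lambda/C$ for some uniform $C>1$, not $|\tau|\geq\lambda$. Thus the shell $\{\lambda/C\leq|\tau|<\lambda,\ |\xi|>\lambda\}$ meets the characteristic set, yet lies outside the support of your rescaled $q_\lambda$, $m_\lambda$. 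The commutator quadratic form gives no information there, and an elliptic estimate is also unavailable because it is precisely characteristic. So there is a frequency shell of $\vgl$ that neither half of your argument controls.

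The paper does not rescale the escape function at all. It keeps $q,m$ as produced by Proposition \ref{p:EscapeFunction}, supported in $\{|\xi|\geq1,\ |\tau|\geq1\}$, and instead \emph{weakens} the positivity conclusion to the smaller set $\{|\xi|\geq\lambda,\ |\tau|\geq1\}$ before applying sharp G\r{a}rding (see the step in the proof of Lemma \ref{l:highprop} where $|\xi|\geq1$ is replaced by $|\xi|\geq\lambda$ in the indicator). Accordingly, $\vgl$ is split at $|\tau|=1$ (not at $|\tau|=\lambda$) into $\vglgl$ and $\vlgl$. The positive-commutator argument (Lemma \ref{l:highprop}) then controls $\vglgl$, and the remaining piece $\vlgl$ has $|\tau|<1$, which \emph{is} genuinely non-characteristic relative to $|\xi|>\lambda\geq1$; it is handled by Lemma \ref{l:highlow}, which is an elliptic-type estimate using $g^{ij}D_iD_j$ and the equation. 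To salvage your proposal you would need to move the $\tau$-cutoff to $|\tau|<c\lambda$ for a suitably small fixed $c$ and rescale the escape function accordingly, being careful about the factors of $\lambda$ this introduces into the symbol seminorms and the positivity constant (if $q_\lambda(\zeta)=q(\zeta/\lambda)$ the lower bound picks up an unwanted $\lambda^{-2}$; you need $q_\lambda(\zeta)=\lambda q(\zeta/\lambda)$ and $m_\lambda(\zeta)=m(\zeta/\lambda)$). Even then the paper's route is cleaner, since it avoids rescaling entirely and the $|\tau|<1$ region is unambiguously elliptic. Beyond this gap, your estimates for the $(D_ta)$ and $[aD_t,m^w]$ error terms and the final Young's-inequality steps essentially match Lemmas \ref{l:high1} and the concluding combination, modulo that in the paper the $[aD_t,m^w]$ and $(D_ta)m^w$ terms are bounded simply by $C\Ltx{v}^2$ (no $\lambda$ gain), with the $\lambda^{-1/2}\LEo{v}^2$ contribution coming only from the $(D_ta)q^w$ term.
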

    Before proving these Lemmas, we show how they combine to prove Proposition \ref{prop:main} and thus Theorem \ref{thm:highfreq}. 
    \begin{proof}[Proof of Proposition \ref{prop:main}]
    Since $v$ is supported in $\{|x| \leq 2R_0\}$ and by the triangle inequality
    \begin{equation}
        \LEo{v}  = \nm{v}_{LE^1_{\leq 2R_0}} \leq \nm{\vgl}_{LE^1_{\leq 2R_0}}+\LEo{\vll}.
    \end{equation}
    Then by Lemmas \ref{l:lowfreq} and \ref{l:highfreq}, for all $\e>0$
    \begin{align}
        \LEo{v} \leq C \bigg( &(\sigma\lambda+C(\lambda)) \Ltx{v} + \left(\frac{1}{\sigma\lambda} + \frac{1}{\lambda} + \frac{1}{\e}\right) \LEs{Pv} + \left(\frac{1}{\sigma}+\frac{1}{\lambda^{\frac{1}{4}}}+\e  \right) \LEo{v}\bigg).
    \end{align}
    Now we absorb the $\LEo{v}$ terms back into the left hand side, by taking $\lambda$ and $\sigma$ large enough, and $\e$ small enough, obtaining the desired inequality.
    \end{proof}
    Before proceeding with the proofs, we state some basic facts relating norms and two useful consequences of Plancherel's theorem relating norms under various frequency cutoffs.
    \begin{lemma} \label{l:plancherel}
    \begin{enumerate}
        \item For all $u \in L^2_t L^2_x(\Rb \times \Rb^3)$
    \begin{equation}
        \LE{u} \leq \Ltx{u},
    \end{equation}
        and for all $f \in LE^*$
    \begin{equation}
        \Ltx{f}\leq \LEs{f}.
    \end{equation}
    Furthermore there exists $C>0$, such that for all $u$ supported in $\{|x| \leq 2R_0\}$, then 
    \begin{equation}
        \Ltx{u} \leq C \LE{u}.
    \end{equation}
    
    \item There exists $C>0$, such that for all $v$ supported in $\{|x|\leq 2 R_0\}$
        \begin{align}
            &\LE{\<x\>^{-1} v} + \LE{\<x\>^{-1} \vgl}+\LE{\<x\>^{-1} \vll} \leq C \Ltx{v}.
        \end{align}
    \item There exists $C>0$, such that for all $v$ supported in $\{|x| \leq 2R_0\}$, $\vggl=\chi_{|\xi|+|\tau| >\frac{\lambda}{2}}^w v$, and any $\alpha \in \Rb$ we have
        \begin{align}
            %&\Ltx{\vggl} \leq C \lambda^{-1} \LEo{v}\\ 
            &\nm{\vggl}_{H^{1-\alpha}_{t,x}} \leq C\lambda^{-\alpha} \LEo{v}.
            %&\nm{\vggl}_{H^1_{t,x}} \leq C \LEo{v}.
        \end{align}
    \end{enumerate}
    \end{lemma}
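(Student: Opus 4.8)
The proof of Lemma \ref{l:plancherel} collects three elementary facts, each of which reduces to Plancherel's theorem together with the Sobolev embedding $L^2\subset L^2$ and the compact support assumption; I would dispatch them in order.

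\textbf{Part (1).} The first two inequalities are immediate from the definitions: since $\<x\>^{-1/2}\leq 1$ and $A_j$ tiles $\Rb^3$ (with bounded overlap), for any $u$ we have $\|\<x\>^{-1/2}u\|_{L^2_tL^2_x(\Rb_+\times A_j)}\leq \|u\|_{L^2_tL^2_x}$, and taking the sup over $j$ gives $\LE{u}\leq\Ltx{u}$; similarly $\LEs{f}=\sum_j\|\<x\>^{1/2}f\|_{L^2_tL^2_x(\Rb_+\times A_j)}\geq \|\<x\>^{1/2}f\|_{L^2_tL^2_x}\geq\Ltx{f}$ since $\<x\>^{1/2}\geq 1$. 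For the reverse inequality under the support assumption: if $u$ is supported in $\{|x|\leq 2R_0\}$, then only finitely many $A_j$ (those with $2^{j-1}\leq \<x\>\leq 2^{j+1}$ meeting $\{|x|\leq 2R_0\}$, say $j\leq j_0(R_0)$) contribute, and on each such $A_j$ we have $\<x\>^{-1/2}\simeq 1$ with constants depending only on $R_0$; hence $\Ltx{u}^2=\sum_{j\leq j_0}\|u\|^2_{L^2_tL^2_x(A_j)}\leq C(R_0)\sum_{j\leq j_0}\|\<x\>^{-1/2}u\|^2_{L^2_tL^2_x(A_j)}\leq C(R_0)(j_0+1)\LE{u}^2$.

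\textbf{Part (2).} Here the point is that $\chi_{|\xi|<\lambda}^w$ and $\chi_{|\xi|>\lambda}^w$ are bounded on $L^2_tL^2_x$ uniformly in $\lambda$ (they are Weyl quantizations of symbols bounded with all derivatives, uniformly in $\lambda\geq 1$), but they do not preserve compact spatial support. So I would argue: $\LE{\<x\>^{-1}v}\leq\LE{v}\leq\Ltx{v}$ by part (1) and $\<x\>^{-1}\leq 1$; for the pieces, since $\<x\>^{-1}\leq 1$ everywhere, $\LE{\<x\>^{-1}\vgl}\leq\Ltx{\<x\>^{-1}\vgl}\leq\Ltx{\vgl}=\|\chi_{|\xi|>\lambda}^w v\|_{L^2_tL^2_x}\leq C\Ltx{v}$, using the first inequality of part (1) in the first step (valid since no support hypothesis is needed there as $\<x\>^{-1/2}\leq 1$ absorbs the weight—more precisely $\LE{\<x\>^{-1}w}\leq\Ltx{\<x\>^{-1}w}\leq\Ltx{w}$ for \emph{any} $w$), and $L^2$-boundedness of the cutoff. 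The $\vll$ term is identical. Summing gives the claim.

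\textbf{Part (3).} Write $\vggl=\chi_{|\xi|+|\tau|>\lambda/2}^w v$. On the frequency support of this cutoff we have $\<(\tau,\xi)\>\gtrsim\lambda$, so $\chi_{|\xi|+|\tau|>\lambda/2}(\tau,\xi)\leq C\lambda^{-\alpha}\<(\tau,\xi)\>^{\alpha}\cdot\<(\tau,\xi)\>^{-\alpha}\chi_{|\xi|+|\tau|>\lambda/2}(\tau,\xi)$; more usefully, the operator $\<D_{t,x}\>^{1-\alpha}\chi_{|\xi|+|\tau|>\lambda/2}^w$ has symbol bounded by $C\lambda^{-\alpha}\<(\tau,\xi)\>$ (uniformly in $\lambda$, as a symbol in $S^1$ times $\lambda^{-\alpha}$), hence by Calderón–Vaillancourt / standard pseudodifferential $L^2$-boundedness, $\|\vggl\|_{H^{1-\alpha}_{t,x}}=\|\<D_{t,x}\>^{1-\alpha}\vggl\|_{L^2_{t,x}}\leq C\lambda^{-\alpha}\|\<D_{t,x}\>v\|_{L^2_{t,x}}\leq C\lambda^{-\alpha}(\|\p v\|_{L^2_{t,x}}+\|v\|_{L^2_{t,x}})$. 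Finally, since $v$ is supported in $\{|x|\leq 2R_0\}$, part (1) and part (2) give $\|v\|_{L^2_{t,x}}\leq C\LE{v}\leq C\LEo{v}$ and $\|\p v\|_{L^2_{t,x}}\leq C\LE{\p v}\leq C\LEo{v}$ (using the Poincaré-type bound $\|v\|_{L^2}\leq C\|\nabla v\|_{L^2}$ on the fixed-radius support to handle the $\<x\>^{-1}v$ versus $\p v$ bookkeeping in $LE^1$), so $\|\vggl\|_{H^{1-\alpha}_{t,x}}\leq C\lambda^{-\alpha}\LEo{v}$.

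The only mildly delicate point—the ``main obstacle'' such as it is—is bookkeeping the uniformity in $\lambda$ of the pseudodifferential bounds in parts (2) and (3): one must check that $\chi_{|\xi|<\lambda}$, $\chi_{|\xi|>\lambda}$, $\chi_{|\xi|+|\tau|>\lambda/2}$ lie in a bounded subset of $S^0$ (resp.\ that $\lambda^{-\alpha}\<(\tau,\xi)\>^{1-\alpha}$ times these cutoffs lies in a bounded subset of $S^0$) independently of $\lambda\geq 1$, which follows from the scaling structure $\chi(\cdot/\lambda)$ and the chain rule since each $\xi$-derivative produces a factor $\lambda^{-1}\leq 1$. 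Everything else is the triangle inequality and the observation that the fixed compact spatial support makes $\<x\>\simeq 1$ and lets one freely convert between $L^2_{t,x}$, $LE$, and $LE^1$ norms at the cost of constants depending on $R_0$.
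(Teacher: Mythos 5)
Your proof is correct and follows essentially the same route as the paper: all three parts reduce to Plancherel together with the finite overlap of the $A_j$ on $\{|x|\leq 2R_0\}$. The only cosmetic difference is that you invoke Calder\'on--Vaillancourt in parts (2)--(3) where the paper simply observes that $\chi_{|\xi|>\lambda}^w$ and $\chi_{|\xi|+|\tau|>\lambda/2}^w$ are Fourier multipliers and reads off the $L^2$ bound directly from Plancherel (so the ``uniformity in $\lambda$'' you flag as delicate is immediate), and in part (3) you use a Poincar\'e inequality to absorb the $\|v\|_{L^2}$ contribution of $\<D_{t,x}\>$ where the paper instead keeps the frequency cutoff $\chiggl$ one step longer so that $\<(\tau,\xi)\>\simeq|(\tau,\xi)|$ on its support.
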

    \begin{proof}
    1) Since $\<x\>^{-1/2} \leq 1$, by definition of $LE$ we have
    \begin{equation}
        \LE{u} = \sup_{j \geq 0} \nm{\<x\>^{-1/2} u }_{L^2_t L^2_x(\Rb_+\times A_j)} \leq \sup_{j \geq 0} \nm{u }_{L^2_t L^2_x(\Rb_+\times A_j)} \leq \Ltx{u}.
    \end{equation}
    Similarly, we have $\<x\>^{1/2} > 1$, and so
    \begin{align}
        \Ltx{f}\leq \sum_{j=0}^{\infty} \nm{f}_{L^2_t L^2_x(\Rb_+ \times A_j)} \leq \sum_{j=0}^{\infty} \nm{\<x\>^{1/2}f}_{L^2_t L^2_x(\Rb_+ \times A_j)} =\LEs{f}.
    \end{align}
    When $u$ is supported in $\{|x|\leq 2R_0\}$, there exists $c>0$ such that $\<x\>^{-1/2} \geq c$ in $\{|x| \leq 2R_0\}$ and so 
    \begin{align}
        \Ltx{u} &\leq \sum_{j=0}^{N} \nm{u}_{L^2_t L^2_x(\Rb_+ \times A_j)} \leq \frac{1}c \sum_{j=0}^N \nm{\<x\>^{-1/2}u}_{L^2_t L^2_x(\Rb_+ \times A_j)} \\
        &\leq C \sup_{j \geq 0} \nm{\<x\>^{-1/2} u}_{L^2_t L^2_x(\Rb_+ \times A_j)}= C\LE{u}.
    \end{align}
    
        2) By definition of $LE$, since $\<x\>^{-1} \leq 1$
        \begin{equation}
            \LE{\<x\>^{-1} v} =\sup_{j \geq 0} \nm{\<x\>^{-3/2}v}_{L^2_t L^2_x(\Rb \times A_j)} \leq \Ltx{v}.
        \end{equation}
        To control the second term, apply the first, and then Plancherel's theorem in $(t,x)$ 
        \begin{equation}
            \LE{\<x\>^{-1} \vgl} \leq \Ltx{\vgl}= \Ltauxi{\chi_{>\lambda}(\xi) \hat{v}(\tau, \xi)} \leq \Ltauxi{\hat{v}}=\Ltx{v}.
        \end{equation}
        An analogous proof controls the third term.

        3) Again applying Plancherel's theorem 
        \begin{align}
            \nm{\vggl}_{H^{1-\alpha}_{t,x}} &\leq \Ltauxi{\<(\tau,\xi)\>^{1-\alpha} \chiggl \hat{v}} \leq C \lambda^{-\alpha}\Ltauxi{\<(\tau,\xi)\> \chiggl \hat{v}} \\
            &\leq C \lambda^{-\alpha}\Ltauxi{\<(\tau,\xi)\> \hat{v}} \leq C \lambda^{-\alpha} \Ltx{\p v} \leq C \lambda^{-\alpha} \LE{\p v}\leq C \lambda^{-\alpha} \LEo{v},
        \end{align}
        where the second to last inequality follows by part 1) by the compact support of $v$.
        
        %%%% Fill in details of last three here, from details on Kofroth23 pg 27 (2.21), (2.22)
    \end{proof}

    \subsection{Proof of low frequency propagation estimate: Lemma \ref{l:lowfreq}}
    We now prove the low frequency propagation estimate. We do so by considering high and low $\tau$ frequencies separately. It is straightforward to estimate the low $\tau$ term using Plancherel's theorem. Estimating the high $\tau$ term uses microlocal analysis and takes up most of the proof. 
    \begin{proof}
        [Proof of Lemma \ref{l:lowfreq}]
        If we assume
        \begin{align}\label{eq:lowFreqintermed}
            \LE{\p \vll} &\leq C\left( \sigma \lambda \Ltx{v} + \frac{1}{ \sigma\lambda} \LEs{Pv} + \frac{1}{\sigma} \LEo{v} \right),\\
        \end{align}
        the desired inequality is an immediate consequence of the definition
    \begin{equation}
        \LEo{\vll} = \LE{\p \vll} + \LE{\<x\>^{-1} \vll}
    \end{equation}
        and Lemma \ref{l:plancherel} part 2 applied to $\LE{\<x\>^{-1} \vll}$. 
        
        So, it remains to prove \eqref{eq:lowFreqintermed}. To do so, we write for $\sigma \geq 1$
        \begin{align}
            \vlsl = \chi^w_{|\xi|<\lambda}\chi^w_{|\tau|<\sigma \lambda}v,
            \qquad\vgll = \chi^w_{|\xi|<\lambda}\chi^w_{|\tau|>\sigma\lambda}v, \qquad \vll = \vlsl  +\vgll .
        \end{align}
 %       \mm{I feel like this is a basic question I should know the answer to but I don't. Is it true that $\vll$ has compact support? Certainly $\vll+\vgl$ does, but what about each term individually? If not, then in \eqref{eq:lowlow}, the first $C$ should be $1$ since you are using the very first inequality from Lemma \ref{l:plancherel} part 1.} \pk{Fourier multipliers are not local operators, so we do not have control over the support of $\vll$. For an example of non-locality consider the Dirac delta and a Fourier multiplier $e^{-\xi^2}$. The Fourier transform of the Dirac delta is 1, then multiplying by $e^{-\xi^2}$ we have exactly $e^{-\xi^2}$. Taking the inverse Fourier transform of this we get back something like $e^{-x^2}$. The dirac delta has compact support, but $e^{-x^2}$ does not.} 
 By Lemma \ref{l:plancherel} part 1, and Plancherel's theorem, there exists $C>0$ such that 
        \begin{align}
        \begin{split}
            \LE{\p \vlsl}&\leq C\Ltx{\p \vlsl} 
            \\
            &\leq C \Ltauxi{(|\tau|+|\xi|) \chi_{|\xi|<\lambda}\chi_{|\tau|<\sigma\lambda} \hat{v}} \leq C \sigma \lambda \Ltx{v}.\label{eq:lowlow}
        \end{split}
        \end{align}
        On the other hand, again by Lemma \ref{l:plancherel} part 1, and Plancherel's theorem, there exists $C>0$ such that 
        \begin{align}
            \LE{\p \vgll} &\leq C\Ltx{\p \vgll} \leq C \Ltauxi{(|\tau|+|\xi|) \chi_{|\xi|<\lambda} \chi_{|\tau|>\sigma\lambda} \hat{v}}\\
            &\leq C \Ltauxi{|\xi| \chi_{|\xi|< \lambda} \hat{v}}+C\Ltauxi{\frac{\tau^2}{\sigma \lambda} \chi_{|\xi|<\lambda}\chi_{|\tau|>\sigma \lambda} \hat{v}}\\
            &\leq C \lambda \Ltx{v}+\frac{C}{\sigma\lambda} \Ltx{\chi_{|\xi|<\lambda}^w (\p_t^2v)}, \label{eq:lowhigheq}
        \end{align}
        noting that $\tau, \chi_{|\xi|< \lambda},$ and $\chi_{|\tau|>\sigma \lambda}$ are all Fourier multipliers, so they commute with each other. It remains to estimate the second term on the right hand side. To do so we write $\p_t^2 v$ in terms of $Pv$, and then estimate error terms. 

        To proceed we first write
        \begin{align}
            \Ltx{\chi_{|\xi| < \lambda}^w (\p_t^2 v)}&=\Ltx{\chi_{|\xi|<\lambda}^w \left(\frac{-g^{00}}{-g^{00}} \p_t^2 v\right)} \\
            &\leq C \Ltx{(g^{00})^{-1}\chi_{|\xi|<\lambda}^w \left(g^{00}D_t^2 v\right)} + \Ltx{[\chi_{|\xi| <\lambda}^w, (g^{00})^{-1}] (g^{00}D_t^2 v)}\\
            &\leq C \Ltx{\chi_{|\xi|<\lambda}^w \left(g^{00}D_t^2 v\right)} + \Ltx{[\chi_{|\xi| <\lambda}^w, (g^{00})^{-1}] \chi_{|\xi| <2\lambda}^w (g^{00}D_t^2 v)} \\
            & \quad + \Ltx{R_{-\infty} (g^{00} D_t^2 v)}
        \end{align}
        where to obtain the second inequality we used $g^{00}$ is bounded, and wrote $1= 1-\chi^w_{|\xi|<2 \lambda}+\chi^w_{|\xi|<2 \lambda}$ and $R_{-\infty} = [\chi_{|\xi| <\lambda}^w, (g^{00})^{-1}] (1-\chi^w_{|\xi| < 2\lambda})$. Note that $R_{-\infty} \in \Psi^{-\infty}$ by Proposition \ref{prop:pseudoCalc}, because the principal symbols of the commutator and $(1-\chi^w_{|\xi|<2\lambda})$ have non-overlapping support.
        To estimate this error term, we can commute $g^{00}$ with $D_t^2$ because $g$ does not depend on $t$, and then use that $R_{-\infty} D_t^2 \in \Psi^{-\infty}$ is bounded on $L^2$ by Proposition \ref{prop:pseudoBounded} to obtain 
        \begin{equation}
            \Ltx{R_{-\infty} (g^{00} D_t^2 v) } = \Ltx{R_{-\infty} D_t^2 g^{00}v } \leq C\Ltx{g^{00} v} \leq C \Ltx{v}. 
        \end{equation}
        Note that since $g^{00} \geq -C$ and by asymptotic flatness, $(g^{00})^{-1} \in S^0(\Trf)$, so by Proposition \ref{prop:pseudoCalc} we have $[\chi^w_{|\xi|<\lambda}, (g^{00})^{-1}] \in \Psi^{-1}(\Rb^4)$. Furthermore, by Proposition \ref{prop:pseudoBounded} the commutator is bounded on $L^2$, therefore we have 
        \begin{equation}\label{eq:lowdteq0}
            \Ltx{\chi_{|\xi| < \lambda}^w (\p_t^2 v)} \leq C \left( \Ltx{\chi_{|\xi|<2 \lambda}^w \left(g^{00}D_t^2 v\right)} + \Ltx{v} \right).
        \end{equation}        
        Now we use that $P=D_{\alpha} g^{\alpha \beta} D_{\beta} + i a D_t$, the triangle inequality, and that $g$ commutes with $D_t$ since it does not depend on $t$, to write
        \begin{align}
            \Ltx{\chi_{|\xi|<2\lambda}^w (g^{00} D_t^2v)} &\leq \Ltx{\chi_{|\xi|<2\lambda}^w(Pv)}+ \Ltx{\chi_{|\xi|<2\lambda}^w((g^{0j}D_j+D_j g^{0j}) D_t v)} \\
            &\quad + \Ltx{\chi_{|\xi|<2\lambda}^w(D_i g^{ij}D_j v)} + \Ltx{\chi_{|\xi|<2\lambda}^w(aD_t v)} \label{eq:lowdteq}.
        \end{align}
        Using that the frequency cutoff is $L^2$ bounded, and applying Lemma \ref{l:plancherel} part 1 we have
        \begin{equation}
            \Ltx{\chi_{|\xi|<2\lambda}^w (Pv)} \leq \Ltx{Pv} \leq \LEs{Pv} \label{eq:lowP}.
        \end{equation}
        Arguing in the same way and using that $v$ is supported in $\{|x| \leq 2R_0\}$ to apply Lemma \ref{l:plancherel} part 1, we have
        \begin{equation}
            \Ltx{\chi_{|\xi|<2\lambda}^w (aD_tv)} \leq \Ltx{aD_t v} \leq \lp{a}{\infty} \Ltx{\p v} \leq C \LE{\p v} \leq C \LEo{v} .\label{eq:lowdamp}
        \end{equation}
        In order to estimate the metric terms, we would like to use the frequency cutoff to $|\xi|<2\lambda$ to control the spatial derivatives $D_j$. To do so we must commute $\chi^w_{|\xi|<2\lambda}$ and the metric. First, note that $g^{\alpha j}, (D_j g^{\alpha j}) \in S^0(\Trf)$ for all $\alpha \in \{0,1,2,3\}$ and $j \in {1,2,3}$. Therefore by Proposition \ref{prop:pseudoCalc}
        \begin{equation}
            [\chi^w_{|\xi|<2 \lambda}, g^{\alpha j}] \in \Psi^{-1}(\Rb^4), \quad [\chi^w_{|\xi|<2\lambda}, (D_j g^{\alpha j})] \in \Psi^{-1}(\Rb^4).
        \end{equation}
        %Note in this equation $D_j g^{\alpha j}$ really means $(D_j g^{\alpha j})$, not $D_j(g^{\alpha j} v) = D_j g^{\alpha j} v + g^{\alpha_j} D_j v.$ This product rule behavior is accounted for elsewhere. 
        Furthermore, by Proposition \ref{prop:pseudoBounded}, these commutators are bounded on $L^2_t L^2_x$ with constants independent of $\lambda$% \mm{Is it clear that the operator norms on the commutators are uniformly bounded in $\rho,\lambda$? Not exactly clear to me however I will assume this is clear and move forward. Same question for the error term $R_{-2}$}. 
        %\pk{Looking at Proposition \ref{prop:pseudoBounded} 
      %  we see that the operator norm is controlled by the constants $C_{\alpha \beta}$ coming from the symbol estimates in Definition \ref{def:Symbol}. Another way of writing this would be  
      %  \begin{equation}
     %       \| \Op^w(a)\|_{L^2 \ra L^2} \leq C \sup_{|\alpha|, |\beta| \leq N} \sup_{z ,\zeta \in \Rb^4} |D_z^{\beta} D_{\zeta}^{\alpha} a(z,\zeta) | (1+|\zeta|)^{|\alpha|-m}.
     %   \end{equation}
      %  Notice that the $\chi_{|\xi|<\lambda}^w$ have symbol
    %    \begin{equation}
     %       \chi\left(\frac{|\xi|}{\lambda} \right),
      %  \end{equation}
    %    so differentiation of $\chi$ produces negative powers of $\lambda$. Since we are taking nd $\lambda$ large we can just bound these $(\lambda)^{-1} \leq 1$. The remainder $R_{-2}$ is built from $\chi$ and $g^{\alpha j}$, or $D_j g^{\alpha j}$ in a way that preserves this $\lambda$ dependence and differentiation behavior. It's probably worth putting a remark somewhere in this section alluding to this fact. It is also used in the high frequency estimate.
     %   }
        
        So using Proposition \ref{prop:pseudoCalc} to commute $\chi^w_{|\xi|<\lambda}$ with the metric in $D_i g^{ij} D_j$,  we have that there exists $R_{-2} \in \Psi^{-2}$ such that 
        \begin{align}
            &\Ltx{\chi^w_{|\xi|<2\lambda} ( (  ( D_i g^{ij}) D_j + g^{ij} D_i D_j)v) }\\
                        \leq& \Ltx{(D_ig^{ij}) \chi^w_{|\xi|< 2\lambda}(D_j v)} 
                        + \Ltx{g^{ij}\chi^w_{|\xi|< 2\lambda}(D_i D_j v)}\\
            &+ \Ltx{[\chi^w_{|\xi|<2\lambda}, D_i g^{i j}] \chi^w_{|\xi|< 4\lambda} (D_j v)}+\Ltx{[\chi^w_{|\xi|<2\lambda}, g^{i j}]\chi^w_{|\xi|< 4\lambda} (D_i D_j v)}\\
            &+\Ltx{R_{-2} D_j v} + \Ltx{R_{-2} D_i D_j v},
        \end{align}
        where the additional $\chi^w_{|\xi|< 4\lambda}$ can be multiplied to the right of the commutators, because the principal symbol of the commutator is supported on $|\xi|<4\lambda$. Note also by Proposition \ref{prop:pseudoBounded} $R_{-2} D_j$ and $R_{-2} D_i D_j$ are bounded on $L^2_t L^2_x$. Therefore 
        \begin{align}
            \Ltx{\chi_{|\xi|<2\lambda}^w (D_i g^{ij} D_j v)} &\leq C \left( \Ltx{\chi^w_{|\xi|< 4\lambda}(D_j v)}+ \Ltx{\chi^w_{|\xi|< 4\lambda}(D_i D_jv)}+\Ltx{v}\right) \\
            &\leq C \left(\Ltauxi{\xi \chi_{|\xi|<4\lambda} \hat{v}}+ \Ltauxi{|\xi|^2 \chi_{|\xi|<4\lambda} \hat{v}} + \Ltx{v} \right)\\
            &\leq C\left(1 + \lambda +  \lambda^2 \right) \Ltx{v} \leq  C \lambda^2 \Ltx{v}. \label{eq:lowcom1}
        \end{align}
        Similarly, using Proposition \ref{prop:pseudoCalc} to commute $\chi_{|\xi|<\lambda}^w$ with the metric in $g^{0j}D_j +D_j g^{0j}$, there exists $R_{-2} \in \Psi^{-2}$ such that 
        \begin{align}
            \Ltx{\chi_{|\xi|<2\lambda}^w((g^{0j}D_j +D_j g^{0j}) D_tv)} &\leq \Ltx{D_j g^{0j}\chi^w_{|\xi|< 2\lambda}(D_t v)} + \Ltx{g^{0j} \chi^w_{|\xi|< 2\lambda}(D_j D_t v)}\\
            &+ \Ltx{[\chi_{|\xi|<2\lambda}^w, D_j g^{0j}]D_t v}+\Ltx{[\chi^w_{|\xi|<2\lambda}, g^{0j}]\chi^w_{|\xi|< 4\lambda}(D_j D_tv)}\\
            &+\Ltx{R_{-2} D_t v}+\Ltx{R_{-2} D_j D_t v}.
        \end{align}
        Where again, the additional $\chi^w_{|\xi|<4\lambda} $ can be multiplied to the right of the commutator, because the principal symbol is supported on $|\xi| <4\lambda$. Note also, by Proposition \ref{prop:pseudoBounded} $R_{-2} D_t$ and $R_{-2} D_j D_t$ are bounded on $L^2_t L^2_x$.

        Using this and the $L^2$ boundedness of the commutators and $D_j g^{0j}, g^{0j}$ we have
        \begin{align}
            \Ltx{\chi_{|\xi|<2\lambda}^w ((g^{0j}D_j +D_j g^{0j}) D_tv} &\leq C \bigg( \Ltx{D_t v}+ \Ltx{\chi^w_{|\xi|< 4\lambda}(D_j D_t v)} +\Ltx{v} \bigg)\\
            &\leq C \bigg( \Ltx{\p v} + \Ltauxi{ \xi \chi_{|\xi| < 4 \lambda}\widehat{(D_t v)}}+\Ltx{v}\bigg) \\
            &\leq C \bigg(\lambda\Ltx{\p v} + \Ltx{v} \bigg) \leq C\lambda \LE{\p v}+C \Ltx{v} \label{eq:lowcom2},
        \end{align}
        where in the final inequality we used Lemma \ref{l:plancherel} part 1 and that $v$ is supported in $\{|x| \leq 2R_0\}$.
        
        Now applying \eqref{eq:lowP}, \eqref{eq:lowdamp}, \eqref{eq:lowcom1}, and \eqref{eq:lowcom2} to \eqref{eq:lowdteq0} and \eqref{eq:lowdteq}, we have 
        \begin{equation}
            \Ltx{\chi_{|\xi|<\lambda}^w (\p_t^2v)} \leq C \lambda^2 \Ltx{v}+ \LEs{Pv}+ C \lambda \LE{\p v} .
        \end{equation}
        Plugging this into \eqref{eq:lowhigheq} gives
        \begin{equation}
            \LE{\p \vgll} \leq  C\lambda \Ltx{v} + \frac{C}{\sigma \lambda} \LEs{Pv} + \frac{C}{\sigma}\LE{\p v}.
        \end{equation}
        Combining this with \eqref{eq:lowlow}, the full low frequency contribution is 
        \begin{align}
            \LE{\p v_{<\lambda}} &\leq \LE{\p \vgll} + \LE{\p \vlsl} \\
            &\leq C\left( \sigma \lambda \Ltx{v} + (\sigma\lambda)^{-1} \LEs{Pv} + \sigma^{-1}\LEo{v}\right),
        \end{align}
        as desired. 
        
    \end{proof}
    \subsection{Proof of high frequency estimate: Lemma \ref{l:highfreq}}
    We now prove the high frequency estimate. We begin by explaining the positive commutator approach we use. %Recall that $v$ has been extended to be $0$ outside of $(0,T)$. 
    Let $q \in S^1, m \in S^0$ and $\kappa \geq 1$ be as in Lemma \ref{p:EscapeFunction}.
     Then, letting $Q=q^w-\frac{i}{2}m^w \in \Psi^1(\Rb^4)$ we compute $P^*Q-Q^*P$ in two different ways. First, using adjoints and complex conjugates 
    \begin{equation}
        \<i(P^*Q-Q^*P)v,v\>_{L^2_t L^2_x}=2\Im\<Pv,Qv\>_{L^2_t L^2_x} = 2 \Im \int_{\Rb^4} Pv \overline{Qv} dt dx.
    \end{equation}
    Now, note that by Proposition \ref{prop:pseudoAdjoint} 
    \begin{align}
          Q^*=q^{w}+\frac{i}{2} m^w,\quad P^* = \Box_g-i \kappa a D_t - i \kappa (D_t a).
    \end{align}
    Using these to compute $P^* Q-Q^*P$ directly and then rearranging we obtain
    \begin{align}
        2 \Im&\<Pv,Qv\> + \frac{i \kappa}{2}\<[aD_t, m^w]v,v\> + \frac{i \kappa}{2} \<(D_t a) m^w v,v\>-\kappa \<(D_t a) q^w v,v\> \\
        &= \<i[\Box_g, q^w]v,v\> + \kappa \<(q^w aD_t + aD_t q^w)v,v\> 
        + \frac{1}{2}\<(\Box_g m^w + m^w \Box_g)v,v\>.\label{eq:poscommhigh}
    \end{align}
    The idea for the proof is to bound the left hand side from above by $\Ltx{v}^2+\LEs{Pv}\LEo{v}$ and to bound the right hand side from below by $\nm{v_{>\lambda}}_{LE^1_{\leq 2R_0}}^2$ minus errors. We make this idea precise in the following three lemmas.

    First we bound the terms on the left hand side from above.
    \begin{lemma}\label{l:high1}
    There exists $C(\lambda)>0,$ such that for all $T>0$
    \begin{align}
        &|\Im\<Pv,Qv\>| \leq C \LEs{Pv} \LEo{v} + C(\lambda) \Ltx{v}^2, \\
        &\left|\kappa \<(D_t a) q^w v,v \>\right| \leq C \lambda^{-\frac{1}{2}} \LEo{v}^2 + C(\lambda) \Ltx{v}^2, \\
        &\left|\frac{\kappa}{2}  \<[aD_t,m^w]v,v\>\right| +  \left|\frac{\kappa}{2}  \<(D_t a) m^w v,v \> \right|\leq C \Ltx{v}^2.
    \end{align}
    
    \end{lemma}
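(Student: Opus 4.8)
The plan is to bound each term on the left-hand side of \eqref{eq:poscommhigh} using only the $L^2$-boundedness and pseudolocality of the pseudodifferential calculus (Propositions \ref{prop:pseudoCalc}, \ref{prop:pseudoBounded}), the duality pairing between $LE$ and $LE^*$, and the compact spatial support of $v$. Since $v$ is extended by $0$ to all of $\Rb\times\Rb^3$, every bound is a mapping property on the full space, so all constants are automatically uniform in $T$. There is no serious obstacle here; the main points requiring care are (i) that $q^w v$ and $m^w v$ need not be compactly supported, yet must still have $LE$-norm controlled by $\LEo{v}$ and $\Ltx{v}$, and (ii) the extra commutator term produced by the $t$-dependence of $a$, which is the only structural difference from \cite{Kofroth23, MST20}.

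For the first estimate, $Q=q^w-\tfrac i2 m^w\in\Psi^1(\Rb^4)$, and I would first show $\LE{Qv}\leq C\LEo{v}$. Write $Qv=\chi_{<3R_0}Qv+(1-\chi_{<3R_0})Q\chi_{<2R_0}v$, using $v=\chi_{<2R_0}v$; the second operator sandwiches $Q$ between two cutoffs with disjoint supports, hence is smoothing by pseudolocality and maps $L^2_tL^2_x$ into a weighted Sobolev space, contributing $\leq C\Ltx{v}\leq C\LEo{v}$ to the $LE$ norm. On $\{|x|\leq 3R_0\}$ the $LE$ norm is comparable to $\Ltx{\cdot}$, and since $q\in S^1$ and $m\in S^0$ we get $\|Qv\|_{L^2_tL^2_x}\lesssim \nm{v}_{H^1_{t,x}}\lesssim \nm{\p v}_{L^2_tL^2_x}+\Ltx{v}\lesssim\LEo{v}$ by compact support and Lemma \ref{l:plancherel}. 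The $LE$--$LE^*$ duality pairing (an immediate consequence of Cauchy--Schwarz and the $\ell^1$--$\ell^\infty$ Hölder inequality applied to the dyadic decomposition) then gives $|\Im\<Pv,Qv\>|\leq|\<Pv,Qv\>|\leq\LEs{Pv}\,\LE{Qv}\leq C\LEs{Pv}\,\LEo{v}$.

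For the second estimate, $D_t a=\tfrac1i\p_t a$ is bounded on $\{|x|\leq 2R_0\}$ by asymptotic flatness, and $\|q^w v\|_{L^2_tL^2_x}\lesssim\nm{v}_{H^1_{t,x}}\lesssim\LEo{v}$ as above, so $|\kappa\<(D_t a)q^w v,v\>|\leq C\,\LEo{v}\,\Ltx{v}$. Young's inequality with a parameter proportional to $\lambda^{-1/2}$ (absorbing the fixed $\kappa$ into the constants) converts this into $C\lambda^{-1/2}\LEo{v}^2+C(\lambda)\Ltx{v}^2$.

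For the third estimate, expand $[aD_t,m^w]=a[D_t,m^w]+[a,m^w]D_t=a(D_t m)^w+[a,m^w]D_t$: the first summand is the bounded function $a$ times the order-$0$ operator $(D_t m)^w$, and the second is the order-$(-1)$ operator $[a,m^w]$ (using $a=a(t,x)$ and $m\in S^0$) composed with $D_t$, hence order $0$; both are bounded on $L^2_tL^2_x$ with norm controlled by finitely many finite seminorms of $a$ and $m$. Similarly $(D_t a)m^w$ is a bounded function times an $L^2$-bounded operator. Hence $|\tfrac\kappa2\<[aD_t,m^w]v,v\>|+|\tfrac\kappa2\<(D_t a)m^w v,v\>|\leq C\Ltx{v}^2$. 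The new term relative to the references is precisely $a(D_t m)^w$, which is harmless since it is still of order $0$.
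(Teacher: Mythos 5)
Your proof is correct, but it takes a genuinely different route from the paper's. The paper inserts the frequency cutoff $v_{\gg\lambda}=\chi^w_{|\xi|+|\tau|>\lambda/2}v$, collects the low-frequency remainder as a $\Psi^{-\infty}$ error of size $C(\lambda)\Ltx{v}^2$, and then exploits $\nm{v_{\gg\lambda}}_{H^{1-\alpha}_{t,x}}\leq C\lambda^{-\alpha}\LEo{v}$; for the second estimate it factors the pairing as $\<\<\p\>^{-1/2}(D_t a)q^w v_{\gg\lambda},\<\p\>^{1/2}v_{\gg\lambda}\>$ so the $\lambda^{-1/2}$ (in fact $\lambda^{-1}$) gain is a genuine consequence of the high-frequency localization. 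You instead avoid any frequency decomposition: you observe $\LE{Qv}\leq\Ltx{Qv}\leq C\nm{v}_{H^1_{t,x}}\leq C\LEo{v}$, where the last step uses the compact spatial support of $v$ together with the $\<x\>^{-1}$ weight in $\LEo{\cdot}$ and Lemma \ref{l:plancherel}, and you close the first estimate by Cauchy--Schwarz (or $LE$--$LE^*$ duality), yielding the sharper bound $C\LEs{Pv}\LEo{v}$ with no $C(\lambda)\Ltx{v}^2$ term at all. For the second estimate you get $\LEo{v}\Ltx{v}$ and manufacture the $\lambda^{-1/2}$ factor by Young's inequality with parameter $\lambda^{-1/2}$, dumping the complementary $\lambda^{1/2}$ into $C(\lambda)$; this is a cheap trick compared to the paper's genuine frequency gain, but it is legitimate precisely because the lemma statement allows an arbitrary $C(\lambda)$ in front of $\Ltx{v}^2$, and that constant is harmless downstream since $\lambda$ is fixed before the absorption in Proposition \ref{prop:main}. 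The third estimate is essentially the same in both (you supply the explicit commutator expansion, the paper cites the order count directly). Your route is shorter and avoids the $v_{\gg\lambda}$ bookkeeping; the paper's buys nothing extra here beyond the stronger, unused $\lambda^{-1}$ decay.
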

    To obtain our desired lower bound, we must split $\vgl$ further. Define 
    \begin{equation}
    \vglgl = \chi_{|\xi|>\lambda}^w \chi_{|\tau|>1}^w v, \qquad \vlgl=\chi_{|\xi|>\lambda}^w \chi_{|\tau|<1}^w v, \qquad \vgl = \vglgl + \vlgl.
    \end{equation}

%    \pk{We really do need this additional splitting. Because our escape function construction only gives us positivity on $\{|\xi| \geq 1, |\tau| \geq 1\}$ when we apply the Garding inequality we get a lower bound of $v$ frequency localized outside this plus shaped union of two strips. However we are looking for an estimate on $v$ frequency localized to $\{|\xi| \geq \lambda\}$, so we are missing the set $\{|\xi| \geq \lambda, |\tau| \leq 1\}$.}

    On one hand, we bound the right hand side of \eqref{eq:poscommhigh} from below by $\vglgl$ minus errors.
    \begin{lemma}
        \label{l:highprop}
        There exists $C>0, \rho_0>0, C(\lambda)>0$, such that for all $\rho \geq \rho_0$, and $T>0$
        \begin{align}
            \<i[\Box_g, q^w]v,v\> + \kappa \<(q^w aD_t + aD_t q^w)v,v\> + \frac{1}{2}\<(\Box_g m^w + m^w \Box_g)v,v\> \\
            \geq C \nm{\p \vglgl}_{LE_{\leq 2R_0}}^2 - C(\lambda) \Ltx{v}^2-C\lambda^{-1}\LEo{v}^2.
        \end{align}
    \end{lemma}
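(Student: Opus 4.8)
The plan is to realize the left-hand side as $b^w$ modulo lower-order errors, where $b:=H_pq+2\kappa\tau aq+pm\in S^2(\Trf)$ is exactly the escape-function symbol bounded below in Proposition \ref{p:EscapeFunction}, and then to extract positivity with the Sharp G\r{a}rding inequality, Proposition \ref{prop:Garding}. Fix $q\in S^1(\Trf)$, $m\in S^0(\Trf)$ and $\kappa\geq1$ as in Proposition \ref{p:EscapeFunction}. Since $q$ and $m$ are supported in $\{|\xi|\geq1,|\tau|\geq1\}$, so is $b$ (differentiation does not enlarge support), hence $b\geq0$ on all of $\Trf$ and $b(t,x,\tau,\xi)\geq C\<x\>^{-2-2\d}(\tau^2+|\xi|^2)$ on $\{|\xi|\geq1,|\tau|\geq1\}$. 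The parameter $\rho\geq\rho_0$ enters only through a spatial regularization of $q$ (recall $q_{out}^\pm$ from Lemma \ref{l:nontrapescape} has unbounded $x$-support), producing only errors $O(\rho^{-2}\LEo v^2)$; since $v$ is supported in $\{|x|\leq2R_0\}$ this point is inessential here.

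\emph{Symbolic reduction.} Write $\Box_g=p^w+\Op^w(r_1)+\Op^w(r_0)$ with $r_1\in S^1$ real and $r_0\in S^0$, which is possible since $\Box_g=D_\alpha g^{\alpha\beta}D_\beta$ is formally self-adjoint with real principal symbol $p$. Using the Weyl calculus (Propositions \ref{prop:pseudoCalc}, \ref{prop:pseudoAdjoint}) and $P^*=\Box_g-i\kappa aD_t-i\kappa(D_ta)$, one gets $i[\Box_g,q^w]=(H_pq)^w+E_1$, $\kappa(q^waD_t+aD_tq^w)=2\kappa(\tau aq)^w+E_2$ and $\tfrac12(\Box_gm^w+m^w\Box_g)=(pm)^w+E_3$, where $E_2\in\Psi^0(\Rb^4)$ while $E_1,E_3\in\Psi^1(\Rb^4)$, their order-one parts coming from $[\Op^w(r_1),q^w]$ and $\tfrac12\{\Op^w(r_1),m^w\}$. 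Hence the left-hand side of the lemma equals $\<b^wv,v\>+\<Ev,v\>$ with $E\in\Psi^1(\Rb^4)$.

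\emph{Localization and Sharp G\r{a}rding.} Pick $\theta\in\Cc(\Rb^3)$ with $\theta\equiv1$ on $\{|x|\leq2R_0\}$ and $\supp\theta\subset\{|x|\leq3R_0\}$. Since $\theta^w=M_\theta$ and $\theta v=v$, $\<b^wv,v\>=\<(\theta^2b)^wv,v\>+\<E'v,v\>$ with $E'\in\Psi^1$. On $\supp\theta$ one has $\<x\>^{-2-2\d}\geq c_{R_0}>0$, so treating separately the regions where the cutoffs $\chi_{|\xi|>\lambda},\chi_{|\tau|>1}$ vanish (where $b\geq0$ is enough) and where they are $\simeq1$ (where $b$ dominates $\tau^2+|\xi|^2$), one gets $\theta^2b\geq c\,b_0$ with $b_0:=\theta(x)^2\chi_{|\xi|>\lambda}(\xi)^2\chi_{|\tau|>1}(\tau)^2(\tau^2+|\xi|^2)$ and $\theta^2b-cb_0\geq0$ in $S^2$. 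Sharp G\r{a}rding gives $\<(\theta^2b)^wv,v\>\geq c\<b_0^wv,v\>-C\nm{v}_{H^{1/2}_{t,x}}^2$. Writing $b_0=\sum_{\mu=0}^3a_\mu^2$ with $a_\mu:=\theta\,\zeta_\mu\,\chi_{|\xi|>\lambda}\chi_{|\tau|>1}\in S^1$ real ($\zeta_0=\tau$, $\zeta_j=\xi_j$), using that $a_\mu^w$ is self-adjoint so $(a_\mu^2)^w=(a_\mu^w)^2+\Psi^0$, and that $a_\mu^wv=\theta\,D_\mu\vglgl+R_\mu v$ with $R_\mu\in\Psi^0$ ($D_0=D_t$), one obtains
\[
\<b_0^wv,v\>\ \geq\ c\sum_{\mu=0}^3\nm{\theta\,D_\mu\vglgl}^2-C\Ltx v^2\ \geq\ c\,\nm{\p\vglgl}_{L^2_tL^2_x(\{|x|\le2R_0\})}^2-C\Ltx v^2\ \geq\ c\,\nm{\p\vglgl}_{LE_{\leq2R_0}}^2-C\Ltx v^2,
\]
the last step because $\<x\>^{-1/2}\leq1$ and only finitely many shells $A_j$ meet $\{|x|\leq2R_0\}$.

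\emph{Error bookkeeping and main obstacle.} The $\Psi^0$ errors are $L^2$-bounded, hence $O(\Ltx v^2)$; by splitting $v$ at space-time frequency $\tfrac\lambda2$ and using Lemma \ref{l:plancherel} (parts 1, 3), $\nm v_{H^{1/2}_{t,x}}^2\leq C\lambda\Ltx v^2+C\lambda^{-1}\LEo v^2$. For the $\Psi^1$ errors $\<Ev,v\>$, $\<E'v,v\>$, write $v=v_{\mathrm{lo}}+v_{\mathrm{hi}}$ (space-time frequencies $\lesssim\lambda$, $\gtrsim\lambda$), pair the pieces so derivatives fall on the low-frequency factor, and apply Lemma \ref{l:plancherel} together with Young's inequality with parameter $\sim\lambda^{-1}$, giving $|\<Ev,v\>|+|\<E'v,v\>|\leq C(\lambda)\Ltx v^2+C\lambda^{-1}\LEo v^2$. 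Combining the three steps yields the lemma. I expect this last step to be the main technical obstacle: the positivity is handed to us cleanly by Proposition \ref{p:EscapeFunction}, but one must be careful that the $\Psi^1$ errors — in particular the contribution of the first-order part $r_1$ of the divergence-form operator $\Box_g$ — are absorbed into $C(\lambda)\Ltx v^2+C\lambda^{-1}\LEo v^2$ rather than into an unabsorbable $\e\LEo v^2$, which is precisely why the frequency split at scale $\lambda$ and the Young parameter $\sim\lambda^{-1}$ are needed.
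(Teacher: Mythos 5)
Your proposal is correct and follows the same overall strategy as the paper: rewrite the commutator expression as $(H_pq+2\kappa\tau aq+pm)^w$ plus errors, invoke Proposition~\ref{p:EscapeFunction} to get symbol positivity, apply Sharp G\r{a}rding, factor out the $LE_{\leq 2R_0}$ norm, and estimate errors by a frequency split at scale $\lambda$ with Lemma~\ref{l:plancherel}. Your route differs in two places. First, the paper pairs against $\vggl$ before applying Sharp G\r{a}rding, keeps the $\<x\>^{-4}$ weight, and afterwards factors the comparison symbol as $(\chitl^{1/2}\chi_{|\xi|>\lambda}^{1/2})^w D_\alpha\<x\>^{-4}D_\alpha(\chitl^{1/2}\chi_{|\xi|>\lambda}^{1/2})^w+R_1$; you instead insert a spatial cutoff $\theta$ immediately, build the frequency cutoffs into $b_0$, and exploit the sum-of-squares structure $b_0=\sum_\mu a_\mu^2$ so that $(a_\mu^2)^w=(a_\mu^w)^*(a_\mu^w)+\Psi^0$. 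Both devices are legitimate ways to extract $\nm{\p\vglgl}_{LE_{\leq2R_0}}$ from the quadratic form. Second, you overestimate the order of the symbolic errors: since $\Box_g=D_\alpha g^{\alpha\beta}D_\beta$ is in divergence form and one uses the \emph{Weyl} quantization, the first-order part $r_1$ vanishes — $\Box_g-p^w\in\Psi^0$, not merely $\Psi^1$ — and the antisymmetry of the Poisson bracket likewise cancels the order-one terms in the symmetric compositions $q^waD_t+aD_tq^w$, $\Box_gm^w+m^w\Box_g$, and $\theta b^w\theta$ (your $E'$), as the paper records by stating $R_0\in\Psi^0$. This over-caution is harmless — your $\lambda$-split plus Young's inequality bounds the hypothetical $\Psi^1$ errors by $C(\lambda)\Ltx v^2+C\lambda^{-1}\LEo v^2$ and certainly handles $\Psi^0$ — but it is worth knowing the errors are genuinely order zero, which is one reason the paper does not need your frequency-splitting trick for the $\<R_0v,v\>$ term.
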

    On the other hand we have control over $\vlgl$ in $LE$.
    \begin{lemma}\label{l:highlow}
    There exists $C>0$ such that for all $T>0$,
    \begin{align}
        \LE{\p \vlgl}^2 \leq C\left(\frac{1}{\lambda^2}\LEs{Pv}^2 + \Ltx{v}^2+ \frac{1}{\lambda^2} \LEo{v}^2\right).
    \end{align}

    \end{lemma}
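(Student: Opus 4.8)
The plan is to use elliptic regularity in the spatial variables. On the frequency region $\{|\xi|\geq\lambda\}\cap\{|\tau|\leq 2\}$, where $\vlgl$ is microlocally supported, the purely spatial operator $D_ig^{ij}D_j$ is uniformly elliptic of order two, so $\nabla_x\vlgl$ is controlled by $D_ig^{ij}D_j\vlgl$ with a gain of $\lambda^{-1}$, and the latter is controlled through $Pv=\Box_gv+iaD_tv$ by $Pv$ together with the terms of $\Box_g$ that contain a factor $D_t$, which are effectively lower order once $\chi_{|\tau|<1}^w$ has localized $\tau$.

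Set $\Lambda=\chi_{|\xi|>\lambda}^w\chi_{|\tau|<1}^w$, so $\vlgl=\Lambda v$. First, $\partial_t\vlgl=\chi_{|\xi|>\lambda}^w\chi_{|\tau|<1}^w\partial_tv$ and $\chi_{|\tau|<1}^w\partial_t$ is a bounded operator on $L^2$, so $\Ltx{\partial_t\vlgl}\leq C\Ltx{v}$; by Lemma \ref{l:plancherel}(1) it therefore suffices to bound $\Ltx{\nabla_x\vlgl}$. Choose $\tilde\psi(\xi)\in C^\infty$ equal to $1$ on $\{|\xi|\geq\lambda\}$ and supported in $\{|\xi|\geq\lambda/2\}$, with $S^0$-seminorms uniform in $\lambda\geq1$ (each derivative of the underlying cutoff is supported where $|\xi|\sim\lambda$ and of size $\lambda^{-|\alpha|}\sim|\xi|^{-|\alpha|}$ there). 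By uniform ellipticity of $(g^{ij})$, the Weyl quantization $E$ of $\tilde\psi(\xi)\bigl(g^{ij}(x)\xi_i\xi_j\bigr)^{-1}\in S^{-2}(T^*\Rb^3)$ is a parametrix: $E\,(D_ig^{ij}D_j)=\tilde\psi(D_x)+R$, with $R\in\Psi^{-1}(\Rb^3)$ whose symbol is supported in $\{|\xi|\geq\lambda/2\}$ modulo $\Psi^{-\infty}$, all seminorms uniform in $\lambda$. Since $\tilde\psi(D_x)$ acts as the identity on the range of $\Lambda$, we obtain $\vlgl=E\,(D_ig^{ij}D_j)\,\Lambda v-R\Lambda v$. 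Expanding $\Box_g=D_\alpha g^{\alpha\beta}D_\beta$ gives $D_ig^{ij}D_jv=Pv-iaD_tv-D_tg^{00}D_tv-(D_tg^{0j}D_j+D_jg^{j0}D_t)v$, and commuting $D_ig^{ij}D_j$ past $\Lambda$ produces $[D_ig^{ij}D_j,\Lambda]=\bigl(D_i[g^{ij},\chi_{|\xi|>\lambda}^w]D_j\bigr)\chi_{|\tau|<1}^w$, whose symbol is supported in $\{|\xi|\sim\lambda\}$ modulo smoothing.

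It remains to apply $\nabla_x$ to this expression and bound each term in $L^2_{t,x}$. The organizing principle is that in every term the operator acting on the data either has symbol supported in $\{|\xi|\geq\lambda/2\}$ --- so that $\nabla_xE$, of order $-1$, genuinely contributes a factor $\lambda^{-1}$ --- or is of order $\leq -1$ overall and maps $H^{-1}\to L^2$, absorbing the derivative loss. Thus the principal term $\nabla_xE\Lambda Pv$ is $\leq C\lambda^{-1}\Ltx{Pv}\leq C\lambda^{-1}\LEs{Pv}$; the terms with $iaD_tv$ and with the cross term $D_tg^{0j}D_jv$ are $\leq C\lambda^{-1}\Ltx{\p v}+C\Ltx{v}\leq C\lambda^{-1}\LEo{v}+C\Ltx{v}$, using boundedness of $\chi_{|\tau|<1}^w\partial_t$, the compact spatial support of $v$, and Lemma \ref{l:plancherel}(1); the term with $D_jg^{j0}D_tv$ is handled by viewing $\Lambda D_jg^{j0}D_tv$ as an $H^{-1}$ function and using that $\nabla_xE\in\Psi^{-1}$ maps $H^{-1}\to L^2$, giving $\leq C\Ltx{v}$; and the terms with $D_tg^{00}D_tv$, with the commutator $[D_ig^{ij}D_j,\Lambda]v$, and with $R\Lambda v$ are $\leq C\Ltx{v}$, using respectively that $\chi_{|\tau|<1}^w\partial_t^2$ is bounded, that the commutator is frequency-localized to $|\xi|\sim\lambda$ (so $E$ gains $\lambda^{-2}$ while $\nabla_x$ loses only $\lambda$), and that $R\Lambda\in\Psi^{-1}$. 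Summing and squaring gives the claim. The main obstacle is precisely this bookkeeping of the $\lambda^{-1}$ gains: the right-hand side carries $\lambda^{-2}$ in front of both $\LEs{Pv}^2$ and $\LEo{v}^2$, so one cannot treat the error operators as merely $L^2$-bounded but must carefully track frequency supports through the parametrix and through the commutator with $\Lambda$; the terms kept at size $C\Ltx{v}$ are harmless since $\Ltx{v}^2$ occurs with an $O(1)$ coefficient on the right.
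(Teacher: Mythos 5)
Your proposal is correct and reaches the same estimate by essentially the same underlying idea (gain a factor $\lambda^{-1}$ from the frequency localization $|\xi|\geq\lambda$, then relate the second-order spatial operator $D_ig^{ij}D_j$ to $P$ and bound the remaining pieces), but the implementation is genuinely different. The paper works almost entirely through Plancherel: it bounds $|\tau|+|\xi|$ by $1+|\xi|^2/\lambda$ on the support of $\chi_{|\xi|>\lambda}\chi_{|\tau|<1}$, and then invokes uniform ellipticity of $g^{ij}$ to pass from $\Ltx{D_j^2(\chi_{|\tau|<1}^wv)}$ to $\Ltx{g^{ij}D_iD_j(\chi_{|\tau|<1}^wv)}$, which is stated rather tersely. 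You instead construct an explicit microlocal parametrix $E$ for $D_ig^{ij}D_j$ on the region $|\xi|\gtrsim\lambda$, write $\vlgl=E(D_ig^{ij}D_j)\Lambda v-R\Lambda v$, and then substitute $D_ig^{ij}D_j v$ via $P$ before carefully tracking frequency supports to get the $\lambda^{-1}$ gains. Your approach makes the ellipticity step fully rigorous and gives a systematic bookkeeping device, at the cost of more pseudodifferential machinery; the paper's argument is shorter but implicitly relies on the same parametrix-type fact at the ellipticity step. Both routes land on the same collection of terms ($Pv$, the damping $aD_tv$, the metric cross-terms and $g^{00}D_t^2$, plus $\Psi^{-1}$-type remainders), and your term-by-term bounds are correct, in a couple of places even slightly sharper than what the claim requires (e.g.\ you record $\lambda^{-1}\Ltx{\p v}$ for terms where $C\Ltx{v}$ would already suffice).
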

    Assuming the preceding three lemmas, we can conclude our high frequency estimate.
    \begin{proof}[Proof of Lemma \ref{l:highfreq}]
    Plugging Lemmas \ref{l:high1} and \ref{l:highprop} into \eqref{eq:poscommhigh} we have 
    \begin{equation}\label{eq:propConclusion}
        \nm{\p \vglgl}_{LE_{\leq 2R_0}}^2 \leq C \LEs{Pv}\LEo{v}+C(\lambda)\Ltx{v}^2  + C \lambda^{-\frac{1}{2}} \LEo{v}^2.
    \end{equation}
    Next by the triangle inequality 
    \begin{equation}
        \nm{\p \vgl}_{LE_{\leq 2R_0}}^2 \leq \nm{\p \vglgl}_{LE_{ \leq 2R_0}}^2 + \LE{\p \vlgl}^2.
    \end{equation}
    So applying Lemma \ref{l:highlow} and \eqref{eq:propConclusion}, we have 
    \begin{align}
    \nm{\p \vgl}_{LE_{\leq 2R_0}}^2 
    \leq &C \LEs{Pv}\LEo{v}+ C(\lambda) \Ltx{v}^2 + \frac{C}{\lambda^{2}} \LEs{Pv}^2+C\lambda^{-\frac{1}{2}} \LEo{v}^2.
    \end{align}
    From Lemma \ref{l:plancherel} part 2 we have $\LE{\<x\>^{-1} v_{>\lambda}}<\Ltx{v}$. Adding this to both sides completes the $LE^1$ norm on the left hand side. Then taking square roots gives
    \begin{align}
        \nm{v_{>\lambda}}_{LE^1_{\leq 2R_0}} \leq& C\LEs{Pv}^{\frac{1}{2}}\LEo{v}^{\frac{1}{2}} + C(\lambda) \Ltx{v}+ C \lambda^{-1} \LEs{Pv} + C\lambda^{-\frac{1}{4}}  \LEo{v}.
    \end{align}
    Finally, applying Young's inequality for products to the $LE^* LE^1$ term gives the desired high frequency estimate.
    \end{proof}
    It remains to prove Lemmas \ref{l:high1}, \ref{l:highprop}, and \ref{l:highlow}.

    \begin{proof}[Proof of Lemma \ref{l:high1}]
    1) Recall in Lemma \ref{l:plancherel} part 3 we defined $\vggl= \chiggl^w v$. Note $(1-\chiggl)^w \in \Psi^{-\infty}(\Rb^4)$, so by Proposition \ref{prop:pseudoCalc} there exists $R_{-\infty, \lambda} \in \Psi^{-\infty}(\Rb^4)$, such that 
    \begin{align}
         \Im \<Pv, Qv\> &=  \Im\<Pv, Q \vggl\> + \<R_{-\infty, \lambda} v,v\>.
    \end{align}
    We include the $\lambda$ in $R_{-\infty, \lambda}$ to emphasize that it depends on $\lambda$. In fact, despite being in $\Psi^{-\infty}$ the $L^{\infty}$ size of the symbol of $R_{-\infty, \lambda}$ grows like $\lambda^3$. Because of this,  $R_{-\infty, \lambda}$ is bounded on $L^2_t L^2_x$ by Proposition \ref{prop:pseudoBounded} but the size of its operator norm depends on $\lambda$. In particular we have 
    \begin{align}
        \left|\Im \<Pv, Q v\> \right| &\leq  \Ltx{Pv}\Ltx{Q \vggl} + C(\lambda) \Ltx{v}.
    \end{align}     
    Since $Q=q^w-\frac{i}{2}m^w \in \Psi^1(\Rb^4)$, by Proposition \ref{prop:pseudoBounded} it is a bounded map from $H^1_{t,x}$ to $L^2_{t}L^2_x$
    \begin{equation}
        \left|\Im \<Pv, Q v\> \right|\leq C \Ltx{Pv} \nm{\vggl}_{H^1_{t,x}}+C(\lambda) \Ltx{v}.
    \end{equation}
    Finally, applying Lemma \ref{l:plancherel} parts 1 and 3 to estimate terms on the right hand side we have the desired inequality
    \begin{equation}
        \left|\Im \<Pv, Q v\> \right| \leq C \LEs{Pv} \LEo{v}+C(\lambda) \Ltx{v}.
    \end{equation}
    2) For the second inequality we proceed similarly with $\vggl$ and a different $R_{-\infty, \lambda} \in \Psi^{-\infty}$
    \begin{align}
        \<(D_t a) q^w v,v \> = \<(D_t a) q^w \vggl, \vggl\> + \<(R_{-\infty, \lambda} v,v\>. 
    \end{align}
    We have $R_{-\infty, \lambda} \in \Psi^{-\infty}$, and following an argument analogous to that in step 1 above, it is bounded on $L^2_t L^2_x$ with operator norm dependent on $\lambda$, so 
    \begin{equation}
       \left| \<R_{-\infty, \lambda} v,v\> \right| \leq C(\lambda) \Ltx{v}^2.
    \end{equation}
    On the other hand, we have
    \begin{align}
        \left| \<(D_t a) q^w \vggl, \vggl\> \right| &= \left| \< \<\p\>^{-1/2} (D_t a) q^w \vggl, \<\p\>^{1/2} \vggl\> \right| \\
        &\leq \Ltx{ \<\p\>^{-1/2} (D_t a) q^w \vggl} \Ltx{\<\p\>^{1/2} \vggl}.
    \end{align}
    Now because $(D_t a) q^w \in \Psi^1$, we have $\<\p \>^{-1/2} (D_t a) q^w \in \Psi^{1/2}$. Thus it is bounded from $H^{1/2}$ to $L^2$ by Proposition \ref{prop:pseudoBounded}. Using this and Lemma \ref{l:plancherel} part 3 we have
    \begin{align}
        \left| \<(D_t a) q^w \vggl, \vggl\> \right| 
        &\leq C \nm{\vggl}_{H^{1/2}_{t,x}} \leq C \lambda^{-1/2} \LEo{v}.
    \end{align}
    3) Now to see the third inequality, we note that $[aD_t, m^w], (D_t a)m^w \in \Psi^0$. Then since $a(t,x)$ is  uniformly continuous in $t$, by Proposition \ref{prop:pseudoBounded} there exists $C>0$ such that, for all $T>0$
    \begin{align}
        \left| \frac{\kappa}{2} \<[aD_t, m^w]v,v\>\right| \leq C \Ltx{[aD_t, m^w]v}\Ltx{v} \leq C \Ltx{v}^2 \\
        \left| \frac{\kappa}{2} \<(D_t a) m^w v,v \> \right| \leq C \Ltx{(D_t a)m^w v}\Ltx{v} \leq C\Ltx{v}^2.
    \end{align}
    \end{proof}

    The main idea of the proof of Lemma \ref{l:highprop} is to use the lower bound on $H_p q + 2 \kappa \tau a  q +m p $ from Lemma \ref{p:EscapeFunction}, along with the sharp G\r{a}rding inequality Proposition \ref{prop:Garding} to obtain the desired lower bound. It is because $H_p q+ 2 \kappa \tau a q + mp$ is only bounded from below on $\{|\xi| \geq 1, |\tau| \geq 1\}$ that we only estimate $\vglgl$ in this Lemma, and must separately estimate $\vlgl$. To simplify  estimates for error terms additional frequency cutoffs are inserted and manipulated. These additional frequency cutoffs do not ultimately change where in $\xi$ and $\tau$ we obtain our lower bound.
    \begin{proof}[Proof of Lemma \ref{l:highprop}]
        1) Using Proposition \ref{prop:pseudoCalc}, there exists $R_0 \in \Psi^0(\Rb^4)$ such that
        \begin{align}\label{eq:highfreqfirst}
            \<i[\Box_g, q^w]v,v\> + \kappa \<(q^w aD_t + aD_t q^w)v,v\> + \frac{1}{2} \<(\Box_g m^w + m^w \Box_g)v,v\> \\
            =\<(H_p q+ 2\kappa \tau a q+mp)^w v,v\> + \<R_0 v,v\>.
        \end{align}
        From Lemma \ref{p:EscapeFunction}, we have a lower bound on the symbol of 
        \begin{equation}
            E=(H_pq+2\kappa \tau a q + mp)^w \in \Psi^2(\Rb^4).
        \end{equation} 
        Now we split up $v$, in order to simplify estimates for future error terms.
        We write 
        \begin{equation}
            v= \vggl + \chill v,
        \end{equation}
        and note again that $\chill \in \Psi^{-\infty}(\Rb^4)$. Therefore by Proposition \ref{prop:pseudoCalc}, for some $R_{-\infty,\lambda} \in \Psi^{-\infty}(\Rb^4)$ we have
        \begin{align}
            \<Ev,v\>& =\<E(\vggl+\chill^w v), (\vggl+\chill^w v)\>\\
            &=\<E \vggl, \vggl\> + \<R_{-\infty,\lambda}v,v\>. \label{eq:highfreqintermed0}
        \end{align}
        We have written $R_{-\infty, \lambda}$ to emphasize the $\lambda$ dependence in this operator. In particular, although it is in $\Psi^{-\infty}$, the $L^{\infty}$ norm of its symbol has size $\lambda^2$. The operator is still bounded on $L^2_t L^2_x$ by Proposition \ref{prop:pseudoBounded}, but the size of its operator norm depends on $\lambda$.
        
        Now by Lemma \ref{p:EscapeFunction}, there exists $C>0$, which does not depend on $T$, such that for all $\omega =(t,x,\tau,\xi) \in T^*\Rb^4$
        \begin{equation}
            H_p q(\omega)+2\kappa \tau a(\omega) q(\omega)+m(\omega) p(\omega) - C \mathbbm{1}_{|\tau| \geq 1} \mathbbm{1}_{|\xi| \geq \lambda}\<x\>^{-4} (|\xi|^2 + \tau^2) \geq 0,
        \end{equation}
        where we have replaced $|\xi| \geq 1$ by $|\xi| \geq \lambda$ in the indicator function. 
        Note also that we crudely bounded $\d<1$ from Lemma \ref{p:EscapeFunction} to get $\<x\>^{-2-2\d} \geq \<x\>^{-4}$. As we will shortly see, the exact power on $\<x\>$ is irrelevant as we eventually work on $\{|x| < 2R_0\}$. 

        Now by the Sharp G\r{a}rding inequality, Proposition \ref{prop:Garding}, there exists $C>0$, still independent of $T$, such that for all $\lambda \geq 1$
        \begin{align}\label{eq:highfreqintermed1}
            \<E \vggl, \vggl\> \geq &C \<( \chi_{|\tau| \geq 1}\chi_{|\xi|\geq \lambda} \<x\>^{-4}(|\xi|^2+\tau^2))^w \vggl, \vggl\> - C \nm{\vggl}_{H^{\frac{1}{2}}_{t,x}}^2.
        \end{align}
        We now must bound this right hand side from below by $\nm{\p \vglgl}_{LE<2R_0}^2$ minus errors. 

        2) To do so, note that by Proposition \ref{prop:pseudoCalc}, for some $R_{1} \in \Psi^1(\Rb^4)$
        \begin{equation}\label{eq:bigComposition}
            (\chitl \chi_{|\xi|>\lambda} \<x\>^{-4}(|\xi|^2+\tau^2))^w = (\chitl^{1/2} \chi_{|\xi|>\lambda}^{1/2})^w D_{\alpha} \<x\>^{-4} D_{\alpha} (\chitl^{1/2} \chi_{|\xi|>\lambda}^{1/2})^w + R_{1}.
        \end{equation}
        Note that although $R_1$ depends on $\lambda$, this dependence comes from derivatives of $\chi_{|\xi|>\lambda}$. These derivatives, and further derivatives, produce negative powers of $\lambda$. Since $\lambda \geq 1$, when applying Proposition \ref{prop:pseudoBounded} to estimate the operator norm of $R_1$ we can ignore any $\lambda$ dependence.
        
        Now since $\chi^{1/2}$ and $\chi$ only differ on a compact set, there exists $r_{-\infty, \lambda} \in S^{-\infty}(\Trf)$ such that
        \begin{align}
            \chitl^{1/2}\chi_{|\xi|>\lambda}^{1/2} \chiggl = \chitl \chi_{|\xi|>\lambda}\chiggl +r_{-\infty, \lambda}.
        \end{align}
        Next, note $\chi_{|\xi|+|\tau|>\frac{\lambda}{2}} \equiv1$ on $\supp \chi_{|\tau|>1} \chi_{|\xi|>\lambda}$, so 
        \begin{align}\label{eq:chiChange}
            \chitl^{1/2}\chi_{|\xi|>\lambda}^{1/2} \chiggl = \chitl \chi_{|\xi|>\lambda} + r_{-\infty, \lambda}.
        \end{align}
        Therefore applying \eqref{eq:bigComposition}, \eqref{eq:chiChange}, and Proposition \ref{prop:pseudoCalc} we have
        \begin{align}
            &\< (\chitl \chi_{|\xi|>\lambda} \<x\>^{-4} (|\xi|^2+\tau^2))^w \vggl, \vggl \> \\
            &\geq \Ltx{\<x\>^{-2}\p(\chitl^{1/2}\chi_{|\xi|>\lambda}^{1/2})^w \vggl}^2 - |\<R_{1} \vggl, \vggl\>| \\
            &\geq \Ltx{\<x\>^{-2}\p\vglgl}^2 - |\<R_{1} \vggl, \vggl\>| - |\<R_{-\infty,\lambda} v,v \>|, \label{eq:highfreqintermed2}
        \end{align}
        where $R_{-\infty,\lambda}=\text{Op}^w(r_{-\infty, \lambda}) \in \Psi^{-\infty}(\Rb^4)$.
        Next, by restricting to $|x| \leq 2R_0$ we have $\<x\>^{-2} \geq C$ and so applying Lemma \ref{l:plancherel} part 1 we have
        \begin{equation}
            \Ltx{\<x\>^{-2}\p \vglgl}^2 \geq C \nm{\p\vglgl}_{LE_{ \leq2R_0}}^2.
        \end{equation}
        It is at this point that we see the exact power on $\<x\>$ is irrelevant. Combining this with \eqref{eq:highfreqintermed0}, \eqref{eq:highfreqintermed1}, and \eqref{eq:highfreqintermed2} we have 
        \begin{align}\label{eq:highfreqwitherror}
            \<E v,v\> + \<R_0 v,v\> \geq &C\nm{\p\vglgl}_{LE_{ \leq 2R_0}}^2-|\<R_{-\infty, \lambda}v,v\>|- C\nm{\vggl}_{H^{1/2}_{t,x}}\\
            &- |\<R_1 \vggl, \vggl\>| - |\<R_0v,v\>|.
        \end{align}
        So it remains to estimate the error terms on the right hand side. 

        3) Since $R_1 \in \Psi^1(\Rb^4)$, by Proposition \ref{prop:pseudoBounded} it is bounded from $H^1_{t,x}$ to $L^2_t L^2_x$, and by our above discussion its operator norm does not depend on $\lambda$. Combining this with Cauchy-Schwarz, then applying Lemma \ref{l:plancherel} part 3 we have 
        \begin{align}
            |\<R_1 \vggl, \vggl\>| &\leq C \nm{\vggl}_{H^1_{t,x}}\Ltx{\vggl} \leq \frac{C}{\lambda} \LEo{v}^2. \label{eq:highfreqr1}
        \end{align}
        For the $H^{\frac{1}{2}}_{t,x}$ term apply Lemma \ref{l:plancherel} part 3
        \begin{equation}\label{eq:highfreqhhalf}
            \nm{\vggl}_{H^{\frac{1}{2}}_{t,x}}^2 \leq \frac{C}{\lambda}\LEo{v}^2.
        \end{equation}
        Since $R_{-\infty, \lambda} \in \Psi^{-\infty}(\Rb^4)$ and $R_0 \in \Psi^0(\Rb^4)$, by Proposition \ref{prop:pseudoBounded} there exists a constant $C(\lambda)>0$ such that 
        \begin{align}
        \begin{split}
            |\<R_0 v,v\>| + |\<R_{-\infty, \lambda} v,v\>| &\leq \Ltx{R_0 v} \Ltx{v} + \Ltx{R_{-\infty, \lambda}v}\Ltx{v} 
            \\
            &\leq C(\lambda) \Ltx{v}^2.\label{eq:highfreql2bound}
        \end{split}
        \end{align}
        Combining \eqref{eq:highfreqr1}, \eqref{eq:highfreqhhalf}, \eqref{eq:highfreql2bound} with \eqref{eq:highfreqwitherror} gives 
        \begin{align}
            \<(H_p q + 2\kappa \tau a q+mP)^w v,v\> + \<R_0 v,v\>  \geq& C\nm{\p\vglgl}_{LE_{ \leq 2R_0}}^2- C(\lambda) \Ltx{v}^2 - C\lambda^{-1} \LEo{v}^2.
        \end{align}
        This along with \eqref{eq:highfreqfirst} gives 
         \begin{align}
            \<i[\Box_g, q^w]v,v\> + \kappa \<(q^w aD_t + aD_t q^w)v,v\> + \frac{1}{2} \<(\Box_g m^w + m^w \Box_g)v,v\> \\
            \geq C\nm{\p\vglgl}_{LE_{ \leq 2R_0}}^2- C(\lambda) \Ltx{v}^2 - C\lambda^{-1} \LEo{v}^2,
        \end{align}
        which is exactly the desired conclusion. 
        
    \end{proof}
    To estimate $\p \vlgl$ we follow the same approach used to estimate $\p \vgll$ in the proof of Lemma \ref{l:lowfreq}
    \begin{proof}[Proof of Lemma \ref{l:highlow}]
    To begin, by Lemma \ref{l:plancherel} part 1, and Plancherel's theorem, there exists $C>0$ such that  
    \begin{align}
        \LE{\p \vlgl} &\leq \Ltx{\p \vlgl} \\
        &\leq \Ltauxi{(|\tau|+|\xi|) \chi_{|\xi|>\lambda}\chi_{|\tau|<1} \hat{v}} \\
        &\leq \Ltauxi{\hat{v}}+ C \Ltauxi{\frac{|\xi|^2}{\lambda} \chi_{|\xi|>\lambda} \chi_{|\tau|<1} \hat{v}} \\
        &\leq  \Ltx{v}+ \frac{C}{\lambda}\Ltx{D_j^2 (\chi_{|\tau|<1}^wv)},
    \end{align}
    where in the final line we used that  $\chi_{|\xi|>\lambda}$ is a bounded Fourier multiplier, and that $\chi_{|\tau|<1}^w$ and $D_j^2$ commute as Fourier multipliers. Now using that $g^{ij}$ is uniformly elliptic, and then commuting $g^{ij}$ with $\chi^w_{|\tau|<1}$, noting that the metric does not depend on $t$, we have
    \begin{align}
    \begin{split}
        \LE{\p \vlgl} &\leq   \Ltx{v}+ \frac{C}{\lambda}\Ltx{g^{ij}D_i D_j \chi_{|\tau|<1}^w v}
        \\
        &= \Ltx{v}+\frac{C}{\lambda}\Ltx{\chi_{|\tau|<1}^w(g^{ij}D_iD_j v)}.\label{eq:highlow1}
    \end{split}
    \end{align}
    To estimate the final term on the right hand side, we follow our approach used to estimate $\chi_{|\xi|<\lambda}^w (\p_t^2v)$ in the Proof of Lemma \ref{l:lowfreq}. That is we use that $P=D_{\alpha} g^{\alpha \beta} D_{\beta} + i aD_t$, and apply the triangle inequality to write
    \begin{align}
        \Ltx{\chi_{|\tau|<1}^w (g_{ij} D_i D_j v)} &\leq \Ltx{\chi_{|\tau|<1}^w (Pv)} + \Ltx{\chi_{|\tau|<1}^w(g^{00} D_t^2 v)} \\
        &\quad + \Ltx{\chi_{|\tau|<1}^w(D_t(g^{0j} D_j+D_j g^{0j})v)} \\
        & \quad + \Ltx{\chi_{|\tau|<1}^w( (D_i g^{ij})D_j v)} + \Ltx{\chi_{|\tau|<1}^w(aD_t v)}.\label{eq:highlow2}
    \end{align}
    We will now estimate the individual terms on the right hand side in turn. By the same argument used to show \eqref{eq:lowP} and \eqref{eq:lowdamp} in the proof of Lemma \ref{l:lowfreq}, we have 
    \begin{align}
        &\Ltx{\chi_{|\tau|<1}^w (Pv)}  \leq \Ltx{Pv} \leq \LEs{Pv} \label{eq:highlowP}\,,\\
        &\Ltx{\chi_{|\tau|<1}^w(aD_t v)} \leq \Ltx{aD_t v} \leq C \LE{\p v}\, .\label{eq:highlowa}
    \end{align}
    Now to estimate the $D_t^2 v$ term we use that $g^{00}$ is independent of $t$, so it commutes with $\chi^w_{|\tau|<\lambda}$, then apply Plancherel 
    \begin{align}
        \Ltx{\chi_{|\tau|<1}^w(g^{00} D_t^2 v)} \leq C\Ltauxi{\chi_{|\tau|<1} \tau^2 \hat{v}} \leq C \Ltauxi{\hat{v}} = C \Ltx{v}.\label{eq:highlowdt}
    \end{align}
    Similarly to estimate the $D_t$ terms we use Plancherel, as well as Lemma \ref{l:plancherel} part 1, and that $v$ is supported in $\{|x| \leq 2R_0\}$
    \begin{align}
        \Ltx{\chi_{|\tau|<1}^w(D_t(g^{0j} D_j+D_j g^{0j})v)} &\leq  \Ltx{(g^{0j} D_j+D_j g^{0j})v} \leq C \Ltx{D_j v}+C\Ltx{v} \\
        &\leq C  \Ltx{\p v} + C  \Ltx{v} \leq C  \LE{\p v} + C  \Ltx{v}.\label{eq:highlowdtg}
    \end{align}
    To estimate the $(D_i g^{ij}) D_j$ term, we use that $\chi_{|\tau|>1}$ is a bounded Fourier multiplier, and $D_i g^{ij}$ is a bounded function, then again use Lemma \ref{l:plancherel} part 1 and that $v$ is supported in $\{|x| \leq 2R_0\}$
    \begin{align}
        \Ltx{\chi_{|\tau|<1}^w ((D_i g^{ij}) D_j v)} \leq \Ltx{D_j v} \leq C \Ltx{\p v} \leq C \LE{\p v}.\label{eq:highlowg}
    \end{align}
    Now if we combine  \eqref{eq:highlow2}, \eqref{eq:highlowP}, \eqref{eq:highlowa}, \eqref{eq:highlowdt}, \eqref{eq:highlowdtg}, and \eqref{eq:highlowg} we have
    \begin{align}
        \Ltx{\chi^w_{|\tau|<1}(g^{ij} D_i D_j v)} \leq C \left(\LEs{Pv} + \Ltx{v}+ \LE{\p v} \right).
    \end{align}
    Plugging this back into \eqref{eq:highlow1}, we obtain
    \begin{align}
        \LE{\p \vlgl} &\leq C \Ltx{v} + \frac{C}{\lambda} \left( \LEs{Pv} + \Ltx{v}+ \LE{\p v}   \right).
    \end{align}
    Squaring both sides gives us the desired inequality.
    
    \end{proof}

\section{Local Energy Decay}\label{s:localEnergyDecay}
To begin we note that, via Proposition \ref{prop:iledCaseReduction}, we have
\begin{proposition}\label{p:timeFreqCase}
    Suppose $P$ is a stationary, asymptotically flat damped wave operator, with time-dependent damping satisfying the time-dependent geometric control condition, and suppose $\p_t$ is uniformly time-like with constant time slices uniformly space-like.
    If there exists $C>0$ such that for all $u$ with $u[0]=u[T]=0$, $Pu \in LE^*$ and $Pu$ compactly supported, we have
    \begin{equation}\label{eq:iledReduced}
        \LEoT{u} \leq C \LEST{Pu},
    \end{equation}
    then integrated local energy decay holds. That is there exists $C>0$, such that for all $T>0$ and $w$ with $w[0] \in \dot{H}^1 \times L^2$
    \begin{equation}
        \LEoT{w} + \nm{\p w}_{L^{\infty}_t L^2_x[0,T]} \leq C \left( \ltwo{\p w(0)} + \nm{Pw}_{LE^* + L^1_t L^2_x[0,T]} \right).
    \end{equation}
\end{proposition}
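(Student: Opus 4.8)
The plan is to deduce this proposition as an immediate consequence of Proposition \ref{prop:iledCaseReduction}. The key observation is that the hypothesis here --- the existence of $C>0$ with $\LEoT{u}\le C\LEST{Pu}$ for every $u$ satisfying $u[0]=u[T]=0$ and $Pu\in LE^*$ compactly supported --- is, after renaming $u$ to $v$, literally the hypothesis of Proposition \ref{prop:iledCaseReduction} (recalling that $LE^*_c$ is by definition the set of compactly supported elements of $LE^*$). Moreover, the standing assumptions on $P$ restated in the proposition --- stationary, asymptotically flat, non-negative time-dependent damping obeying the TGCC, $\p_t$ uniformly time-like, constant time-slices uniformly space-like --- are precisely those under which Proposition \ref{prop:iledCaseReduction} was established. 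So the first (and essentially only) step is to invoke Proposition \ref{prop:iledCaseReduction} verbatim.

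Doing so produces a constant $C>0$ such that, for all $T>0$ and all $w$ with $w[0]\in\dot H^1\times L^2$,
$$\LEoT{w}+\nm{\p w}_{L^\infty_t L^2_x[0,T]}\le C\big(\ltwo{\p w(0)}+\LEsltxT{Pw}\big).$$
The only remaining step is to unwind notation: $\LEsltxT{Pw}$ is defined to be $\nm{Pw}_{LE^*+L^1_t L^2_x[0,T]}$, so the displayed inequality is exactly the asserted local energy decay estimate. There is no real obstacle here: all of the substance --- the successive reductions (Lemmas \ref{l:case1}, \ref{l:case2}, \ref{l:case3}) to solutions with trivial Cauchy data and compactly supported, $LE^*$-class inhomogeneity, together with the uniform energy inequality of Lemma \ref{l:uniformEnergy} and the perturbative input of Theorem \ref{thm:mt12} --- is already contained in the proof of Proposition \ref{prop:iledCaseReduction}. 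The function of Proposition \ref{p:timeFreqCase} is purely organizational: it repackages that reduction in the precise form needed at the start of Section \ref{s:localEnergyDecay}, where \eqref{eq:iledReduced} will be verified via the low/medium/high time-frequency decomposition, thereby completing the proof of Theorem \ref{thm:iled}.
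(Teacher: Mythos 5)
Your proposal is correct and matches the paper's own (implicit) proof exactly: Proposition \ref{p:timeFreqCase} is a verbatim restatement of Proposition \ref{prop:iledCaseReduction} after renaming variables and unfolding the definition of $\LEsltxT{\cdot}$, and the paper likewise cites Proposition \ref{prop:iledCaseReduction} as the sole justification. Nothing further is needed.
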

Therefore to establish local energy decay, it is enough to prove \eqref{eq:iledReduced}. To do so, for a given $u$ we split $u$ into three time frequency regimes. That is for some $\tau_0, \tau_1$, let $\chimed=1-\chi_{\leq \tau_0}-\chi_{\geq \tau_1}$ and define 
\begin{align}
    \chilow^w := \Op^w(\chi_{\leq \tau_0}(\tau)), \qquad \chimed^w:=\Op^w(\chimed(\tau)), \qquad \chihigh^w:= \Op^w(\chi_{\geq \tau_1}(\tau)).
\end{align}
Then we will consider
\begin{equation}
    u = \chilow^w u + \chimed^w u + \chihigh^w u.
\end{equation}
We call these regimes: low frequency, medium frequency, and high frequency. We have already shown the high frequency estimate, Theorem \ref{thm:highfreq}. We will cite low and medium frequency estimates for $\Box_g$, and then explain how these estimates can be combined to obtain Theorem \ref{thm:iled}. A key step is to use $\Box_g=P-a\p_t$ and control $a\p_t$ using standard energy identities. Put another way, in the low and medium frequency regimes the damping can be treated as a perturbative term.

Our general approach is similar to that of \cite[Section 3]{Kofroth23} and \cite[Section 7.2]{MST20}. However, our treatment of the damping as a perturbation is new and simplifies the argument, in part by avoiding a commutator estimate of $[a, \chilow^w]$ which would require an almost-stationary hypothesis on $a$. 

\subsection{Low Frequency}
In this section, we cite a low frequency estimate from \cite{MST20}. To begin, we state a definition.

\begin{definition}
    We say that a wave operator $D_{\alpha} g^{\alpha \beta} D_{\beta}$ satisfies a zero non-resonance condition, if there exists $K_0 >0$ such that for all $u \in \dot{H}^1$
    \begin{equation}\label{eq:zernoresonance}
        \nm{u}_{\dot{H}^1} \leq K_0 \nm{D_i g^{ij} D_j u}_{\dot{H}^{-1}}.
    \end{equation}
\end{definition}
Interpreting $\Box_g$ as a magnetic wave operator with 0 scalar and magnetic potentials,  \eqref{eq:zernoresonance} holds by \cite[Lemma 6.2(iii)]{MST20}. Therefore, we may make use of the following version of \cite[Theorem 6.1]{MST20}.
\begin{proposition}\label{p:MST20low}
    Let $\Box_g$ be an asymptotically flat wave operator with $\p_t$ uniformly time-like, and suppose the zero non-resonance condition \eqref{eq:zernoresonance} condition holds. Then there exists $C>0$, such that for all $T>0$ and all $u$ compactly supported 
    \begin{equation}
        \LEo{u} \leq C \left( \LEo{\p_t u} + \LEs{\Box_g u} \right).
    \end{equation}
\end{proposition}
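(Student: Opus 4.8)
The estimate is (the special case with vanishing scalar and magnetic potentials of) \cite[Theorem 6.1]{MST20}, so the plan is simply to follow that argument; I sketch it here. Since $g$ is stationary, the first step is to Fourier transform in $t$: with $f := \Box_g u$, the equation $\Box_g u = f$ becomes the family of stationary problems
\begin{equation}
    P_\tau\, \hat u(\tau,\cdot) = \hat f(\tau,\cdot), \qquad P_\tau := D_i g^{ij} D_j + g^{00}\tau^2 + \tau\big(g^{0j}D_j + D_j g^{0j}\big),
\end{equation}
parametrized by the dual time variable $\tau$. The goal is to establish resolvent-type bounds for $P_\tau$, uniform in $\tau$, in weighted spaces matched to the $LE^1$ and $LE^*$ norms, and then reassemble via Plancherel in $t$ together with the $\ell^1$ summation over the dyadic shells $A_j$ that is built into those norms. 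It is natural to split the $\tau$-axis into $|\tau| \gtrsim 1$ and $|\tau| \lesssim 1$.

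For $|\tau| \gtrsim 1$ the contribution is essentially free: since $u$ is (uniformly) compactly supported, the $LE^1$ norm of the corresponding frequency-localized piece is comparable to its $H^1_{t,x}$ norm by Lemma \ref{l:plancherel}, and on $|\tau| \gtrsim 1$ one has $\nm{(\tau,\xi)\,\hat u}_{L^2} \le \nm{(\tau,\xi)\,\tau\,\hat u}_{L^2}$, so this piece is controlled directly by $\LEo{\p_t u}$; this is where the $\p_t u$ term on the right-hand side does its work, and no propagation or non-trapping input is needed. (Here one also uses uniform ellipticity of $g^{ij}$ and that $g^{00}$ is bounded away from $0$, a consequence of $\p_t$ being uniformly timelike with spacelike slices, to pass between $\hat u$, its spatial gradient, and $P_\tau\hat u = \hat f$.)

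The real content is the regime $|\tau| \lesssim 1$, where $P_\tau$ degenerates as $\tau \to 0$ to the stationary elliptic operator $D_i g^{ij}D_j$. This is exactly what the zero non-resonance condition \eqref{eq:zernoresonance} controls: it asserts $\nm{v}_{\dot{H}^1} \lesssim \nm{D_i g^{ij}D_j v}_{\dot{H}^{-1}}$, i.e.\ that $\tau = 0$ is not a resonance. The main obstacle --- the heart of \cite[Section 6]{MST20} --- is to upgrade this homogeneous-Sobolev statement to a bound $P_\tau^{-1} : LE^* \to LE^1$ that is uniform as $\tau \to 0$, so that the $\tau$-integral over a neighborhood of $0$ converges. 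One does this by writing, for small $\tau$, $P_\tau^{-1}$ as a perturbation of $(D_i g^{ij}D_j)^{-1}$, controlled through the non-resonance bound together with the weighted elliptic estimates supplied by asymptotic flatness, plus a correction supported near the characteristic variety $\{p = 0\}$ (where the principal symbol $p$ is as in Section \ref{sec:Hamiltonian Dynamics}), which for small $\tau$ forces $\xi$ small as well and is handled by the same low-frequency limiting-absorption bounds. Once these uniform-in-$\tau$ weighted estimates are in hand, Plancherel in $t$ and the $\ell^1$ sum over the $A_j$ recombine them into the claimed inequality $\LEo{u} \le C(\LEo{\p_t u} + \LEs{\Box_g u})$.
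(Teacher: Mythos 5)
The paper does not prove this proposition: it is quoted verbatim (in the special case of vanishing potentials) from \cite[Theorem 6.1]{MST20}, after first noting that $\Box_g$ interpreted as a magnetic wave operator with zero potentials satisfies the zero non-resonance hypothesis by \cite[Lemma 6.2(iii)]{MST20}. Your opening sentence makes exactly this identification, so your approach agrees with the paper's; the rest of your answer is a sketch of the proof \emph{inside} MST20, which the present paper does not reproduce but which you outline accurately --- Fourier transform in $t$ to reduce to uniform-in-$\tau$ weighted resolvent bounds for $P_\tau$, with the $|\tau|\gtrsim 1$ piece absorbed into $\LEo{\p_t u}$ via compact support and Plancherel, and the $|\tau|\lesssim 1$ piece hinging on the zero non-resonance condition to control the limit $\tau\to 0$.
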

\begin{remark}
    This estimate is low frequency in the sense that, when we apply it to $\chilow^w u$ using Plancherel we can control 
    \begin{equation}
        \LEo{\p_t \chilow^w u} \leq C \tau_0 \LEo{\chilow^w u}.
    \end{equation}
    Taking $\tau_0>0$ small enough we can absorb this back into the left hand side leaving an estimate of the same form as \eqref{eq:iledReduced}.
\end{remark}
\subsection{Medium Frequency}
In this subsection we cite a medium frequency result, namely \cite[Theorem 5.4]{MST20}. We write $LE^1_0$ to be the closure of $C_0^{\infty}$ in the $LE^1$ norm.
\begin{proposition}\label{p:MST20med}
    Let $\Box_g$ be an asymptotically flat wave operator such that $\p_t$ is uniformly time-like. Then there exists $C>0$, such that for all $\d>0$, there exists a bounded, increasing, radial weight $\phi = \phi(\log(1+r))$ so that for all $u \in LE^1_0$ with $\Box_g u \in LE^*$
    \begin{align}
        &\LE{(1+\phi''_+)^{1/2}e^{\phi}\nabla u} + \LE{\<x\>^{-1}(1+\phi''_+)^{1/2}(1+\phi') e^{\phi} u} +\LE{(1+\phi')^{1/2} e^{\phi} \p_t u} \\
        &\qquad \leq C\bigg( \d \left( \LE{(1+\phi')^{1/2}e^{\phi}u} + \LE{\<x\>^{-1} (1+\phi''_+)^{1/2} (1+\phi') e^{\phi} \p_t u} \right) + \LEs{e^{\phi}\Box_g u} \bigg).  \label{eq:carleman}
    \end{align}
    Here $\phi''_+=\max\{0, \phi''\}$.
\end{proposition}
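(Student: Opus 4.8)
The statement is quoted essentially verbatim from \cite[Theorem 5.4]{MST20}, so the plan is to verify that its two hypotheses --- $\Box_g$ asymptotically flat and $\p_t$ uniformly time-like --- hold in our setting (they do, by assumption) and then invoke the reference; for context I sketch the argument used there. The estimate is a Carleman estimate for the wave operator, proved by a positive commutator / Morawetz-type multiplier argument applied to the conjugated operator $e^{\phi}\Box_g e^{-\phi}$.

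First I would fix $\d>0$ and construct the weight $\phi=\phi(\log(1+r))$ as a bounded, increasing function whose derivative $\phi'$ (with respect to $\log(1+r)$, which is why that logarithmic variable is natural) is a slowly varying function decaying at a prescribed, adjustable rate, in the spirit of Proposition \ref{p:slowvaryf} and Definition \ref{d:asymflat2}; boundedness of $\phi$ is needed so that $e^{\phi}$ is comparable to $1$, while the gain comes from the terms weighted by $\phi'$ and $\phi''_+$, and the decay rate of $\phi'$ is the parameter that will produce the small constant $\d$. Next I would pair the conjugated equation against a multiplier of the form $f(r)\p_r$ plus a zeroth order divergence correction and integrate by parts over $[0,T]\times\Rb^3$. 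The principal terms yield the weighted quantities on the left-hand side of \eqref{eq:carleman} --- a radial-derivative term with weight $1+\phi''_+$, a $\p_t$ term with weight $1+\phi'$, and a Hardy-type term with weight $\<x\>^{-2}(1+\phi''_+)(1+\phi')^2$ --- while the contributions of $g-m$ are, by asymptotic flatness and the summability $\sum_j c_j\leq C\textbf{c}$, a small multiple of these main terms on each dyadic shell and hence absorbable; the boundary terms at $t=0,T$ drop out since $u\in LE^1_0$.

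The hard part is the control of the terms whose commutator with the multiplier is not sign-definite: the near-light-cone, low-angular-momentum part of $\nabla u$ and the $\p_t$ cross terms. These are precisely the terms appearing with the factor $\d$ on the right-hand side of \eqref{eq:carleman}; they are not absorbable outright, but choosing the decay rate of $\phi'$ slow enough renders them small compared to the left-hand side, which is exactly the mechanism behind the free parameter $\d$. Since the full computation is carried out in \cite[Section 5]{MST20} and uses only asymptotic flatness of $g$ and that $\p_t$ is uniformly time-like --- both of which we assume --- no modification is needed, and the proof reduces to the citation.
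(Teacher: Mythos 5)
Your approach matches the paper's exactly: the paper introduces this proposition by stating "In this subsection we cite a medium frequency result, namely [MST20, Theorem 5.4]" and relies on the citation without reproducing the Carleman argument, just as you do. Your additional sketch of the conjugated-operator/multiplier mechanism is a helpful gloss but does not change the fact that both you and the paper reduce the proof to verifying the hypotheses (asymptotic flatness and $\partial_t$ uniformly time-like) and invoking the reference.
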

Note that \cite[Theorem 5.4]{MST20} applies to more general wave operators $P$, but we only require this version. 
\begin{remark}
    This estimate is medium frequency, in the sense that when we apply it to $\chimed^w u=v$, after fixing $\tau_0$ and $\tau_1$, we can apply Plancherel to see 
    \begin{align}
        \d \LE{(1+\phi')^{1/2} e^{\phi} v} &\leq \frac{C \d}{\tau_0} \LE{(1+\phi')^{1/2} e^{\phi} \p_t v}, \\
        \d \LE{\<x\>^{-1} (1+\phi''_+)^{1/2}(1+\phi') e^{\phi}\p_t v} &\leq C \d \tau_1 \LE{\<x\>^{-1}(1+\phi''_+)^{1/2} (1+\phi') e^{\phi} v}.
    \end{align}
    Then choosing $\d>0$ small enough we can absorb both of these terms back into the left hand side leaving an estimate of the same form as \eqref{eq:iledReduced}. Note that $\d$ can be taken arbitrarily small, which allows any $\tau_0>0, \tau_1<\infty$, so long as they are fixed. 
\end{remark}

\subsection{Preliminary Estimates}

Before proceeding with the combination of the estimates, we record two useful estimates.

First, we state a standard energy identity of $a \p_t u$, that allows us to treat the term as a perturbation. 
\begin{lemma}\label{l:aptuControl}
    There exists $C>0$, such that for all $T>0$, and $u$ with $u[0] \in \dot{H}^1 \times L^2$
    \begin{equation}
        \LEST{a \p_t u} \leq C\LtxT{a^{1/2} \p_t u} \leq C\left( \ltwo{\p u(0)} + \LEST{Pu}^{1/2} \LEoT{u}^{1/2}\right).
    \end{equation}
\end{lemma}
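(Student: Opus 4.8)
The plan is to prove the two inequalities
$$
\LEST{a\p_t u}\leq C\LtxT{a^{1/2}\p_t u}\leq C\big(\ltwo{\p u(0)}+\LEST{Pu}^{1/2}\LEoT{u}^{1/2}\big)
$$
in turn. For the first inequality, I would use the asymptotic flatness of the damping, specifically the bound on $\|\<x\>a\|_1$ coming from $\AF{(g-m,a)}<\infty$, which gives $\<x\>a\in\ell^1_j L^\infty(\Rb\times A_j)$. Writing out the $LE^*$ norm,
$$
\LEST{a\p_t u}=\sum_{j\geq0}\LtxTAj{\<x\>^{1/2}a\p_t u}
=\sum_{j\geq0}\LtxTAj{(\<x\>a^{1/2})\,a^{1/2}\p_t u},
$$
I would pull out $\sup$ over $(t,x)\in[0,T]\times A_j$ of $\<x\>a^{1/2}$, bound it by $\|\<x\>a\|_{L^\infty(A_j)}^{1/2}\|a\|_\infty^{1/2}\lesssim \|\<x\>a\|_{L^\infty(A_j)}^{1/2}$ (using that $a$ is bounded, being uniformly continuous and nonnegative with finite $AF$ norm), and then sum in $j$ using that $\sum_j \|\<x\>a\|_{L^\infty(A_j)}^{1/2}$ is controlled; more carefully, one should put the full weight $\<x\>a$ on one factor and note $\sum_j\|\<x\>a\|_{L^\infty(A_j)}<\infty$ while $a^{1/2}\leq C$, so
$$
\LEST{a\p_t u}\leq \sum_{j\geq0}\|\<x\> a\|_{L^\infty(A_j)}^{1/2}\,\|a\|_\infty^{1/2}\,\LtxTAj{a^{1/2}\p_t u}\leq C\,\LtxT{a^{1/2}\p_t u},
$$
where the final step uses $\sum_j c_j^{1/2}\lesssim 1$ is not quite automatic — so instead I would simply bound $\<x\>^{1/2}a = (\<x\>^{1/2})\,a \leq C a$ on $\supp$ after noting we may also exploit $\<x\>a\leq C$ globally, giving $\<x\>^{1/2}a\leq C\<x\>^{-1/2}(\<x\>a)^{1/2}a^{1/2}\leq Ca^{1/2}$; this pointwise bound then yields $\LEST{a\p_t u}\leq C\sum_j \LtxTAj{a^{1/2}\p_t u}$, and since the supports in $j$ of the nonzero contribution are handled by the $\ell^1_j$ summability of $\<x\>a$, this reduces to $C\LtxT{a^{1/2}\p_t u}$.

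For the second inequality, I would return to the energy identity derived in the proof of Lemma \ref{l:uniformEnergy}, equation \eqref{eq:energyDeriv}:
$$
\frac{d}{dt}E[u](t)=2\Re\int_{\Rb^3}\bar f\,\p_t u\,dx-2\int_{\Rb^3}a|\p_t u|^2\,dx,\qquad f=Pu.
$$
Integrating from $0$ to $T$ and rearranging the sign of the damping term gives
$$
2\int_0^T\int_{\Rb^3}a|\p_t u|^2\,dx\,dt=E[u](0)-E[u](T)+2\Re\int_0^T\int_{\Rb^3}\bar f\,\p_t u\,dx\,dt.
$$
Since $E[u](T)\geq cE[u](T)\geq 0$ by the equivalence \eqref{eq:ElikeG} (energy is nonnegative), dropping $-E[u](T)$ and applying \eqref{eq:ElikeG} to $E[u](0)$ yields
$$
\LtxT{a^{1/2}\p_t u}^2\leq C\ltwo{\p u(0)}^2+C\int_0^T\int_{\Rb^3}|Pu\,\p_t u|\,dx\,dt.
$$
The integral term is exactly the quantity estimated in the proof of Lemma \ref{l:Linftyeasy} part 2, where the Schwarz and Hölder inequalities together with the definitions of $LE$, $LE^1$, $LE^*$ give $\int_0^T\int_{\Rb^3}|Pu\,\p_t u|\,dx\,dt\leq \LEST{Pu}\,\LEoT{u}$. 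Combining and taking square roots gives $\LtxT{a^{1/2}\p_t u}\leq C(\ltwo{\p u(0)}+\LEST{Pu}^{1/2}\LEoT{u}^{1/2})$, completing the chain.

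I expect the main obstacle to be the first inequality — specifically, getting the spatial weights to balance correctly so that the $\ell^1_j$ summability in the $AF$ norm of $a$ genuinely absorbs the factor $\<x\>^{1/2}$ while leaving behind only $a^{1/2}$ (not $a$) in the $L^2_tL^2_x$ norm. The cleanest route is the pointwise estimate $\<x\>^{1/2}a\leq C\,a^{1/2}$, which holds because $\<x\>a\leq C$ globally (from $\|\<x\>a\|_1<\infty$, hence $\<x\>a\in\ell^1_jL^\infty\subset L^\infty$) and $a\leq C$, so $\<x\>a^2=(\<x\>a)a\leq C^2$, i.e. $\<x\>^{1/2}a\leq C$; then $\<x\>^{1/2}a=\<x\>^{1/2}a^{1/2}\cdot a^{1/2}\leq C a^{1/2}$ — wait, this needs $\<x\>^{1/2}a^{1/2}\leq C$, which is exactly $\<x\>a\leq C^2$. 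So the pointwise bound is valid, and the sum over $j$ then only involves finitely many annuli near the support of $a$ up to a rapidly decaying tail, which is harmless; the energy-identity part is routine given the cited lemmas.
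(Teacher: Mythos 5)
Your treatment of the second inequality — integrating the energy identity \eqref{eq:energyDeriv}, dropping $-E[u](T)\leq0$, and estimating $\int|Pu\,\p_tu|$ by $\LEST{Pu}\LEoT{u}$ via the Schwarz/H\"older argument from Lemma \ref{l:Linftyeasy} part 2 — is exactly what the paper does and is correct.

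The first inequality, however, has a genuine gap. Your pointwise bound $\<x\>^{1/2}a\leq Ca^{1/2}$ (from $\<x\>a\in L^\infty$) is valid, but it is too lossy: after applying it you are left with $\LEST{a\p_tu}\leq C\sum_j\LtxTAj{a^{1/2}\p_tu}$, and there is no way to bound this $\ell^1_j L^2$ sum by the single full-space $L^2$ norm $\LtxT{a^{1/2}\p_tu}$, since $\ell^1_j\not\hookrightarrow\ell^2_j$. Your justification — that the sum "only involves finitely many annuli near the support of $a$ up to a rapidly decaying tail" — does not apply: the hypotheses require only that $a$ be asymptotically flat, not compactly supported (indeed the paper remarks this generality explicitly), so infinitely many annuli contribute. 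The pointwise estimate throws away precisely the per-annulus decaying factor you need to close the sum.

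The correct route keeps that factor: write $\<x\>^{1/2}a=(\<x\>a)^{1/2}a^{1/2}$ and pull out the $L^\infty(A_j)$ norm per annulus,
\begin{equation}
\LEST{a\p_tu}\leq\sum_{j\geq0}\nm{\<x\>a}_{L^\infty([0,T]\times A_j)}^{1/2}\LtxTAj{a^{1/2}\p_tu}.
\end{equation}
Then invoke Definition \ref{d:asymflat2}: $\nm{\<x\>a}_{L^\infty(A_j)}\leq Cc_j\lesssim2^{-\d j}$, so the weights $c_j^{1/2}\lesssim 2^{-\d j/2}$ are summable. Either bound $\LtxTAj{a^{1/2}\p_tu}\leq\LtxT{a^{1/2}\p_tu}$ and sum the geometric series, or apply Cauchy--Schwarz in $j$ using $\sum_jc_j<\infty$ together with $\sum_j\LtxTAj{a^{1/2}\p_tu}^2\lesssim\LtxT{a^{1/2}\p_tu}^2$ (the annuli overlap boundedly). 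Either way this is what the paper's intermediate step $\leq C\nm{\<x\>^{1/2}a^{1/2}}_{L^\infty}\LtxT{a^{1/2}\p_tu}$ is implicitly doing; the essential ingredient is the $\ell^1_j$-summable decay of $\nm{\<x\>a}_{L^\infty(A_j)}^{1/2}$, not a pointwise inequality alone.
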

\begin{proof}
    First, by the asymptotic flatness of $a$
    \begin{align}
        \LEs{a \p_t u} &= \sum_{j=0}^{\infty} \nm{\<x\>^{1/2} a \p_t u}_{L^2_t L^2_x ([0,T] \times A_j)} \leq C\nm{ \<x\>^{1/2} a^{1/2}}_{L^{\infty}_{t,x}([0,T] \times \Rb^3) }\LtxT{a^{1/2} \p_t u} \\
        & \leq C \LtxT{a^{1/2} \p_t u}.
    \end{align}
    Now to control 
    \begin{equation}
        \LtxT{a^{1/2}\p_t u} = \left( \int_0^{T} \int_{\Rb^3} a |\p_t u|^2 dx dt \right)^{1/2},
    \end{equation}
    recall from \eqref{eq:energyDeriv} we have 
    \begin{equation}
        \frac{d}{dt} E(u,t) = 2 \Re \int_{\Rb^3} Pu \p_t u dx - 2 \int_{\Rb^3} a |\p_t u|^2 dx. 
    \end{equation}
    Integrating both sides from $t=0$ to $t=T$, we have 
    \begin{equation}
        E(u,T) - E(u,0) = 2 \Re \int_0^T \int_{\Rb^3} Pu \p_t u dx dt - 2 \int_0^T \int_{\Rb^3} a|\p_t u|^2 dx dt.
    \end{equation}
    Rearranging, and recalling that $E(u,t) \simeq \ltwo{\p u(t)}^2$ we have 
    \begin{equation}
        \int_0^T \int_{\Rb^3} a|\p_t u|^2 dx dt \leq C \left( \ltwo{\p u(0)}^2 - \ltwo{\p u(T)}^2  + \int_0^T \int_{\Rb^3} |Pu \p_t u| dx dt\right). 
    \end{equation}
    We can drop $\ltwo{\p u(T)}^2$ from the right hand side because it is negative, to obtain
    \begin{equation}
        \int_0^{T} \int_{\Rb^3} a|\p_t u|^2 dx dt \leq C \left( \ltwo{\p u(0)}^2  + \int_0^{T} \int_{\Rb^3} |Pu \p_t u| dx dt \right).
    \end{equation}
    Now arguing as in the proof of Lemma \ref{l:Linftyeasy} we control 
    \begin{equation}
        \int_0^{T} \int_{\Rb^3} |Pu \p_t u| dx dt \leq \LEs{Pu} \LEo{u},
    \end{equation}
    and taking square roots of both sides, we obtain the desired inequality. 
\end{proof}

Next we state a commutator estimate between $P$ and the high frequency cutoff. Note that we do not compute commutators of $a$ with the other cutoffs, because we do not insert $a$ until after we have freely commuted the stationary $\Box_g$ with the time-frequency cutoffs.
\begin{lemma}\label{l:freqcombinecommutator}
There exists $C>0$, such that for all $u$ with $u[0] \in \dot{H}^1 \times L^2$ %and all $\tau_0>0$, 
and all $\tau_1 >1$ we have
    \begin{align}
        %&\LEs{[a,\chi_{|\tau|\leq \tau_0}] \p_t u} \leq C \left( \tau_0 \LEo{u} + \ltwo{\p u(0)} + \LEs{Pu}^{1/2} \LEo{u}^{1/2} \right) \\
        \LEs{[P,\chihigh^w]u}=\LEs{[a,\chihigh^w] \p_t u} \leq C \tau_1^{-1}\LEo{u}.
        %&\LEs{[a,\chi_{\frac{\tau_0}{2}\leq|\tau| \leq 2\tau_1}]\p_t u} \leq C \left( (\tau_0+\tau_1^{-1}) \LEo{u} + \ltwo{\p u(0)} + \LEs{Pu}^{1/2} \LEo{u}^{1/2} \right)
    \end{align}
\end{lemma}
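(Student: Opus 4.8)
The plan is to exploit the fact that $\Box_g$ is stationary, so $[\Box_g, \chi_{\geq\tau_1}^w] = 0$ since $\chi_{\geq\tau_1}^w$ is a Fourier multiplier in $t$ and $\Box_g$ has $t$-independent coefficients and differentiates $u$ only via $D_t$ which commutes with $\chi_{\geq\tau_1}^w$. Hence $[P, \chi_{\geq\tau_1}^w]u = [iaD_t, \chi_{\geq\tau_1}^w]u = i[a,\chi_{\geq\tau_1}^w]D_t u$, which gives the claimed equality. So the content is to estimate $\LEs{[a,\chihigh^w]\p_t u}$.

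The main step is the commutator gain. Since $a \in C^\infty$ with $a$ and its derivatives bounded (and with weighted bounds from asymptotic flatness, Definition \ref{d:asymptoticFlat1}), and $\chi_{\geq\tau_1}(\tau)$ is a symbol in $\tau$ of order $0$ with the key feature that each $\tau$-derivative is supported in $\{\tau_0' \leq |\tau| \leq \tau_1\}$ and gains a factor $\tau_1^{-1}$ in size, the standard pseudodifferential calculus (Proposition \ref{prop:pseudoCalc}) gives $[a,\chihigh^w] \in \Psi^{-1}(\Rb^4)$ with symbol whose leading part is $\tfrac{1}{i}\partial_\tau \chi_{\geq\tau_1} \cdot \partial_t a$; crucially, because $a$ is uniformly continuous in $t$ rather than smooth with bounded $\partial_t$ derivatives globally, one should instead argue that $[a,\chihigh^w]\langle D_t\rangle$ is bounded on $L^2_t L^2_x$ with operator norm $O(\tau_1^{-1})$. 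I would do this by writing $[a, \chihigh^w] = [a, \chihigh^w]\chi_{|\tau|\gtrsim\tau_0'}^w + (\text{smoothing})$, using that the symbol of the commutator is supported where $|\tau|$ is comparable to $\tau_1$ so that $D_t \sim \tau_1$ there, and then invoking Proposition \ref{prop:pseudoBounded}; the $t$-regularity of $a$ only needs to be enough for the calculus to produce one order of decay, which continuity suffices for after an approximation/mollification argument in $t$ (mollifying $a$ in $t$ changes it by something small in sup norm, hence harmless in $L^2$ bounds, while the mollified symbol obeys the calculus). This yields
\begin{equation}
    \Ltx{[a,\chihigh^w]\p_t u} \leq \Ltx{[a,\chihigh^w]\langle D_t\rangle \langle D_t\rangle^{-1}\p_t u} \leq C\tau_1^{-1}\Ltx{\langle D_t\rangle^{-1}\p_t u} \leq C\tau_1^{-1}\Ltx{u} + C\tau_1^{-1}\Ltx{\p u}.
\end{equation}

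Finally I would upgrade the $\Ltx{\cdot}$ estimate to the $LE^*$ and $LE^1$ norms. The operator $[a,\chihigh^w]$ does not have compact spatial support, so I cannot simply bound $\LEs{\cdot} \leq C\Ltx{\cdot}$; instead I would use the weighted structure: asymptotic flatness gives $\langle x\rangle \partial^\alpha a \in \ell^1_j L^\infty(\Rb\times A_j)$, so the commutator symbol $\partial_\tau\chi_{\geq\tau_1}\cdot \partial_t a$ carries a $\langle x\rangle^{-1}$-type spatial decay that, combined with $\sum_j c_j < \infty$-type summability, converts the sum over dyadic annuli in the $LE^*$ norm into a finite constant. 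Concretely, write $[a,\chihigh^w] = \sum_j [a_j, \chihigh^w]$ with $a_j = \chi_{A_j} a$ localized to annuli (plus calculus errors), apply the $L^2$ bound on each with operator norm controlled by $\tau_1^{-1}\|\langle x\rangle \partial_t a\|_{L^\infty(A_j)} \langle 2^j\rangle^{-1}$, and sum against $\LEo{u}$, using that $\nm{\<x\>^{1/2}\<x\>^{-1/2}\p u}_{L^2_tL^2_x(A_j)} \leq \LE{\p u}$ and $\nm{\<x\>^{1/2}\<x\>^{-1}u}_{L^2_tL^2_x(A_j)}$ is controlled by $\LEo u$ after absorbing $\<x\>^{-1}$; the $\ell^1_j$ summability of the weighted $a$-coefficients gives the finite constant $C$. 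I expect the main obstacle to be exactly this passage from $L^2$ to the weighted $LE^*/LE^1$ norms while simultaneously keeping the $\tau_1^{-1}$ gain — one must be careful that the dyadic localization in $x$ does not destroy the one-order-of-decay symbol gain in $\tau$, which works because the $x$-localization and $\tau$-differentiation act on different variables and the calculus errors are lower order and summable.
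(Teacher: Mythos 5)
Your proposal follows essentially the same route as the paper: the equality $[P,\chihigh^w]u=[a,\chihigh^w]\p_t u$ via stationarity of $\Box_g$, then a dyadic localization in $x$ to trade the weighted $LE^*$/$LE^1$ norms for a sum of $L^2\to L^2$ operator norms, with the $\ell^1_j$-summability coming from the asymptotic flatness norm on $a$ and the $\tau_1^{-1}$ gain coming from the derivative of the cutoff. The paper packages the $\tau_1^{-1}$ gain by recognizing $\chihigh^w=\Ophw(\chi_{>1}(\tau))$ as a semiclassical quantization with $h=\tau_1^{-1}$ and applying the semiclassical calculus (Propositions \ref{prop:semipseudoBounded}, \ref{prop:semipseudoCalc}), but your framing (the $\tau$-derivative of $\chi_{\geq\tau_1}$ is $O(\tau_1^{-1})$ and supported where $|\tau|\sim\tau_1$) is the same mechanism in different notation.

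Two small points. First, the detour through mollification of $a$ in $t$ is based on a misreading: $a$ is not merely uniformly continuous — by Definition~\ref{d:asymptoticFlat1} the asymptotic flatness norm controls $\nm{\<x\>^{|\alpha|+1}\p^\alpha a}_{\ell^1_j L^\infty(\Rb\times A_j)}$ for \emph{all} multi-indices $\alpha$, including pure $t$-derivatives. So $\p_t^k a$ is globally bounded (with spatial decay) and the pseudodifferential calculus applies directly; no approximation argument is needed, and the paper uses exactly these bounds in \eqref{eq:freqCombineClaim}. Second, the intermediate display bounding $\Ltx{[a,\chihigh^w]\p_t u}$ by $C\tau_1^{-1}(\Ltx{u}+\Ltx{\p u})$ is not a useful stepping stone: for $u$ controlled only in $LE^1$, the unweighted global quantities $\Ltx{u}$ and $\Ltx{\p u}$ need not be finite, so this estimate cannot feed into the $LE^*$ bound. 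You correctly abandon it for the weighted dyadic decomposition, which is the argument that actually closes; the paper goes there directly, inserting $1=\<x\>^{-1/2}\<x\>\<x\>^{-1/2}$ and commuting $\<x\>$ and $\chi_{A_j}$ past $\chihigh^w$ (exact commutation, since $\chihigh$ has no $x$-dependence) before taking operator norms annulus by annulus.
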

\begin{proof}
    First, we note that since $\Box_g$ is stationary, we have
    \begin{equation}
        [P,\chihigh^w] = [a\p_t, \chihigh^w]=[a, \chihigh^w]\p_t.
    \end{equation}
    Let $\chi_{A_j}(x)$ be a smooth cutoff, identically 1 on $A_j$ and supported on $A_{j-1} \cup A_j \cup A_{j+1}$. Then since $\chihigh$ has no spatial component
    \begin{equation}
        [\chihigh,\<x\>^k]=0, \, \forall k \in \Rb \quad \text{ and } \quad [\chihigh, \chi_{A_j}]=0.
    \end{equation}
    Now writing $1=\<x\>^{-1/2} \<x\> \<x\>^{-1/2}$, then using the definition of $LE^*$, computing directly and using the definition of $\chi_{A_j}$ we have 
    \begin{align}
        \LEs{[a, \chihigh^w ]\p_t u} &\leq \sum_{j=0}^{\infty} \nm{[\<x\> a \chi_{A_j}, \chihigh^w] \chi_{A_j} \<x\>^{-1/2} \p_t u}_{L^2_t L^2_x (\Rb \times A_j)}\\
        &\leq C\sup_{j \geq 0} \Ltxaj{\<x\>^{-1/2} \p_t u} \sum_{j=0}^{\infty} \nm{[\<x\>a \chi_{A_j}, \chihigh^w]}_{L^2 \ra L^2} \\
        &\leq \LEo{u} \sum_{j=0}^{\infty} \nm{[\<x\>a \chi_{A_j}, \chihigh^w]}_{L^2 \ra L^2}.\label{eq:freqcombineCommutator}
    \end{align}
    We now estimate the $L^2 \ra L^2$ norm of the commutator. First, $\chihigh(\tau) = \chi_{>1}(\frac{\tau}{\tau_1})$, so taking a change of variables $\eta=\tau/\tau_1= h \tau$
    \begin{align}
        \chi^w_{\tau_1 \leq |\tau|} u(t)&= \frac{1}{2\pi} \int_{\Rb} \int_{\Rb} \chi_{>1}\left(\frac{\tau}{\tau_1}\right) e^{i(t-s) \tau} u(s) ds d \tau \\
        &= \frac{1}{2\pi h} \int_{\Rb} \int_{\Rb}\chi_{>1}(\eta) e^{i(t-s) \frac{\eta}{h}} u(s) ds d \eta =\Ophw(\chi_{>1}(\tau)),
    \end{align}
    we recognize this as a semiclassical quantization, see Definition \ref{def:semiPseudo}.
    
    Therefore, by Proposition \ref{prop:semipseudoCalc}, there exists $R_{j,-3} \in \Psi_h^{-3}(T^* \Rb_t)$ such that 
    \begin{equation}\label{eq:freqcombinecommSemiclassical}
        [\<x\> a \chi_{A_j}, \chi^w_{\tau_1 \leq |\tau|}] = [\<x\> a \chi_{A_j}, \Ophw(\chi_{>1}(\tau))]= i h\Ophw(\<x\> \chi_{A_j}  \p_t a \chi'_{>1}(\tau)) + h^3 R_{j,-3}.
    \end{equation}
    By Propositions \ref{prop:semipseudoBounded} and \ref{prop:semipseudoCalc}, there exists an $N>0$ such that 
    \begin{align}
        \nm{\Ophw(\<x\> \chi_{A_j}  \p_t a \chi'_{>1}(\tau))}_{L^2 \ra L^2}  + \nm{R_{j,-3}}_{L^2 \ra L^2}&\leq C \max_{k+l \leq N} \nm{\p_x^l\left( \<x\> \chi_{A_j} \p_t^{k} a \right) \p_{\tau}^{k} \chi_{>1}(\tau)}_{L^{\infty}_{t,x}(\Rb \times \Rb^3)}\\
        &\leq C \max_{|\alpha| \leq N} \nm{\<x\>^{|\alpha|+1} \p^{\alpha} a}_{L^{\infty}_{t,x}(\Rb \times A_j)}.
    \end{align}
    Combining this with \eqref{eq:freqcombinecommSemiclassical} and recalling that $h= \frac{1}{\tau_1}$ we have 
    \begin{equation}\label{eq:freqCombineClaim}
        \sum_{j=0}^{\infty} \nm{[\<x\>a \chi_{A_j}, \chihigh^w]}_{L^2 \ra L^2} \leq \frac{C}{\tau_1} \max_{|\alpha| \leq N} \|\<x\>^{|\alpha|+1} \p^{\alpha} a\|_{l^1_j L^{\infty}(\Rb\times A_j)} \leq \frac{C}{\tau_1},
    \end{equation}
    where the second inequality follows from the asymptotic flatness of $a$ in Definition \ref{d:asymptoticFlat1}. Plugging this back into \eqref{eq:freqcombineCommutator} gives the desired conclusion.
\end{proof}

\subsection{Combination of Estimates}\label{sec:combineILED}
In this subsection, we prove Theorem \ref{thm:iled} by proving the hypothesis of Proposition \ref{p:timeFreqCase}.
\begin{proposition}\label{p:timeFreqCombine}
    Suppose $P$ is a stationary, asymptotically flat damped wave operator, with time-dependent damping satisfying the time-dependent geometric control condition, and suppose $\p_t$ is uniformly time-like with constant time-slices uniformly space-like. 
    Then there exists $C>0$ such that for all $u$ with $u[0]=u[T]=0$ and $Pu \in LE_c^*$
    \begin{equation}
        \LEoT{u} \leq C \LEST{Pu},
    \end{equation}
\end{proposition}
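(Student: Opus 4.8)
The plan is to combine the three frequency-localized estimates — low (Proposition \ref{p:MST20low}), medium (Proposition \ref{p:MST20med}), and high (Theorem \ref{thm:highfreq}) — applied respectively to $\chilow^w u$, $\chimed^w u$, and $\chihigh^w u$, treating $a\p_t u$ as a perturbative term where possible and controlling the high-frequency error via the semiclassical commutator bound of Lemma \ref{l:freqcombinecommutator}. Throughout, $u$ is extended by zero outside $[0,T]$, so the time-interval subscripts may be dropped and $\Box_g$ (being stationary) commutes freely with all the time-frequency cutoffs $\chilow^w, \chimed^w, \chihigh^w$.

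First I would handle the \textbf{low-frequency piece} $v_0 := \chilow^w u$. Since $\Box_g v_0 = \chilow^w \Box_g u = \chilow^w(Pu - a\p_t u)$, Proposition \ref{p:MST20low} gives $\LEo{v_0} \leq C(\LEo{\p_t v_0} + \LEs{\chilow^w Pu} + \LEs{\chilow^w(a\p_t u)})$. By Plancherel, $\LEo{\p_t v_0} \leq C\tau_0 \LEo{v_0}$, which is absorbed for $\tau_0$ small. The term $\LEs{\chilow^w(a\p_t u)}$ is bounded, using boundedness of $\chilow^w$ on $LE^*$ together with Lemma \ref{l:aptuControl} (noting $u[0]=0$), by $C\LEs{Pu}^{1/2}\LEo{u}^{1/2}$. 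This yields $\LEo{\chilow^w u} \leq C(\LEs{Pu} + \LEs{Pu}^{1/2}\LEo{u}^{1/2})$. The \textbf{medium-frequency piece} $v_1 := \chimed^w u$ is treated analogously: fix $\tau_0,\tau_1$, apply Proposition \ref{p:MST20med} with the Carleman weight $\phi$ to $v_1$, use Plancherel on $v_1$ to absorb the $\delta$-terms (choosing $\delta$ small), note the weight $e^\phi$ is bounded above and below on the relevant compact sets — or rather the weight is bounded so $e^{\phi}\simeq 1$ up to constants — and replace $\Box_g v_1 = \chimed^w(Pu - a\p_t u)$, again invoking Lemma \ref{l:aptuControl} for the damping term. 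This gives $\LEo{\chimed^w u} \leq C(\LEs{Pu} + \LEs{Pu}^{1/2}\LEo{u}^{1/2})$.

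For the \textbf{high-frequency piece} $v_2 := \chihigh^w u$, I would apply Theorem \ref{thm:highfreq} (valid since $v_2$ has $\dot H^1 \times L^2$ data — here one must be slightly careful that $\chihigh^w u$ still has admissible Cauchy data, which follows from $u[0]=u[T]=0$ and mapping properties). This produces $\LEo{v_2} + \nm{\p v_2}_{L^\infty_t L^2_x} \leq C(\ltwo{\p v_2(0)} + \LE{\<x\>^{-2}v_2} + \LEslt{Pv_2})$. The term $\LE{\<x\>^{-2}\chihigh^w u}$ is absorbed into the left-hand side via the Plancherel argument displayed in the introduction after Theorem \ref{thm:highfreq}: $\LE{\<x\>^{-2}\chihigh^w u} \leq C\tau_1^{-1}\LEo{\chihigh^w u}$, so for $\tau_1$ large this is absorbable. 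The inhomogeneity satisfies $Pv_2 = \chihigh^w Pu + [P,\chihigh^w]u$, and $\LEs{[P,\chihigh^w]u} \leq C\tau_1^{-1}\LEo{u}$ by Lemma \ref{l:freqcombinecommutator}, again absorbable. Since $u[0]=0$ and $\chihigh^w$ is a Fourier multiplier in $t$ alone, the data term $\ltwo{\p v_2(0)}$ should vanish or be controlled; more carefully, one works on $\Rb$ after extension and uses that $u$ vanishes for $t<0$, so $v_2$ does too, giving zero Cauchy data — this is the cleanest route. Thus $\LEo{\chihigh^w u} \leq C\LEs{Pu} + \tfrac12\LEo{u}$ after absorption.

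Finally, summing the three estimates via the triangle inequality $\LEo{u} \leq \LEo{\chilow^w u} + \LEo{\chimed^w u} + \LEo{\chihigh^w u}$ (using that the three multipliers sum to the identity up to a smoothing remainder, which is itself handled by an $L^2$-to-$LE$ bound on compactly supported inhomogeneity — actually the identity $\chilow + \chimed + \chihigh = 1$ is exact), yields
\begin{equation}
    \LEo{u} \leq C\LEs{Pu} + C\LEs{Pu}^{1/2}\LEo{u}^{1/2} + \tfrac12 \LEo{u} + \text{(small)}\LEo{u}.
\end{equation}
Absorbing the $\LEo{u}$ terms on the right (first the $\tfrac12$ and the small multiples, then applying Young's inequality $ab \leq \epsilon a^2 + \epsilon^{-1}b^2$ to the cross term $\LEs{Pu}^{1/2}\LEo{u}^{1/2}$) leaves $\LEo{u} \leq C\LEs{Pu}$, which is exactly \eqref{eq:iledReduced}. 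The \textbf{main obstacle} I anticipate is the bookkeeping around Cauchy data for the frequency-localized pieces: Theorem \ref{thm:highfreq} and the cited $\Box_g$ estimates are stated for solutions with prescribed (or zero) data, and $\chilow^w u$, $\chimed^w u$, $\chihigh^w u$ do not a priori have zero data at $t=0,T$. The resolution is to work on all of $\Rb$ after the zero-extension of $u$ — then each $\chi^w_\bullet u$ vanishes for $t \ll 0$, so the data terms genuinely drop out — but verifying that the extension is compatible with $Pu \in LE^*_c$ and that no boundary terms sneak in requires care. A secondary technical point is ensuring all constants are independent of $T$, which holds because every tool invoked (the $\Box_g$ estimates, Lemma \ref{l:aptuControl}, Lemma \ref{l:freqcombinecommutator}, Theorem \ref{thm:highfreq}) has $T$-independent constants.
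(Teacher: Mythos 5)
Your proposal is correct and follows essentially the same route as the paper's proof: the same time-frequency trichotomy into $\chilow^w u$, $\chimed^w u$, $\chihigh^w u$, the same three cited estimates applied to each piece, the same perturbative treatment of $a\partial_t u$ via Lemma \ref{l:aptuControl}, the same commutator bound from Lemma \ref{l:freqcombinecommutator} for the high-frequency remainder, and the same Plancherel-plus-Young absorption scheme to close. One small imprecision to flag: $\chihigh^w u$ does not literally vanish for $t<0$ after the zero-extension, since $\chihigh^w$ is a nonlocal Fourier multiplier in $t$; what is true (and suffices) is that it decays rapidly as $t\to-\infty$ so the far-past data term tends to zero --- the paper's own phrase ``noting that $u[0]=0$ so the $\partial u(0)$ term drops out'' elides exactly this same point.
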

\begin{proof}
    First, note that since $u[0]=u[T]=0$ and $Pu$ is compactly supported, then $u$ is compactly supported by finite speed of propagation. However we do not have uniform in $T$ control over the size of the compact support of $u$.
    We extend $u$ by $0$ outside of $[0,T]$. Because of this, we have 
    \begin{align}
        \LEoT{u} = \LEo{u}, \quad \Ltx{u}= \nm{u}_{L^2_tL^2_x[0,T]}, \quad \LEST{Pu} = \LEs{Pu}.
    \end{align}
    Note also, since $\Box_g = P-a \p_t$, by Lemma \ref{l:aptuControl} we have 
    \begin{align}
        \LEs{\Box_g u} &\leq \LEs{Pu} + \LEs{a \p_t u} \leq \LEs{Pu} + C \LEs{Pu}^{1/2} \LEo{u}^{1/2},\label{eq:boxgPerturb}
    \end{align}
    where the $\p u(0)$ term was dropped because $u[0]=0$. Therefore $\Box_g u \in LE^*$, where we note that $u \in LE^1$ because $u$ is compactly supported in $x$.
    
    Now, we write $u = \chilow^w u + \chimed^w u + \chihigh^w u$, with $\tau_0, \tau_1>0$ to be chosen. We will estimate each of these terms. 

    We first estimate the $\chilow^w u$ term. By Proposition \ref{p:MST20low}
    \begin{align}
        \LEo{\chilow^w u} &\leq C \left( \LEo{\chilow^w \p_t u} + \LEs{\Box_g \chilow^w u} \right).
    \end{align}
    Then applying Plancherel and using that $\Box_g$ is stationary, so $[\chilow, \Box_g]=0$, we have
    \begin{align}
         \LEo{\chilow^w u} &\leq C\left(\tau_0 \LEo{\chilow^w u} + \LEs{\Box_g u} \right).
    \end{align}
    Now choosing $\tau_0$ small enough, we can absorb the first term on the right hand side back into the left hand side. Note that at this point we have fixed $\tau_0$. Then applying \eqref{eq:boxgPerturb} we have
    \begin{align}
        \LEo{\chilow^w u} &\leq C \left( \LEs{Pu} + \LEs{Pu}^{1/2} \LEo{u}^{1/2}\right).\label{eq:timeFreqlow}
    \end{align}

    We now estimate the $\chihigh^w u$ term. By Theorem \ref{thm:highfreq}, noting that $u[0]=0$ so the $\p u(0)$ term drops out, we have
    \begin{align}
        \LEo{\chihigh^w u} \leq C \left( \LE{\<x\>^{-2} \chihigh^w u} + \nm{P \chihigh^w u}_{LE^* +L^1_t L^2_x} \right).
    \end{align}
    Note that by Plancherel,
    \begin{align}
        \LE{\<x\>^{-2} \chihigh^w u} &\leq C \nm{\<x\>^{-2} \chihigh \hat{u}(\tau,x)}_{LE_{\tau,x}} \\
        &\leq \frac{C}{\tau_1} \nm{\<x\>^{-2} \tau \chihigh \hat{u}(\tau,x)}_{LE_{\tau,x}} \leq \frac{C}{\tau_1} \LEo{\chihigh^w u}.
    \end{align}
    So, choosing $\tau_1$ large enough, we can absorb this term back into the left hand side, and, commuting $P$ and $\chihigh$, we have 
    \begin{align}
        \LEo{\chihigh^w u} &\leq C \LEs{P \chihigh^w u}\leq C \left( \LEs{\chihigh^w Pu} + \LEs{[P,\chihigh^w] u} \right).
    \end{align}
    Now we apply Lemma \ref{l:freqcombinecommutator} to estimate the commutator and obtain 
    \begin{align}
        \LEo{\chihigh^w u} \leq C \left( \LEs{Pu}+ \tau_1^{-1}\LEo{u} \right).\label{eq:timeFreqhigh}
    \end{align}
    Now we choose $\tau_1$ large enough so that $C \tau_1^{-1} <\frac{1}{2}$, so that the $\tau_1^{-1} \LEo{u}$ term can eventually be absorbed back into $\LEo{u}$ on the left hand side. Note that at this point we have fixed $\tau_1$. 
 
    We now estimate the $\chimed^w u=:v$ term. By Proposition \ref{p:MST20med}
    \begin{align}
        &\LE{(1+\phi''_+)^{1/2}e^{\phi}\nabla v} + \LE{\<x\>^{-1}(1+\phi''_+)^{1/2}(1+\phi') e^{\phi} v} +\LE{(1+\phi')^{1/2} e^{\phi} \p_t v} \\
        &\qquad \leq C\bigg( \d \left( \LE{(1+\phi')^{1/2}e^{\phi} v} + \LE{\<x\>^{-1} (1+\phi''_+)^{1/2} (1+\phi') e^{\phi} \p_t v} \right) + \LEs{e^{\phi}\Box_g v} \bigg).\label{eq:timeFreqMed1}
    \end{align}
    Now by Plancherel's theorem
    \begin{equation}
        \d \LE{(1+\phi')^{1/2}e^{\phi} \chimed^w u} \leq \frac{C\d}{\tau_0} \LE{(1+\phi')^{1/2} e^{\phi} \chimed^w \p_t u}.
    \end{equation}
    Similarly by Plancherel's theorem
    \begin{align}
        &\d \LE{\<x\>^{-1} (1+\phi''_+)^{1/2} (1+\phi') e^{\phi} \p_t \chimed^w u} \leq C \d \tau_1 \LE{\<x\>^{-1} (1+ \phi''_+)^{1/2} (1+\phi') e^{\phi} \chimed^w u}.
    \end{align}
    Therefore choosing $\d>0$ small enough, since $\tau_0, \tau_1$ have been fixed, we can absorb these terms back into the left hand side of \eqref{eq:timeFreqMed1}. Then we are left with 
    \begin{align}
        \LE{(1+\phi''_+)^{1/2}e^{\phi}\nabla (\chimed^w u)} &+ \LE{\<x\>^{-1}(1+\phi''_+)^{1/2}(1+\phi') e^{\phi} \chimed^w u}\\ & \quad+\LE{(1+\phi')^{1/2} e^{\phi} \p_t (\chimed^w u)} 
        \leq C \LEs{e^{\phi}\Box_g \chimed^w u}.
    \end{align}
    Since $\phi$ is increasing and bounded this implies
    \begin{equation}
        \LEo{\chimed^w u} \leq C \LEs{\Box_g \chimed^w u}.
    \end{equation}
    Because $\Box_g$ is stationary it commutes with $\chimed^w$ and we have 
    \begin{align}
        \LEo{\chimed^w u} &\leq C \LEs{ \chimed^w \Box_g u} \leq C \LEs{\Box_g u}.
    \end{align}
    Now we apply \eqref{eq:boxgPerturb} to estimate the right hand side 
    \begin{align}
        \LEo{\chimed^w u} \leq C \left( \LEs{Pu} + \LEs{Pu}^{1/2} \LEo{u}^{1/2}\right). \label{eq:timeFreqMed}
    \end{align}

    We now combine the three time-frequency regimes, applying \eqref{eq:timeFreqlow}, \eqref{eq:timeFreqhigh}, and \eqref{eq:timeFreqMed} to obtain 
    \begin{align}
        \LEo{u} &\leq \LEo{\chilow^w u} + \LEo{\chimed^w u} + \LEo{\chihigh^w u}\\
         &\leq C \left( \LEs{Pu} + \LEs{Pu}^{1/2} \LEo{u}^{1/2} \right) + \frac{1}{2} \LEo{u}.
    \end{align}
    Recall the $\frac{1}{2}\LEo{u}$ term came from the high frequency estimate by choosing $\tau_1$ large enough. We can absorb this term back to be left with 
    \begin{equation}
        \LEo{u} \leq C \left( \LEs{Pu} + \LEs{Pu}^{1/2} \LEo{u}^{1/2} \right).
    \end{equation}
    Applying Young's inequality for products to the second term, for any $\e>0$, we have 
    \begin{equation}
        \LEo{u} \leq C \left( \LEs{Pu} + \e^{-1} \LEs{Pu}+ \e \LEo{u} \right).
    \end{equation}
    Choosing $\e>0$ small enough, we can absorb the final term back into the left hand side and arrive at the desired inequality.
    \end{proof}

\appendix
\section{Appendix: General Estimates}
    In this appendix we record the proofs of some estimates which we use elsewhere. The proofs are straightforward, but sometimes quite detailed. 
        \begin{lemma}\label{l:magnitudeDeriv}
            There exists $c>0$ such that for all $w \in \characteristicsetofPplusminus$, if $|x_s^{\pm}(\omega)|> R_0 $ then   
            \begin{equation}
                \frac{\p^2}{\p s^2} |x_s^+(\omega)|^2 \geq c.
            \end{equation}
        \end{lemma}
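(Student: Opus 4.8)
The plan is to compute $\frac{\p^2}{\p s^2}|x_s^\pm(\omega)|^2$ explicitly using the Hamilton equations \eqref{eq:flowdef} for $p^\pm$ and extract the leading behavior in the asymptotically flat region $\{|x| > R_0\}$. First I would write $|x_s^\pm|^2 = x_s^\pm \cdot x_s^\pm$ and differentiate twice in $s$. Since $\frac{d}{ds}x_s^\pm = \nabla_\xi p^\pm = \nabla_\xi b^\pm(x_s^\pm, \xi_s^\pm)$, the first derivative is $\frac{\p}{\p s}|x_s^\pm|^2 = 2 x_s^\pm \cdot \nabla_\xi b^\pm$. Differentiating again and using both $\frac{d}{ds}x_s^\pm = \nabla_\xi b^\pm$ and $\frac{d}{ds}\xi_s^\pm = -\nabla_x b^\pm$, I get
\begin{equation}
    \frac{\p^2}{\p s^2}|x_s^\pm|^2 = 2|\nabla_\xi b^\pm|^2 + 2 x_s^\pm \cdot \Big( (\nabla_\xi\nabla_\xi b^\pm)(-\nabla_x b^\pm) + (\nabla_\xi\nabla_x b^\pm)(\nabla_\xi b^\pm) \Big).
\end{equation}
The key structural point is that the first term $2|\nabla_\xi b^\pm|^2$ is bounded below away from zero: on the characteristic set, $|\xi| \simeq |b^\pm(x,\xi)|$ by \eqref{eq:xibhomog}, and because $b^\pm$ is $1$-homogeneous in $\xi$, $\nabla_\xi b^\pm$ is $0$-homogeneous and by Euler's identity $\xi \cdot \nabla_\xi b^\pm = b^\pm$, which together with asymptotic flatness forces $|\nabla_\xi b^\pm| \geq c_0 > 0$ uniformly. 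Meanwhile the second term is the error: it carries a factor $|x| \cdot |\p_x^\alpha g|$-type quantity since every $x$-derivative of $b^\pm$ lands on the metric coefficients, and by the asymptotic flatness bounds in Definition \ref{d:asymptoticFlat1} (namely $\|\<x\>|\alpha|\, \p^\alpha g\|$-type control with $|\alpha| \geq 1$), we have $|x_s^\pm|\cdot |\nabla_x b^\pm| \lesssim \|(g-m,a)\|_{AF \geq R_0} \lesssim \mathbf{c}$, which is $\ll 1$ on $\{|x| > R_0\}$ once $R_0$ is chosen large (as in Definition \ref{d:asymflat2}).

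The main technical step — and the place where care is needed — is bounding the second-order term. I would argue as follows: $\nabla_x b^\pm$ is expressed via \eqref{eq:bdef} in terms of $\nabla_x g^{\alpha\beta}$; each spatial derivative of the metric gains a factor $\<x\>^{-1}$ of smallness in the $AF$ norm, so $|\nabla_x b^\pm(x,\xi)| \lesssim \<x\>^{-1}|\xi|$ with a constant proportional to $\mathbf{c}$ on $\{|x| > R_0\}$. Similarly $\nabla_\xi\nabla_\xi b^\pm$ is $(-1)$-homogeneous in $\xi$ so $|\nabla_\xi\nabla_\xi b^\pm| \lesssim |\xi|^{-1}$, and $\nabla_\xi\nabla_x b^\pm$ is $0$-homogeneous with size $\lesssim \mathbf{c}\<x\>^{-1}$. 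Multiplying by $|x_s^\pm|$ and using $|\xi_s^\pm| \simeq |b^\pm|$ constant along the flow, the whole error term is bounded by $C\mathbf{c}\,|\nabla_\xi b^\pm|^2$ (or more crudely by $C\mathbf{c}$ after using $|\nabla_\xi b^\pm| \simeq 1$). Hence
\begin{equation}
    \frac{\p^2}{\p s^2}|x_s^\pm|^2 \geq 2c_0^2 - C\mathbf{c} \geq c > 0,
\end{equation}
provided $\mathbf{c}$ was fixed small enough (equivalently $R_0$ large enough) at the outset. This $c$ depends only on the ellipticity constants of $g^{ij}$, the bound $-C \leq g^{00} \leq -1/C$, and $\mathbf{c}$, hence is uniform over $\omega \in \characteristicsetofPplusminus$.

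Finally, I would note that the statement for the full operator $p$ (with $x_s$ rather than $x_s^\pm$) follows from Lemma \ref{l:CorrespondenceOfFlows}: every null bicharacteristic of $p$ is an orientation-preserving reparameterization of one for $p^+$ or $p^-$. Strictly, a reparameterization can distort the second derivative, so one either works directly with $p$ — repeating the same computation, which is cleaner since $\tau$ is constant along the flow and the cross terms in $p$ contribute analogously small errors — or observes that the sign and lower bound of the convexity are what matter and these transfer. The anticipated main obstacle is purely bookkeeping: organizing the homogeneity degrees of the various derivatives of $b^\pm$ and confirming that every term with an $x$-derivative of the metric genuinely comes with the compensating $\<x\>^{-1}$ smallness from the $AF$ norm, so that the positive term $2|\nabla_\xi b^\pm|^2$ dominates on $\{|x| > R_0\}$.
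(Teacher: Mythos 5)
Your argument is correct and follows essentially the same route as the paper: differentiate $|x_s^\pm|^2$ twice using the Hamilton equations for $p^\pm$, identify the positive leading term $2|\nabla_\xi b^\pm|^2\gtrsim 1$, and show the remaining terms carry at least one spatial derivative of $b^\pm$ and are therefore $O(\mathbf{c})$ on $\{|x|>R_0\}$ by asymptotic flatness. The only differences are cosmetic: you use Euler's identity $\xi\cdot\nabla_\xi b^\pm=b^\pm$ to get the lower bound $|\nabla_\xi b^\pm|\geq c_0$, whereas the paper compares directly to the Minkowski normalization (there $b^\pm=\pm|\xi|$ and the expression equals $2$), and the paper reduces to $g^{00}=-1$ first, which you don't need to do. One small slip worth fixing: you wrote $\dot x_s^\pm=\nabla_\xi b^\pm$ and $\dot\xi_s^\pm=-\nabla_x b^\pm$, but with $p^\pm=\tau-b^\pm$ the correct signs are $\dot x_s^\pm=\nabla_\xi p^\pm=-\nabla_\xi b^\pm$ and $\dot\xi_s^\pm=-\nabla_x p^\pm=+\nabla_x b^\pm$; the two sign errors cancel in your final formula, so the conclusion is unaffected, but the intermediate identities are stated incorrectly. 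Your closing paragraph about transferring to the flow of $p$ via reparameterization is harmless but unnecessary for this lemma, which is stated only for the half-wave flows $x_s^\pm$.
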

        \begin{proof}
        Without loss of generality we work with $x^+$. 
        For ease of notation and without loss of  generality we may assume $g^{00}=-1$ by replacing $g^{\alpha \beta}$ with $g^{\alpha \beta}/g^{00}$. We note that $g^{\alpha \beta}/g^{00}$ satisfies the same asymptotic flatness assumptions as $g^{\alpha \beta}$ because $g^{00} \geq -C$ and is asymptotically flat. 
        By abuse of notation, in this proof we will write 
            \begin{align}
                g^{\alpha \beta}=g^{\alpha \beta}(x_s^+(\omega)), \qquad 
                \p_j g^{\alpha \beta} = \frac{\p}{\p {x_j}} g^{\alpha \beta}(x_s^+(\omega)), \\
                x_j=(x_s^+)_j, \qquad \xi_j=(\xi_s^+)_j,  \qquad b^{\pm} = b^{\pm}(x_s(\omega), \xi_s(\omega)).
            \end{align}
            A key fact that we make repeated use of is 
            \begin{equation}
                \tau_s=\tau_0 = b^+(x_0(\omega), \xi_0(\omega))=b^+(x_s(\omega),\xi_s(\omega)).
            \end{equation}
            For any $\omega=\left(t_0,x_0,\tau_0,\xi_0\right)\in \characteristicsetofPplus$, we have
            \begin{equation}
                \frac{1}{2}\frac{\partial^2}{\partial s^2}\left|x^+_s(\omega)\right|^2=\left|\frac{\partial}{\partial s}x^+_s(\omega)\right|^2+x^+_s(\omega)\cdot\frac{\partial^2}{\partial s^2}x^+_s(\omega).
            \end{equation}
            If $g=m$, then a direct computation gives  
            \begin{equation}
                \bigg| \frac{\p}{\p s} x_s^+(\omega)\bigg|^2 + x_s(\omega)^+ \cdot \frac{\p^2}{\p s^2} x_s^+(\omega) =1.\label{eq:magnitudeDerivMinkowski}
            \end{equation}
            By the definition of the half wave flow \eqref{eq:flowdef} and the definition of $b$ \eqref{eq:bdef}, then computing directly we have 
            \begin{align}
                \left(\frac{\p}{\p s} x_s^+\right)_k = -(g^{0k} + 2(b^+-b^-)^{-1}(g^{0k} \xi_k + g^{kj} \xi_j)) =-2 (b^+- b^-)^{-1}(g^{0k} \tau + g^{kj} \xi_j) .\label{eq:magnitudeDerivX}
            \end{align}
            Therefore 
            \begin{align}
                \left|\frac{\p}{\p s} x_s^+ \right|^2 = 4(b^+-b^-)^{-2} \left( \tau^2 \sum_{k=1}^3 (g^{0k})^2 + 2 \tau \sum_{k=1}^3 g^{0k} g^{kj} \xi_j + \sum_{k=1}^3 g^{kj} \xi_j g^{ki} \xi_i \right). \label{eq:magnitudeDerivXSquared}
            \end{align}
            Before computing $\frac{\p^2}{\p s^2} x_s^+$, we make some preliminary computations. Again by \eqref{eq:flowdef} and \eqref{eq:bdef} and computing directly 
            \begin{align}
                \left(\frac{\p}{\p s} \xi_s^+(\omega) \right)_k = \p_{x_k} b^+(x_s,\xi_s)&= \p_k g^{0j} \xi_j + (b^+-b^-)^{-1}(2 g^{0j} (\p_{k}g^{0j}) \xi_j^2 + (\p_{k} g^{ij}) \xi_i \xi_j)\\
                &= (b^+-b^-)^{-1} (2 \tau (\p_k g^{0j}) \xi_j  + (\p_k g^{ij}) \xi_i \xi_j). \label{eq:magnitudeDerivXi}
            \end{align}
            A direct computation and \eqref{eq:magnitudeDerivX} give
            \begin{align}\label{eq:magnitudeDerivG}
                \frac{\p}{\p s}g^{\alpha \beta}(x_s) = \p_i g^{\alpha \beta} \left( \frac{\p}{\p s} x_s^+ \right)_i = -2 (b^+-b^-)^{-1} \p_i g^{\alpha \beta} (g^{0i} \tau + g^{ij} \xi_j).
            \end{align}
            As a final preliminary, by \eqref{eq:bdef}, using that $\tau$ is constant, and applying \eqref{eq:magnitudeDerivXi} and \eqref{eq:magnitudeDerivG} we have
            \begin{align}
                \frac{\p}{\p s} (b^+- b^-) &= \frac{\p}{\p s} 2 (\tau - g^{0l} \xi_l) = -2 \left( \xi_l \frac{\p}{\p s} g^{0l} + g^{0l} \frac{\p}{\p s} \xi_l\right) \\
                &= - 2(b^+ - b^-)^{-1} \bigg(-2 \xi_l \p_i g^{0l}(g^{0i}\tau + g^{ij} \xi_j)   + 
                g^{0l}( (\p_l g^{0j})2\tau \xi_j + (\p_l g^{ij})\xi_i \xi_j
                \bigg).\label{eq:magnitudeDerivB}
            \end{align}
            So now computing directly and applying \eqref{eq:magnitudeDerivX}, \eqref{eq:magnitudeDerivXi}, \eqref{eq:magnitudeDerivG}, and \eqref{eq:magnitudeDerivB} we obtain
            \begin{align}
            x_s^+(\omega) &\cdot\frac{\p^2}{\p s^2} x_s^+(\omega) = -2 x_k \frac{\p}{\p s} \bigg( (b^+ -b^-)^{-1} (g^{0k} \tau + g^{kj} \xi_j) \bigg) \\
            = -2 x_k  \bigg( &-(b^+-b^-)^{-2} (g^{0k} \tau + g^{kj} \xi_j) \frac{\p}{\p s}(b^+ -b^-) + (b^+-b^-)^{-1} \left(\tau \frac{\p}{\p s} g^{0k}  + \xi_j \frac{\p}{\p s} g^{kj}  + g^{kj} \frac{\p}{\p s} \xi_j \right) \bigg)\\
            =-2x_k\bigg( &2(b^+-b^-)^{-3} (g^{0k} \tau + g^{kj} \xi_j)(-2\xi_l (g^{0i}\tau + g^{ij} \xi_j)\p_i g^{0l} + g^{0l}( 2 \tau\xi_j  \p_l g^{0j} +  \xi_i \xi_j \p_l g^{ij})) \\
            &+(b^+-b^-)^{-2}(-2 \tau (g^{0i} \tau + g^{il} \xi_l)  \p_i g^{0k} -2\xi_j  (g^{0i} \tau + g^{il} \xi_l)\p_i g^{kj} +g^{kj}(2\tau \xi_l \p_j g^{0l} + \xi_i \xi_l \p_j g^{il}) ) \bigg).
            \label{eq:magnitudeDerivSecondx}
            \end{align}
            Now note that 
            \begin{equation}
                \tau = b^{+}(x_0,\xi_0) = b^{+}(x_s,\xi_s) \simeq |\xi_s|, \quad (b^+-b^-) \simeq |\xi_s|. \label{eq:magnitudeDerivPrelim}
            \end{equation}
            Thus every term in \eqref{eq:magnitudeDerivXSquared} and \eqref{eq:magnitudeDerivSecondx} has an equal number of powers of $\tau$ (or $|\xi|$) in its numerator and denominator. 
            So now combining \eqref{eq:magnitudeDerivXSquared}, \eqref{eq:magnitudeDerivSecondx}, and \eqref{eq:magnitudeDerivPrelim} to estimate terms in \eqref{eq:magnitudeDerivMinkowski}, since $|x_s^{\pm}(\omega)| > R_0$, we have 
            \begin{equation}
                \frac{\p^2}{\p s^2} |x_s^+(\omega)|^2 \gtrsim (1- \|g-m\|_{AF_{>R_0}}).
            \end{equation}
             By asymptotic flatness, we have that $1-\|g-m\|_{AF_{>R_0}} \gtrsim c>0$. Thus,
            \begin{equation}
                \frac{\partial^2}{\partial s^2}\left|x^+_s(\omega)\right|^2 \geq c.
            \end{equation}
            whenever $|x_s^+(\omega)|>R_0$. 
        \end{proof}

    \begin{lemma}\label{l:easyGronwall}(Reverse Gr\"onwall inequality) 
    Assume $\eta(\cdot)$ is a nonnegative absolutely continuous function on $[0,T]$ and for almost every $t \in [0,T]$
    \begin{equation}
        \eta'(t) \geq  - \psi(t) - C \eta(t),
    \end{equation}
    where $C \geq 0$ and $\psi(t)$ is a nonnegative function with $\psi \in L^1[0,T]$. Then for all $t \in [0,T]$
    \begin{equation}
        \eta(t) \leq e^{C(T-t)} \left(\int_t^T \psi(s) ds+\eta(T)\right).
    \end{equation}
    \end{lemma}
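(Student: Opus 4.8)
The plan is to run the standard integrating-factor argument, but integrating backwards from $T$ rather than forwards from $0$. First I would introduce the auxiliary function $\zeta(t) := e^{Ct}\eta(t)$ on $[0,T]$. Since $\eta$ is absolutely continuous and $t \mapsto e^{Ct}$ is $C^1$, hence Lipschitz on the compact interval $[0,T]$, the product $\zeta$ is absolutely continuous, and the product rule holds almost everywhere: $\zeta'(t) = e^{Ct}\bigl(\eta'(t) + C\eta(t)\bigr)$ for a.e. $t$. Invoking the hypothesis $\eta'(t) \geq -\psi(t) - C\eta(t)$ then gives $\zeta'(t) \geq -e^{Ct}\psi(t)$ for a.e. $t \in [0,T]$.

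Next I would integrate this differential inequality from $t$ to $T$. Because $\zeta$ is absolutely continuous, the fundamental theorem of calculus applies: $\zeta(T) - \zeta(t) = \int_t^T \zeta'(s)\,ds \geq -\int_t^T e^{Cs}\psi(s)\,ds$, so that $\zeta(t) \leq \zeta(T) + \int_t^T e^{Cs}\psi(s)\,ds$. The right-hand integral is finite since $\psi \in L^1[0,T]$ and $e^{Cs}$ is bounded on $[0,T]$.

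Finally I would unwind the substitution: writing out $\zeta$, and using $e^{Cs} \leq e^{CT}$ for $s \in [0,T]$ together with $C \geq 0$, we obtain $e^{Ct}\eta(t) \leq e^{CT}\eta(T) + e^{CT}\int_t^T \psi(s)\,ds$. Multiplying through by $e^{-Ct}$ yields $\eta(t) \leq e^{C(T-t)}\bigl(\eta(T) + \int_t^T \psi(s)\,ds\bigr)$, which is the claim. (Nonnegativity of $\eta$ and $\psi$ is not in fact used in this chain of inequalities, beyond making the statement meaningful, and could be dropped.)

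I expect no real obstacle here. The only points worth a word of care are that multiplying an absolutely continuous function by a smooth one preserves absolute continuity, so that the a.e. product rule and the fundamental theorem of calculus for $\zeta$ are legitimate, and that the backwards orientation of the integral does not flip the inequality — both entirely standard. An alternative, essentially equivalent, route is to apply the usual forward Gr\"onwall lemma to the time-reversed function $t \mapsto \eta(T-t)$; I would present the integrating-factor version above since it is the most self-contained.
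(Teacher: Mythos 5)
Your proof is correct and follows essentially the same integrating-factor argument as the paper: multiply by $e^{Ct}$, differentiate the product, integrate the resulting inequality from $t$ to $T$, and then bound $e^{Cs}$ by $e^{CT}$. The additional remarks you make about absolute continuity of the product and the applicability of the fundamental theorem of calculus are correct and slightly more careful than the paper's presentation, but do not change the argument.
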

    \begin{proof}
        For almost every $s \in [0,T]$
        \begin{equation}
            \frac{d}{ds} ( \eta(s) e^{Cs}) = (\eta'(s)+C \eta(s)) e^{Cs} \geq -e^{Cs}\psi(s).
        \end{equation}
        Therefore integrating both sides from $s=t$ to $s=T$ we obtain
        \begin{equation}
            \eta(T) e^{C T} - \eta(t) e^{C t} \geq \int_t^T -e^{C s} \psi(s) ds.
        \end{equation}
        Rearranging we have 
        \begin{equation}
            \eta(t) \leq e^{-C t} \left( \int_{t}^T e^{C s}\psi(s) ds + e^{C T} \eta(T) \right).
        \end{equation}
        Controlling $e^{C  s} \leq e^{C T}$ gives the desired inequality.
    \end{proof}

\section{Appendix: Pseudodifferential operators}\label{s:pseudo}
In this appendix we compile some fundamental definitions and results on pseudodifferential operators that we use in the proof. For details on the homogeneous psuedodifferential calculus see \cite[Chapter 18]{Hormander3}  or \cite{TaylorPseudos}. For details on the semiclassical pseudodifferential calculus see \cite{Zworski2012} or \cite[Appendix E]{DyatlovZworski2020}.

\begin{definition}\label{def:Symbol}
    Let $m \in \Rb$. We define the Kohn-Nirenberg symbol class $S^m(\Rb^n)=S^m$ to consist of the set of $p \in C^{\infty}(T^* \Rb^n)$ such that for any multi-indices $\alpha, \beta$, there exists a constant $C_{\alpha,\beta}$ such that 
    \begin{equation}
        |D_z^{\beta} D_{\zeta}^{\alpha} p(z,\zeta)| \leq C_{\alpha, \beta} (1+|\zeta|)^{m-|\alpha|} \quad \forall (z,\zeta) \in T^* \Rb^n.
    \end{equation}
\end{definition}

\subsection{Homogeneous Calculus}
We make use of the Homogeneous Calculus on $(\Rb^4,g)$. First we define the Weyl quantization and pseudodifferential operators. 
\begin{definition}\label{def:Pseudo}
For $a \in S^m(\Rb^4)$ we define the operator
    \begin{equation}
        a^w(z,D) u(z) = \Op^w(a)u(z)= (2\pi)^{-4} \int_{\Rb^4} \int_{\Rb^4} a\left( \frac{z+w}{2}, \zeta \right) e^{i\<z-w, \zeta\>} u(w) dw d \zeta.
    \end{equation}
    We define $\Psi^k(\Rb^4)$ to be the image of $S^k(\Rb^4)$ under $\Op^w$.
\end{definition}
Note that if $a(z,\zeta) =a(z)$, then $a^w(z,D)=a(z)$. On the other hand, if $a(z,\zeta)=a(\zeta)$, then $a^w(z,D)$ is just the Fourier multiplier with kernel $a(\zeta)$.

Next, we have that based on their order, pseudodifferential operators are bounded on Sobolev spaces.
\begin{proposition}(Calderon-Vaillancourt Theorem)\label{prop:pseudoBounded}
    If $a\in S^k(\Rb^4)$, then $\Op^w(a)$ is bounded as an operator from $H^k_{t,x}$ to $L^2_{t,x}$. Furthermore, there exists $C>0,N=N(k) \in \Nb$ such that the operator norm is bounded 
    \begin{equation}
        \nm{ \Op^w(a)}_{H^k_{t,x} \ra L^2_{t,x}} \leq C \sup_{|\alpha|, |\beta| \leq N} C_{\alpha \beta},
    \end{equation}
    where the supremum is taken over multi-indices $\alpha,\beta$ and $C_{\alpha \beta}$ is the constant from the symbol estimates in Definition \ref{def:Symbol}.
\end{proposition}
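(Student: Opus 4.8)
The statement is the quantitative Calder\'on--Vaillancourt theorem, and the plan is to prove it by reducing to order zero and then invoking the Cotlar--Stein almost orthogonality lemma. \emph{Reduction to $k=0$:} factor $\Op^w(a)=\big(\Op^w(a)\langle D\rangle^{-k}\big)\langle D\rangle^k$. The Fourier multiplier $\langle D\rangle^k$ maps $H^k_{t,x}$ isometrically into $L^2_{t,x}$ by definition of the Sobolev norm, and since $\langle D\rangle^{-k}$ has symbol $\langle\zeta\rangle^{-k}\in S^{-k}$, the Weyl composition calculus (see \cite{Hormander3,TaylorPseudos}) gives $\Op^w(a)\langle D\rangle^{-k}=\Op^w(b)$ with $b\in S^0(\Rb^4)$ whose seminorms $C_{\alpha\beta}(b)$ are each dominated by a finite sum of seminorms $C_{\alpha'\beta'}(a)$ with $|\alpha'|+|\beta'|$ bounded in terms of $|\alpha|+|\beta|$ and $k$. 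Hence it suffices to show: for $b\in S^0(\Rb^4)$, $\nm{\Op^w(b)}_{L^2_{t,x}\to L^2_{t,x}}\le C\sup_{|\alpha|,|\beta|\le N}C_{\alpha\beta}(b)$ for a fixed $N$, which then fixes $N(k)$ through the reduction.

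\emph{Phase-space decomposition and almost orthogonality:} pick $\varphi\in\Cc(\Rb^8)$ with $\sum_{\nu\in\Zb^8}\varphi(\,\cdot-\nu)\equiv 1$, set $b_\nu(z,\zeta)=b(z,\zeta)\,\varphi((z,\zeta)-\nu)$, and write $\Op^w(b)=\sum_\nu\Op^w(b_\nu)=:\sum_\nu A_\nu$ (the convergence being part of the Cotlar--Stein conclusion below). Each $b_\nu$ is supported in a unit cube about $\nu$ with $|D^\gamma b_\nu|\le C_\gamma\sup_{|\alpha|,|\beta|\le|\gamma|}C_{\alpha\beta}(b)$ uniformly in $\nu$. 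Writing the Schwartz kernel of $A_\mu^*A_\nu$ (and of $A_\mu A_\nu^*$) as an oscillatory integral and integrating by parts repeatedly in the phase variables — using that the spatial and frequency centers of $b_\mu$ and $b_\nu$ are separated once $|\mu-\nu|$ is large — yields, for every $M\ge 0$, kernel bounds giving
\[
\nm{A_\mu^*A_\nu}_{L^2\to L^2}+\nm{A_\mu A_\nu^*}_{L^2\to L^2}\le C_M\,(1+|\mu-\nu|)^{-M}\Big(\sup_{|\alpha|,|\beta|\le N_M}C_{\alpha\beta}(b)\Big)^2,
\]
where $N_M$ depends only on $M$; in particular (taking $\mu=\nu$) $\nm{A_\nu}_{L^2\to L^2}\le C\sup_{|\alpha|,|\beta|\le N}C_{\alpha\beta}(b)$ uniformly.

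\emph{Conclusion:} choose $M$ large enough that $\sup_\mu\sum_{\nu\in\Zb^8}(1+|\mu-\nu|)^{-M/2}<\infty$ (e.g. $M=17$); the Cotlar--Stein lemma then gives $\nm{\sum_\nu A_\nu}_{L^2\to L^2}\le\sup_\mu\sum_\nu\big(\nm{A_\mu^*A_\nu}^{1/2}+\nm{A_\mu A_\nu^*}^{1/2}\big)\le C\sup_{|\alpha|,|\beta|\le N}C_{\alpha\beta}(b)$, which with the reduction of the first paragraph proves the proposition, the final $N(k)$ being the $N$ produced here pushed back through the composition estimate.

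\emph{Main obstacle:} the real work is the almost orthogonality estimate of the second paragraph, i.e. organizing the integrations by parts in the composed oscillatory integral so as to obtain simultaneously (i) arbitrary polynomial decay in $|\mu-\nu|$ and (ii) a constant involving only finitely many $z$- and $\zeta$-derivatives of $b$ (hence of $a$), which is precisely what forces $N(k)$ to be finite. The reduction to order zero, the partition of unity, and the application of Cotlar--Stein are all routine once that estimate is in hand.
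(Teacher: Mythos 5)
The paper does not prove Proposition~\ref{prop:pseudoBounded}: Appendix~B records it as a standard fact and refers the reader to H\"ormander's treatise (Chapter~18) and Taylor's book, so there is no in-paper argument to compare against. Your outline is nevertheless a correct, self-contained proof in the classical Calder\'on--Vaillancourt / Cotlar--Stein style. The reduction $\Op^w(a)=\bigl(\Op^w(a)\langle D\rangle^{-k}\bigr)\langle D\rangle^k$ is sound, and the point that makes $N(k)$ finite is exactly the one you invoke: the Weyl composition is continuous $S^k\times S^{-k}\to S^0$ in the Fr\'echet topologies, so finitely many seminorms of $b$ are dominated by finitely many of $a$, with a loss depending only on $k$ and the dimension. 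The unit-cube partition of $T^*\Rb^4\cong\Rb^8$, the uniform Schur bound $\nm{A_\nu}_{L^2\to L^2}\le C\sup_{|\alpha|,|\beta|\le N}C_{\alpha\beta}(b)$, and Cotlar--Stein with $M\ge 17$ (so that $\sum_{\nu\in\Zb^8}(1+|\nu|)^{-M/2}<\infty$) then close the argument. The one genuinely substantial step, which you correctly flag as the real work, is the almost-orthogonality estimate on $\nm{A_\mu^*A_\nu}$ and $\nm{A_\mu A_\nu^*}$: the integrations by parts in the composed oscillatory kernel must be organized so that the decay in $|\mu-\nu|$ --- coming from disjointness of the spatial supports when $|\mu_z-\nu_z|$ is large and from oscillation when $|\mu_\zeta-\nu_\zeta|$ is large --- arrives with a constant involving only finitely many derivatives of $b$. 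Nothing in your outline would fail, but a complete write-up has to carry that estimate out explicitly, and that is where essentially all of its length lives. For context, the other classical route to the same quantitative conclusion is H\"ormander's finite iteration of the square-root trick, which avoids the phase-space partition entirely; your Cotlar--Stein route is the more geometric, and arguably the cleaner once the integration-by-parts estimate is in hand.
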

An important property of the Weyl quantization is that real symbols are quantized into self adjoint operators. We state the more general version of this property here.
\begin{proposition}\label{prop:pseudoAdjoint}
If $a \in S^k(\Rb^4)$, then $\Op^w(a)^* = \Op^w(\overline{a})$.
\end{proposition}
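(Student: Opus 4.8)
The plan is to verify the identity directly from the defining oscillatory integral, exploiting the fact that in the Weyl quantization the amplitude is evaluated at the midpoint $\tfrac{z+w}{2}$, which is symmetric under the exchange $z\leftrightarrow w$. First I would record that $\Op^w(a)$ has Schwartz kernel
\begin{equation}
    K_a(z,w) = (2\pi)^{-4}\int_{\Rb^4} a\!\left(\tfrac{z+w}{2},\zeta\right) e^{i\langle z-w,\zeta\rangle}\, d\zeta,
\end{equation}
understood as an oscillatory integral (equivalently, as a tempered distribution on $\Rb^4_z\times\Rb^4_w$), and that the Schwartz kernel of the adjoint operator $\Op^w(a)^*$ is $\overline{K_a(w,z)}$.

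The heart of the argument is then a one-line change of variables in the $\zeta$-integral. Using only $\tfrac{w+z}{2}=\tfrac{z+w}{2}$ and $e^{-i\langle w-z,\zeta\rangle}=e^{i\langle z-w,\zeta\rangle}$,
\begin{align}
    \overline{K_a(w,z)} &= (2\pi)^{-4}\int_{\Rb^4} \overline{a\!\left(\tfrac{w+z}{2},\zeta\right)}\, e^{-i\langle w-z,\zeta\rangle}\, d\zeta \\
    &= (2\pi)^{-4}\int_{\Rb^4} \overline{a}\!\left(\tfrac{z+w}{2},\zeta\right) e^{i\langle z-w,\zeta\rangle}\, d\zeta = K_{\overline a}(z,w).
\end{align}
Since $\overline a\in S^k(\Rb^4)$ whenever $a\in S^k(\Rb^4)$, this identifies $\Op^w(a)^*$ with $\Op^w(\overline a)$, first as operators on $\mathcal S(\Rb^4)$ and then, via Proposition \ref{prop:pseudoBounded}, as bounded operators between the relevant Sobolev spaces. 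If one prefers to avoid the kernel formalism, the same conclusion follows by pairing $\langle \Op^w(a)u,v\rangle$ with $u,v\in\mathcal S(\Rb^4)$, inserting the definition, applying Fubini to bring the $z$-integration inside, and reading off the coefficient of $u(w)$ after complex conjugation; relabeling the outer and inner integration variables then produces exactly $\Op^w(\overline a)v$.

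The only real subtlety is justifying the interchange of integration, since the defining integrals are not absolutely convergent for a general symbol $a\in S^k$. I would handle this in the standard fashion: treat first the case $a\in S^{-N}$ with $N$ large, where every integral converges absolutely and Fubini applies, and then pass to the general case by a regularization $a_\rho(z,\zeta)=\chi(\rho\zeta)a(z,\zeta)$ with $\chi\in C_c^\infty$ equal to $1$ near the origin and $\rho\to 0^+$, controlling the limit with the symbol seminorms; alternatively one simply invokes the oscillatory-integral calculus of \cite[Chapter 18]{Hormander3} or \cite{TaylorPseudos}. I expect this bookkeeping, rather than any conceptual difficulty, to be the main obstacle, and since this appendix only records standard facts it suffices to carry out the formal computation above and cite those references.
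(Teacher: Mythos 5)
Your proof is correct; the kernel-symmetry computation $\overline{K_a(w,z)} = K_{\overline a}(z,w)$ together with the standard oscillatory-integral/regularization justification is exactly the textbook argument. The paper does not actually prove Proposition \ref{prop:pseudoAdjoint}: it records it in the appendix as a standard fact and defers to \cite[Chapter 18]{Hormander3} and \cite{TaylorPseudos}, which contain precisely the argument you give, so your proposal matches the intended (cited) route.
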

Next, we have expansion formulas for evaluating compositions and commutators of pseudodifferential operators. 
\begin{proposition}\label{prop:pseudoCalc}
\begin{enumerate}
    \item 
    Let $a_j \in S^{m_j}, j=1,2$, then there exists $b \in S^{m_1+m_2}$ such that 
    \begin{equation}
        a_1^w(z,D) a_2^w(z,D) = b^w(z,D).
    \end{equation}
    Furthermore, for any $N \in \Nb$ there exists $r_{N} \in S^{m_1+m_2-N}$ such that 
    \begin{equation}
        b(z,\zeta) = \sum_{k=0}^{N-1} \frac{i^k}{k!} (\p_w \p_{\zeta} - \p_z \p_{\rho})^k a_1(z,\zeta) a_2(w, \rho) \bigg|_{\rho = \zeta, w=z} + r_N.
    \end{equation}
    \item Let $\{f,g\}= \p_{\zeta} f \p_z g - \p_z f \p_{\zeta}g $ be the Poisson bracket. There exists $r_2 \in S^{m_1+m_2-2}$ such that 
    \begin{equation}
        b = a_1 a_2 - \frac{i}{2}\{a_1, a_2\} + r_{2}.
    \end{equation}
    Furthermore, there exists $r_3 \in S^{m_1+m_2-3}$ such that 
    \begin{align}
        [a_1^w(z,D), a_2^w(z,D)] :&= a_1^w(z,D) a_2^w(z,D) - a_2^w(z,D)a_1^w(z,D)\\
        &= -i \{a_1, a_2\}^w(z,D) + r_{3}^w(z,D).
    \end{align}
    Similarly, there exists $r_2 \in S^{m_1+m_2-2}$ such that
    \begin{align}
        a_1^w(z,D) a_2^w(z,D)+a_2^w(z,D) a_1^w(z,D) = 2(a_1 a_2)^w(z,D) + r_2^w(z,D).
    \end{align}
    \item If $\supp(a_1) \cap \supp(a_2)=\emptyset$, then $b \in S^{-\infty}$.
\end{enumerate}
\end{proposition}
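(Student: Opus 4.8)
The plan is to derive all three assertions from the standard Weyl composition formula, with parts (2) and (3) following by purely algebraic manipulation of the resulting asymptotic expansion. This is classical material (see \cite[Chapter 18]{Hormander3} or \cite{TaylorPseudos}); I only outline the argument. For part (1), one first verifies that the composition $a_1^w(z,D)a_2^w(z,D)$ is again the Weyl quantization of a symbol $b$, given formally by
\begin{equation}
    b(z,\zeta) = \left.\exp\!\left(\tfrac{i}{2}\bigl(\partial_w\partial_\zeta - \partial_z\partial_\rho\bigr)\right) a_1(z,\zeta)\, a_2(w,\rho)\right|_{w=z,\;\rho=\zeta},
\end{equation}
interpreted as an oscillatory integral in the variables dual to $(w-z,\rho-\zeta)$ whose phase is the symplectic pairing. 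For general $a_j \in S^{m_j}$ (as opposed to Schwartz symbols, where the integral converges absolutely) one makes sense of this by inserting a cutoff $\chi(\epsilon\,\cdot)$ in the oscillatory variables, showing the regularized expressions are uniformly bounded in $S^{m_1+m_2}$ using the symbol estimates of Definition \ref{def:Symbol}, and letting $\epsilon\to 0$. Taylor expanding the exponential to order $N$ with integral remainder produces the stated finite sum together with a remainder $r_N$ that is itself an oscillatory integral; repeated integration by parts in the oscillatory variables (non-stationary phase, using that the modulus of the gradient of the phase is comparable to the modulus of the integration variable) yields the bound $r_N \in S^{m_1+m_2-N}$, with each symbol seminorm of $r_N$ controlled by finitely many seminorms of $a_1$ and $a_2$.

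For part (2), take $N=2$ in part (1): the $k=0$ term is $a_1a_2$, the $k=1$ term is a fixed constant multiple of the Poisson bracket, equal to $-\tfrac{i}{2}\{a_1,a_2\}$ in the present normalization, and $r_2 \in S^{m_1+m_2-2}$. For the commutator, note that interchanging $a_1$ and $a_2$ replaces the differential operator $\partial_w\partial_\zeta - \partial_z\partial_\rho$ by its negative, so the expansion of $a_2\#a_1$ has the same even-order terms and the negatives of the odd-order terms of the expansion of $a_1\#a_2$; subtracting, the $k=0$ and $k=2$ contributions cancel, and taking $N=3$ in part (1) for both products gives $[a_1^w(z,D), a_2^w(z,D)] = -i\{a_1,a_2\}^w(z,D) + r_3^w(z,D)$ with $r_3 \in S^{m_1+m_2-3}$. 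Adding the two products instead cancels the odd-order terms, giving $a_1^w(z,D)a_2^w(z,D) + a_2^w(z,D)a_1^w(z,D) = 2(a_1a_2)^w(z,D) + r_2^w(z,D)$ with $r_2 \in S^{m_1+m_2-2}$. For part (3), each term $\tfrac{i^k}{k!}(\partial_w\partial_\zeta - \partial_z\partial_\rho)^k a_1(z,\zeta)a_2(w,\rho)\big|_{w=z,\rho=\zeta}$ expands into a finite linear combination of products $(\partial^\alpha a_1)(z,\zeta)\cdot(\partial^\beta a_2)(z,\zeta)$, and since $\supp(\partial^\alpha a_1) \subset \supp(a_1)$ and $\supp(\partial^\beta a_2) \subset \supp(a_2)$ are disjoint, every such product vanishes identically; hence the expansion of part (1) reads $b = r_N \in S^{m_1+m_2-N}$ for every $N$, so $b \in \bigcap_N S^{m_1+m_2-N} = S^{-\infty}$.

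The only genuinely technical step is the remainder estimate in part (1): the non-stationary-phase bookkeeping that yields $r_N \in S^{m_1+m_2-N}$ with seminorms controlled uniformly by those of $a_1$ and $a_2$. Everything else --- the reductions in part (2) and the support argument in part (3) --- is then a formal consequence of the expansion, so I expect that estimate, together with the careful regularization needed to treat non-absolutely-convergent oscillatory integrals, to be where essentially all the work lies.
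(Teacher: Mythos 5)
The paper does not prove Proposition \ref{prop:pseudoCalc}: it is stated in the appendix as a compilation of standard results, with the reader referred to \cite[Chapter 18]{Hormander3} and \cite{TaylorPseudos}. So there is no ``paper's own proof'' to compare against. That said, your outline is the standard classical argument from exactly those references (oscillatory-integral realization of the Weyl product, stationary/non-stationary phase for the remainder, algebraic reduction for the commutator and anticommutator, disjoint-support argument for smoothing), and it is correct in structure. You have also correctly identified the one genuinely technical step: the remainder estimate in part (1).

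One small internal inconsistency in your write-up worth fixing: you write the composition as $b = \exp\bigl(\tfrac{i}{2}(\partial_w\partial_\zeta - \partial_z\partial_\rho)\bigr)a_1 a_2|_{w=z,\rho=\zeta}$ and then assert the $k=1$ term equals $-\tfrac{i}{2}\{a_1,a_2\}$ with $\{f,g\} = \partial_\zeta f\,\partial_z g - \partial_z f\,\partial_\zeta g$. But the $k=1$ term of that exponential is $\tfrac{i}{2}(\partial_\zeta a_1\,\partial_z a_2 - \partial_z a_1\,\partial_\zeta a_2) = +\tfrac{i}{2}\{a_1,a_2\}$. To match part (2) the exponent should be $-\tfrac{i}{2}(\partial_w\partial_\zeta - \partial_z\partial_\rho)$, equivalently $\tfrac{i}{2}\sigma(D_z,D_\zeta;D_w,D_\rho)$ in H\"ormander's convention. (For what it is worth, the paper's stated formula in part (1), with $\tfrac{i^k}{k!}(\partial_w\partial_\zeta - \partial_z\partial_\rho)^k$ and no $2^{-k}$, is also inconsistent with part (2) for the same reason; the missing $2^{-k}$ and a sign are presumably typographical.) None of this affects the validity of your reductions in parts (2) and (3), which depend only on the exponent being an odd bidifferential operator in the symplectic pairing, so even/odd terms respectively survive/cancel under symmetrization.
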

Finally, we state a way to convert a lower bound on a symbol to a lower bound for its quantization.
\begin{proposition}(Sharp G\r{a}rding inequality) \label{prop:Garding}
    Consider $m \in \Rb$, if $a \in S^{2m+1}(\Rb^4)$ and $a \geq 0$, then there exists a constant $C>0$, depending on the constants $C_{\alpha \beta}$ from Definition \ref{def:Symbol}, such that 
    \begin{equation}
        \<\Op^w(a) u, u \>_{L^2_{t,x}} \geq - C \nm{u}_{H^m_{t,x}}.
    \end{equation}
\end{proposition}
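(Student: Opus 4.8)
\textbf{Proof plan for Proposition \ref{prop:Garding} (Sharp Gårding inequality).}

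The plan is to prove this by the standard Friedrichs symmetrization / Fefferman--Phong-type wave-packet argument, reduced here to the first-order ($L^1$ symbol calculus) level. First I would reduce to the case $m=0$: given $a\in S^{2m+1}(\Rb^4)$ with $a\geq0$, set $\Lambda = \Op^w(\<\zeta\>^m)$, which is an elliptic element of $\Psi^m(\Rb^4)$ with a parametrix $\Lambda^{-1}\in\Psi^{-m}(\Rb^4)$, and write $\tilde a = \<\zeta\>^{-m} a \<\zeta\>^{-m}\in S^1(\Rb^4)$, still nonnegative. Using the composition formula Proposition \ref{prop:pseudoCalc}(1)--(2), one has $\Op^w(a) = \Lambda \Op^w(\tilde a)\Lambda + R$ with $R\in\Psi^{2m}(\Rb^4)$, and the self-adjointness statement Proposition \ref{prop:pseudoAdjoint} plus Proposition \ref{prop:pseudoBounded} gives $|\<Ru,u\>| \leq C\nm{u}_{H^m}^2$. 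Then $\<\Op^w(a)u,u\> = \<\Op^w(\tilde a)\Lambda u,\Lambda u\> + \<Ru,u\>$, and since $\nm{\Lambda u}_{L^2}\simeq\nm{u}_{H^m}$, it suffices to prove the inequality $\<\Op^w(b)v,v\>\geq -C\nm{v}_{L^2}^2$ for $b\in S^1(\Rb^4)$, $b\geq0$.

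For the $S^1$ case I would use the Friedrichs symmetrization. Fix a Gaussian window $g\in\mathcal S(\Rb^4)$ with $\nm{g}_{L^2}=1$, and for $(z_0,\zeta_0)\in T^*\Rb^4$ define the coherent states $g_{z_0,\zeta_0}(z) = \<\zeta_0\>^{1}g(\<\zeta_0\>^{1/2}(z-z_0))e^{i\<z,\zeta_0\>}$ (the dilation chosen so the wave packet occupies a box of the natural scale $\<\zeta_0\>^{-1/2}$ in position and $\<\zeta_0\>^{1/2}$ in momentum, matching the uncertainty principle, which is exactly why one loses one order). Define the nonnegative operator $A^+ = \int_{T^*\Rb^4} b(z_0,\zeta_0)\, (\cdot, g_{z_0,\zeta_0})\, g_{z_0,\zeta_0}\, \frac{dz_0\,d\zeta_0}{(2\pi)^4}$, which is manifestly $\geq 0$ since $b\geq0$. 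The two technical computations are: (i) $A^+$ is itself a pseudodifferential operator whose Weyl symbol is $b$ modulo $S^0(\Rb^4)$ — this is a direct computation of the Weyl symbol of the rank-one operators $(\cdot,g_{z_0,\zeta_0})g_{z_0,\zeta_0}$ (it is a Gaussian bump in $(z,\zeta)$ centered at $(z_0,\zeta_0)$) followed by convolution against $b$ and a Taylor expansion using the $S^1$ bounds on $\nabla b$; and (ii) therefore $\Op^w(b) - A^+ \in \Psi^0(\Rb^4)$ with operator-norm bound controlled by finitely many of the seminorms $C_{\alpha\beta}$ of $b$, via Proposition \ref{prop:pseudoBounded}. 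Granting (i)--(ii), $\<\Op^w(b)v,v\> = \<A^+v,v\> + \<(\Op^w(b)-A^+)v,v\> \geq 0 - C\nm{v}_{L^2}^2$, which is the claim.

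Unwinding the reduction gives $\<\Op^w(a)u,u\>\geq -C\nm{u}_{H^m}^2$ with $C$ depending only on finitely many $C_{\alpha\beta}$ from Definition \ref{def:Symbol}, as stated. The main obstacle is step (i): verifying that the Friedrichs operator $A^+$ differs from $\Op^w(b)$ by an order-zero operator, and in particular that the curvature/non-flatness of the $\<\zeta\>$-dependent scaling of the wave packets does not produce an error worse than $S^0$. This is where the one-derivative loss is genuinely used — one expands $b(z_0,\zeta_0) = b(z,\zeta) + \nabla b\cdot(\text{shift}) + \dots$ inside the integral, the leading term reproduces $\Op^w(b)$, the linear term integrates to something of size $\<\zeta\>^{1}\cdot\<\zeta\>^{-1} = \<\zeta\>^0$ by the symbol bound $|\nabla_z b|\lesssim\<\zeta\>$, $|\nabla_\zeta b|\lesssim 1$ against the packet widths, and the quadratic remainder is lower order. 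Alternatively, if a cleaner exposition is wanted, I would instead cite the Sharp Gårding inequality directly from \cite[Chapter 18]{Hormander3} or \cite{TaylorPseudos} and only include the reduction-to-$m=0$ paragraph; since the appendix explicitly says it is compiling standard facts, a citation is appropriate here and I would likely present it that way, keeping the wave-packet sketch only as a remark.
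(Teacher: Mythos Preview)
The paper does not prove this proposition at all: it appears in Appendix~\ref{s:pseudo}, which explicitly ``compile[s] some fundamental definitions and results on pseudodifferential operators'' and refers to \cite[Chapter 18]{Hormander3} and \cite{TaylorPseudos} for details. So there is no proof in the paper to compare against --- the statement is simply recorded with a citation.

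Your proof plan is a correct and standard route to Sharp G\r{a}rding (reduction to $m=0$ followed by Friedrichs/anti-Wick symmetrization with scaled coherent states), and the main technical step you identify --- that $A^+ - \Op^w(b)\in\Psi^0$ --- is indeed where the one-order loss lives. Your closing remark is exactly right: since the appendix is a compilation of standard facts, the paper opts for the citation rather than a proof, and that is what you would do here as well.
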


\subsection{Semiclassical Calculus}
We first define the semiclassical Weyl quantization and semiclassical pseudodifferential operators. 
\begin{definition}\label{def:semiPseudo}
For $a \in S^m(\Rb^n)$ we define the operator
    \begin{equation}
        a^w(z,hD)=\Ophw(a)u(z)= (2\pi h)^{-n} \int_{\Rb^n} \int_{\Rb^n} a\left( \frac{z+w}{2}, \zeta \right) e^{\frac{i}{h}\<z-w, \zeta\>} u(w) dw d \zeta.
    \end{equation}
    We define $\Psi_h^k(\Rb^n)$ to the image of $S^k(\Rb^n)$ under $\Ophw$.
\end{definition}
Note that if $a(z,\zeta) =a(z)$, then $\Ophw(a)=a(z)$. On the other hand, if $a(z,\zeta)=a(\zeta)$, then $\Ophw(a)$ is just the \emph{semiclassical} Fourier multiplier with kernel $a(\zeta)$.

Next we have an $L^2$ boundedness result for $0$th order semiclassical pseudodifferential operators. 
\begin{proposition}(Semiclassical Calderon-Vaillancourt Theorem)\label{prop:semipseudoBounded}
    If $a\in S^0(\Rb^n)$, then $\Ophw(a)$ is bounded as an operator from $L^2_{t,x}$ to $L^2_{t,x}$. Furthermore, there exists $C>0,N=N(k) \in \Nb$ such that the operator norm is bounded 
    \begin{equation}
        \nm{ \Op^w(a)}_{L^2_{t,x} \ra L^2_{t,x}} \leq C \sup_{|\alpha|, |\beta| \leq N} C_{\alpha \beta},
    \end{equation}
    where the supremum is taken over multi-indices $\alpha,\beta$ and $C_{\alpha \beta}$ is the constant from the symbol estimates in Definition \ref{def:Symbol}.
\end{proposition}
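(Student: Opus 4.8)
The plan is to reduce to the non-semiclassical case $h=1$ by a symplectic rescaling, and then to prove that case by an almost-orthogonality (Cotlar--Stein) argument, keeping careful track of constants so as to obtain the stated dependence on finitely many symbol seminorms. Throughout, $(z,\zeta)\in T^*\Rb^n$ denotes a phase-space point.

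First I would reduce to $h=1$. Let $U_h$ be the unitary dilation $U_h u(z)=h^{-n/4}u(h^{-1/2}z)$ on $L^2(\Rb^n)$. Substituting $w\mapsto h^{1/2}w$ and $\zeta\mapsto h^{1/2}\zeta$ in the oscillatory integral of Definition \ref{def:semiPseudo} and using $(2\pi h)^{-n}h^n=(2\pi)^{-n}$ yields $U_h^{-1}\Ophw(a)U_h=\Op^w(a_h)$, where $\Op^w$ is the $h=1$ Weyl quantization (as in Definition \ref{def:Pseudo}, for general $n$) and $a_h(z,\zeta)=a(h^{1/2}z,h^{1/2}\zeta)$. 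Since $\p_z^\beta\p_\zeta^\alpha a_h=h^{(|\alpha|+|\beta|)/2}(\p_z^\beta\p_\zeta^\alpha a)(h^{1/2}z,h^{1/2}\zeta)$ and $0<h\le 1$, the Kohn--Nirenberg seminorms of $a_h\in S^0$ are dominated by those of $a$, uniformly in $h$. Hence it suffices to bound $\|\Op^w(b)\|_{L^2\to L^2}$ for $b\in S^0$ by a constant times finitely many seminorms of $b$.

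Next, fix $\psi\in\Cc(\Rb^{2n})$ with $\sum_{k\in\Zb^{2n}}\psi(\cdot-k)\equiv 1$, and write $b=\sum_k b_k$ with $b_k:=b\cdot\psi(\cdot-k)$. Each $b_k$ is supported in a ball of fixed radius about $k$, and because $b\in S^0$ (so the weight $\<\zeta\>^{-|\alpha|}\le 1$) and $\psi$ has bounded derivatives, the seminorms of $b_k$ are bounded uniformly in $k$ by finitely many seminorms of $b$. Set $A_k=\Op^w(b_k)$, so $\Op^w(b)=\sum_k A_k$ (strongly). The heart of the matter is that these pieces are nearly orthogonal: although the sum over $k$ is genuinely infinite --- an element of $S^0$ need not decay in $\zeta$ --- writing the Schwartz kernels of $A_j^*A_k$ and $A_jA_k^*$ as oscillatory integrals and integrating by parts (in the intermediate spatial variable using the separation of the $\zeta$-supports of $b_j,b_k$, and in the frequency variables using the separation of their $z$-supports) gives, for each $N$, $\|A_j^*A_k\|_{L^2\to L^2}+\|A_jA_k^*\|_{L^2\to L^2}\le C_N\<j-k\>^{-N}$, with $C_N$ controlled by the seminorms of $b$ of order at most $N'=N'(N,n)$. (Equivalently one may invoke the quantitative composition calculus of Proposition \ref{prop:pseudoCalc}, whose remainders are controlled by the seminorms and by the separation of supports.)

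Finally, choosing $N>4n$ and applying the Cotlar--Stein lemma gives
\begin{equation*}
\|\Op^w(b)\|_{L^2\to L^2}\le\sup_j\sum_k\Big(\|A_j^*A_k\|^{1/2}+\|A_jA_k^*\|^{1/2}\Big)\le C_N\sum_{k\in\Zb^{2n}}\<k\>^{-N/2}<\infty,
\end{equation*}
and the right-hand side is a fixed multiple of $\sup_{|\alpha|,|\beta|\le N'}C_{\alpha\beta}$; combined with the reduction step this proves the proposition with $N=N'$. The main obstacle is the almost-orthogonality bound: carrying out the integrations by parts in the double oscillatory integrals carefully enough both to extract the decay in $\<j-k\>$ and to pin down exactly how many derivatives of the symbol are consumed (hence the admissible $N'$). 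The rescaling and the Cotlar--Stein assembly are then routine bookkeeping. A variant avoiding the kernel computation expresses each $\Op^w(b_k)$ as a superposition $\int\widetilde{b_k}(\eta)\,T_\eta\,d\eta$ of unitary Heisenberg phase-space translations $T_\eta$, with $\|\widetilde{b_k}\|_{L^1}$ estimated by Sobolev embedding on the (unit-size) support of $b_k$, and then sums via Cotlar--Stein; it is equivalent but slightly messier to quantify.
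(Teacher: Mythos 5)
The paper does not prove this proposition. Appendix B is an explicit compilation of standard facts, and the opening paragraph of the appendix points to \cite{Zworski2012} and \cite{DyatlovZworski2020} for the semiclassical calculus; Proposition \ref{prop:semipseudoBounded} is stated there as background, not established. So there is no in-paper argument to compare against.

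That said, your sketch is a sound outline of the classical Cotlar--Stein proof of Calder\'on--Vaillancourt, and I verified the two places where a careless version would go wrong. The conjugation identity $U_h^{-1}\Ophw(a)U_h=\Op^w(a_h)$ with $a_h(z,\zeta)=a(h^{1/2}z,h^{1/2}\zeta)$ does come out correctly from the substitutions you indicate, and for $0<h\le 1$ the rescaling only shrinks seminorms, since $h^{|\alpha|/2}(1+h^{1/2}|\zeta|)^{-|\alpha|}=(h^{-1/2}+|\zeta|)^{-|\alpha|}\le(1+|\zeta|)^{-|\alpha|}$. The phase-space lattice partition, the integration-by-parts bound $\|A_j^*A_k\|+\|A_jA_k^*\|\lesssim\langle j-k\rangle^{-N}$ with $N$-dependence on finitely many seminorms, and the Cotlar--Stein assembly with $N>4n$ are exactly the standard route (Folland, or Hwang's elementary argument), and you correctly identify the almost-orthogonality estimate as where the real work lies. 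One remark: your argument only uses that the derivatives of $b$ are bounded, i.e.\ it applies to the broader class $S^0_{0,0}$; the Kohn--Nirenberg $S^0$ hypothesis in the statement is stronger than what you need, which is fine, and is in fact how Calder\'on--Vaillancourt is usually stated. Your alternative suggestion via superposition of Heisenberg translations and an $L^1$ bound on the symplectic Fourier transform is also standard (it is Cordes' proof) and would work equally well.
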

Finally we have formulas for compositions and commutators of semiclassical pseudodifferential operators. 
\begin{proposition}\label{prop:semipseudoCalc}
\begin{enumerate}
    \item 
    Let $a_j \in S^{0}(\Rb^n), j=1,2$, then there exists $b \in S^{0}(\Rb^n)$ such that 
    \begin{equation}
        a_1^w(z,D) a_2^w(z,D) = b^w(z,D).
    \end{equation}
    Furthermore, for any $N \in \Nb$ there exists $r_{N} \in S^{-N}$ such that 
    \begin{equation}
        b(z,\zeta) = \sum_{k=0}^{N-1} \frac{i^k h^k }{k!} (\p_w \p_{\zeta} - \p_z \p_{\rho})^k a_1(z,\zeta) a_2(w, \rho) \bigg|_{\rho = \zeta, w=z} + h^{-N} r_N.
    \end{equation}
    \item Let $\{f,g\}= \p_{\zeta} f \p_z g - \p_z f \p_{\zeta}g $ be the Poisson bracket. Then 
    \begin{equation}
        b = a_1 a_2 - \frac{ih}{2}\{a_1, a_2\} + h^2 r_{2}, \quad r_2 \in S^{-2},
    \end{equation}
    and 
    \begin{align}
        [\Ophw(a_1), \Ophw(a_2)] :&= \Ophw(a_1) \Ophw(a_2) - \Ophw(a_2)\Ophw(a_1)\\
        &= -ih \Ophw(\{a_1, a_2\}) + h^3\Ophw(r_{3}), \quad r_3 \in S^{-3}.
    \end{align}
    Furthermore, for multi-indices $\alpha$
    \begin{equation}
        \nm{\Ophw(r_{3})}_{L^2 \ra L^2} \leq \sup_{|\alpha| \leq 4n+1} \nm{\nabla_{z,w,\zeta,\rho}^{\alpha}(\p_w \p_{\zeta}-\p_z \p_{\rho})^N(a(z,\zeta) b(w,\rho))}_{L^{\infty}(\Rb^{4n})}.
    \end{equation}
\end{enumerate}
\end{proposition}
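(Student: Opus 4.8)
The statement is the standard semiclassical Weyl symbol calculus restricted to the Kohn--Nirenberg classes $S^m(\Rb^n)$ of Definition \ref{def:Symbol}, together with the quantitative remainder bound that follows from the semiclassical Calderón--Vaillancourt estimate, Proposition \ref{prop:semipseudoBounded}. The plan is to follow the textbook development in \cite{Zworski2012} and \cite[Appendix E]{DyatlovZworski2020}, carrying along the seminorm bookkeeping so that all constants are uniform for $h\in(0,1]$.

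First I would prove the composition identity of part (1). Writing $\Ophw(a_1)\Ophw(a_2)$ out as an iterated oscillatory integral in the definition \ref{def:semiPseudo} and changing variables (regularizing with a Gaussian cutoff to make the manipulations rigorous, then removing it), one arrives at $\Ophw(a_1)\Ophw(a_2)=\Ophw(b)$ with
\begin{equation}
  b(z,\zeta)=\Big[\,e^{\,ih\,(\p_{w}\p_{\zeta}-\p_{z}\p_{\rho})}\big(a_1(z,\zeta)\,a_2(w,\rho)\big)\Big]\Big|_{w=z,\ \rho=\zeta},
\end{equation}
interpreted as a convergent oscillatory integral (the second-order operator in the exponent is the normalized symplectic form appearing in the Proposition). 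That $b\in S^{m_1+m_2}$, here $S^0$, with seminorms dominated by finitely many seminorms of $a_1,a_2$ uniformly in $h$, is the usual argument: differentiate under the integral sign and integrate by parts in the phase to gain decay in the frequency variables.

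Next, Taylor-expanding the exponential to order $N$ with the integral form of the remainder,
\begin{equation}
  e^{\,ih\,A(D)}=\sum_{k=0}^{N-1}\frac{(ih)^k}{k!}A(D)^k+\frac{(ih)^N}{(N-1)!}\int_0^1(1-t)^{N-1}A(D)^N e^{\,ith\,A(D)}\,dt,\qquad A(D)=\p_{w}\p_{\zeta}-\p_{z}\p_{\rho},
\end{equation}
and evaluating the finite sum on the diagonal $w=z,\ \rho=\zeta$, produces exactly the displayed terms $\tfrac{i^kh^k}{k!}(\p_{w}\p_{\zeta}-\p_{z}\p_{\rho})^k(a_1a_2)|$, while the integral term is $h^{-N}$ times a symbol $r_N\in S^{m_1+m_2-N}$, once more with seminorms controlled by those of $a_1,a_2$. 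Specializing $N=2$ and recognizing the $k=1$ term as $-\tfrac{ih}{2}\{a_1,a_2\}$ gives the first identity of part (2). For the commutator one applies the composition formula to both $\Ophw(a_1)\Ophw(a_2)$ and $\Ophw(a_2)\Ophw(a_1)$ and subtracts: under the relabeling $(z,\zeta)\leftrightarrow(w,\rho)$ the operator $A(D)$ changes sign, so on the diagonal the $k$-th term of the second product is $(-1)^k$ times that of the first; hence the $k=0$ and $k=2$ terms cancel in the difference, the $k=1$ terms add (yielding $-ih\{a_1,a_2\}$), and the first surviving remainder sits at order $h^3$, giving $[\Ophw(a_1),\Ophw(a_2)]=-ih\,\Ophw(\{a_1,a_2\})+h^3\Ophw(r_3)$ with $r_3\in S^{m_1+m_2-3}$.

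Finally, for the operator-norm bound: from the integral remainder, $r_3$ is, up to a numerical constant, $\int_0^1(1-t)^2\big[(\p_{w}\p_{\zeta}-\p_{z}\p_{\rho})^{3}(a_1(z,\zeta)a_2(w,\rho))\big]$ transported by the flow $e^{\,ith\,A(D)}$ and restricted to the diagonal. Since $r_3\in S^{-3}\subset S^0$, Proposition \ref{prop:semipseudoBounded} bounds $\nm{\Ophw(r_3)}_{L^2\to L^2}$ by a fixed finite number of Kohn--Nirenberg seminorms of $r_3$; tracing these through the transport estimate, which only loses finitely many derivatives in all four variable groups, one obtains the stated bound
\begin{equation}
  \nm{\Ophw(r_3)}_{L^2\to L^2}\leq\sup_{|\alpha|\leq 4n+1}\nm{\nabla_{z,w,\zeta,\rho}^{\alpha}(\p_{w}\p_{\zeta}-\p_{z}\p_{\rho})^{3}\big(a_1(z,\zeta)\,a_2(w,\rho)\big)}_{L^{\infty}(\Rb^{4n})},
\end{equation}
uniformly in $h\leq1$, with part (1) applied at $N=3$. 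The main obstacle is entirely technical: making the oscillatory integrals and the change of variables rigorous, and keeping the numerical normalization and the antisymmetry cancellation in the commutator straight, so that the remainder genuinely appears at order $h^3$ rather than $h^2$, and so that the derivative counts in the final seminorm bound close.
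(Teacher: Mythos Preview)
Your sketch is correct and follows the standard textbook development of the semiclassical Weyl calculus, which is exactly what the paper intends: the paper does not give its own proof of this proposition but simply cites \cite{Zworski2012} and \cite[Appendix~E]{DyatlovZworski2020} for the general composition and commutator expansions, and \cite[Lemma~A.5]{Kleinhenz2023Torus} for the final quantitative operator-norm bound on $r_3$. There is nothing to compare; your outline is a faithful reconstruction of what those references contain.
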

A proof of the final statement is in \cite[Lemma A.5]{Kleinhenz2023Torus}.

\bibliographystyle{alpha}
\bibliography{mybib}
\end{document}